\documentclass[reqno,a4paper]{amsart}

\usepackage{array}
\usepackage[T1]{fontenc}
\usepackage{enumerate, amsmath, amsfonts, amssymb, amsthm, mathrsfs, mathabx}
\usepackage{bbm}
\usepackage[alphabetic, initials]{amsrefs}
\usepackage[lmargin=3.5cm,rmargin=3.5cm,top=2.9cm,bottom=2.9cm]{geometry}

\usepackage[all]{xy}
\usepackage{tikz}
\usetikzlibrary{arrows, decorations, shapes}
\usetikzlibrary{intersections}
\usetikzlibrary{calc}
\usepackage{tikz-cd}
\usepackage{colonequals}
\usetikzlibrary{arrows}
\usetikzlibrary{decorations.pathmorphing}
\usepackage[colorlinks, citecolor=blue]{hyperref}
\usepackage[noabbrev,capitalise]{cleveref}
\usepackage{multirow}
\usepackage{ulem}
\usepackage{enumitem}
\setlist[enumerate]{format=\normalfont}
\usepackage{caption}
\usepackage{cancel}

\usepackage{graphicx}
\usepackage{caption}
\usepackage{subcaption}

\usepackage{setspace}
\newcommand{\marginparstretch}{0.6}
\let\oldmarginpar\marginpar
\renewcommand\marginpar[1]{\-\oldmarginpar[\framebox{\setstretch{\marginparstretch}\begin{minipage}{\marginparwidth}{\raggedleft\tiny #1}\end{minipage}}]{\framebox{\setstretch{\marginparstretch}\begin{minipage}{\marginparwidth}{\raggedright\tiny #1}\end{minipage}}}}

 \newcommand{\arrow}[2][20]
 {
  \hspace{-5pt}
  \begin{tikzpicture}
   \node (A) at (0,0) {};
   \node (B) at (#1pt,0) {};
   \draw [#2] (A) -- (B);
  \end{tikzpicture}
  \hspace{-5pt}
 }

\newtheorem{theorem}{Theorem}[section]
\newtheorem{corollary}[theorem]{Corollary}
\newtheorem{proposition}[theorem]{Proposition}
\newtheorem{conjecture}[theorem]{Conjecture}
\newtheorem{lemma}[theorem]{Lemma}
\newtheorem*{theorem*}{Theorem}%[section]

\theoremstyle{definition}
\newtheorem{definition}[theorem]{Definition}
\newtheorem{example}[theorem]{Example}
\newtheorem{remark}[theorem]{Remark}

\newcommand{\tc}{\mathcal{T}}

\newcommand{\mc}{\mathcal{M}}
\newcommand{\uc}{\mathcal{U}}
\renewcommand{\sc}{\mathcal{S}}
\newcommand{\cc}{\mathcal{C}}
\newcommand{\dc}{\mathcal{D}}
\newcommand{\fc}{\mathcal{F}}
\newcommand{\gc}{\mathcal{G}}
\newcommand{\pc}{\mathcal{P}}
\newcommand{\Pf}{\mathfrak{P}}

\newcommand{\hf}{\mathfrak{h}}

\newcommand{\Cf}{\mathfrak{C}}
\newcommand{\ic}{\mathcal{I}}
\newcommand{\xc}{\mathcal{X}}
\newcommand{\yc}{\mathcal{Y}}
\newcommand{\hc}{\mathcal{H}}

\newcommand{\df}{\mathfrak{d}}
\newcommand{\ef}{\mathfrak{e}}
\newcommand{\Df}{\mathfrak{D}}
\newcommand{\codim}{\operatorname{codim}}

\newcommand{\modA}{\operatorname{mod}A}
\renewcommand{\mod}{\operatorname{mod}}

\newcommand{\Fac}{\operatorname{Fac}}
\newcommand{\Sub}{\operatorname{Sub}}
\newcommand{\Filt}{\operatorname{Filt}}

\newcommand{\End}{\operatorname{End}}
\newcommand{\Image}{\operatorname{Im}}

\newcommand{\Hom}{\operatorname{Hom}}
\newcommand{\Hasse}{\operatorname{\bf Hasse}}

\newcommand{\add}{\operatorname{add}}

\newcommand{\proj}{\operatorname{proj}}
\renewcommand{\star}{\operatorname{star}}
\newcommand{\tors}{\operatorname{ \bf tors}}
\newcommand{\ftors}{\operatorname{\bf tfree}}

\newcommand{\rep}[1]{%
  {%Make sure some parameters are set to the default
    \renewcommand{\arraystretch}{1}
    \setlength{\extrarowheight}{0pt}
    \tiny%
    \begin{matrix}%
      #1%
    \end{matrix}%
  }%
}

\title{Scattering diagrams for Artin algebras}
\author[Treffinger]{Hipolito Treffinger}
\address{Universidad de Buenos Aires. Facultad de Ciencias Exactas y Naturales. Departamento de Matemática. Buenos Aires, Argentina.
CONICET - Universidad de Buenos Aires. Instituto de Investigaciones Matemáticas “Luis A. Santaló”  (IMAS). Buenos Aires, Argentina.}
\email{htreffinger@dm.uba.ar}
\usepackage{url}
\begin{document}

\begin{abstract}
    For an arbitrary Artin algebra $A$, we construct a minimal and consistent scattering diagram by approximating its module category $\mathrm{mod}\,A$ using the subcategories $(\mathrm{mod}\,A)_\ell$ of modules of length at most $\ell \in \mathbb{N}$.
    We prove that each subcategory $(\operatorname{mod}A)_\ell$ possesses a well-behaved lattice of torsion classes, a finite wall-and-chamber structure $\mathfrak{D}_\ell(A)$ and an associated picture group $G_\ell(A)$ with a natural categorical interpretation.
    Using these properties, we build a finite, minimal, consistent scattering diagram for each $\ell \in \mathbb{N}$.
    Passing to the inverse limit, we establish the existence of a minimal consistent scattering diagram for $A$.
    In particular, when $A$ is a finite-dimensional algebra over $\mathbb{C}$, our inverse limit construction is canonically isomorphic to Bridgeland's stability scattering diagram.
\end{abstract}

\maketitle
\setcounter{tocdepth}{1}
\tableofcontents

%%%%%%%%%%%
\section{Introduction}
%%%%%%%%%%%
\subsection*{Motivation and main result}
Scattering diagrams were introduced by Kontsevich and Soibelman \cite{KontsevichSoibelman} in homological mirror symmetry as a combinatorial tool to construct the mirror of a log-Calabi-Yau variety.  
Later, Gross, Hacking, Keel, and Kontsevich \cite{GHKK} introduced cluster scattering diagrams, providing a foundational geometric framework for understanding the cluster algebras introduced by Fomin and Zelevinsky \cite{FZ}.
Bridgeland \cite{BridgelandScat} subsequently demonstrated that for skew-symmetric cluster algebras with an acyclic initial seed, the scattering diagram can be constructed via the representation theory of the quiver path algebra using stability conditions and motivic Hall algebras.
Specifically, Bridgeland proved the existence of a unique  minimal consistent scattering diagram, often called stability scattering diagram, for every finite-dimensional $\mathbb{C}$-algebra, showing it is isomorphic to the cluster scattering diagramin the hereditary case. 
This result was later generalised to Jacobian algebras with a non-degenerate potential by Mou \cite{Mou}.
Since then, scattering diagrams have attracted significant attention at the interface of representation theory, cluster theory, and geometry (see, e.g., \cite{BFMN, BurcroffLeeMou, LabardiniMou, Mou2, PlamondonYurikusa, Reading, ReadingStella}).

Despite these developments, the construction of stability scattering diagrams has relied heavily on motivic techniques and the geometric properties of finite-dimensional $\mathbb{C}$-algebras, leaving the general case of Artin algebras open. 
The main result of this paper resolves this question in full generality:

\begin{theorem}\label{thm:mainIntro}[\cref{thm:scatteringdiagrams,thm:isomorphism}]
For any Artin algebra $A$ there is a minimal consistent scattering diagram $(\Df(A), \Phi : \Df^1(A) \to G(A))$. 
Moreover, this scattering diagram is isomorphic to the stability scattering diagram if $A=\mathbb{C}Q/I$, where $Q$ is a finite quiver and $I$ is an admissible ideal of $\mathbb{C}Q$.
\end{theorem}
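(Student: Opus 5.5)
The plan follows the strategy announced in the abstract: truncate by length, build a finite scattering diagram at each level, and pass to the inverse limit. Fix $\ell\in\mathbb{N}$ and let $(\modA)_\ell$ be the full subcategory of modules of length at most $\ell$.

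First I would show that the torsion classes of $(\modA)_\ell$, defined as the intersections $\tc\cap(\modA)_\ell$ for $\tc$ a torsion class of $\modA$, form a finite lattice. Its covering relations should be labelled by bricks of length at most $\ell$, as in brick labelling. The source of finiteness is that $\modA$ has only finitely many dimension vectors of length at most $\ell$. For each such brick $B$ I would take the King-type stability cone
\[
\Df(B)=\{v\in\mathbb{R}^n:\ \langle v,[B]\rangle=0,\ \langle v,[B']\rangle\le 0 \text{ for every quotient } B' \text{ of } B\}.
\]
These cones give a finite wall-and-chamber structure $\Df_\ell(A)$. Its chambers correspond to the torsion classes of the truncated lattice, and each wall is labelled by a dimension vector, which determines its primitive normal.

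Next I would define the truncated group $G_\ell(A)$. It is the picture group generated by the bricks of length at most $\ell$, with relations read off from the codimension-two strata of $\Df_\ell(A)$. Concretely, around each such stratum the two maximal chains in the corresponding rank-two interval of the truncated lattice must give equal products of brick generators, and any product involving total length greater than $\ell$ is killed. Equivalently, $G_\ell(A)$ is the quotient of a pro-unipotent group attached to $\mathfrak{g}=\bigoplus_{d}\mathfrak{g}_d$ by the ideal of degrees of length greater than $\ell$. Each wall is sent to the element $\Phi_\ell(\Df)\in G_\ell(A)$ obtained as the ordered product over the bricks supported on it.

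Consistency at level $\ell$ says that the path-ordered product around any generic loop is trivial. I would reduce this to loops around codimension-two joints, where it is exactly the defining relation. Minimality would follow because every wall carries a nontrivial brick label that cannot be cancelled.

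For compatibility, I would check that restricting from level $\ell+1$ to level $\ell$ forgets the walls of bricks of length $\ell+1$ and projects $G_{\ell+1}(A)\to G_\ell(A)$, and that this restriction is compatible with the $\Phi$'s. The inverse limit then gives $\Df(A)$, with $G(A)=\varprojlim G_\ell(A)$. Consistency and minimality pass to the limit because each finite-length question is decided at some finite level.

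For the second part, with $A=\mathbb{C}Q/I$, I would compare with Bridgeland's diagram degree by degree. Both diagrams are consistent and minimal, and they have the same support and the same initial walls. Bridgeland's uniqueness for consistent diagrams with prescribed initial data in the Lie algebra completion then gives the isomorphism once $G(A)$ is identified with Bridgeland's group through the motivic Hall algebra integration map.

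The main obstacle is the consistency step. One must show that a generic path's product in $G_\ell(A)$ depends only on its endpoints even though walls overlap along entire cones, not only in general position. I would first try to use the lattice structure: a generic path determines a maximal chain in the interval of truncated torsion classes between its endpoint chambers, and its product is the product of the labels along that chain. Since any two maximal chains are connected by polygonal moves, it would then suffice to check the rank-two polygon relations, and these hold by definition.
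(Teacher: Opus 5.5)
You have a genuine gap in the level-$\ell$ construction.

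\textbf{The truncated lattice is not finite.} In \cref{ex: Kronecker 1} (Kronecker algebra, $\ell=2$) there is a class $\tc_2^P$ for every subset $P\subseteq\mathbb{P}^1(\mathbb{C})$. Torsion classes separate non-isomorphic modules with the same dimension vector, so ``finitely many dimension vectors'' gives no finiteness for $\tors_\ell A$. Only the numerical classes $\tors^{ss}_\ell A$ form a finite poset, and only these correspond to chambers (\cref{prop: tors to chambers}); in that example $\Df_2(A)$ has five chambers.

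\textbf{The wall labels are not defined.} This is the more serious problem. A single wall of $\Df_\ell(A)$ can carry infinitely many pairwise non-isomorphic bricks of the same length. The regular simples $M_\lambda$, $\lambda\in\mathbb{P}^1(\mathbb{C})$, of dimension vector $(1,1)$ all have as stability locus the ray spanned by $(1,-1)$. The covering relation of $\tors^{ss}_2A$ across that ray is therefore not labelled by one brick. Consequently a group generated by bricks, with $\Phi_\ell$ of a wall defined as ``the ordered product over the bricks supported on it'', does not exist. There is no ordering, a group has no infinite products, and truncating by length does nothing since all these bricks have length $2$. Your ``equivalently, a truncated pro-unipotent group attached to $\mathfrak g$'' also has no meaning for an arbitrary Artin algebra, where no Hall or Lie algebra is available.

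The missing idea is to give each wall a single generator, indexed by the cone and interpreted as the semistable subcategory $(\mod^{ss}_{\df}A)_\ell$. Concretely, take Kaipel's picture group of the partitioned fan poset $(\Sigma_\ell(A),\Pf_{ss},\le)$, with relations $g_{\df_2}=g_{\df_1}g_{\df'}$ across each wall. Then use its realisation $G_\ell(A)\cong\widetilde G_\ell(A)$ as a group of subcategories under $*$ (\cref{prop:categorical interpretation}). This realisation is also what the paper invokes to justify minimality (your ``cannot be cancelled'' is otherwise unproved) and to make the transition maps $G_m(A)\to G_\ell(A)$ well defined.

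\textbf{Consistency only covers green paths.} Your lattice argument treats only monotone paths. A general generic path crosses walls in both directions and gives a zig-zag walk in the Hasse quiver of $\tors^{ss}_\ell A$, not a maximal chain of the interval between its endpoints. This is easily repaired, and it is what the paper does. Each crossing is the interval $[(\tc_{\df'})_\ell,(\overline{\tc}_{\df'})_\ell]^{\pm 1}$ in the groupoid $\gc^{ss}_\ell(A)$, and consecutive factors compose. Since $\tors^{ss}_\ell A$ has a minimum, the composite is forced to be $[\{0\},\tc_{\gamma(0)}]^{-1}[\{0\},\tc_{\gamma(1)}]$ (\cref{prop: one map}). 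One then pushes this into $G_\ell(A)$ along the groupoid morphism $\rho$. No polygonality is needed, because Kaipel's relations already identify all maximal chains of an interval.

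\textbf{The comparison with Bridgeland fails as argued.} You cannot invoke the Kontsevich--Soibelman uniqueness theorem. It compares diagrams valued in one pro-unipotent group, but $G(A)$ is not that group: for $A=\mathbb{C}$ one gets $G(A)\cong\mathbb{Z}$. The integration map goes from the Hall algebra to a torus algebra and does not identify the two groups. What is actually needed is the factorisation $[\overline{\tc}_{v}\to\mc]=[\tc_{v}\to\mc]*[\mod^{ss}_{v}A\to\mc]$ in $\hat H(A)$, which follows from uniqueness of torsion-pair decompositions; this is what the paper takes from Bridgeland's Lemma~6.6. It matches the defining relation $\tc_{\df_2}=\tc_{\df_1}*\mod^{ss}_{\df'}$ of $\widetilde G_\ell(A)$. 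Hence the assignment $\mod^{ss}_{\df}A\mapsto[\mod^{ss}_{\df}A\to\mc]$ is compatible with all path-ordered products, level by level and therefore in the inverse limit.
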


The main obstacle in the construction is that the wall-and-chamber structure of an arbitrary Artin algebra may contain infinitely many walls, so the wall-crossing product associated with a generic path is not defined a priori, see below.
Our method relies on the bounded-length subcategories $(\modA)_\ell$ of $\modA$, to which we associate a finite minimal consistent scattering diagram for each $\ell$ using lattices of torsion classes. 
By taking the inverse limit over $\ell$, we obtain a scattering diagram for arbitrary $A$.
The torsion-theoretic framework we adopt avoids motivic Hall algebra techniques, offering a categorical and combinatorial construction that is both transparent and widely applicable.

\medskip
The significance of extending scattering diagrams to arbitrary Artin algebras stems from the profound impact of cluster theory on modern representation theory. 
From its inception, cluster theory has driven the development of fundamental categorical structures \cite{CCS, BMRRT, IngallsThomas, Am, GLS1, KellerMGS, JKS}. 
Beyond quiver representations, this influence opened major new directions in representation theory, including the K\"onig--Yang correspondences \cite{KonigYang}, higher Auslander--Reiten theory \cite{Iya}, and $\tau$-tilting theory \cite{AIR, DF}.

In this setting, the appearance of scattering diagrams in cluster theory \cite{GHKK, BridgelandScat} quickly found applications in representation theory, giving a new perspective on the study of stability conditions \cite{King}. 
Even before the introduction of scattering diagrams, the study of quiver moduli has long been, and continues to be, a powerful tool to understand identities arising in algebraic geometry, such as quantum dilogarithm identities \cite{Reineke, KellerMGS}. 
The introduction of scattering diagrams also shifted attention toward the global geometry of the space of stability conditions, extending the earlier work of \cite{ITW, BHIT}  for hereditary algebras that was based on the notion of semi-invariants of quivers \cite{Schofield}.

For an Artin algebra $A$, the wall-and-chamber structure $\Df(A)$ encodes a remarkable amount of representation-theoretic data. 
In particular, $\Df(A)$ recovers important aspects of the $\tau$-tilting theory of $A$ via the bijection between the chambers of $\Df(A)$ and the support $\tau$-tilting modules \cite{BST, Asai}, where wall-crossing corresponds to mutations of support $\tau$-tilting along bricks \cite{Treffinger_c-vectors, AsaiSemibricks}. 
The translation between the homological language of $\tau$-tilting theory and stability conditions was given by $g$-vectors \cite{AIR, DIJ} and $c$-vectors \cite{Fu}.
In many cases, the wall-and-chamber structure $\Df(A)$ also provides a natural realisation of the duality between the Grothendieck group $K_0(A)$ of the module category $\modA$ and the Grothendieck group $K_0(K^{[-1,0]}(\proj A))$ of the extriangulated category $K^{[-1,0]}(\proj A)$ of two-term complexes of projectives up to homotopy \cite{NP, GNP}, see also \cite{AIR, DF, IOTW2, DIJ, PlamondonYurikusa, Garcia}.

Our construction focuses exclusively on the category $\modA$ and its bounded-length subcategories. 
The identification of the categorical structures that interact with $(\mod A)\ell$ through the wall-and-chamber structure $\Df\ell(A)$ is left for future research.
\medskip

%%%%%%%%%%%
\subsection*{Basics on scattering diagrams and strategy of the proof}
In this work, we give a constructive proof that every Artin algebra determines a minimal consistent scattering diagram. 
We now briefly recall the basic notions needed for the construction.
Formal definitions for scattering diagrams appear in \cref{sec:torsion scattering diagram}; see \cite{Nakanishi} for a comprehensive description.

A scattering diagram is defined from the following data:
A lattice $N$ of finite rank $n$, its dual lattice $M$ and a skew-symmetric form $\{-,-\}: N\times N \to \mathbb{Q}$. 
Using these data, one constructs a group $G$, whose precise definition depends on the context. 

A scattering diagram is a pair $(\Df, \Phi:\Df^1\to G)$, where $\Df$ is a cone complex in $M\otimes\mathbb{R} \cong \mathbb{R}^n$.
The map $\Phi: \Df^1 \to G$ assigns to every wall $\df$ an element $g_\df \in G$.
The scattering diagram $(\Df, \Phi:\Df^1\to G)$ is said to be minimal if $\Phi(\df)\neq 1_G$ for every wall $\df$.

A $\Df$-generic path is a smooth curve $\gamma: [0,1] \to \mathbb{R}^n$ starting and ending in chambers (that is, codimension-zero cones), avoiding cones of $\Df$ of codimension $2$ or higher, and crossing walls transversely. 
For every $\Df$-generic path $\gamma$, one should be able to define an element $g_\gamma$ which is calculated as the ordered product of the elements $g_\df$ associated to the walls $\df$ crossed by $\gamma$.
Then the scattering diagram $(\Df, \Phi:\Df^1\to G)$ is consistent if the element $g_\gamma$ is completely determined by $\gamma(0)$ and $\gamma(1)$.

The representation theory of Artin algebras naturally provides candidates for some of the ingredients needed to construct scattering diagrams:
\begin{itemize}
\item The lattice $N$  is the Grothendieck group $K_0(A)$ of $A$;
\item the realisation $M \otimes \mathbb{R} \cong \mathbb{R}^n$ of the dual lattice $M$ is the set of all stability conditions defined over $\modA$; and
\item the cone complex $\Df$ is the wall-and-chamber structure of $A$.
\end{itemize}

The main idea of the construction stems from an observation originated in \cite{BKT}.
Every stability condition $v\in \mathbb{R}^n$ induces two torsion pairs $(\tc_v, \overline{\fc}_v)$ and $(\overline{\tc}_v, \fc_v)$ in $\modA$. 
Moreover, $\tc_v = \overline{\tc}_v$ if $v$ belongs to a chamber.
Suppose that one can construct a group $G(A)$ whose elements correspond to intervals of torsion classes.
Then one can define a scattering diagram $(\Df(A), \Phi: \Df(A) \to G(A))$ in such a way that the element associated to a generic path depends only on the torsion classes determined by its endpoints.
Consequently, the scattering diagram is consistent.

Although the theory of torsion classes in $\modA$ allows for the proof of consistency for certain generic paths $\gamma$ even when it crosses infinitely many walls (see \cref{sec:groupoids}), a new approach was needed in order to consider arbitrary generic paths.
The principal obstruction arises when $\Df$ is an infinite cone complex: a given path $\gamma$ might cross infinitely many walls and the ordered product $g_\gamma$ might not be well-defined.
The classical strategy to avoid this problem is to show that the scattering diagram $(\Df, \Phi:\Df\to G)$ is the inverse limit of a family 
\(
\{(\Df_\ell, \Phi_\ell:\Df_\ell\to G_\ell) \mid \ell \in \mathbb{N}\}
\)
of consistent scattering diagrams such that $\Df_\ell$ is finite for all $\ell \in \mathbb{N}$.

A key feature of the category $\modA$ is that it is a length category.
By the Jordan-H\"older theorem, every module admits a finite filtration whose factors are simple modules.
If $(\modA)_\ell$ denotes the full subcategory consisting of modules of length at most $\ell$, then
\[
\modA = \bigcup_{\ell\in\mathbb{N}} (\modA)_\ell.
\]
This allows us to construct finite scattering diagrams whose inverse limit recovers the desired scattering diagram of the algebra.

%\begin{theorem}[\cref{thm:boundedscatteringdiagrams}]
%Let $A$ be an Artin algebra.
%Then there is a minimal consistent scattering diagram $(\Df_\ell(A),\Phi^\ell_A: \Df_\ell^1(A) \to G_\ell(A))$ associated to $(\modA)_\ell$ which is finite for every $\ell \in \mathbb{N}$.
%\end{theorem}
%We then obtain the scattering diagram of Theorem~\ref{thm:mainIntro} as the inverse limit of the scattering diagrams 
%\[
%\{(\Df_\ell(A),\Phi^\ell_A: \Df_\ell^1(A) \to G_\ell(A))\mid \ell \in \mathbb{N}\}.
%\]

To be able to construct the scattering diagrams for $(\modA)_\ell$, we make a detailed study of torsion classes, stability conditions, and picture groups in this category. 
Several of the results obtained along the way are of independent interest, as we explain in the following subsections.

%%%%%%%%%%%
\subsection*{The category of modules of bounded length and torsion classes}
Let $\tors A$ be the lattice of torsion classes in $\modA$.
For every $\ell \in \mathbb N$, there is a natural equivalence relation $\sim_\ell$ defined on $\tors A$.
Given $\tc, \tc' \in \tors A$, we write $\tc \sim_\ell \tc'$ if and only if $\tc\cap (\modA)_\ell = \tc'\cap(\modA)_\ell$. 
The set of equivalence classes of $\sim_\ell$ is denoted by $\tors_\ell A$ and the equivalence class of $\tc$ is denoted by $\tc_\ell$.
By abuse of notation, we also denote by $\tc_\ell$ the corresponding subcategory $\tc_\ell := \tc\cap (\modA)_\ell$.
Similarly, one can define $\fc_\ell$ for a torsion free class $\fc$ in $\modA$.

Even if $(\modA)_\ell$ is not an additive category, the subcategories $\tc_\ell$ retain the essential properties of torsion classes in $\modA$, see \cref{thm: torsion pairs of bounded length}.
%\begin{theorem}[\cref{thm: torsion pairs of bounded length}]
%    Let $(\tc, \fc)$ be a torsion pair in $\modA$. 
%    Then the following holds. 
%    \begin{enumerate}
%        \item \( \fc_\ell= \left\{M \in (\modA)_\ell \mid \Hom_A(T,M)=0 \, \forall \, T\in \tc_\ell\right\}. \)
%        \item \(        \tc_\ell= \left\{M \in (\modA)_\ell \mid \Hom_A(M,F)=0 \, \forall \, F\in \fc_\ell\right\}.  \)
%        \item For every $M \in (\modA)_\ell$ there is a short exact sequence 
%        \[
%        0 \to tM \to M \to fM \to 0
%        \]
%        where $tM \in \tc_\ell$ and $fM\in\fc_\ell$.
%    \end{enumerate}
%\end{theorem}
%
Moreover, the set $\tors_\ell A$ also shares many of the structural properties of $\tors A$.

\begin{theorem}[\cref{prop:completelattice,prop: semidistributive ell,prop:minimal extending}]
The set $\tors_\ell A$ is a complete semidistributive lattice and it admits a labelling of the edges of its Hasse quiver whose labels corresponds to the set $Br_\ell(A)$ of all bricks of length at most $\ell$.
\end{theorem}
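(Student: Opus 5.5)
Since $\sim_\ell$ is an equivalence relation on the complete lattice $\tors A$, the natural plan is to show that it is a \emph{complete lattice congruence}: it is compatible with arbitrary meets and joins. Then $\tors_\ell A = \tors A/\!\sim_\ell$ inherits completeness, and semidistributivity descends along the quotient map. Concretely, I would first show that every class $[\tc]_\ell$ is an interval $[\tc^\ell_{\min},\tc^\ell_{\max}]$ in $\tors A$. Take $\tc^\ell_{\min} = \operatorname{T}(\tc_\ell)$, the smallest torsion class containing $\tc\cap(\modA)_\ell$. Take $\tc^\ell_{\max} = {}^\perp(\fc_\ell)$, where $\fc$ is the torsion-free class paired with $\tc$.

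To see that these give the same truncation, I would use \cref{thm: torsion pairs of bounded length}. Its first item gives that $\tc_\ell$ and $\fc_\ell$ determine each other inside $(\modA)_\ell$ by Hom-orthogonality. Its third item gives the truncated canonical sequence. Together these show that any $\tc'$ with $\tc_\ell\subseteq \tc'\subseteq {}^\perp(\fc_\ell)$ satisfies $\tc'_\ell=\tc_\ell$. The key point is that for $M\in\tc'_\ell$ the quotient $fM$ has length at most $\ell$, lies in $\fc_\ell$, and so must vanish.

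Once classes are intervals, I would check that the projections $\pi_\downarrow\colon\tc\mapsto \tc^\ell_{\min}$ and $\pi^\uparrow\colon\tc\mapsto\tc^\ell_{\max}$ are order preserving. Then arbitrary meets and joins are computed as $\bigwedge[\tc_i] = [\bigcap \tc_i]$ and $\bigvee [\tc_i]=[\bigvee \tc_i]$. For meets this is immediate, because $(\bigcap\tc_i)_\ell=\bigcap (\tc_i)_\ell$. For joins I would pass to the torsion-free side, where intersections are easy, and use the first item of \cref{thm: torsion pairs of bounded length} to translate back. This yields completeness. For semidistributivity I would take $[\tc]\wedge[\tc']=[\tc]\wedge[\tc'']$ and choose representatives. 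Using the $\pi_\downarrow$ representatives, the meets are honest intersections, so I can reduce to semidistributivity of $\tors A$ (known from Garver--McConville / DIRRT). The dual statement follows by the symmetric argument using the $\pi^\uparrow$ representatives.

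For the labelling, I would use the fact that covers in the quotient $\tors_\ell A$ correspond to covers $\tc\lessdot\tc'$ in $\tors A$ with $\tc'=\pi_\downarrow\tc'$. Such a cover is labelled by a brick $B$ with $\tc'=\operatorname{T}(\tc\cup\{B\})$, as in the brick labelling of DIRRT. It then suffices to show two things. First, $B$ has length at most $\ell$: it lies in $\tc'\cap\fc$, and $\tc'$ is generated by modules of length at most $\ell$, so a minimal-length module of $\tc'_\ell\setminus\tc$ maps onto a brick in $\tc'\cap\fc$ which must be $B$. Second, every brick of length at most $\ell$ arises in this way, for instance as the label of $\operatorname{T}(B)\gtrdot \operatorname{T}(B)\cap{}^\perp B$ after projecting.

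I expect the main obstacle to be the join compatibility, equivalently showing that $\pi^\uparrow$ is monotone and that $\tc^\ell_{\max}$ is truly the top of the class. This needs the truncated canonical sequence in an essential way, because $(\modA)_\ell$ is not closed under extensions.
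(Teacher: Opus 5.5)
Your strategy is sound, but after the first step it takes a different route from the paper. Describing the $\sim_\ell$-classes as intervals $[T(\tc_\ell),{}^\perp(\fc_\ell)]$, with $\tc\mapsto\tc_\ell$ preserving arbitrary meets and, via torsion-free classes, arbitrary joins, is essentially \cref{thm:epi of lattices}. Both arguments get completeness from closure under intersections.

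From there the paper stays inside $(\modA)_\ell$. It proves semidistributivity directly by a minimal-length counterexample. For the labelling it adapts \cite{BCZ}: bricks of length at most $\ell$ correspond to the completely join-irreducibles $T(B)_\ell$, \cite[Proposition~9.1]{Thomas} then labels the covers, and each label is identified as the unique minimal extending module of the cover.

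You instead transport everything from $\tors A$ through the congruence. You take complete semidistributivity from \cite{DIRRT}, and you take each label to be the DIRRT label of a canonical lift of the cover. Your route is shorter and gives the complete versions for free. It also shows that the labelling of $\tors_\ell A$ is the restriction of that of $\tors A$, which is convenient for the inverse limit. And it avoids a delicate point in the intrinsic argument: there the submodule $M_1$ split off a minimal counterexample has to lie in $\tc_\ell$, although torsion classes are not closed under submodules. The paper's route, in exchange, yields descriptions phrased inside $(\modA)_\ell$: join-irreducibles, minimal extending modules and semibricks.

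As written, however, your semidistributivity step fails, because the representatives are paired the wrong way round. Bottoms are closed under joins, $T(\tc_\ell)\vee T(\tc'_\ell)=T(\tc_\ell\cup\tc'_\ell)$, and tops are closed under intersections, ${}^\perp(\fc_\ell)\cap{}^\perp(\fc'_\ell)={}^\perp(\fc_\ell\cup\fc'_\ell)$. Bottoms are not closed under intersections.

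For a counterexample, take linearly oriented $\mathbb{A}_3$, $\ell=2$, and $U$ the uniserial module of length $3$. The bottoms $\tc=T(\{U/\operatorname{soc}U,\operatorname{soc}U\})$ and $\tc'=T(\{U/\operatorname{rad}U,\operatorname{rad}U\})$ both contain $U$. However, $(\tc\cap\tc')_2$ only generates $\tc''=T(U/\operatorname{soc}U)$, whose modules have no composition factor $\operatorname{soc}U$. Hence $[\tc]\wedge[\tc']=[\tc]\wedge[\tc'']$ in $\tors_2A$ while $\tc\cap\tc'\neq\tc\cap\tc''$. The hypothesis does not lift to $\tors A$, so meet-semidistributivity of $\tors A$ cannot be applied.

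The fix is to swap the choices. For the law whose hypothesis is on meets, use $\pi^\uparrow$-representatives. Then $\tc\cap\tc'$ and $\tc\cap\tc''$ are tops of the same class, hence equal; the law in $\tors A$ applies, and you project its conclusion. For the join law, use $\pi_\downarrow$-representatives. Both arguments work verbatim for arbitrary families.

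A smaller point concerns the labelling. Since $\tors A$ is infinite, the correspondence between covers of $\tors_\ell A$ and covers of $\tors A$ whose top is a bottom is not automatic. Given a cover $\tc_\ell\subsetneq\tc'_\ell$, you need a torsion class covered by $T(\tc'_\ell)$ and containing $T(\tc'_\ell)\cap{}^\perp(\fc_\ell)$. This follows from the result of \cite{DIRRT} that every nontrivial interval of $\tors A$ contains a lower cover of its top. Uniqueness of the lift holds because two distinct such lower covers would have join $T(\tc'_\ell)$, yet the class of $\tc_\ell$ is an interval and hence closed under joins.

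Finally, the step you flag as the main obstacle is immediate: $\pi^\uparrow$ is monotone because $\tc\subseteq\tc'$ gives $\fc'_\ell\subseteq\fc_\ell$.
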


We conclude this subsection with two remarks.
First, $\tors_\ell A$ is always a quotient of $\tors A$ as lattices and   
\[
     \underleftarrow\lim \tors_\ell A =\tors A.
    \]
Moreover, it follows from \cite{ST} that $\tors A\cong\tors_\ell A$ for some $\ell \in \mathbb{N}$ if and only if $A$ is $\tau$-tilting finite in the sense of \cite{DIJ}.

Second, the lattices $\tors A$ and $\tors_\ell A$ exhibit strikingly different behaviour whenever $\tors A$ is infinite.
In this situation, it was shown in \cite{DIJ} that the Hasse quiver of $\tors A$ is not connected.
By contrast, every example we have computed suggests that $\tors_\ell A$ is connected, and we conjecture that this always holds.
This naturally raises the question whether semibricks \cite{AsaiSemibricks} in $(\modA)_\ell$ determine all torsion classes $\tc_\ell$ in $(\modA)_\ell$, see \cref{cor:semibricks}.
It is worth noting that most of the tools used to study $\tors A$, such as $\tau$-tilting theory \cite{AIR, DF}, cannot be applied directly in this setting, since they rely heavily on the additive structure of $\modA$.

%%%%%%%%%%%%%%%
\subsection*{Wall-and-chamber structures for $(\modA)_\ell$}
The next ingredient is to interpret the lattice $\tors_\ell A$ geometrically via stability conditions.
Let $K_0(A)$ be the Grothendieck group of $\modA$.
Recall that a stability condition, in the sense of King \cite{King}, is simply a linear map $\theta : K_0(A) \to \mathbb{R}$. 
Identifying $\Hom_\mathbb{Z}(K_0(A), \mathbb{R}) \cong \mathbb{R}^n$, we regard a stability condition $\theta$ as the vector $v\in\mathbb R^n$ satisfying $\theta([M]) = \langle v, [M] \rangle$ for every $M \in \modA$, where \mbox{\(\langle -,- \rangle: \mathbb{R}^n\times\mathbb{R}^n \to \mathbb{R}\)} is the canonical inner product in $\mathbb{R}^n$.
We say that a module $M \in \modA$ is $v$-semistable if $\langle v, [M]\rangle = 0 $ and $\langle v, [L]\rangle \leq 0 $ for every submodule $L$ of $M$.
The subcategory of $\modA$ of all $v$-semistable modules is denoted by $\mod^{ss}_vA$. 
Similarly, the subcategory of $(\modA)_\ell$ of all $v$-semistable modules is denoted by $(\mod^{ss}_vA)_\ell$. 

Given a module $M\in \modA$, the stability locus $\dc(M)$ is defined as 
$$\dc(M) = \{v\in \mathbb{R}^n \mid M \text{ is $v$-semistable}\}.$$
The set $\dc(M)$ is said to be a wall if it is of codimension $1$ in $\mathbb{R}^n$.
Following \cite{BridgelandScat, BST}, we define the wall-and-chamber $\Df_\ell(A)$ to consists of all walls $\dc(M)$ with $M\in (\modA)_\ell$ together with the connected components of the set
\[
\mathbb{R}^n \setminus \bigcup_{0 \neq M \in (\modA)_\ell} \dc(M),
\]
which we call chambers.
One of the key features of $\Df_\ell(A)$ is that it is always finite, regardless the algebra $A$ and the integer $\ell$.

Our next objective is to understand the geometry of $\Df_\ell(A)$ in terms of the bounded-length torsion classes introduced in the previous subsection.

Following \cite{Asai}, we say that two stability conditions are $\ell$-TF-equivalent if \mbox{$(\tc_v)_\ell = (\tc_{v'})_\ell$} and $(\overline{\tc}_v)_\ell = (\overline{\tc}_{v'})_\ell$. 
The $\ell$-TF-equivalence class of a stability condition $v\in \mathbb{R}^n$ is denoted by $\df^\ell_v$.
This induces a partition of $\Df_\ell(A)$ into finitely many $\ell$-TF-equivalence classes:
\[
\Df_\ell(A) = \bigcup_{v \in \mathbb{R}^n} \df_v^\ell.
\]
We describe properties of $\ell$-TF-equivalence classes in $\mathbb{R}^n$ as follows, compare with \cite[Theorem~2.17]{Asai}.\footnote{We note that recently Asai and Iyama used the same techniques to build a wall-and-chamber structure and its dual polytope for a single module $M\in \modA$ in \cite{AsaiIyama}.}

\begin{theorem}[\cref{thm:bounded l-TF}, \cref{cor:finite cone complex for l}]
Let $v, v' \in \mathbb{R}^n$ be two different stability conditions. 
Then $v$ and $v'$ are $\ell$-TF-equivalent if and only if there is no brick $B \in (\modA)_\ell$ such that the intersection between the segment $[v,v']$ and $\dc(B)$ consists of a single point. 
Moreover, every $\ell$-TF-equivalence class $\df_v^\ell$ is a relatively open cone in $\overline{\df_v^\ell}$. 
\end{theorem}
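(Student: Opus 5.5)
We will follow the strategy of Asai's proof of \cite[Theorem~2.17]{Asai}, replacing $\modA$ by $(\modA)_\ell$ throughout. We work with the explicit description of the torsion classes attached to a stability condition:
\[
\tc_v=\{M \mid \langle v,[N]\rangle>0 \text{ for every nonzero quotient } N \text{ of } M\},
\]
\[
\overline{\tc}_v=\{M \mid \langle v,[N]\rangle\geq 0 \text{ for every quotient } N \text{ of } M\}.
\]
The first step is a reduction to bricks: $(\tc_v)_\ell$ and $(\overline{\tc}_v)_\ell$ should be determined by which bricks of length at most $\ell$ they contain. Using \cref{thm: torsion pairs of bounded length}, a module $M\in(\modA)_\ell$ fails to lie in $\tc_v$ exactly when some nonzero morphism $M\to F$ exists with $F\in\overline{\fc}_v$ of length at most $\ell$. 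Since torsion-free classes are closed under submodules, we may take the image of this morphism, which is a quotient of $M$ and so also has length at most $\ell$. Passing from this quotient to a brick requires the semistable filtrations: every module in $\overline{\fc}_v$ (respectively $\mod^{ss}_vA$) admits a brick, or $v$-stable module, in a suitable position of its Harder--Narasimhan or Jordan--H\"older filtration. These filtration factors are subquotients of the original module, so their length is again at most $\ell$.

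Next we prove the direction ``no brick wall separates $\Rightarrow$ $\ell$-TF-equivalent''. Suppose $v$ and $v'$ are not $\ell$-TF-equivalent, say $M\in(\tc_v)_\ell\setminus(\tc_{v'})_\ell$. By the reduction step we can choose a brick $B$ of length at most $\ell$ that witnesses the change along the segment. Consider the torsion classes $\tc_{w}$ for $w=(1-t)v+tv'$. For each fixed $N$, the quantity $\langle w,[N]\rangle$ is affine in $t$. Only finitely many classes $[N]$ with $N$ of length at most $\ell$ occur, because there are only finitely many dimension vectors of bounded length. Hence membership of $M$ changes at a first parameter $t_0$, and at $w_0$ some quotient of $M$ satisfies $\langle w_0,[N]\rangle=0$ and is $w_0$-semistable. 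Taking a $w_0$-stable factor gives a brick $B$ of length at most $\ell$ with $w_0\in\dc(B)$. It remains to show that $[v,v']\cap\dc(B)$ is exactly one point rather than a subsegment. This is the delicate point. If the intersection were a nondegenerate segment, then $\langle -,[B]\rangle$ would vanish identically on the segment, and $B$ could not explain the change in $\tc$. To make this rigorous, we will choose $t_0$ and $B$ minimal, and use that $\dc(B)$ is a convex polyhedral cone cut out by the finitely many inequalities coming from submodules of $B$.

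For the converse, suppose $[v,v']\cap\dc(B)=\{w_0\}$ for a brick $B$ of length at most $\ell$. Then $\langle v,[B]\rangle$ and $\langle v',[B]\rangle$ do not both vanish. We distinguish two cases.
\begin{itemize}
\item If the two values have opposite strict signs, or one is zero and the other nonzero, then $B$ lies in $\overline{\tc}$ on one side but not on the other, or in $\tc$ on one side but not on the other. Either way $v$ and $v'$ are not $\ell$-TF-equivalent.
\item If both values have the same strict sign, say both are positive, the single-point intersection forces the exit from $\dc(B)$ to happen through a submodule inequality. So some submodule $L\subseteq B$ has $\langle -,[L]\rangle$ changing sign along the segment. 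We then argue with the quotient $B/L$, or with $L$, which has smaller length, and conclude by induction on length.
\end{itemize}
We expect this second case to be the main obstacle. It is the analogue of Asai's argument, and we will need to check that every auxiliary module produced in it stays inside $(\modA)_\ell$.

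Finally, for the cone statement, the equivalence just proved shows that $\df_v^\ell$ is a finite intersection of conditions. Each condition has the form $\langle -,[N]\rangle>0$, $=0$ or $<0$ for the finitely many classes $[N]$ with $N$ of length at most $\ell$, or is a membership or non-membership condition in the finitely many polyhedral cones $\dc(B)$. In particular $\df_v^\ell$ is convex: if $v''$ lies on $[v,v']$, then its subsegments meet the walls in the same way. It is also closed under positive scaling, since all the defining conditions are homogeneous. A convex set cut out by finitely many homogeneous linear equalities and strict inequalities is relatively open in its closure, so $\df_v^\ell$ is a relatively open cone in $\overline{\df_v^\ell}$. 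Finiteness of the number of these classes then gives \cref{cor:finite cone complex for l}.
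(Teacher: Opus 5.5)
Both of your implications break at the decisive step. Take first the direction ``a brick wall meeting $[v,v']$ in one point forces non-equivalence''. Here your case analysis is wrong in three places.

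\begin{itemize}
\item The claim that $\langle v,[B]\rangle$ and $\langle v',[B]\rangle$ cannot both vanish is false. Take $v'=0$ and any $v\in[B]^\perp\setminus\dc(B)$. Since $\dc(B)$ is a cone, $[v,0]\cap\dc(B)=\{0\}$.
\item The case you call the main obstacle cannot occur. If both values are positive, then $\langle w,[B]\rangle>0$ on all of $[v,v']$, while $\dc(B)\subseteq[B]^\perp$, so the intersection is empty.
\item In the opposite-sign case, semistability at the interior point $w_0$ says nothing about the quotients and submodules of $B$ at the endpoints, so $B$ need not witness anything. For a uniserial brick $B$ with composition factors $S(1),S(2),S(3)$ from top to socle (e.g.\ over a linearly oriented $\mathbb{A}_3$), put $v=(-1,5,-3)$ and $v'=(3,-5,1)$. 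Then $[v,v']\cap\dc(B)=\{(1,0,-1)\}$, yet $B$ lies in none of $\tc,\overline{\tc},\fc,\overline{\fc}$ at either endpoint.
\end{itemize}

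The missing idea is the paper's linearity argument. Suppose $(\tc_v)_\ell=(\tc_{v'})_\ell$ and $(\overline{\tc}_v)_\ell=(\overline{\tc}_{v'})_\ell$. Positivity or nonnegativity of $\langle -,[N]\rangle$ at both endpoints passes to every $v''\in[v,v']$. This gives $(\tc_v)_\ell\subseteq(\tc_{v''})_\ell$, $(\overline{\fc}_v)_\ell\subseteq(\overline{\fc}_{v''})_\ell$, and so on, and the duality in \cref{thm: torsion pairs of bounded length} upgrades these inclusions to equalities. Hence $(\mod^{ss}_{v''}A)_\ell$ is constant on the segment, and every $\dc(M)$ with $\lg(M)\leq\ell$ meets $[v,v']$ in $\varnothing$ or in all of it. This settles the direction in every case at once. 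It also gives convexity of $\df^\ell_v$, which your last paragraph does not prove. The single-point criterion does not pass to subsegments: a wall meeting $[v,v']$ in $[v'',v']$ meets $[v,v'']$ only in $v''$.

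In the other direction, your construction of $t_0$ and of a $w_0$-semistable quotient $N$ of $M$ with $\langle w_0,[N]\rangle=0$ is fine. The problem is that an arbitrary $w_0$-stable factor $B$ of $N$ may have $[B]$ orthogonal to the whole segment. Saying ``choose $t_0$ and $B$ minimal'' does not rule this out. One repair within your approach: write $f_X(t)=\langle (1-t)v+tv',[X]\rangle$. Then $f_N>0$ on $[0,t_0)$, and $f_N=\sum_i f_{B_i}$ over the $w_0$-stable composition factors of $N$. So some $f_{B_i}$ is a nonzero affine function vanishing at $t_0$, which gives $\dc(B_i)\cap[v,v']=\{w_0\}$.

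The paper instead uses \cref{prop:objectintheintersection} to take a brick $B$ of minimal length in $(\overline{\tc}_v)_\ell\cap(\fc_{v'})_\ell$. Then $f_B(0)\geq 0>f_B(1)$ gives a unique zero. Minimality forces every proper quotient of $B$ into $\overline{\tc}_{v'}$, which yields semistability at that zero.

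Finally, $\df^\ell_v$ is not cut out by a conjunction of equalities and strict inequalities as you claim, since non-membership in $\dc(B)$ is a disjunction. For relative openness, note that $\df^\ell_v$ lies in $V=\bigcap_{M\in(\mod^{ss}_vA)_\ell}[M]^\perp$ and is open there. The reason is that a quotient $N$ of $M\in(\overline{\tc}_w)_\ell$ with $\langle w,[N]\rangle=0$ is automatically $w$-semistable, so $\langle u,[N]\rangle=0$ for all $u\in V$.
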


As a consequence of this result we can show that the wall-and-chamber structure $\Df_\ell(A)$ is a geometric realisation for the poset $\tors_\ell^{ss}A$, see \cref{prop: tors to chambers,cor: maximal inclusions tors^ss}.
%%%%%%%%%%%
\subsection*{Picture groups for Artin algebras}
Having constructed the support $\Df_\ell(A)$ of the scattering diagram $(\Df_\ell(A), \Phi_\ell: \Df_\ell(A) \to G_\ell(A))$, we now turn our attention to the group $G_\ell(A)$, which we chose to be the picture group associated to $\Df_\ell(A)$. 

The notion of group pictures was introduced by Igusa in his PhD thesis \cite{IgusaPhD} to study the $K$-theory of group algebras.
Roughly speaking, given a group $G$, each picture encodes the generators and relations of deformations of $G$. 
In particular, every picture has an associated picture group. 
The story of picture groups in representation theory starts with \cite{ITW, IT}, where the authors use the semi-invariant picture associated to every hereditary algebra of Dynkin type $H$ to produce a corresponding picture group $G(H)$. 
The authors also define a cluster morphism category $\Cf(H)$ associated to $H$ and they show that $\Cf(H)$ is a \mbox{CAT$(0)$-category} with fundamental group  $G(H)$, which implies that $G(H)$ is $K(\pi, 1)$.
For a detailed account of the history of picture groups and related concepts, see \cite{IT2}.

The development of $\tau$-tilting theory \cite{AIR} led to the definition of a picture group $G(A)$ for every $\tau$-tilting finite algebra $A$ \cite{HansonIgusa} by Hanson and Igusa using the wall-and-chamber structure $\Df(A)$ of $A$.  

The definition of a $\tau$-cluster morphism category $\Cf(A)$ was introduced in the work of Buan, Hanson and Marsh \cite{BuanHanson, BuanMarsh}, which relies heavily on the homological properties of $\modA$. 
Later on, Schroll, Tattar, Treffinger and Williams \cite{STTW} showed that the $\tau$-cluster morphism category $\Cf(A)$ can also be defined using the combinatorial properties of the wall-and-chamber structure $\Df(A)$ of $A$. 
Kaipel later extended this approach in \cite{Kaipel}, showing that there is a category $\Cf(\Sigma, \Pf)$ associated to every finite fan $\Sigma$ admitting an admissible partition $\Pf$ of its cones. 
Moreover, Kaipel's construction associates a picture group $G(\Sigma,\Pf)$ to every partitioned fan $(\Sigma,\Pf)$ with a poset structure.

To state our next result, we first observe that $\Df_\ell(A)$ can be seen as a fan 
%\[
%\Sigma_\ell(A) := \{\sigma_v:=\overline{\df_v^\ell} \mid v\in \mathbb{R}^n\}.
%\]
$\Sigma_\ell(A)$, which is not always a simplicial, see \cref{ex: no li chamber}.
This fan can be partitioned in a natural way, by considering the equivalence relation $\sim_{ss}$ defined by $\sigma_v \sim_{ss}\sigma_{v'}$ if and only if $(\mod^{ss}_vA)_\ell = (\mod^{ss}_{v'}A)_\ell$.
This induces a nontrivial partition $\Pf_{\ell}(A)$.

\begin{theorem}
Let $A$ be an Artin algebra, $\ell \in \mathbb{N}$.
Then the pair $(\Sigma_\ell(A), \Pf_\ell(A))$ is a partitioned fan in the sense of Kaipel. 
In particular there is a $\tau$-cluster morphism category $\Cf_\ell(A)$ and a picture group $G_\ell(A)$ for every $A$ and $\ell$. 
\end{theorem}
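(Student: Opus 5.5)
The plan is to check Kaipel's axioms for a partitioned fan one at a time, using the bounded-length results stated earlier. Once these hold, the existence of $\Cf_\ell(A)$ and $G_\ell(A)$ follows directly from Kaipel's construction \cite{Kaipel}.

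First, $\Sigma_\ell(A)$ must be a finite complete fan in $\mathbb{R}^n$. By \cref{cor:finite cone complex for l} there are only finitely many $\ell$-TF-equivalence classes. Each class $\df_v^\ell$ is relatively open in its closure $\sigma_v=\overline{\df_v^\ell}$. Each $\sigma_v$ is a polyhedral cone: by \cref{thm:bounded l-TF}, $\sigma_v$ is cut out by finitely many conditions of the form $\langle w,[B]\rangle\geq 0$, $\leq 0$ or $=0$. Here $B$ runs over bricks of length at most $\ell$ and their submodules, and there are finitely many such dimension vectors. Completeness is immediate because the classes partition $\mathbb{R}^n$. To show that faces of cones are cones of the fan and that two cones meet in a common face, I would use the segment characterisation of \cref{thm:bounded l-TF}: within a fixed $\sigma_v$, the $\ell$-TF-class of a point is determined by which of the finitely many defining inequalities are equalities there. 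Hence every relatively open face of $\sigma_v$ is exactly one $\ell$-TF class, and intersections of cones are unions of common faces.

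Second, I would check that $\Pf_\ell(A)$ is admissible. $\ell$-TF-equivalence implies equality of $(\mod^{ss}_vA)_\ell$, because the semistable modules of length at most $\ell$ are $(\overline{\tc}_v)_\ell\cap(\overline{\fc}_v)_\ell$, and by \cref{thm: torsion pairs of bounded length} this is determined by the pair of bounded torsion classes. So $\sim_{ss}$ is coarser than $\ell$-TF-equivalence and gives a well-defined partition of the cones. Kaipel's conditions ask, roughly, that each part be the set of cones of a fixed dimension spanning a common linear subspace, with the parts compatible under taking faces and links. For the linear span, I would show that the span of $\sigma_v$ equals $\{w\mid\langle w,[M]\rangle=0\ \forall M\in(\mod^{ss}_vA)_\ell\}$. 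The inclusion $\subseteq$ is clear. For equality, I would use that the inequalities are strict on the relative interior for all non-semistable test modules. Then two cones in the same part span the same subspace, and the dimension is fixed.

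Third comes compatibility with faces. If $\sigma\sim_{ss}\sigma'$ and $\tau\le\sigma$, there must be a face $\tau'\le\sigma'$ with $\tau\sim_{ss}\tau'$. This is the step I expect to be the main obstacle, since it requires a reduction: the faces of $\sigma$ containing it should correspond to the wall-and-chamber structure of the wide-type category of semistable objects, so that the link of $\sigma$ depends only on $(\mod^{ss}_vA)_\ell$. I would first try to prove that near a point $v$, the local structure of $\Sigma_\ell(A)$ equals the product of $\operatorname{span}\sigma_v$ with the bounded-length chamber structure of the semistable subcategory, which is filtered by the $\ell$-bounded bricks stable at $v$. This is a local version of \cref{thm:bounded l-TF} restricted to bricks whose walls contain $v$. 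Finally, I would verify the poset/orientation condition Kaipel needs for the picture group. For this I would orient each codimension-one crossing using the edge labelling of $\tors_\ell A$ by bricks in $Br_\ell(A)$ from \cref{prop:minimal extending}, using that $\tors_\ell A$ is a lattice (\cref{prop:completelattice}) so that the induced order on chambers is acyclic.
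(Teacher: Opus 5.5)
\textbf{Admissibility.} Your third step is stated in the wrong direction, and as stated it is false. You ask that every face $\tau\le\sigma$ be matched by a face of $\sigma'$, but that is not the condition in \cref{def:partitionfan}. Counterexample: take the path algebra of $1\to 2$ and $\ell\ge 2$. All chambers of $\Df_\ell(A)$ are $\sim_{ss}$-equivalent, since their semistable subcategory is $\{0\}$. Yet each chamber adjacent to the ray $\dc(P)$, where $P$ is the indecomposable module of length $2$, has this ray as a face whose semistable objects are the direct sums of copies of $P$, and no face of $\cc^+$ has that semistable subcategory.

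The actual condition concerns the star. If $\df_1\sim_{ss}\df_2$, $\df_i\subset\overline{\ef_i}$ and $\pi_{\df_1}(\ef_1)=\pi_{\df_2}(\ef_2)$, then one must show $(\mod^{ss}_{\ef_1}A)_\ell=(\mod^{ss}_{\ef_2}A)_\ell$. This is the heart of the theorem, and you only announce a local product structure that you ``would try to prove''. That route does work, and the missing observation is short:
\begin{itemize}
\item Only finitely many classes occur in $(\modA)_\ell$, so every strict inequality at $v\in\df$ survives at $v+u$ for all sufficiently small $u\in\df^\perp$.
\item A submodule $L$ of a $v$-semistable $M$ with $\langle v,[L]\rangle=0$ is itself $v$-semistable.
\item Hence $M$ is $(v+u)$-semistable if and only if $M\in(\mod^{ss}_vA)_\ell$, $\langle u,[M]\rangle=0$, and $\langle u,[L]\rangle\le 0$ for the subobjects $L$ of $M$ inside $\mod^{ss}_vA$. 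This condition depends only on the semistable category.
\end{itemize}
Now choose $v_i\in\df_i$ and $u$ in the common projection. Then $v_i+\varepsilon u\in\ef_i$ for small $\varepsilon>0$, and admissibility follows.

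This last step uses your span identity, which is correct. It is exactly what guarantees $\df_1^\perp=\df_2^\perp$, so that $\pi_{\df_1}$ and $\pi_{\df_2}$ have the same target; the paper's \cref{lem:l-taucluster1,lem:l-taucluster2} take this as a hypothesis without deriving it. The paper itself proves admissibility by an explicit perturbation instead of a local product. Starting from $v_1=w_1+w\in\ef_1$, it builds $\overline{v}_2=\sum_i v_2^i+w_2+w\in\ef_2$. Here each $v_2^i\in\df_2$ is chosen strictly negative on $[L_i]$, one for each of the finitely many submodule classes with $\langle w,[L_i]\rangle>0$, and $M$ is then $\overline{v}_2$-semistable.

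\textbf{Fan poset.} The ``in particular'' about $G_\ell(A)$ needs Kaipel's fan-poset axioms (\cref{def:fanposet}). Orienting walls by brick labels to get acyclicity does not provide them. The order on chambers is simply inclusion of $(\tc_\df)_\ell$. It is automatically a partial order, because distinct chambers give distinct elements of $\tors^{ss}_\ell A$. Two things must still be proved:
\begin{enumerate}
\item For every $\df$, the chambers $\df'$ with $\df\subset\overline{\df'}$ form an interval. The paper gets this from the bounded-length version of \cite[Lemma~2.16]{Asai}. It identifies these chambers with those whose torsion class lies in $[(\tc_\df)_\ell,(\overline{\tc}_\df)_\ell]$, and both endpoints are realised by chambers by \cref{prop: tors to chambers}.
\item For every interval $[\tc_\ell,\tc'_\ell]$, the union of the corresponding closed chambers is a convex cone. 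The paper shows this by identifying the union with $\mathcal{H}^+_{\tc_\ell}\cap\mathcal{H}^-_{\fc'_\ell}$.
\end{enumerate}
Neither point appears in your plan, so the existence of $G_\ell(A)$ is not yet established. Your polyhedrality and fan argument in the first step is fine, and it supplies what the paper only asserts ``by construction''.
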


An important consequence of the previous result is that we can define the picture group $G(A)$ for every Artin algebra as 
\[
G(A) := \underleftarrow{\lim}\, G_\ell(A).
\]
Note that this definition recovers the classical definition of picture groups for $\tau$-tilting finite algebras. 

The main drawback of this definition for the picture group $G(A)$ is that, although geometrically very explicit, it is difficult to understand and control the product of elements in the group. 
In order to avoid this problem, we study in \cref{sec:groupoids} a groupoid $\gc_\ell^{ss}(A)$ associated to the lattice $\tors^{ss}_{\ell}A$. 
Using this groupoid, we obtain the following presentation of $G_\ell(A)$, where $\Df^0_\ell(A)$ and $\Df^1_\ell(A)$ correspond to the cones of $\Df_\ell(A)$ of codimension $0$ and $1$, respectively.

\begin{proposition}[\cref{prop:categorical interpretation}]
The picture group $G_\ell(A)$ is isomorphic to the group of subcategories of $(\modA)_\ell$ generated by 
\[
\{(\tc_{\df})_\ell \mid \df \in \Df^{0}_\ell(A)\} \text{ and } \{(\mod_{\df'}^{ss}A)_\ell \mid \df' \in \Df^1_\ell(A)\}  
\]
with the $*$-product and with identity element corresponding to the subcategory $\{0\}\subset (\modA)_\ell$.
\end{proposition}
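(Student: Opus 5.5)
We will establish the claimed presentation of $G_\ell(A)$ by comparing two presentations: Kaipel's presentation of the picture group $G(\Sigma_\ell(A),\Pf_\ell(A))$, and a presentation of the group of subcategories under the $*$-product, obtained from the groupoid $\gc^{ss}_\ell(A)$ of \cref{sec:groupoids}.

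Kaipel's picture group has one generator $X_{[\sigma]}$ for each part of the partition $\Pf_\ell(A)$ consisting of codimension-one cones. Since $\Pf_\ell(A)$ is defined by $\sim_{ss}$, these parts correspond exactly to the distinct subcategories $(\mod^{ss}_{\df'}A)_\ell$ with $\df'\in\Df^1_\ell(A)$. The relations are the cyclic relations around each codimension-two cone: the two ordered products of wall generators along the two sides of the cone agree. Kaipel's framework also allows generators attached to chambers; these are trivial, or are expressible as products of wall generators along a generic path from the base chamber.

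\textbf{Step 1: define a map $\Psi$ from Kaipel's group to the group of subcategories.} Send the generator of a wall $\df'$ to $(\mod^{ss}_{\df'}A)_\ell$, and send a chamber $\df$ to $(\tc_\df)_\ell$. To see that $\Psi$ respects the relations, take a generic path crossing walls $\df'_1,\dots,\df'_k$ in increasing direction. Applying iteratively the decomposition of bounded-length torsion classes via semistable subcategories (\cref{thm: torsion pairs of bounded length} together with the groupoid results of \cref{sec:groupoids}), the $*$-product $(\mod^{ss}_{\df'_1}A)_\ell*\cdots*(\mod^{ss}_{\df'_k}A)_\ell$, taken in the appropriate order, should equal the interval $(\tc_{\gamma(0)})_\ell\cap(\fc_{\gamma(1)})_\ell$. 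This interval depends only on the endpoints $\gamma(0)$ and $\gamma(1)$. Hence the two sides of any codimension-two cycle give the same $*$-product, so the relations hold in the target. Well-definedness rests on the finiteness of $\Df_\ell(A)$ (\cref{cor:finite cone complex for l}), which guarantees that every such product is finite.

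\textbf{Step 2: show $\Psi$ is surjective.} This is immediate, since the image of $\Psi$ contains all the listed generators of the target group.

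\textbf{Step 3: show $\Psi$ is injective.} This is the main obstacle. Here I would use the Hasse-quiver labelling of $\tors_\ell A$ by bricks (\cref{prop:minimal extending}) and its geometric realisation through chambers (\cref{prop: tors to chambers,cor: maximal inclusions tors^ss}). These identify the groupoid $\gc^{ss}_\ell(A)$, whose objects are chambers and whose morphisms are wall-crossings, with the fundamental groupoid of the complement of the codimension-$\ge 2$ skeleton. Its group completion is then Kaipel's group. On the other side, the group of subcategories should be exactly the universal group of the groupoid of intervals of $\tors^{ss}_\ell A$ under $*$. Every relation among $*$-products comes from the uniqueness of torsion/torsion-free factorisations, that is, from two maximal chains sharing endpoints. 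By semidistributivity (\cref{prop: semidistributive ell}), two such chains are connected by polygon moves, and each polygon corresponds to a codimension-two cone of $\Df_\ell(A)$.

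Making this correspondence between polygons and codimension-two cones precise is the delicate point. I would first try to read it off from \cref{thm:bounded l-TF}, by taking a small circle around the codimension-two cone and recording the bricks whose walls it crosses.
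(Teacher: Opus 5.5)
Step 3 is a genuine gap. Injectivity is only sketched, and the principle it rests on is false. You claim that every relation among $*$-products of the listed subcategories comes from two maximal chains with common endpoints.

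The product has to be taken inside $(\modA)_\ell$. Otherwise the wall-crossing identity $(\tc_{\df_2})_\ell=(\tc_{\df_1})_\ell*(\mod^{ss}_{\df'}A)_\ell$ already fails in general, because the left side contains modules of length greater than $\ell$. But inside $(\modA)_\ell$ every extension-closed subcategory containing $0$ is idempotent: $(\tc_\df)_\ell*(\tc_\df)_\ell=(\tc_\df)_\ell$ and $(\mod^{ss}_{\df'}A)_\ell*(\mod^{ss}_{\df'}A)_\ell=(\mod^{ss}_{\df'}A)_\ell$. In a group where all true $*$-identities hold, $x*x=x$ forces $x=1$. For $\ell=1$, where the wall generators are the classes $\{0,S(i)\}$, this kills every generator, whereas $G_1(A)\cong\mathbb{Z}^n$.

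So the group in the statement cannot be the group completion of the actual $*$-monoid. It has to be the presented group $\widetilde G_\ell(A)$ of \cref{prop:deftildeG}. Its generators are the listed subcategories, with walls having equal semistable subcategory giving the same generator. Its only relations are \cref{eq:relations Gtilde}, one per wall between adjacent chambers.

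Your identification of $\gc^{ss}_\ell(A)$ with the fundamental groupoid of the complement of the codimension-$\ge 2$ skeleton is also incorrect. By \cref{prop: one map}, $\gc^{ss}_\ell(A)$ has exactly one morphism between any two objects. In that complement, by contrast, small loops around codimension-two cones are essential, already for the coordinate arrangement in $\mathbb{R}^3$.

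Once the target is fixed, no polygon or codimension-two analysis is needed. The missing idea is the one you discarded: the chamber generators are not trivial. Indeed $g_\df$ corresponds to $X_{[\{0\},(\tc_\df)_\ell]}$, which is $1$ only for the negative chamber. Keeping them gives the presentation of \cref{def:l-picture group2}, with one relation $g_{\df_2}=g_{\df_1}g_{\df'}$ per wall. The relation around a codimension-two cone $\sigma$ is then automatic, since both sides telescope to $g_{\sigma^-}^{-1}g_{\sigma^+}$.

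The two presentations now match term by term:
\begin{itemize}
\item chambers correspond bijectively to the distinct $(\tc_\df)_\ell$;
\item wall classes, indexed by $\sim_{ss}$ as you already do, correspond to the distinct $(\mod^{ss}_{\df'}A)_\ell$;
\item each relation $g_{\df_2}=g_{\df_1}g_{\df'}$ corresponds to the matching instance of \cref{eq:relations Gtilde}, which is the single-wall case of \cref{prop:extending torsion classes}.
\end{itemize}
So the assignments on generators in both directions respect the defining relations and are mutually inverse. This is the paper's argument (\cref{lem: map between groups} and the proof of \cref{prop:categorical interpretation}).

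Finally, there is a slip in Step 1. Along a path on which torsion classes increase, the interval category is $(\tc_{\gamma(1)})_\ell\cap(\fc_{\gamma(0)})_\ell$; the one you wrote is $\{0\}$.
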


Combining our categorical description of $G_\ell(A)$ with the results of \cite{BKH}, we show that for every $\tau$-tilting finite algebra $A$ there is a faithful functor $\Gamma_A : \Cf(A) \to G$ where $G$ is viewed as a category with a single object, see \cref{thm:faithfulfunctor}. 
This provides an important step towards the proof of the conjecture that the picture group $G(A)$ of any $\tau$-tilting finite algebra $A$ is a $K(\pi, 1)$-group. 

%%%%%%%%%%%
\subsection*{Scattering diagrams}
We are now ready to define, for every Artin algebra $A$ and every $\ell \in \mathbb{N}$ the scattering diagram 
$$(\Df_\ell(A), \Phi_\ell: \Df^1_\ell(A) \to G_\ell(A))$$ 
where $\Phi_\ell: \Df^{1}_\ell(A) \to G_\ell(A)$ assigns to wach wall $\df\in \Df^1_\ell(A)$ the element $(\mod_{\df}^{ss}A)_\ell\in G_\ell(A)$.
As it turns out, the categorical description of the picture group $G_\ell(A)$ obtained above allows us to compute explicitly the element $g_\gamma$ associated to every generic path $\gamma: [0,1] \to \Df_\ell(A)$. 
Consequently, we obtain the following theorem.

\begin{theorem}[\cref{thm:boundedscatteringdiagrams}]
Let $A$ be an Artin algebra and $\ell$ be a positive integer. 
Then the pair $(\Df_\ell(A),\Phi^\ell_A: \Df_\ell^1(A) \to G_\ell(A))$ is a minimal consistent scattering diagram.
\end{theorem}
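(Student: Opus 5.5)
The plan is to prove minimality and consistency separately. Both rest on the categorical presentation of $G_\ell(A)$ from \cref{prop:categorical interpretation}, in which elements are subcategories of $(\modA)_\ell$ multiplied by the $*$-product.

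\textbf{Minimality.} Let $\df\in\Df^1_\ell(A)$ be a wall. By definition of $\Df_\ell(A)$, $\df$ lies in some $\dc(M)$ of codimension $1$ with $0\neq M\in(\modA)_\ell$. By the description of $\ell$-TF-equivalence classes (\cref{thm:bounded l-TF}), crossing $\df$ transversally changes $(\tc_v)_\ell$. Hence some brick $B$ of length at most $\ell$ is $v$-semistable for $v\in\df$, so $(\mod^{ss}_\df A)_\ell\neq\{0\}$. Under the identification of \cref{prop:categorical interpretation}, the identity is $\{0\}$, so $\Phi_\ell(\df)\neq 1$. One point needs care: that the subcategory $(\mod^{ss}_\df A)_\ell$ is not $*$-equivalent to $\{0\}$ inside the group. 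This is precisely what the presentation guarantees.

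\textbf{Consistency.} Let $\gamma$ be a $\Df_\ell(A)$-generic path. Since $\Df_\ell(A)$ is finite (\cref{cor:finite cone complex for l}) and $\gamma$ is transverse and avoids codimension $\geq 2$ cones, $\gamma$ crosses finitely many walls $\df_1,\dots,\df_k$ at times $t_1<\dots<t_k$. It passes through chambers $C_0,\dots,C_k$ with $\gamma(0)\in C_0$ and $\gamma(1)\in C_k$. The product is $g_\gamma=\Phi_\ell(\df_k)^{\epsilon_k}\cdots\Phi_\ell(\df_1)^{\epsilon_1}$, with signs $\epsilon_i$ determined by the crossing direction; I will match the paper's ordering convention. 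The key local claim is this: if $v\in\df_i$ and the chambers on either side have bounded torsion classes $(\tc_{C_{i-1}})_\ell\supseteq(\tc_{C_i})_\ell$ (the positive crossing), then
\[
(\tc_{C_{i-1}})_\ell=(\mod^{ss}_{\df_i}A)_\ell * (\tc_{C_i})_\ell .
\]
Equivalently, in $G_\ell(A)$ one has $\Phi_\ell(\df_i)=(\tc_{C_{i-1}})_\ell\,(\tc_{C_i})_\ell^{-1}$, with the inverse taken in the group generated by these subcategories. I would prove the identity from $\overline{\tc}_v=\tc_v*\mod^{ss}_vA$ (from \cite{BKT}), intersected with $(\modA)_\ell$ using \cref{thm: torsion pairs of bounded length}. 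I would also use that for $v$ on the wall, $(\tc_v)_\ell$ and $(\overline{\tc}_v)_\ell$ equal the torsion classes of the two adjacent chambers; this follows from \cref{thm:bounded l-TF}, since points near $v$ off the wall are $\ell$-TF-equivalent to the respective chambers. For negative crossings the relation is inverted.

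With the local relation, the product telescopes:
\[
g_\gamma=(\tc_{C_0})_\ell\,(\tc_{C_k})_\ell^{-1}
\]
(up to orientation convention). This depends only on the chambers of $\gamma(0)$ and $\gamma(1)$, which proves consistency.

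\textbf{Main obstacle.} I expect the main obstacle to be the local identity. Two things must be justified there. First, the bounded-length intersection must commute with the $*$-product: a module of length at most $\ell$ in $\overline{\tc}_v$ must have its canonical filtration factors in $(\tc_v)_\ell$ and $(\mod^{ss}_vA)_\ell$. This holds because subquotients have smaller length, and \cref{thm: torsion pairs of bounded length} gives the torsion sequence inside $(\modA)_\ell$. Second, this identity must be exactly a defining relation of $G_\ell(A)$ in \cref{prop:categorical interpretation}, rather than merely an equality of subcategories. I would verify the second point by checking that the picture-group relations indexed by codimension-$2$ cones coincide with the $*$-associativity relations among the chambers around each such cone.
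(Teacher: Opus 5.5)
Your proposal is essentially the paper's proof. Minimality is argued exactly as in the paper, from $(\mod^{ss}_{\df}A)_\ell\neq\{0\}$. For consistency the paper does the same telescoping, packaged in the groupoid $\gc^{ss}_\ell(A)$. Consecutive crossings share a chamber and so compose (\cref{lem:liftingtogroupoid}). The composite is the unique morphism between the endpoints (\cref{prop: one map}). The groupoid morphism $\rho$ of \cref{prop:from groupoid to group} then carries it to $G_\ell(A)$.

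Your ``main obstacle'' is not an obstacle. The wall relation is literally a defining relation of $\widetilde G_\ell(A)$ (\cref{prop:deftildeG}) and of $G_\ell(A)$ (the relation $g_{\df_2}=g_{\df_1}g_{\df'}$ in \cref{def:l-picture group2}). So no comparison with codimension-$2$ relations is needed.

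You should, however, fix the order of the factors in your local identity. In the non-abelian group $G_\ell(A)$, that order is exactly what makes the cancellation work.

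Let $C,C'$ be the two chambers adjacent to $\df_i$, with $(\tc_C)_\ell\subseteq(\tc_{C'})_\ell$. The correct identity is $(\tc_{C'})_\ell=(\tc_C)_\ell*(\mod^{ss}_{\df_i}A)_\ell$, which is what $\overline{\tc}_v=\tc_v*\mod^{ss}_vA$ (the identity you cite) actually gives. The reversed order is false. Already for the quiver $1\to 2$ and $v=(0,1)$, the indecomposable module with top $S(1)$ and socle $S(2)$ lies in $\tc_v*\mod^{ss}_vA$ but not in $\mod^{ss}_vA*\tc_v$.

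Hence the wall element is $(\tc_C)_\ell^{-1}(\tc_{C'})_\ell$, not $(\tc_{C'})_\ell(\tc_C)_\ell^{-1}$. Use the paper's conventions: products are taken left to right, and $\epsilon_i=+1$ exactly when the crossing enlarges the torsion class. Then every factor $\Phi^\ell_A(\df_i)^{\epsilon_i}$ equals $(\tc_{C_{i-1}})_\ell^{-1}(\tc_{C_i})_\ell$. The product therefore telescopes to $(\tc_{C_0})_\ell^{-1}(\tc_{C_k})_\ell$, which is the image under $\rho$ of $[\{0\},\tc_{\gamma(0)}]^{-1}[\{0\},\tc_{\gamma(1)}]$.

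Your final formula $(\tc_{C_0})_\ell(\tc_{C_k})_\ell^{-1}$ is an artefact of the reversed order. With the right-to-left product you displayed and your $\Phi$, nothing cancels at all. So this bookkeeping cannot be left ``up to convention''.
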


Combining the finite scattering diagrams through inverse limits yields the scattering diagram of an arbitrary Artin algebra.

Finally, we compare the scattering diagram $(\Df_\ell(A),\Phi^\ell_A: \Df_\ell^1(A) \to G_\ell(A))$, which we call the torsion scattering diagram, with the scattering diagram built by Bridgeland in \cite{BridgelandScat} for finite dimensional algebras of the form $A = \mathbb{C}Q/I$, usually known as the stability scattering diagram. 
%
%The stability scattering diagram is the pair $(\Df(A),\hat{\Phi}_A: \Df^1(A) \to \hat{H}(A))$
%whose support is the same wall-and-chamber structure $\Df(A)$.
%The map $\hat{\Phi}_A: \Df^1(A) \to \hat{H}(A)$ assigns to each wall $\df$ the element $\hat{\Phi}_{A}(\df) = [\mod_\df^{ss}A \to \mc]$ which is an element to the group of units of the motivic Hall algebra $\hat{H}(A)$.
%Our last result is the following. 
%
%\begin{theorem}[\cref{thm:isomorphism}]
%Let $A=\mathbb{C}Q/I$ be a finite-dimensional algebra. 
%Then the scattering diagram $(\Df_\ell(A),\Phi^\ell_A: \Df_\ell^1(A) \to G_\ell(A))$ is isomorphic to $(\Df(A),\hat{\Phi}_A: \Df^1(A) \to \hat{H}(A))$.
%\end{theorem}
More concretely, we prove in \cref{thm:isomorphism} that our construction recovers Bridgeland's scattering diagram whenever the latter is defined.

%%%%%%%%%%%
\subsection*{Structure of the paper.}
In \cref{sec:background}, we establish the framework of the paper, introduce the basic notation, and recall the necessary preliminary results.
\Cref{sec:l-torsion classes} is devoted to the study of the interplay between torsion classes and the category $(\modA)_\ell$ of modules of length at most $\ell$.
In \cref{sec:conecomplex} we introduce and study the wall-and-chamber structure $\Df_\ell(A)$ associated to the category $(\modA)_\ell$.
In \cref{sec:groupoids} we consider a groupoid $\gc(\pc)$ associated to a poset $\pc$ and  give a categorical interpretation of the composition in this groupoid for the lattices $\tors A$, $\tors_\ell A$, $\tors^{ss}A$ and $\tors_\ell^{ss}A$.
In \cref{sec:picture group}, we return to wall-and-chamber structures and show that they naturally give rise to a $\tau$-cluster morphism category $\Cf_\ell(A)$ and a picture group $G_\ell(A)$.
Building on the previous sections, in \cref{sec:torsion scattering diagram} contains the construction of the scattering diagrams $(\Df_\ell(A),\Phi^\ell_A: \Df_\ell^1(A) \to G_\ell(A))$ and $(\Df(A),\Phi_A: \Df^1(A) \to G(A))$.
Finally, in \cref{sec:isomorphism}, we prove that our scattering diagram is isomorphic to the stability scattering diagram introduced by Bridgeland in \cite{BridgelandScat}.

%%%%%%%%%%%%%
\subsection*{Acknowledgements}
This work is the product of countless discussions over several years with many of my closest colleagues, who have patiently listened to my often vague and unclear ideas.
They are the ones who pointed me in the right direction whenever I got stuck.
In particular, I thank Sibylle Schroll, Baptiste Rognerud, Aran Tattar, Alfredo Nájera Chávez, Johanne Haugland, and many others.
I am especially grateful to Maximilian Kaipel, who generously took the time to read an earlier version of this manuscript and provide valuable feedback.
I also thank Vicente Treffinger Ghergo for his companionship and encouragement during the final stages of the writing this paper.

%%%%%%%%%%%%%
\subsection*{Funding}
This work was completed without any funding and in spite of Argentina’s current science policies, which severely hinder basic research in the country.

%%%%%%%%%%%%%
\subsection*{AI disclosure}
The use of artificial intelligence in the preparation of this paper was limited to improving grammar and readability.

%%%%%%%%%%%%%%%%%%
\section{Setting and background.}\label{sec:background}

\subsection{Framework and notation}
In this paper $A$ refers to an Artin algebra. 
We work on the category $\modA$ of finitely generated (right) $A$-modules. It is well known that $\modA$ is Krull--Schimidt and idempotent complete.
Since we are interested in the study of the module category $\modA$ of $A$, we assume without loss of generality that $A$ is basic.

The number of isomorphism classes of simple $A$-modules is finite and we denote it by $n$. 
We fix a set $\{S(1), \dots, S(n)\}$ of representatives of the isomorphism classes of simple $A$-modules.
The Jordan--Hölder theorem states that for every object $M \in \modA$, there exists a composition series
\[
0= M_0 \subset M_1 \subset \dots\subset M_{\ell -1} \subset M_\ell = M
\]
such that each subquotient $M_i/M_{i-1}$ is simple for all $1 \leq i \leq \ell$. 
Moreover, any two composition series have the same length and the same multiset $\{M_1/M_0, \dots, M_\ell/M_{\ell-1}\}$ of simple modules. 
We denote by $\lg(M)$ the length of any such composition series.

Let $\ell$ be a nonnegative integer.
We denote by $(\modA)_\ell$ the full subcategory of $\modA$ consisting of all $A$-modules of length at most $\ell$:
\[
(\modA)_\ell = \{M \in \modA \mid \lg(M)\leq \ell\}.
\]
From this definition, it follows immediately that
\[
\modA = \bigcup_{\ell\in \mathbb{N}} (\modA)_\ell.
\]
Every subcategory $\xc \subset \modA$ is assumed to be full and closed under isomorphisms.
Given a subcategory $\xc$ of $\modA$, we say that an object $M$ is filtered by $\xc$ if there exists a chain of subobjects 
\[
0 = M_0 \subseteq M_1 \subseteq \dots \subseteq M_{r-1} \subseteq M_r=M
\]
such that $M_i/M_{i-1} \in \xc$ for every $1 \leq i \leq r$. 
We denote by $\Filt(\xc)$ the subcategory of $\modA$ consisting of all the objects in $\modA$ that are filtered by $\xc$. 
The category $\add \xc$ consists of all the direct summands of the finite direct sums of objects in $\xc$.
We denote by $\Fac \xc$ the category of objects $M$ in $\modA$ for which there is an object $X \in \xc$ and an epimorphism $p: X \to M$. 
Similarly, the category $\Sub\xc$ consists of all the objects $M\in \modA$ for which there is an object $X\in \xc$ and a monomorphism $i: M \to X$.
If $\xc = \add(X)$ for some object $X\in \modA$, then by $\Fac X$ and $\Sub X$ we mean $\Fac (\add (X))$ and $\Sub (\add(X))$, respectively.

An object $B\in \modA$ is called a brick if its endomorphism algebra $\End_A(B)$ is a division ring. 
The set of all bricks in $\modA$ is denoted by Br$(A)$.
A semibrick $\sc$ is a set of objects $\sc=\{B_i \in \modA \mid i \in I\}$ in $\modA$ indexed by an index set $I$ such that $\Hom_A(B_i, B_j)$ is either $0$ if $i \neq j$ or a division ring if $i=j$.
It is clear from the definition that every object in a semibrick is necessarily a brick. 

%%%%%%%%%
\subsection{Notions in lattice theory}
A lattice $(\pc, \leq)$ with meet $\wedge$ and join $\vee$ is said to be complete if arbitrary subsets of $\pc$ have joins and meets.
The lattice $(\pc, \leq)$ is join semidistributive if whenever $x\vee y = x\vee z$, then it satisfies  $x\vee(y\wedge z)= x\vee y$.
A complete lattice is said to be completely join semidistributive if for every $x\in\pc$ and every set $S\subseteq \pc$ such that $x\vee y = x\vee y'$ for every pair $y,y'\in S$, then $x\vee \left(\bigwedge_{z\in S} z\right)=x\vee y$ for some $y\in S$. 
Dually, a lattice is said to be meet semidistributive if whenever $x\wedge y = x\wedge z$, then it satisfies  $x\wedge(y\vee z)= x\wedge y$.
A complete lattice is said to be completely meet semidistributive if for every $x\in\pc$ and every set $S\subseteq \pc$ such that $x\wedge y = x\wedge y'$ for every pair $y,y'\in S$, then $x\wedge \left(\bigvee_{z\in S} z\right)=x\wedge y$ for some $y\in S$. 
A lattice is said to be completely semidistributive if
if it is both completely join semidistributive and completely meet semidistributive.

Given a lattice $(\pc, \leq)$, an element $x\in \pc$ is said to be completely join irreducible if $x\neq \bigvee_{y<x} y$.
It has been shown that every semidistributive lattice admits a labelling of its edges by join-irreducible elements, see \cite[Proposition 9.1]{Thomas}. 

%%%%%%%%%
\subsection{Torsion classes in module categories}
A torsion pair in $\modA$ is a pair of subcategories $(\tc, \fc)$ such that $\Hom_A(T,F)=0$ for every $T \in \tc, F \in \fc$ and for every object $X \in \modA$ there is a short exact sequence 
        \[0 \to tX \to X \to fX \to 0\]
where $tX \in \tc$ and $fX \in \fc$.
If $(\tc, \fc)$ is a torsion pair we say that $\tc$ is a torsion class and that $\fc$ is a torsion-free class. 
It follows from the definition of torsion pairs that $\tc$ completely determines $\fc$:
  \[\fc = \{M\in \modA \mid \Hom_A(T,M) =0 \;\forall\, T\in \tc\}.\]
Similarly $\fc$ determines $\tc$ as \(\tc = \{M\in \modA \mid \Hom_A(M,F) =0 \;\forall\, F\in \fc\}\).
In particular, if $(\tc, \fc)$ and $(\tc', \fc')$ are two torsion pairs we have that $\tc \subset \tc'$ if and only if $\fc' \subset \fc$.

Unless stated otherwise, we denote by $\tors A$ and $\ftors A$ the set of all torsion classes and all torsion-free classes in $\modA$, respectively.
However, in some places we denote by $\tors A$ the set of all torsion pairs $(\tc, \fc)$ in $\modA$. 
When we do the latter, we will explicitly state it.

A well-known characterization of torsion classes in $\modA$ states that a subcategory $\tc$ of $\modA$ is a torsion class if and only if $\tc$ is closed under quotients and extensions. Dually, a subcategory $\fc$ of $\modA$ is a torsion-free class if and only if $\fc$ is closed under subobjects and extensions. 
An important consequence of this characterization is that the poset $(\tors A, \subseteq)$ is a complete lattice with the intersection as its meet \cite{IRTT}.

While the meet of two torsion classes $\tc, \tc' \in \tors A$ is just their intersection, their join is less easy to describe.
We now explain different ways to find it. 
First, one can consider the intersection 
\[
T(\tc \cup \tc') = \bigcap_{(\tc \cup \tc' )\subset \widetilde{\tc} \in \tors A} \widetilde{\tc}
\]
of all the torsion classes $\widetilde{\tc}$ containing $\tc \cup \tc'$.
This intersection always gives back a torsion class since $\tors A$ is a complete lattice. 
Also, note that this set is never empty since $\modA$ is a torsion class. 
In fact, the minimal torsion class $T(\xc)$ containing a subcategory $\xc \subset \modA$ is given by
\[
T(\xc) = \bigcap_{\xc \subset \tc \in \tors A} \tc.
\]

Another way to find the minimal torsion class containing a given subcategory $\xc$ is as follows. Given $\xc \subset \modA$, it follows from the definitions that the category 
\[
\xc^{\perp} = \{M \in \modA \mid \Hom_A(X, M) = 0 \text{ for every $X\in \xc$}\}
\]
is a torsion free class in $\modA$. Dually, 
\[
^\perp\xc= \{M \in \modA \mid \Hom_A(M, X) = 0 \text{ for every $X\in \xc$}\}
\]
is a torsion class in $\modA$. 
Moreover, for every $\xc \subset \modA$ the pair $({}^\perp(\xc^\perp), \xc^\perp)$ is a torsion pair in $\modA$ and ${}^\perp(\xc^\perp)$ is the minimal torsion class in $\modA$ containing $\xc$.

Alternatively, one can use that $\ftors A$ is also a complete lattice with intersection as its meet.
Moreover, $\ftors A$ is the dual lattice of $\tors A$. 
Since there is a bijection between torsion classes and torsion-free classes, it is frequently convenient to consider the join of two given torsion classes $\tc, \tc' \in \tors A$ as the torsion class corresponding to $\fc\cap\fc' \in \ftors(A)$ where $(\tc, \fc)$ and $(\tc',\fc')$ are torsion pairs in $\modA$.
Finally, one can show that $T(\xc)=\Filt(\Fac \xc)$.
Note that, in general, $\Fac(\Filt \xc)$ is not closed under extensions and therefore is not a torsion class.

Given two distinct torsion classes $\tc$ and $\tc'$ in $\tors A$ such that $\tc\subset\tc'$, we say that $\tc$ is maximally contained in $\tc'$ if a torsion class $\tc''$ satisfying $\tc \subset \tc'' \subset \tc'$ is either $\tc''=\tc$ or $\tc''=\tc'$. 
We denote by $\Hasse(A)$ the Hasse quiver of $\tors A$. 
That is, the quiver whose vertices are the elements of $\tors A$ and in which there is an arrow from $\tc \to \tc'$ if $\tc$ is maximally contained in $\tc'$.

It was shown in \cite{DIRRT} that $\tors A$ is a completely semidistributive lattice, generalizing the result of \cite{GM} where it is shown that $\tors A$ is a semidistributive lattice. 
An important result, first proved in \cite{BCZ}, states that there is a bijection between the (completely) join-irreducible elements of $\tors A$ and the bricks in $\modA$. 
This then induces a labeling of the edges of the Hasse diagram of $\tors A$, whose set of labels is contained in Br$(A)$.

%%%%%%%%%
\subsection{Grothendieck groups, cones and fans}\label{subsec: cones and fans}

The Grothendieck group $K_0(A)$ of $A$ is defined as the abelian group generated by the isomorphism classes $[M]$ of objects \mbox{$M\in\modA$}, modulo the relations generated by short exact sequences in $\modA$.
\[
K_0(A) := {\langle[M] \mid M\in \modA\rangle\over\langle [M]-[L]-[N]\mid 0\to L \to M\to N\to 0  \rangle}
\]
The Jordan--Hölder theorem induces a natural isomorphism 
$\Phi: K_0(A) \to \mathbb{Z}^n$ between $K_0(A)$ and $\mathbb{Z}^n$ which sends the class $[S(i)]$ of the simple module $S(i)$ to $\Phi([S(i)])=e_i$, the $i$-th element of the canonical basis of $\mathbb{Z}^n$. 
By abuse of notation, we identify the class $[M]\in K_0(A)$ with the vector $\Phi([M])$ and thus regard $[M]$ as an element of $\mathbb R^n$.
In this case we are thinking of $\mathbb{R}^n$ as $\mathbb{R}^n = \mathbb{Z}^n\otimes\mathbb{R}$.
Given two vectors $v, w \in \mathbb{R}^n$, we denote by $\langle v, w\rangle$ their canonical inner product in $\mathbb{R}^n$, that is $\langle v, w\rangle = \sum_{i=1}^n v_iw_i$.

A subset $\df$ of $\mathbb{R}^n$ is said to be an (open) convex cone if for every finite set $\{v_1, \dots, v_t\} \subset \df$ the set  
    $$\langle v_1, \dots, v_t \rangle_+:=\left\{v \in \mathbb{R}^n \mid v=\sum_{i=1}^t \lambda_i v_i \text{ with $\lambda_i > 0$ for every $1\leq i \leq t$}\right\}\subseteq \df.$$
    A cone $\df$ is polyhedral if there is a finite set $\{v_1, \dots, v_t\} \subset \df$ such that \mbox{$\df \subseteq \langle v_1, \dots, v_t \rangle_+$}.
  A polyhedral cone $\df$ is said to be simplicial if it can be generated by a linearly independent set.
The codimension $d$ of a cone $\df \subset \mathbb{R}^n$ is equal to $n$ minus the dimension $\dim_{\mathbb{R}} \langle \df\rangle$ of the subspace $\langle \df\rangle$ generated by $\df$.
We denote the codimension of $\df$ as $\codim \df$.

A set of open cones $\Df = \{\df \subset \mathbb{R}^n\}$ is said to be a cone complex if, for any two cones $\df_1, \df_2 \in \Df$ we have that $\df_1\cap\df_2 = \varnothing$. 
A set of closed cones $\Sigma= \{\sigma \subset \mathbb{R}^n\}$ is said to be a fan if, given $\sigma_1, \sigma_2 \in \Sigma$ we have that $\sigma_1\cap\sigma_2 = \sigma$ for some cone $\sigma \in \Sigma$ and every face $\sigma'$ of a cone $\sigma \in \Sigma$ satisfies $\sigma' \in \Sigma$.
In this paper, we will work with cone complexes
$\Df=\{\df\subset\mathbb R^n\}$ such that
\[
\Sigma_\Df=\{\overline{\df}\subset\mathbb R^n\mid \df\in\Df\}
\]
is a fan.

Given a cone complex $\Df$, we denote by $\Df^i$ the set 
$\Df^i := \{\df \in \Df \mid \codim \df = i\}$. 
Clearly \[ \Df = \bigcup_{i=0}^n \Df^i.\]
If $\df \in \Df$ is a cone of codimension $d$, we denote by $\pi_\df$ the canonical projection $\pi_\df : \mathbb{R}^n \to \df^\perp \cong \mathbb{R}^d$.
Following \cite{Amini, Adiprasito}, we denote by $\star(\df)$ the cone complex in $\mathbb R^d$ obtained by projecting via $\pi_\df$ all cones $\df'$ whose closures contain $\df$.
\[\star(\df) := \left\{\pi_{\df}(\df') \subset \mathbb{R}^d \mid \df'\in \Df \text{ and } \df \subset \overline{\df'}\right\}\]
Note that different cones $\df_1, \df_2 \in \Df$ might satisfy $\star(\df_1) = \star(\df_2)$.

%%%%%%%%%
\subsection{Filtrations induced by chains of torsion classes}\label{subsec: star product}

Given two subcategories $\xc$ and $\yc$ we denote by $\xc * \yc$ the full subcategory of $\modA$ whose objects are all $Z \in \modA$ such that there exists a short exact sequence $0\to X \to Z \to Y \to 0$ where $X \in \xc$ and $Y \in \yc$. 
More generally, if $I$ is a (possibly infinite) totally ordered set and 
$\{\xc_t \subseteq \modA \mid t \in I\}$
is a set of subcategories of $\modA$ indexed by $I$, we denote by $\bigast_{t \in I}^{\leftarrow} \xc_t$ the subcategory of objects $M \in \modA$ having a filtration 
$$0=M_0 \subset M_1 \subset \dots \subset M_{r-1} \subset M_r=M$$
where $M_i/M_{i-1} \in \xc_{s_i}$ for all $1\leq i \leq r$ and $s_1 > s_2 >\dots > s_r$.
Note that the category $\xc * \yc$ is not necessarily closed under extensions, even when this is the case for $\xc$ and $\yc$. 
The following observation is well-known.

\begin{proposition}\label{prop:extending torsion classes}
    Let $(\tc, \fc)$ and $(\tc', \fc')$ be two torsion pairs such that $\tc \subset \tc'$ or, equivalently, such that $\fc' \subset\fc$.
    Let $\xc_{[\tc,\tc']}$ be the subcategory $\tc' \cap \fc$. 
    Then $\tc' = \tc*\xc_{[\tc,\tc']}$ and $\fc = \xc_{[\tc,\tc']}* \fc'$. 
\end{proposition}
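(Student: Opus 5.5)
We prove $\tc' = \tc * \xc_{[\tc,\tc']}$ by double inclusion; the statement about $\fc$ follows by the dual argument, exchanging the roles of torsion and torsion-free classes.

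\emph{Inclusion $\supseteq$.} Let $Z$ sit in a short exact sequence $0\to X\to Z\to Y\to 0$ with $X\in\tc$ and $Y\in \tc'\cap\fc$. Since $\tc\subseteq\tc'$, both end terms lie in $\tc'$. Because $\tc'$ is a torsion class, it is closed under extensions, so $Z\in\tc'$.

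\emph{Inclusion $\subseteq$.} Take $M\in\tc'$ and its canonical sequence with respect to $(\tc,\fc)$:
\[
0\to tM\to M\to fM\to 0,
\]
where $tM\in\tc$ and $fM\in\fc$. Now $fM$ is a quotient of $M\in\tc'$, and torsion classes are closed under quotients, so $fM\in\tc'$. Hence $fM\in\tc'\cap\fc=\xc_{[\tc,\tc']}$, and the sequence exhibits $M\in\tc*\xc_{[\tc,\tc']}$.

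\emph{Dual statement.} We show $\fc=\xc_{[\tc,\tc']}*\fc'$.
\begin{itemize}
\item For $\subseteq$: given $N\in\fc$, take its canonical sequence $0\to t'N\to N\to f'N\to 0$ with respect to $(\tc',\fc')$. Here $t'N\in\tc'$, and $t'N$ is a submodule of $N\in\fc$, so $t'N\in\fc$ because torsion-free classes are closed under submodules. Thus $t'N\in\tc'\cap\fc$ and $f'N\in\fc'$.
\item For $\supseteq$: both $\tc'\cap\fc$ and $\fc'$ are contained in $\fc$, using $\fc'\subseteq\fc$. Since $\fc$ is closed under extensions, any extension of an object of $\tc'\cap\fc$ by an object of $\fc'$ lies in $\fc$.
\end{itemize}

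There is no real obstacle here. The only point requiring care is to use the torsion pair $(\tc,\fc)$ for the first decomposition and $(\tc',\fc')$ for the second, together with closure of torsion classes under quotients and of torsion-free classes under submodules.
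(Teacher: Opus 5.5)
Your proof is correct and complete. For $\tc'$ you use the canonical sequence of $(\tc,\fc)$ together with closure of $\tc'$ under quotients and extensions, and for $\fc$ you use the canonical sequence of $(\tc',\fc')$ together with closure of $\fc$ under submodules and extensions. The paper gives no proof, calling the statement well known, and your double-inclusion argument is the standard one it implicitly relies on.
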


Fix a totally ordered set $I$ with a minimal element $0$ and a maximal element $1$.
Then a chain of torsion pairs $\eta$ is a set 
$$\eta:= \{ (\tc_\eta(i), \fc_\eta(i)) \in \tors A \mid 
\tc_\eta(i) \subset \tc_\eta(j) \text{ if $j \leq i$ in $I$}\}.$$
Given any chain of torsion pairs $\eta$ and any $t\in I$ one can show \cite[Proposition~2.10]{Treffinger-HN-filt} that 
$$
\left(\bigcup_{j>t} \tc_\eta(j),\bigcap_{j>t} \fc_\eta(j)\right) \text{ and }
\left(\bigcap_{i<t} \tc_\eta(i),\bigcup_{i<t} \fc_\eta(i)\right)
$$
are torsion pairs in $\modA$. 
Moreover, an easy verification shows that the torsion class $\bigcup_{j>t} \tc_\eta(j)$ is contained in $\bigcap_{i<t} \tc_\eta(i)$.
Then, associated to every chain of torsion pairs $\eta$ we define the set of subcategories 
\begin{multline*}
    \xc_\eta := \left\{\xc_\eta(i) \subset \modA \mid \xc_\eta(0) = \tc_\eta(0) \cap \bigcap_{i>0} \fc_\eta(i),\; \xc_\eta(1) = \fc_\eta(1)\cap \bigcap_{i<1} \tc_\eta(i) \;\text{ and }\right. \\
\left. \xc_\eta(k) = \bigcap_{i<k} \tc_\eta(i) \cap \bigcap_{j>k} \fc_\eta(j) \text{ for $0<k< 1$}\right\}.
\end{multline*}
It follows from the properties of torsion pairs that chains of torsion pairs always induce a distinguished filtration.

\begin{theorem}\cite[Theorem~1.2]{Treffinger-HN-filt}\label{thm:HarderNarasimhanfiltrations}
Let $\eta$ be a chain of torsion pairs as above. 
Then every module $M \in \modA$ admits a Harder-Narasimhan filtration, that is a filtration
$$0=M_0 \subset M_1 \subset \dots \subset M_{r-1} \subset M_r=M$$
where $M_i/M_{i-1} \in \xc_\eta (t_i)$ for all $1\leq i \leq r$ and $t_1 > t_2 >\dots > t_r$. 
Moreover, this filtration is unique up to isomorphism.
\end{theorem}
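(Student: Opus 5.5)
The plan is to prove existence and uniqueness separately. Both parts use only the facts recalled above: the subcategories $\xc_\eta(t)$ sit between the torsion pairs $\left(\bigcup_{j>t}\tc_\eta(j),\bigcap_{j>t}\fc_\eta(j)\right)$ and $\left(\bigcap_{i<t}\tc_\eta(i),\bigcup_{i<t}\fc_\eta(i)\right)$, and \cref{prop:extending torsion classes} holds.

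\textbf{Existence.} Fix $M\in\modA$ and argue by induction on $\lg(M)$. The case $M=0$ is trivial. For $M\neq 0$, consider the set $\{i\in I\mid tM\neq 0 \text{ for the torsion radical of }\tc_\eta(i)\}$. Let $t_1$ be its supremum in $I$, and set $\tc^+=\bigcup_{j>t_1}\tc_\eta(j)$ and $\tc^-=\bigcap_{i<t_1}\tc_\eta(i)$.

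The first goal is to produce a nonzero submodule $M_1\subseteq M$ with $M_1\in\xc_\eta(t_1)=\tc^-\cap\bigcap_{j>t_1}\fc_\eta(j)$. Since $M$ has finite length, the torsion submodules $t_iM$ (for $\tc_\eta(i)$) form a chain of submodules of $M$ that is monotone in $i$. Hence only finitely many distinct submodules occur, and the chain stabilises on either side of $t_1$.

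Let $M_1$ be the torsion part of $M$ with respect to $\tc^-$. Because $\tc^-$ is an intersection of a chain, $M_1$ equals $t_iM$ for $i<t_1$ close to $t_1$. Its torsion part with respect to $\tc^+$ vanishes by the choice of $t_1$, so $M_1\in\bigcap_{j>t_1}\fc_\eta(j)$.

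The hard part is a boundary case: when $t_1$ is attained with $t_{t_1}M\neq 0$, or when $t_1=0$, the endpoint conventions for $\xc_\eta(0)$ and $\xc_\eta(1)$ must be checked separately. To show $M_1\neq 0$, I would use the stabilisation. The submodule $t_iM$ for $i$ slightly below $t_1$ is nonzero, unless the set above is empty. In that case $M\in\bigcap_{i}\fc_\eta(i)\subseteq \fc_\eta(0)$, and then $M\in\xc_\eta(1)$ directly.

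Next apply induction to $M/M_1$. Since $M/M_1\in\bigcup_{i<t_1}\fc_\eta(i)$, every factor of its Harder--Narasimhan filtration has index strictly less than $t_1$. Concatenating with $M_1$ gives the required filtration with $t_1>t_2>\dots$.

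\textbf{Uniqueness.} Given a filtration with strictly decreasing indices, I would show that $M_1$ must equal the torsion part of $M$ with respect to $\bigcap_{i<t_1}\tc_\eta(i)$. First, $M_1$ lies in that torsion class. Second, $M/M_1$ is filtered by $\xc_\eta(t_k)$ with $t_k<t_1$, and each of these lies in the torsion-free class $\bigcup_{i<t_1}\fc_\eta(i)$. Torsion-free classes are closed under extensions, so $M/M_1$ lies in it as well. Therefore $0\to M_1\to M\to M/M_1\to 0$ is the canonical sequence of that torsion pair.

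This also pins down $t_1$ as the largest index for which $M$ has a nonzero $\xc_\eta$-subobject, consistent with the existence step. Induction on $r$ then gives uniqueness of the filtration up to isomorphism.

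The main obstacle is making the choice of $t_1$ rigorous when $I$ is infinite: the supremum may not be attained, and $\tc^+$ and $\tc^-$ may differ from every $\tc_\eta(i)$. The finite-length stabilisation argument is what resolves this.
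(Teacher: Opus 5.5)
Your strategy is the standard argument for this result. You take $t_1=\sup\{i\mid t_iM\neq 0\}$, let $M_1$ be the torsion part of $M$ for $\bigcap_{i<t_1}\tc_\eta(i)$, use finite length to get $M_1=t_iM\neq 0$ for $i<t_1$ close to $t_1$, induct on $M/M_1\in\bigcup_{i<t_1}\fc_\eta(i)$, and read off uniqueness from the canonical sequence of the torsion pair $\bigl(\bigcap_{i<t_1}\tc_\eta(i),\bigcup_{i<t_1}\fc_\eta(i)\bigr)$. The paper does not prove the statement; it only cites \cite[Theorem~1.2]{Treffinger-HN-filt}, where, as far as I recall, $I=[0,1]$ and the chain runs from $\modA$ to $\{0\}$. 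In that setting your core steps are correct. As written, however, you apply the argument to an arbitrary chain over an arbitrary totally ordered $I$. There three of your steps fail, the statement itself is false, and the one boundary case you treat explicitly is handled incorrectly.

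\textbf{Endpoints.} Suppose $\{i\mid t_iM\neq 0\}$ is empty. Then $0\neq M\in\fc_\eta(0)$, but every $\xc_\eta(t)$ lies in $\tc_\eta(0)$. So $M\notin\xc_\eta(1)$, and $M$ has no such filtration at all. Likewise, if the supremum $t_1=1$ is attained, then $M_1\supseteq t_{\tc_\eta(1)}M\neq 0$ cannot lie in $\fc_\eta(1)$. In general only modules in $\tc_\eta(0)\cap\fc_\eta(1)$ can have an HN filtration. For a constant chain $\tc_\eta(i)=\tc\notin\{\{0\},\modA\}$, every $\xc_\eta(t)$ is zero. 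You must therefore impose $\tc_\eta(0)=\modA$ and $\tc_\eta(1)=\{0\}$, which makes both boundary cases vacuous.

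\textbf{Existence of the supremum.} The supremum has to exist in $I$. Take $I=\mathbb{Q}\cap[0,1]$, $\tc_\eta(i)=\modA$ for $i<\alpha$ and $\tc_\eta(i)=\{0\}$ for $i>\alpha$, with $\alpha$ irrational. Then every $\xc_\eta(t)$ vanishes and existence fails.

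\textbf{Density in the uniqueness step.} Two claims need an index strictly between $t_k$ and $t_1$: the inclusion $\xc_\eta(t_k)\subseteq\bigcup_{i<t_1}\fc_\eta(i)$ for $t_k<t_1$, and your claim that $t_1$ is determined by $M$. The reason is that a priori $\xc_\eta(t_k)\subseteq\fc_\eta(j)$ only for $j>t_k$. Take $I=\{0,\tfrac12,1\}$ with $\tc_\eta(\tfrac12)=\tc$ and $\fc$ its torsion-free class. Then $\xc_\eta(\tfrac12)=\modA$, $\xc_\eta(1)=\tc$ and $\xc_\eta(0)=\fc$. For any $M$ with $0\neq tM\neq M$, both $0\subset M$ and $tM\subset M$ are HN filtrations, so uniqueness fails.

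With $I=[0,1]$, which is complete and dense, and with $\tc_\eta(0)=\modA$ and $\tc_\eta(1)=\{0\}$, all three issues disappear and your proof goes through. You should state these hypotheses explicitly.
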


As a corollary of this result, one can extend \cref{prop:extending torsion classes} to relate the torsion classes $\tc_\eta(0)$ and $\tc_\eta(1)$ and the torsion free classes $\fc_\eta(0)$ and $\fc_\eta(1)$  for any chain of torsion pairs $\eta$ using the $*$-product defined above.

\begin{corollary}\label{cor:infinite products modA}
    For every chain of torsion classes $\eta$ the following holds. 
    $$\tc_\eta (0) = \tc_\eta(1) * \left(  \bigast^{\leftarrow}_{t\in I} \xc_\eta(t)\right)\quad
    \text{  } \quad\fc_\eta (1) = \left(  \bigast^{\leftarrow}_{t\in I} \xc_\eta(t)\right) *\fc_\eta(0).$$
\end{corollary}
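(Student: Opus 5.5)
Both identities come straight out of the Harder--Narasimhan filtration of \cref{thm:HarderNarasimhanfiltrations}. I will prove the first one in detail; the second follows by the dual argument.

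\textbf{Setting up.} Write $\mathcal{Y}=\bigast^{\leftarrow}_{t\in I}\xc_\eta(t)$. By the definition of a chain, $\tc_\eta(1)\subseteq\tc_\eta(t)\subseteq\tc_\eta(0)$ for every $t\in I$, and dually $\fc_\eta(0)\subseteq\fc_\eta(t)\subseteq\fc_\eta(1)$.

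\textbf{Inclusion $\supseteq$.} Every $\xc_\eta(t)$ is contained in $\tc_\eta(0)$:
\begin{itemize}
\item For $t=0$ this holds by definition.
\item For $t>0$ we have $\xc_\eta(t)\subseteq\bigcap_{i<t}\tc_\eta(i)\subseteq\tc_\eta(0)$.
\end{itemize}
Also $\tc_\eta(1)\subseteq\tc_\eta(0)$. Since $\tc_\eta(0)$ is closed under extensions, every object of $\tc_\eta(1)*\mathcal{Y}$ lies in $\tc_\eta(0)$.

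\textbf{Inclusion $\subseteq$.} Let $M\in\tc_\eta(0)$ and take its Harder--Narasimhan filtration $0=M_0\subset\dots\subset M_r=M$, with factors $M_i/M_{i-1}\in\xc_\eta(t_i)$ and $t_1>\dots>t_r$. The key point is to locate the factors in $\xc_\eta(1)$.

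\begin{enumerate}
\item \emph{Only the first factor can lie in $\xc_\eta(1)$.} Since the $t_i$ are strictly decreasing and $1$ is the maximum of $I$, at most one factor lies in $\xc_\eta(1)$, and if so it is the first one, $M_1$.
\item \emph{Split off the torsion part.} Take $M'=M_1$ if $t_1=1$, and $M'=0$ otherwise. Since $\xc_\eta(1)\subseteq\fc_\eta(1)$, it is tempting to put $M'$ into $\tc_\eta(1)$. That is wrong: $M'$ lies in $\fc_\eta(1)$, not in $\tc_\eta(1)$.
\end{enumerate}

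So the correct splitting must use the torsion pair $(\tc_\eta(1),\fc_\eta(1))$ directly.

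\begin{enumerate}
\setcounter{enumi}{2}
\item \emph{Use the canonical sequence.} Take $0\to tM\to M\to fM\to 0$ with $tM\in\tc_\eta(1)$ and $fM\in\fc_\eta(1)$. Because $\tc_\eta(0)$ is closed under quotients, $fM\in\tc_\eta(0)\cap\fc_\eta(1)$.
\item \emph{Filter the quotient.} Apply \cref{thm:HarderNarasimhanfiltrations} to $fM$. By uniqueness of the filtration, and because $\Hom(\tc_\eta(t),\fc_\eta(t))=0$, each factor of the filtration of $fM$ lies in some $\xc_\eta(t_i)$, where $\xc_\eta(1)$ is allowed. It follows that $fM\in\mathcal{Y}$, and hence $M\in\tc_\eta(1)*\mathcal{Y}$.
\end{enumerate}

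\textbf{The second identity.} This is the dual argument. For $M\in\fc_\eta(1)$, take the sequence $0\to tM\to M\to fM\to 0$ for the torsion pair $(\tc_\eta(0),\fc_\eta(0))$. Then $tM\in\tc_\eta(0)\cap\fc_\eta(1)$, and its Harder--Narasimhan filtration places it in $\mathcal{Y}$. The reverse inclusion again uses extension-closure, this time of $\fc_\eta(1)$.

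\textbf{Main obstacle.} The delicate step is showing that an object of $\tc_\eta(0)\cap\fc_\eta(1)$ has every Harder--Narasimhan factor in $\mathcal{Y}$. This amounts to checking that the filtration of \cref{thm:HarderNarasimhanfiltrations} of such an object only uses the categories $\xc_\eta(t)$, which is exactly the content of that theorem. I would verify it by noting that the extremal subcategories $\xc_\eta(0)$ and $\xc_\eta(1)$ are precisely designed to absorb the top and bottom torsion parts.
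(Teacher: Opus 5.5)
Your argument is correct and is essentially the paper's: split off the $\tc_\eta(1)$-torsion part using the torsion pair $(\tc_\eta(1),\fc_\eta(1))$, apply \cref{thm:HarderNarasimhanfiltrations} to the quotient, which lies in $\tc_\eta(0)\cap\fc_\eta(1)$, and obtain the reverse inclusion from extension-closure (dually for $\fc_\eta(1)$). In a final write-up, drop the abandoned steps 1--2 and the appeal to uniqueness and $\Hom$-vanishing in step 4: they play no role, since the theorem directly places $fM$ in $\bigast^{\leftarrow}_{t\in I}\xc_\eta(t)$.
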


\begin{proof}
    Let 
    $$\eta:= \{ (\tc_\eta(i), \fc_\eta(i)) \in \tors A \mid 
\tc_\eta(i) \subseteq \tc_\eta(j) \text{ if $j \leq i$ in $I$}\}$$
be a chain of torsion pairs. 
We only show that 
$$\tc_\eta (0) = \tc_\eta(1) * \left(  \bigast^{\leftarrow}_{t\in I} \xc_\eta(t)\right),$$
since the other case is similar.
This expression implies that an object $T_0\in \modA$ satisfies $T_0\in \tc_\eta(0)$ if and only if there is a short exact sequence $0 \to T_1 \to T_0 \to M \to 0$ where $T_1 \in \tc_\eta(1)$ and $M$ admits a filtration 
$$0=M_0 \subset M_1 \subset \dots \subset M_{r-1} \subset M_r=M$$
where $M_i/M_{i-1} \in \xc_\eta (t_i)$ for all $1\leq i \leq r$ and $t_1 > t_2 >\dots > t_r$. 
The claim then follows from \cref{thm:HarderNarasimhanfiltrations}. 
\end{proof}

%%%%%%%%%%%%%%%%%%
\section{Torsion classes and modules with bounded length}\label{sec:l-torsion classes}

For any positive $\ell \in \mathbb{N}$, we define a relation $\sim_\ell$ on $\tors A$ by setting $\tc \sim_\ell \tc'$ if and only if \mbox{$\tc\cap (\modA)_\ell = \tc'\cap (\modA)_\ell$}.
This is clearly an equivalence relation.
Let us denote by $\tors_\ell A$ the set of equivalence classes of torsion classes with respect to this equivalence relation. 
In this section we study the set $\tors_\ell A$.
In \cref{subsec:torsionclassesin modA_l} we study the basic properties of the elements of $\tors_\ell A$.
Later, in \cref{subsec:lattice tors_l A} we study its lattice-theoretic properties. 
Finally, \cref{subsec:bricklabelling-l} we show that $\tors_\ell A$ has a brick labeling analogous to that of $\tors A$.

%%%%%%%%%
\subsection{Torsion classes in $\modA_\ell$}\label{subsec:torsionclassesin modA_l}
We start by showing some properties of the equivalence classes in $\tors_\ell A$. 
Our first result concerning these equivalence classes is that they behave like torsion classes in $(\modA)_\ell$, despite $(\modA)_\ell$ not being an abelian category. 

\begin{theorem}\label{thm: torsion pairs of bounded length}
    Let $(\tc, \fc)$ be a torsion pair in $\modA$. Then the following holds. 
    \begin{enumerate}
        \item \(
        \fc\cap (\modA)_\ell= \left\{M \in (\modA)_\ell \mid \Hom_A(T,M)=0 \, \forall \, T\in \tc\cap(\modA)_\ell\right\}.
        \)
        \item \(
        \tc\cap (\modA)_\ell= \left\{M \in (\modA)_\ell \mid \Hom_A(M,F)=0 \, \forall \, F\in \fc\cap(\modA)_\ell\right\}.
        \)
        \item For every $M \in (\modA)_\ell$ there is a short exact sequence 
        \[
        0 \to tM \to M \to fM \to 0
        \]
        where $tM \in \tc\cap(\modA)_\ell$ and $fM\in\fc\cap(\modA)_\ell$.
    \end{enumerate}
\end{theorem}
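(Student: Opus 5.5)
The plan is to prove (3) first, since it follows directly from the torsion pair in $\modA$, and then derive (1) and (2) from (3).

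For (3), take $M \in (\modA)_\ell$ and its canonical sequence $0 \to tM \to M \to fM \to 0$ in $\modA$, with $tM\in\tc$ and $fM\in\fc$. Length is additive on short exact sequences, so $\lg(tM)\le \lg(M)\le\ell$ and $\lg(fM)\le\ell$. Hence $tM\in\tc\cap(\modA)_\ell$ and $fM\in\fc\cap(\modA)_\ell$.

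For (1), the inclusion $\subseteq$ is immediate from $\Hom_A(\tc,\fc)=0$. For the reverse inclusion, let $M\in(\modA)_\ell$ satisfy $\Hom_A(T,M)=0$ for every $T\in\tc\cap(\modA)_\ell$. By (3), $tM\in\tc\cap(\modA)_\ell$, so the inclusion $tM\to M$ must vanish. Since it is a monomorphism, $tM=0$, hence $M\cong fM\in\fc$, and so $M \in \fc\cap(\modA)_\ell$.

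For (2), the argument is dual. The inclusion $\subseteq$ again follows from $\Hom_A(\tc,\fc)=0$. Conversely, if $\Hom_A(M,F)=0$ for all $F\in\fc\cap(\modA)_\ell$, then the epimorphism $M\to fM$ is zero because $fM\in\fc\cap(\modA)_\ell$ by (3). Thus $fM=0$ and $M\cong tM\in\tc\cap(\modA)_\ell$.

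No step here is a real obstacle. The only point needing care is that the torsion and torsion-free parts of a module of length at most $\ell$ stay in $(\modA)_\ell$, and this is exactly what additivity of length gives, so the usual torsion-pair arguments apply unchanged even though $(\modA)_\ell$ is not abelian.
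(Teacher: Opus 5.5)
Your proof is correct, and your argument for (3) is exactly the paper's. The difference is in (1) and (2). The paper does not deduce them from (3). It proves (1) directly: for an arbitrary $T'\in\tc$ and $f\in\Hom_A(T',M)$, the image $\Image f$ lies in $\tc$ (as a quotient of $T'$) and has length at most $\ell$ (as a submodule of $M$). So $\Image f=0$ by hypothesis, and hence $M\in\tc^{\perp}=\fc$; (2) is handled dually. You instead establish (3) first and then test the hypothesis against the single canonical object $tM$ (respectively $fM$). Both routes rest on the same observation: subobjects and quotients of a module of length at most $\ell$ stay in $(\modA)_\ell$. Your ordering is slightly more economical, since one test object suffices. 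The paper's argument for (1) uses only that $\tc$ is closed under quotients and that $\fc=\tc^{\perp}$. It therefore does not need the canonical sequence, and works verbatim for any quotient-closed subcategory in place of $\tc$.
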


\begin{proof}
    \textit{(1)}  We have that \(\Hom_A(T,M)=0 \) for every \( T\in \tc\) and for every $M\in \fc\cap(\modA)_\ell$. 
    In particular, we have that \(\Hom_A(T,M)=0 \) for every \( T\in \tc\cap(\modA)_\ell\) and for every $M\in \fc\cap(\modA)_\ell$. 
    This shows that 
    \[
    \fc\cap (\modA)_\ell \subseteq \left\{M \in (\modA)_\ell \mid \Hom_A(T,M)=0 \, \forall \, T\in \tc\cap(\modA)_\ell\right\}.
    \]

    We now show the reverse inclusion. 
    Let $M \in (\modA)_\ell$ such that \(\Hom_A(T,M)=0 \) for every \( T\in \tc\cap(\modA)_\ell\) and let $T'\in \tc$. 
    We need to show that $\Hom_A(T', M)=0$. 
    Let $f\in \Hom_A(T', M)$. 
    Then the morphism $f$ factors through its image $\Image f$ because $\modA$ is abelian. 
    Since $\Image f$ is a quotient of $T'$ we have that $\Image f$ belongs to $\tc$. 
    Also, since $\Image f$ is a subobject of $M$ we have that $\lg(\Image f)\leq \lg(M)\leq \ell$. 
    Then $\Image f \in \tc\cap (\mod A)_\ell$; hence $\Image f =0$ since $\Hom_A(\Image f, M)=0$ by hypothesis. 
    Therefore
    \(M \in \tc^\perp=\fc\) and \(M \in \fc\cap (\mod A)_\ell\).\\
    \textit{(2)} It is dual to (1).\\
    \noindent
    \textit{(3)} Since $(\tc, \fc)$ is a torsion pair in $\modA$ for every $M \in \modA$ we have a short exact sequence
    \[
    0 \to tM \to M \to fM \to 0
    \]
    where $tM \in \tc$ and $fM\in\fc$.
    In particular, we have such a sequence for every \mbox{$M \in (\modA)_\ell$}. 
    Since \mbox{$\lg(tM)\leq \lg(M)$} and $\lg(fM) \leq \lg(M)$, the result follows.
\end{proof}

From now on, given $\tc\in\tors A$, let $\tc_\ell\in\tors_\ell A$ denote its equivalence class.
By abuse of notation, we also use $\tc_\ell$ to denote the subcategory $\tc \cap (\modA)_\ell\subset (\modA)_\ell$.
Similarly for $\fc_\ell$.
We now give a couple of corollaries to the previous theorem.  

\begin{corollary}
    Let $(\tc, \fc)$ be a torsion pair in $\modA$ and let $\ell \geq 0$. 
    Then 
    \[
    \tc_\ell =\{tM \mid M \in (\modA)_\ell \} \quad \text{ and }\quad
    \fc_\ell =\{fM \mid M \in (\modA)_\ell \}.
    \]
\end{corollary}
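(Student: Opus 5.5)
The plan is to prove each equality by double inclusion, using part (3) of \cref{thm: torsion pairs of bounded length} for one direction and idempotence of the torsion radical for the other. I only treat $\tc_\ell$; the statement for $\fc_\ell$ is the same argument with the roles of $tM$ and $fM$ swapped.

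For the inclusion $\{tM \mid M \in (\modA)_\ell\}\subseteq \tc_\ell$, let $M\in(\modA)_\ell$. Part (3) of \cref{thm: torsion pairs of bounded length} gives a short exact sequence $0\to tM\to M\to fM\to 0$ with $tM\in\tc\cap(\modA)_\ell$. Concretely, $tM\in\tc$ by the definition of a torsion pair, and $\lg(tM)\le\lg(M)\le\ell$ because $tM$ is a submodule of $M$. Hence $tM\in\tc_\ell$.

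For the reverse inclusion $\tc_\ell\subseteq\{tM \mid M \in (\modA)_\ell\}$, let $T\in\tc_\ell$. Then $T\in(\modA)_\ell$, so $T$ itself is an admissible choice of $M$. It remains to check $tT\cong T$. The sequence $0\to T\xrightarrow{\mathrm{id}} T\to 0\to 0$ has first term in $\tc$ and last term $0\in\fc$. The canonical sequence of a torsion pair is unique up to isomorphism, since $tT$ is the largest subobject of $T$ lying in $\tc$. Therefore $tT=T$, and $T=tT$ with $T\in(\modA)_\ell$.

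The argument for $\fc_\ell$ is symmetric: $fM$ is a quotient of $M$, so $\lg(fM)\le\ell$ and $fM\in\fc$, and any $F\in\fc_\ell$ satisfies $fF\cong F$. The only step that needs any care is the uniqueness of the canonical sequence, that is, $t$ fixing objects of $\tc$. This is standard torsion theory and follows from $\Hom_A(\tc,\fc)=0$, so I do not expect a real obstacle.
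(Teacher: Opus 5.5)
Your proposal is correct and follows essentially the same route as the paper's (brief) argument. The bound $\lg(tM)\le\lg(M)\le\ell$ (respectively $\lg(fM)\le\ell$) gives one inclusion. The identities $tT=T$ for $T\in\tc$ and $fF\cong F$ for $F\in\fc$ give the other.
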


% \begin{proof}
%     Let $tM$ be the torsion subobject of $M \in(\modA)_\ell$.
%     Then $tM$ is in $\tc$ by definition and $tM\in (\modA)_\ell$ because the length of $tM$ cannot be greater than $\ell$, the length of $M$. 
%     So $\{tM \mid M \in (\modA)_\ell \} \subset \tc\cap(\modA)_\ell$. 
%     The converse inclusion follows from the fact that $tM =M$ for every object $M\in \tc$.

%     The other equality is shown in a similar fashion.
% \end{proof}

%We define the equivalence relation $\sim_\ell$ using torsion classes. A similar construction can be done using torsion-free classes. 

\begin{corollary}\label{cor:sametorsion-free}
    Let $(\tc,\fc)$ and $(\tc', \fc')$ be two torsion pairs in $\modA$. Then $\tc_\ell = \tc'_\ell$ if and only if $\fc_\ell = \fc'_\ell$.
\end{corollary}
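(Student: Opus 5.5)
The plan is to derive this directly from \cref{thm: torsion pairs of bounded length}, parts (1) and (2). These say that, inside $(\modA)_\ell$, each of $\tc_\ell$ and $\fc_\ell$ is recovered from the other by the same formula. Concretely, part (1) says
\[
\fc_\ell=\{M\in(\modA)_\ell \mid \Hom_A(T,M)=0 \ \forall\, T\in\tc_\ell\},
\]
so $\fc_\ell$ is a function of the subcategory $\tc_\ell$ alone. Likewise part (2) says that $\tc_\ell$ is a function of $\fc_\ell$ alone.

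For the forward direction, assume $\tc_\ell=\tc'_\ell$. Apply part (1) to the torsion pair $(\tc,\fc)$ and to $(\tc',\fc')$. The two right-hand sides are literally the same set, since they are defined by Hom-vanishing against the same subcategory $\tc_\ell=\tc'_\ell$ of $(\modA)_\ell$. Hence $\fc_\ell=\fc'_\ell$.

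The converse is identical, using part (2) in place of part (1): if $\fc_\ell=\fc'_\ell$, then the perpendicular categories defining $\tc_\ell$ and $\tc'_\ell$ coincide, so $\tc_\ell=\tc'_\ell$.

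There is no genuine obstacle here; all the work was already done in the theorem. The key point it supplies, used implicitly above, is that the perpendicular may be taken inside $(\modA)_\ell$ rather than in all of $\modA$. The reason is that any nonzero morphism $T\to M$ with $M\in(\modA)_\ell$ has image of length at most $\ell$, and that image lies in the torsion class.
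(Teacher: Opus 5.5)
Your proposal is correct and takes the same approach as the paper, which states this as an immediate corollary of \cref{thm: torsion pairs of bounded length}. Parts (1) and (2) of that theorem describe each of $\tc_\ell$ and $\fc_\ell$ as the Hom-perpendicular of the other inside $(\modA)_\ell$, so each one determines the other.
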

%\begin{proof}
    % We only show that $\tc\cap (\modA)_\ell = \tc'\cap (\modA)_\ell$ implies $\fc\cap (\modA)_\ell = \fc'\cap (\modA)_\ell$ since the other implication is shown by dual arguments.
    % Let $M \in \fc\cap (\modA)_\ell$ and $T'\in \tc'$. 
    % Then any morphism $f\in \Hom_A(T', M)$ factors through its image $\Image f$ because $\modA$ is an abelian category. 
    % Since $\Image f$ is a quotient of $T'$ we have that $\Image f$ belongs to $\tc'$. 
    % Also, since $\Image f$ is a subobject of $M$ we find that the length of $\Image f$ is at most $\ell$. Then we can conclude that $\Image f \in \tc'\cap (\mod A)_\ell$. 
    % By hypothesis $\tc'\cap (\mod A)_\ell = \tc\cap (\mod A)_\ell$ so we get that $\Image f \in \tc$ and $\Hom_A(\Image f, M)=0$. 
    % Then $\Image f =0$ and we can conclude that $M \in \fc'\cap (\modA)_\ell$.
    % Since the reverse inclusion can be shown using the same argument, the result follows. 
%\end{proof}

Suppose that $(\tc, \fc)$ and $(\tc', \fc')$ are two torsion pairs such that  $\tc_\ell \neq \tc'_\ell$. 
Then, without loss of generality, we may assume the existence of an object $M \in(\modA)_\ell$ such that $M \in \tc \setminus\tc'$. 
We now show that such an object can be chosen to be a brick in $\fc'_\ell$. 
Compare with \cite[Lemma~2.18]{Asai}.

\begin{proposition}\label{prop:objectintheintersection}
    Let $(\tc,\fc)$ and $(\tc', \fc')$ be two torsion pairs in $\modA$ and suppose that $\tc_\ell \neq \tc'_\ell$.
    Then there is a $M\in (\modA)_\ell$ such that  either $ M \in \tc'_\ell\cap \fc_\ell$ or $M \in \tc_\ell\cap \fc'_\ell$.
    Moreover, such an object $M$ can be chosen to be a brick. 
\end{proposition}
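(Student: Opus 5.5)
Without loss of generality there is an object $N\in(\modA)_\ell$ with $N\in\tc\setminus\tc'$ (the other case is symmetric). I first produce an object in $\tc_\ell\cap\fc'_\ell$. Consider the canonical sequence $0\to t'N\to N\to f'N\to 0$ for the torsion pair $(\tc',\fc')$, where $t'N$ denotes the $\tc'$-torsion subobject. Since $N\notin\tc'$, we have $f'N\neq 0$. Moreover $f'N$ is a quotient of $N\in\tc$, so $f'N\in\tc$, and $\lg(f'N)\le\lg(N)\le\ell$. Hence $M_0:=f'N\in\tc_\ell\cap\fc'_\ell$ is nonzero, which proves the first assertion.

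To obtain a brick, I take a nonzero object $M\in\tc_\ell\cap\fc'_\ell$ of \emph{minimal length}; such an $M$ exists because $M_0$ is a candidate. I then claim that $M$ is a brick. Let $f\in\End_A(M)$ be nonzero. The image $\Image f$ is a quotient of $M$, hence lies in $\tc$, and a submodule of $M$, hence lies in $\fc'$ since torsion-free classes are closed under subobjects. Since $\lg(\Image f)\le\lg(M)\le\ell$, we get $0\neq\Image f\in\tc_\ell\cap\fc'_\ell$. Minimality then forces $\lg(\Image f)=\lg(M)$, so $f$ is surjective, and hence an isomorphism because $M$ has finite length. Thus every nonzero endomorphism of $M$ is invertible, and $\End_A(M)$ is a division ring.

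The only subtle point is that $\tc\cap\fc'$ is closed under images of endomorphisms. This uses closure under quotients for $\tc$ and closure under subobjects for $\fc'$, together with the fact that lengths never increase when passing to images. All of this stays inside $(\modA)_\ell$, so the non-additivity of $(\modA)_\ell$ plays no role. The minimal-length trick replaces the usual argument from \cite{DIRRT} or \cite[Lemma~2.18]{Asai}, and it works because the class in question is defined inside the abelian category $\modA$.
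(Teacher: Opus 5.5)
Your proof is correct and follows the paper's argument: you take the torsion-free quotient of an object of $\tc_\ell\setminus\tc'_\ell$ with respect to $(\tc',\fc')$ to get a nonzero object of $\tc_\ell\cap\fc'_\ell$, and then show that an object of minimal length there is a brick. The only difference is that you check the brick property directly, using that the image of a nonzero endomorphism stays in $\tc_\ell\cap\fc'_\ell$. The paper instead notes that a minimal-length object of $\tc_\ell\cap\fc'_\ell$ is also of minimal length in $\tc\cap\fc'$ and cites \cite[Lemma~3.8]{DIRRT}.
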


\begin{proof}
   By hypothesis, either $(\tc\setminus\tc')\cap (\modA)_\ell$ or $(\tc'\setminus\tc)\cap (\modA)_\ell$ contains a nonzero object $M$.
   Without loss of generality, suppose that there is an object \mbox{$M \in (\tc'\setminus\tc)\cap (\modA)_\ell$} that is nonzero.
    Take the short exact sequence 
    \[
    0\to tM \to M \to fM \to 0
    \]
    with respect to the torsion pair $(\tc, \fc)$.
    By hypothesis, $M$ is not an object of $\tc$ and it is nonzero. This implies that $fM$ is nonzero and belongs to $\fc$. 
    Also, $fM\in \tc'$ since $\tc'$ is closed under quotients. 
    Finally, $fM \in (\modA)_\ell$ since $\lg(fM) \leq \lg(M) \leq \ell$. 
    Therefore, $fM$ is non-zero and belongs to $\tc'_\ell\cap \fc_\ell$.
    %We can show with similar arguments that the existence of a nonzero object in $(\tc\setminus\tc')\cap (\modA)_\ell$ implies the existence of a nonzero object in $\tc_\ell\cap \fc'_\ell$. 

To prove the final statement, let $B$ be an object of minimal length in $\tc_\ell\cap \fc'_\ell$.
Then $B$ is an object of minimal length in $\tc\cap \fc'$.
Our claim then follows from \cite[Lemma~3.8]{DIRRT}.
\end{proof}

%%%%%%%%%
\subsection{The lattice $\tors_\ell A$}\label{subsec:lattice tors_l A}
In this subsection we consider the lattice-theoretic properties of $\tors_\ell A$. 

\begin{proposition}\label{prop:completelattice}
    The poset $(\tors_\ell A, \subseteq)$ is a complete lattice with the intersection as its meet. 
    Dually, the poset $(\ftors_\ell A, \subseteq)$ is a complete lattice with the intersection as its meet.
    Moreover, the pairing $(\tc_\ell, \fc_\ell)$ induces an order-reversing lattice isomorphism. 
\end{proposition}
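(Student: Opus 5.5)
The plan is to realise $\tors_\ell A$ concretely as the set of subcategories $\{\tc\cap(\modA)_\ell \mid \tc\in\tors A\}$ of $(\modA)_\ell$, ordered by inclusion. The order is then unambiguous: each class $\tc_\ell$ is literally a subcategory. I will show that this set is closed under arbitrary intersections and has a top element. Completeness then follows by the standard argument: a poset with arbitrary meets and a top element is a complete lattice, with joins computed as meets of upper bounds.

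\emph{Closure under intersections.} Let $\{\tc^{(i)}\}_{i\in I}$ be a family of torsion classes. Then
\[
\bigcap_{i\in I}\bigl(\tc^{(i)}\cap(\modA)_\ell\bigr)=\Bigl(\bigcap_{i\in I}\tc^{(i)}\Bigr)\cap(\modA)_\ell .
\]
Since $\bigcap_i\tc^{(i)}$ is a torsion class (as $\tors A$ is a complete lattice with intersection as meet), this intersection is again an element of $\tors_\ell A$. The empty intersection gives $(\modA)_\ell=\modA\cap(\modA)_\ell$, which is the top element. Hence arbitrary meets in $\tors_\ell A$ exist and are given by intersection. The join of a family is the intersection of all $\tc_\ell$ containing every member, and this set is nonempty because it contains the top. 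The identical argument applied to $\ftors A$, which is closed under intersections, gives the statement for $\ftors_\ell A$.

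\emph{The order-reversing isomorphism.} Define $\Psi:\tors_\ell A\to\ftors_\ell A$ by $\tc_\ell\mapsto\fc_\ell$.
\begin{enumerate}
\item $\Psi$ is well defined and injective by \cref{cor:sametorsion-free}.
\item $\Psi$ is surjective, since every torsion-free class has a corresponding torsion class.
\item $\Psi$ reverses order. By \cref{thm: torsion pairs of bounded length}(1),
\[
\fc_\ell=\{M\in(\modA)_\ell\mid \Hom_A(T,M)=0\ \forall\, T\in\tc_\ell\},
\]
so $\tc_\ell\subseteq\tc'_\ell$ implies $\fc'_\ell\subseteq\fc_\ell$. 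Part (2) of the same theorem gives the converse.
\end{enumerate}
An order-reversing bijection whose inverse is also order-reversing is an anti-isomorphism of posets. Since it is an anti-isomorphism of complete lattices, it automatically exchanges meets and joins.

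The only subtle point is to be careful about the identification. An element of $\tors_\ell A$ is an equivalence class of torsion classes, and the order must be checked to be independent of representatives. This is immediate once we identify each class with the subcategory $\tc\cap(\modA)_\ell$. Beyond that, everything reduces to the earlier theorem and corollary, so I expect no real obstacle.
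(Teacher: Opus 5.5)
Your proposal is correct and follows the paper's proof essentially verbatim. Both arguments show that $\tors_\ell A$ is closed under arbitrary intersections via $\bigcap_i(\tc_i)_\ell=(\bigcap_i\tc_i)\cap(\modA)_\ell$, then invoke the standard fact that a poset with arbitrary meets is a complete lattice (for which the paper cites a standard lattice-theory lemma), and treat $\ftors_\ell A$ analogously. Both obtain the order-reversing isomorphism from \cref{thm: torsion pairs of bounded length} and \cref{cor:sametorsion-free}; you simply spell out the well-definedness, bijectivity and order-reversal that the paper says ``follow directly''.
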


\begin{proof}
    Let $\{\tc_i \mid i\in I\}$ be a family of torsion classes indexed by a set $I$. 
    Then
    \[
    \bigcap_{i\in I} (\tc_i)_\ell= \left(\bigcap_{i\in I} \tc_i\right)\cap (\modA)_\ell.
    \]
    Since $\bigcap_{i\in I} \tc_i$ is a torsion class, we have that  $\bigcap_{i\in I} (\tc_i)_\ell \in \tors_\ell A$. 
    Then $\tors_\ell A$ is a complete lattice by \cite[Chapter I, Lemma 34]{Gr}. 
    The proof for $(\ftors_\ell A, \subseteq)$ is analogous.
    The moreover part of the statement follows directly from \cref{thm: torsion pairs of bounded length}.
\end{proof}

\begin{proposition}\label{thm:epi of lattices}
    The map $\varphi_{\infty,\ell}: \tors A \to \tors_\ell A$ defined by $\varphi_{\infty, \ell}(\tc)=\tc_\ell$ is an epimorphism of lattices. 
    Moreover, the map $\varphi^{-1}_{\infty, \ell}(\tc_\ell)$ is an interval in $\tors A$.
\end{proposition}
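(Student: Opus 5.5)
Surjectivity of $\varphi_{\infty,\ell}$ is immediate from the definition of $\tors_\ell A$ as the set of $\sim_\ell$-classes. So the plan is to prove compatibility with meets and joins, and then identify the fibres as intervals.

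\textbf{Meets.} By \cref{prop:completelattice} the meet in $\tors_\ell A$ is intersection, and
\[
(\tc\cap\tc')_\ell=\tc_\ell\cap\tc'_\ell .
\]
The same identity holds for arbitrary families.

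\textbf{Joins.} Here I pass to torsion-free classes. The join $\tc\vee\tc'$ has torsion-free class $\fc\cap\fc'$, and by \cref{thm: torsion pairs of bounded length} together with \cref{cor:sametorsion-free}, $(\tc\vee\tc')_\ell$ is determined by
\[
(\fc\cap\fc')_\ell=\fc_\ell\cap\fc'_\ell .
\]
On the other hand, the join in $\tors_\ell A$ corresponds, under the order-reversing isomorphism of \cref{prop:completelattice}, to the meet $\fc_\ell\cap\fc'_\ell$ in $\ftors_\ell A$. Hence $(\tc\vee\tc')_\ell=\tc_\ell\vee\tc'_\ell$, and again this works for arbitrary families. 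The one point to check is that $\varphi_{\infty,\ell}$ really commutes with the dual isomorphisms. This follows from part (2) of \cref{thm: torsion pairs of bounded length}: $\tc_\ell$ is the left perpendicular of $\fc_\ell$ inside $(\modA)_\ell$.

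\textbf{Fibres.} Fix $\tc$ and consider the fibre $\varphi_{\infty,\ell}^{-1}(\tc_\ell)$. Set $\underline{\tc}$ to be the meet of all torsion classes in the fibre and $\overline{\tc}$ to be their join. By the complete meet and join compatibility just shown, both lie in the fibre.

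Concretely, $\underline{\tc}=T(\tc_\ell)$, the smallest torsion class containing $\tc_\ell$. Indeed, $T(\tc_\ell)\subseteq\tc$ and $T(\tc_\ell)\cap(\modA)_\ell\supseteq\tc_\ell$, so equality holds at level $\ell$. Dually, $\overline{\tc}={}^\perp(\fc_\ell)$.

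Now take any $\tc''$ with $\underline{\tc}\subseteq\tc''\subseteq\overline{\tc}$. Intersecting with $(\modA)_\ell$ gives
\[
\tc_\ell\subseteq\tc''_\ell\subseteq\tc_\ell ,
\]
so $\tc''$ lies in the fibre. Conversely, every element of the fibre lies between $\underline{\tc}$ and $\overline{\tc}$ by definition. Therefore the fibre is exactly the interval $[\underline{\tc},\overline{\tc}]$.

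\textbf{Main obstacle.} The only genuinely nontrivial step is the join compatibility. It relies on the torsion-free description of joins and on \cref{thm: torsion pairs of bounded length} to translate between $\tc_\ell$ and $\fc_\ell$.
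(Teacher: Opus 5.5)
Your proof is correct and follows essentially the same route as the paper. Meets and joins are checked via intersections of torsion classes and of torsion-free classes respectively, and the fibre is identified as the interval between $T(\tc_\ell)$ and the torsion class ${}^\perp(\fc_\ell)$ paired with $F(\fc_\ell)$; the only cosmetic difference is that you get the fibre's endpoints from complete meet/join preservation before describing them concretely, whereas the paper checks convexity and the extremal elements directly.
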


\begin{proof}
    Clearly $\varphi_{\infty,\ell}$ is a surjective map. 
    Moreover, the map $\varphi_{\infty,\ell}$ preserves intersections and hence it is a morphism of lattices since both the meet and join in $\tors A$ and $\tors_\ell A$ can be defined via the intersection.

    Consider the set 
    \[   \varphi^{-1}_{\infty, \ell}(\tc_\ell) = \{\tc'\in\tors A \mid \tc'_\ell = \tc_\ell\}.\]
    In order to prove that $\varphi^{-1}_{\infty, l}(\tc_\ell)$ is an interval in $\tors A$ we need to show that it is convex and has a unique minimal and a unique maximal element. 
    
    To show that $\varphi^{-1}_{\infty, \ell}(\tc_\ell)$ is convex, let $\tc_1, \tc_2, \tc_3$ be three torsion classes verifying $\tc_1\subset\tc_2\subset\tc_3$ and $\tc_1, \tc_3 \in  \varphi^{-1}_{\infty, \ell}(\tc_\ell)$.
    Since \(
    (\tc_1)_\ell\subset (\tc_2)_\ell \subset (\tc_3)_\ell\) and \((\tc_1)_\ell=(\tc_3)_\ell=\tc_\ell\),  we have that $\tc_2 \in \varphi^{-1}_{\infty, \ell}(\tc_\ell)$ and $\varphi^{-1}_{\infty, \ell}(\tc_\ell)$ is convex.
   
   Let $T(\tc_\ell) \in \tors A$ be the unique minimal torsion class containing $\tc_\ell$ and let $\tc \in \varphi^{-1}_{\infty, l}(\tc_\ell)$.
    Then $\tc_\ell \subset T(\tc_\ell)_\ell = T(\tc_\ell)\cap(\modA)_\ell$. 
    Also, $T(\tc_\ell) \subset \tc$ because $\tc_\ell \subset \tc$. 
    This implies $T(\tc_\ell)_\ell \subset \tc_\ell$.
    Therefore, $T(\tc_\ell)\in \varphi^{-1}_{\infty, \ell}(\tc_\ell)$ and it is the unique minimal element in $\varphi^{-1}_{\infty, l}(\tc_\ell)$.

    Using similar arguments, one can show the existence of a unique minimal torsion-free class $F(\fc_\ell)$ satisfying $F(\fc_\ell)_\ell= \fc_\ell$.
    If $\widetilde\tc$ is the torsion class such that $(\widetilde\tc, F(\fc_\ell))$ is a torsion pair in $\modA$ then it follows from \cref{cor:sametorsion-free} that  $\widetilde\tc \in \varphi^{-1}_{\infty, \ell}(\tc_\ell)$ and that is the unique maximal element in $\varphi^{-1}_{\infty, l}(\tc_\ell)$.
\end{proof}

The following result can be proven using arguments similar to those in the proof of \cref{thm:epi of lattices}. 
We leave the details to the reader. 

\begin{proposition}\label{prop:morphismstodiagram}
    Let $\ell<m$ be two nonnegative integers. 
    The map $$\varphi_{m,\ell}: \tors_m A \to \tors_\ell A$$ defined by $\varphi_{m, \ell}(\tc_m)=\tc_\ell$ is then an epimorphism of lattices. 
    Moreover, $\varphi^{-1}_{m, \ell}(\tc_\ell)$ is an interval in $\tors_m A$ with a unique minimal and a unique maximal element.
\end{proposition}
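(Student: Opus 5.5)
The plan is to mirror the proof of \cref{thm:epi of lattices}, with $\tors_m A$ in place of $\tors A$. First we check that $\varphi_{m,\ell}$ is well defined. If $\tc_m = \tc'_m$, then intersecting both sides with $(\modA)_\ell \subseteq (\modA)_m$ gives $\tc_\ell = \tc'_\ell$. Surjectivity is immediate, since every element of $\tors_\ell A$ has the form $\tc_\ell$ for some $\tc \in \tors A$, and then $\tc_\ell = \varphi_{m,\ell}(\tc_m)$. Equivalently, $\varphi_{m,\ell}\circ\varphi_{\infty,m} = \varphi_{\infty,\ell}$, and $\varphi_{\infty,\ell}$ is surjective.

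To see that $\varphi_{m,\ell}$ is a morphism of lattices, we use \cref{prop:completelattice}: meets in $\tors_m A$ and in $\tors_\ell A$ are intersections. For any family $\{\tc_i\}$, intersecting with $(\modA)_\ell$ commutes with intersections:
\[
\Bigl(\bigcap_i (\tc_i)_m\Bigr)\cap(\modA)_\ell=\bigcap_i (\tc_i)_\ell .
\]
For joins, we use the order-reversing isomorphism with $\ftors_m A$ and $\ftors_\ell A$ from \cref{prop:completelattice}. By \cref{cor:sametorsion-free}, the map $\fc_m\mapsto\fc_\ell$ is compatible with $\varphi_{m,\ell}$, and it also preserves intersections. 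A cleaner alternative is to observe that $\varphi_{\infty,m}$ and $\varphi_{\infty,\ell}$ are surjective lattice morphisms with $\varphi_{\infty,\ell}=\varphi_{m,\ell}\circ\varphi_{\infty,m}$. Then for $x=\varphi_{\infty,m}(\tc)$ and $y=\varphi_{\infty,m}(\tc')$,
\[
\varphi_{m,\ell}(x\vee y)=\varphi_{\infty,\ell}(\tc\vee\tc')=\varphi_{\infty,\ell}(\tc)\vee\varphi_{\infty,\ell}(\tc')=\varphi_{m,\ell}(x)\vee\varphi_{m,\ell}(y),
\]
and the same computation works for meets. This transfer argument is the one I would write down, since it avoids recomputing joins.

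For the interval statement, convexity of $\varphi_{m,\ell}^{-1}(\tc_\ell)$ follows from monotonicity exactly as before. Suppose $(\tc_1)_m\subseteq(\tc_2)_m\subseteq(\tc_3)_m$ and the outer two both restrict to $\tc_\ell$. Intersecting with $(\modA)_\ell$ shows that the middle one does as well.

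For the minimal element, take $T(\tc_\ell)\in\tors A$, the smallest torsion class containing $\tc_\ell$. By the proof of \cref{thm:epi of lattices}, $T(\tc_\ell)_\ell=\tc_\ell$. Now let $\tc'_m$ be any element of the fibre. Then $\tc_\ell\subseteq\tc'$, so $T(\tc_\ell)\subseteq\tc'$, and hence $T(\tc_\ell)_m\subseteq\tc'_m$. So $T(\tc_\ell)_m$ is the unique minimum of the fibre.

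The maximal element is obtained dually. Let $F(\fc_\ell)$ be the smallest torsion-free class containing $\fc_\ell$, and let $\widetilde\tc$ be its torsion class. By \cref{cor:sametorsion-free}, $\widetilde\tc_\ell=\tc_\ell$. Now take any $\tc'_m$ in the fibre. Its torsion-free class $\fc'$ satisfies $\fc'_\ell=\fc_\ell$, so $F(\fc_\ell)\subseteq\fc'$ and therefore $\tc'\subseteq\widetilde\tc$, which gives $\tc'_m\subseteq\widetilde\tc_m$. The only point needing care is that every element of $\tors_m A$ comes from some $\tc\in\tors A$, so these arguments may be run in $\tors A$ and then pushed down with $\varphi_{\infty,m}$. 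I do not expect any serious obstacle.
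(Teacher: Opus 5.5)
Your proposal is correct and is essentially the argument the paper intends: the paper leaves this proposition to the reader as a repetition of the proof of \cref{thm:epi of lattices}, which is what you do, identifying the fibre as the interval $[T(\tc_\ell)_m, \widetilde{\tc}_m]$ computed in $\tors A$ and pushed down along $\varphi_{\infty,m}$. Your factorization $\varphi_{\infty,\ell}=\varphi_{m,\ell}\circ\varphi_{\infty,m}$, with $\varphi_{\infty,m}$ surjective, is a tidy way to inherit the lattice-morphism property without recomputing joins in $\tors_m A$.
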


The epimorphisms of lattices defined in the previous proposition behave well under composition, as we show in the next result.

\begin{proposition}\label{prop:composition}
    Let $\ell_1, \ell_2, \ell_3$ be three nonnegative integers such that $\ell_1\leq \ell_2 \leq \ell_3$. 
    Then $\varphi_{\ell_3,\ell_2}\circ\varphi_{\ell_2,\ell_1}=\varphi_{\ell_3,\ell_1}$.
\end{proposition}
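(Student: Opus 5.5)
Note first that, read literally, $\varphi_{\ell_3,\ell_2}\circ\varphi_{\ell_2,\ell_1}$ does not compose: $\varphi_{\ell_2,\ell_1}$ lands in $\tors_{\ell_1}A$, while $\varphi_{\ell_3,\ell_2}$ is defined on $\tors_{\ell_3}A$. The intended statement is $\varphi_{\ell_2,\ell_1}\circ\varphi_{\ell_3,\ell_2}=\varphi_{\ell_3,\ell_1}$, or equivalently the compatibility $\varphi_{\ell_2,\ell_1}\circ\varphi_{\infty,\ell_2}=\varphi_{\infty,\ell_1}$. I would prove both directly from the definitions. The only substantive check is that each map $\varphi_{m,\ell}$ is well defined, that is, independent of the chosen representative. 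The composition identity then reduces to an equality of subcategories.

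For well-definedness, take $\tc,\tc'\in\tors A$ with $\tc_m=\tc'_m$, meaning $\tc\cap(\modA)_m=\tc'\cap(\modA)_m$, and let $\ell\leq m$. Since $(\modA)_\ell\subseteq(\modA)_m$, intersecting both sides with $(\modA)_\ell$ gives
\[
\tc\cap(\modA)_\ell=\tc\cap(\modA)_m\cap(\modA)_\ell=\tc'\cap(\modA)_m\cap(\modA)_\ell=\tc'\cap(\modA)_\ell .
\]
So $\tc_\ell=\tc'_\ell$, and $\varphi_{m,\ell}(\tc_m)=\tc_\ell$ is independent of the representative. This also shows that $\varphi_{m,\ell}\circ\varphi_{\infty,m}=\varphi_{\infty,\ell}$.

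For the composition, let $x\in\tors_{\ell_3}A$. By definition of $\tors_{\ell_3}A$ as a set of $\sim_{\ell_3}$-classes, $x=\tc_{\ell_3}$ for some $\tc\in\tors A$. Then
\[
\varphi_{\ell_2,\ell_1}(\varphi_{\ell_3,\ell_2}(\tc_{\ell_3}))=\varphi_{\ell_2,\ell_1}(\tc_{\ell_2})=\tc_{\ell_1}=\varphi_{\ell_3,\ell_1}(\tc_{\ell_3}).
\]
As subcategories, this is the chain $(\modA)_{\ell_1}\subseteq(\modA)_{\ell_2}\subseteq(\modA)_{\ell_3}$, giving
\[
\bigl(\tc\cap(\modA)_{\ell_3}\bigr)\cap(\modA)_{\ell_2}\cap(\modA)_{\ell_1}=\tc\cap(\modA)_{\ell_1}.
\]

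The only real obstacle is bookkeeping: getting the order of composition right, and keeping track that elements of $\tors_\ell A$ are equivalence classes, identified with the subcategories $\tc_\ell$. Surjectivity and the lattice-morphism property are not needed here; they are part of \cref{prop:morphismstodiagram}.
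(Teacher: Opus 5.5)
Your proof is correct and follows the paper's argument: the paper evaluates the composite on $\tc_{\ell_3}$ and uses $\bigl(\tc_{\ell_3}\cap(\modA)_{\ell_2}\bigr)\cap(\modA)_{\ell_1}=\tc_{\ell_3}\cap(\modA)_{\ell_1}$. In doing so it implicitly reads the composition in the diagrammatic order you identified. Your additional check that $\varphi_{m,\ell}$ does not depend on the chosen representative is a step the paper leaves to the reader in \cref{prop:morphismstodiagram}, and including it is worthwhile.
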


\begin{proof}
    \begin{align*}
        \left(\varphi_{\ell_3,\ell_2}\circ\varphi_{\ell_2,\ell_1}\right)(\tc_{\ell_3})&= ((\tc_{\ell_3})\cap (\modA)_{\ell_2})\cap (\modA)_{\ell_1}\\
        & = (\tc_{\ell_3})\cap( (\modA)_{\ell_2}\cap (\modA)_{\ell_1})\\
        & = (\tc_{\ell_3})\cap(\modA)_{\ell_1}\\
        & = \varphi_{\ell_3,\ell_1}(\tc_{\ell_3})
    \end{align*}
\end{proof}
 
As a consequence of \cref{prop:composition} we can build the diagram $F : \mathbb{N} \to Sets$ such that $F(n) = \tors_n A$ and $F(n\leq m) = \varphi_{m,n}$. 
Furthermore, \cref{prop:morphismstodiagram} implies that $\tors A$ is a cone for this diagram. 

\begin{corollary}\label{cor:limit1}
    \[
     \underleftarrow\lim \tors_\ell A =\tors A.
    \]
\end{corollary}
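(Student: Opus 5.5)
We prove \cref{cor:limit1} by checking the universal property of the inverse limit in $Sets$ (equivalently, in lattices), realising $\underleftarrow\lim \tors_\ell A$ concretely as the set of compatible sequences $(x_\ell)_{\ell\in\mathbb N}$ with $x_\ell\in\tors_\ell A$ and $\varphi_{m,\ell}(x_m)=x_\ell$ for $\ell\le m$. By \cref{prop:composition} the maps $\varphi_{\infty,\ell}$ of \cref{thm:epi of lattices} are compatible: $\varphi_{m,\ell}\circ\varphi_{\infty,m}=\varphi_{\infty,\ell}$, since both send $\tc$ to $\tc\cap(\modA)_\ell$. They therefore induce a canonical map
\[
\Psi:\tors A\to \underleftarrow\lim \tors_\ell A,\qquad \tc\mapsto (\tc_\ell)_{\ell\in\mathbb N},
\]
and it suffices to show that $\Psi$ is bijective. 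It is then automatically a lattice isomorphism, because each $\varphi_{\infty,\ell}$ preserves meets (intersections) and hence joins.

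Injectivity follows from $\modA=\bigcup_\ell(\modA)_\ell$. If $\tc_\ell=\tc'_\ell$ for every $\ell$, then
\[
\tc=\bigcup_\ell \tc\cap(\modA)_\ell=\bigcup_\ell \tc'\cap(\modA)_\ell=\tc'.
\]

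Surjectivity is the only step needing an argument. Given a compatible family $(x_\ell)$, choose for each $\ell$ a torsion class $\tc^{(\ell)}$ with $\tc^{(\ell)}_\ell=x_\ell$, and identify $x_\ell$ with the subcategory $\tc^{(\ell)}\cap(\modA)_\ell$. Compatibility says $x_m\cap(\modA)_\ell=x_\ell$ for $\ell\le m$, so the subcategories $x_\ell$ form an increasing chain. Set $\tc:=\bigcup_\ell x_\ell$. We check that $\tc$ is closed under quotients and extensions:
\begin{itemize}
\item If $M\in x_\ell$ and $M\twoheadrightarrow N$, then $N\in\tc^{(\ell)}$ and $\lg N\le\ell$, so $N\in x_\ell$.
\item If $0\to L\to M\to N\to 0$ with $L\in x_a$ and $N\in x_b$, put $m=\max(a,b,\lg M)$. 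Compatibility gives $L,N\in x_m\subseteq\tc^{(m)}$. Since $\tc^{(m)}$ is closed under extensions, $M\in\tc^{(m)}\cap(\modA)_m=x_m$.
\end{itemize}
Hence $\tc\in\tors A$. Finally, $\tc\cap(\modA)_\ell=\bigcup_{m\ge\ell}x_m\cap(\modA)_\ell=x_\ell$, so $\Psi(\tc)=(x_\ell)$.

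The main subtlety is this extension-closure step. One must pass to a level $m$ large enough to contain the middle term $M$, and use compatibility to ensure that $L$ and $N$ still lie in $x_m$; closure of the single torsion class $\tc^{(m)}$ under extensions then does the rest. Alternatively, one can appeal to \cref{prop:morphismstodiagram}, which shows that the minimal elements $T(x_\ell)$ of the fibres form an increasing chain whose union is $\tc$, but the direct argument above is cleaner.
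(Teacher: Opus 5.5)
Your argument is correct and follows the paper's route: the maps $\varphi_{\infty,\ell}$, compatible by \cref{prop:composition}, make $\tors A$ a cone over the diagram. You also supply what the paper leaves implicit in passing from ``cone'' to ``limit'': injectivity of the induced map via $\modA=\bigcup_\ell(\modA)_\ell$, and surjectivity by checking that $\bigcup_\ell x_\ell$ is closed under quotients and extensions, with a small citation slip in your closing alternative, where the minimality of $T(x_\ell)$ in the fibre $\varphi_{\infty,\ell}^{-1}(x_\ell)\subseteq\tors A$ is \cref{thm:epi of lattices}, not \cref{prop:morphismstodiagram}.
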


\begin{figure}
    \centering
% https://q.uiver.app/#q=WzAsNixbOCwwLCIoXFx0b3JzIEEpXzAiXSxbNiwwLCIoXFx0b3JzIEEpXzEiXSxbNCwwLCJcXGNkb3RzIl0sWzIsMCwiIChcXHRvcnMgQSlfbCJdLFsyLDIsIlxcdG9ycyBBIl0sWzAsMCwiXFxjZG90cyJdLFszLDIsIlxcdmFycGhpX3tsLGwtMX0iLDAseyJzdHlsZSI6eyJoZWFkIjp7Im5hbWUiOiJlcGkifX19XSxbMiwxLCJcXHZhcnBoaV97MiwxfSIsMCx7InN0eWxlIjp7ImhlYWQiOnsibmFtZSI6ImVwaSJ9fX1dLFsxLDAsIlxcdmFycGhpX3sxLDB9IiwwLHsic3R5bGUiOnsiaGVhZCI6eyJuYW1lIjoiZXBpIn19fV0sWzQsMywiXFx2YXJwaGlfe1xcaW5mdHksbH0iLDAseyJzdHlsZSI6eyJoZWFkIjp7Im5hbWUiOiJlcGkifX19XSxbNCwxLCJcXHZhcnBoaV97XFxpbmZ0eSwxfSIsMCx7InN0eWxlIjp7ImhlYWQiOnsibmFtZSI6ImVwaSJ9fX1dLFs0LDAsIlxcdmFycGhpX3tcXGluZnR5LDB9IiwyLHsic3R5bGUiOnsiaGVhZCI6eyJuYW1lIjoiZXBpIn19fV0sWzUsMywiXFx2YXJwaGlfe2wrMSxsfSJdXQ==
\[\begin{tikzcd}
	\cdots && { \tors_\ell A} && \cdots && {\tors_2 A} && {\tors_1 A} \\
	\\
	&& {\tors A}
	\arrow["{\varphi_{\ell+1,\ell}}", from=1-1, to=1-3]
	\arrow["{\varphi_{\ell,\ell-1}}", two heads, from=1-3, to=1-5]
	\arrow["{\varphi_{3,2}}", two heads, from=1-5, to=1-7]
	\arrow["{\varphi_{2,1}}", two heads, from=1-7, to=1-9]
	\arrow["{\varphi_{\infty,\ell}}", two heads, from=3-3, to=1-3]
	\arrow["{\varphi_{\infty,2}}", two heads, from=3-3, to=1-7]
	\arrow["{\varphi_{\infty,1}}"', two heads, from=3-3, to=1-9]
\end{tikzcd}\]
    \caption{$\tors A$ as the inverse limit of the lattices $\tors_\ell A$ for $\ell\in\mathbb{N}$.}
        \label{fig:inverse limits}
\end{figure}

\begin{proposition}\label{prop: semidistributive ell}
    For every algebra $A$ and every $\ell \in \mathbb{N}$, the lattice $\tors_\ell A$ is completely semidistributive.
\end{proposition}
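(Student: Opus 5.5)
Since $\tors_\ell A$ is the image of $\tors A$ under $\varphi_{\infty,\ell}$, the natural first idea is to transport the complete semidistributivity of $\tors A$ \cite{DIRRT}. This cannot be done directly: \cref{thm:epi of lattices} only tells us that $\varphi_{\infty,\ell}$ preserves meets (intersections), and a join in $\tors_\ell A$ need not be the image of the join in $\tors A$. I therefore plan to argue inside $\tors_\ell A$ itself. I will use the explicit descriptions of meet and join given by \cref{prop:completelattice}. The meet of $\{(\tc_i)_\ell\}$ is $\bigcap_i (\tc_i)_\ell$. The join is the torsion part corresponding to $\bigcap_i (\fc_i)_\ell$ under the order-reversing isomorphism $\tors_\ell A\cong \ftors_\ell A$, so by \cref{thm: torsion pairs of bounded length}(2) it is the set of $M\in(\modA)_\ell$ with $\Hom_A(M,F)=0$ for all $F\in\bigcap_i(\fc_i)_\ell$. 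Because of the duality $\tors_\ell A\cong(\ftors_\ell A)^{op}$, and because $\ftors_\ell A$ is the analogous construction for $A^{op}$ via the standard duality $D$, it suffices to prove complete meet semidistributivity. Join semidistributivity then follows dually.

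\textbf{Meet semidistributivity.} Let $x=\tc_\ell$ and let $S=\{(\tc_i)_\ell\}$ satisfy $\tc_\ell\cap(\tc_i)_\ell=\tc_\ell\cap(\tc_j)_\ell=:\mathcal{Y}$ for all $i,j$. I want to show $\tc_\ell\cap\bigvee_i(\tc_i)_\ell=\mathcal{Y}$. The inclusion $\supseteq$ is clear. For $\subseteq$, I argue by contradiction, following the proof in \cite{DIRRT} and using \cref{prop:objectintheintersection} to reduce to bricks. Suppose $M\in\tc_\ell\cap\bigvee_i(\tc_i)_\ell$ is not in $\mathcal{Y}$. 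Choose a lift $\mathcal{Y}'\in\tors A$ of $\mathcal{Y}$, for instance $T(\mathcal{Y})$, the minimal lift from \cref{thm:epi of lattices}, and let $\fc_{\mathcal Y'}$ be its torsion-free class. Taking $f_{\mathcal Y'}M$ and then a minimal-length subquotient as in \cref{prop:objectintheintersection} and \cite[Lemma~3.8]{DIRRT}, I obtain a brick $B\in(\modA)_\ell$. This brick lies in $\tc_\ell$ (quotients of $M$ stay in $\tc$), lies in $\fc_{\mathcal Y'}$, and still lies in the join, since the join is closed under quotients inside $(\modA)_\ell$. Its length is at most $\ell$ automatically, since it is a subquotient of $M$.

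Next, since $B\notin\mathcal{Y}=\tc_\ell\cap(\tc_i)_\ell$ and $B\in\tc$, we get $B\notin\tc_i$ for every $i$. Hence $f_iB\neq 0$ with $f_iB\in(\fc_i)_\ell\cap\tc_\ell$. The key step is to produce a single nonzero module $F\in\bigcap_i(\fc_i)_\ell$ with $\Hom_A(B,F)\neq0$, which contradicts $B\in\bigvee_i(\tc_i)_\ell$. For this I will use the brick property. Every nonzero map from $B$ to a module in $\tc\cap\fc_{\mathcal Y'}$-type categories behaves well, so the candidate is the image of $B$ in $f_iB$; I would show this image is independent of $i$. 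The hypothesis that all $\tc_\ell\cap(\tc_i)_\ell$ coincide should force the torsion parts $t_iB$ (which lie in $\tc\cap\tc_i$ and have length $\le\ell$, hence in $\mathcal{Y}$) to be the same submodule of $B$ for all $i$. Then $F:=B/t_iB$ does not depend on $i$, lies in every $(\fc_i)_\ell$, and is a nonzero quotient of $B$, which gives the contradiction.

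\textbf{Main obstacle.} The delicate point is showing that $t_iB$ is independent of $i$. I would argue that $t_iB$ is the largest submodule of $B$ lying in $\tc_i$, and that the submodules of $B$ lying in $\mathcal{Y}$ form a set with a largest element, namely the $\mathcal{Y}'$-torsion part, which is zero because $B\in\fc_{\mathcal Y'}$. Since $t_iB\in\mathcal{Y}$, this gives $t_iB=0$ for all $i$. So $F=B$ itself lies in every $(\fc_i)_\ell$, and $\Hom_A(B,B)\neq0$ contradicts $B\in\bigvee_i(\tc_i)_\ell$. This shortcut needs $t_iB\in\mathcal{Y}$, that is, $t_iB\in\tc$. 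This is where some care is needed: submodules of $B\in\tc$ need not lie in $\tc$. If that fails, I would instead pick $B$ as a minimal-length object of $\tc_\ell\cap(\bigvee_i(\tc_i)_\ell)\setminus\mathcal{Y}$ and use minimality to conclude $t_iB\in\mathcal{Y}$, mimicking the proof of \cite[Theorem~3.1]{DIRRT}.
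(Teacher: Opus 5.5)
Your reduction to one half via $\tors_\ell A\cong(\ftors_\ell A)^{op}$ and the duality $D$ is fine; the paper does the same. The half you actually prove, however, has a genuine gap at exactly the step you flag.

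The main line needs $t_iB\in\mathcal{Y}$, i.e.\ $t_iB\in\tc$. Nothing in your set-up provides this: $t_iB$ is a submodule of $B\in\tc$, and torsion classes are not closed under submodules. The brick property of $B$, which you announce as the key tool, is never actually used.

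The fallback (``use minimality to conclude $t_iB\in\mathcal{Y}$'') is where the whole proof lives, and as stated it aims minimality at the wrong object. Minimal length in $\bigl(\tc_\ell\cap\bigvee_i(\tc_i)_\ell\bigr)\setminus\mathcal{Y}$ gives information about \emph{quotients} of $B$, because both $\tc_\ell$ and the join are closed under quotients. It says nothing about submodules of $B$.

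The missing idea is to look at $M/t_iM$ rather than $t_iM$. Take $M$ of minimal length in $\bigl(\tc_\ell\cap\bigvee_i(\tc_i)_\ell\bigr)\setminus\mathcal{Y}$.
Since $M\neq0$ lies in the join and $\Hom_A(M,M)\neq0$, we have $M\notin\bigcap_i(\fc_i)_\ell$, so $t_iM\neq0$ for some $i$.
Then $M/t_iM$ is a strictly shorter object of $\tc_\ell\cap\bigvee_i(\tc_i)_\ell$. By minimality it lies in $\mathcal{Y}\subseteq(\tc_i)_\ell$. It also lies in $(\fc_i)_\ell$, so it is zero.
Hence $M\in\tc_\ell\cap(\tc_i)_\ell=\mathcal{Y}$, a contradiction.

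Neither bricks nor the lift $\mathcal{Y}'$ are needed. Your $B$ can still be rescued: choose it as a minimal-length nonzero object of $\tc\cap T(\bigcup_i(\tc_i)_\ell)\cap\fc_{\mathcal{Y}'}$, rather than as a ``subquotient'' of $M$, which need not stay in $\tc$ or in the join. Such a $B$ has all proper quotients in $\mathcal{Y}$, and the same line closes your route.

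This is the paper's proof, which uses the first term $M_1\in(\tc_s)_\ell$ of a filtration of $M$ by $\bigcup_s(\tc_s)_\ell$ in place of $t_iM$. The paper's sentence ``Moreover, $M_1\in\tc_\ell$'' is of the same unjustified kind as your $t_iB\in\tc$, but it is not needed. Minimality gives $M/M_1\in\tc'_\ell\subseteq(\tc_s)_\ell$, so $M\in(\tc_s)_\ell\cap\tc_\ell=\tc'_\ell$ by closure under extensions.

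A side remark: your reason for not transporting from $\tors A$ is inaccurate. Your own formula expresses $\bigvee_i(\tc_i)_\ell$ in terms of $\bigcap_i(\fc_i)_\ell=(\bigcap_i\fc_i)_\ell$, which shows that it equals $(\bigvee_i\tc_i)_\ell$. So $\varphi_{\infty,\ell}$ does preserve joins. What does not obviously transport is the hypothesis: constancy of $\tc_\ell\cap(\tc_i)_\ell$ need not give constancy of $\tc\cap\tc_i$.
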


\begin{proof}
    We claim that $\tors_\ell A$ is a completely join semidistributive lattice.
    Let $\tc_\ell \in\tors_\ell A$ and $S=\{(\tc_s)_\ell\}\subset \tors_\ell A$ be such that for every $(\tc_s)_\ell \in S$ we have 
    \[(\tc_\ell)\cap(\tc_s)_\ell = \tc'_\ell\] for some fixed $\tc'_\ell \in \tors_\ell A$.
    Let us denote $\overline{\tc}_\ell = \bigvee\limits_{S} (\tc_s)_\ell$. 
    We need to show that \( \tc_\ell \cap \overline{\tc}_\ell=\tc'_\ell.\)

    Clearly, $(\tc'_\ell)\subset \tc_\ell \cap \overline{\tc}_\ell$. 
    Suppose there is an object $M$ in $\tc_\ell \cap \overline{\tc}_\ell$ such that $M \not \in \tc'_\ell$.
    Without loss of generality, we can assume that such $M$ is of minimal length.
    Note that our assumption implies that there is no $(\tc_s )_\ell \in S$ such that $M\in (\tc_s)_\ell$.
    Then $M$ is filtered by objects in $\bigcup\limits_S (\tc_s)_\ell$. 
    In particular, there is a proper nonzero submodule $M_1$ of $M$ such that $M_1\in (\tc_s)_\ell\in S$. 
    Moreover, $M_1 \in \tc_\ell$.
    Then $M/M_1$ is also an object of $\tc_\ell\cap\overline{\tc}_\ell$ such that $M/M_1 \not \in \tc'_\ell$ and $0 \neq \lg(M/M_1) < \lg(M)$, contradicting the minimal length of $M$. 
    This leads to a contradiction and, therefore $\tors_\ell A$ is a completely join semidistributive lattice.
    
    Similarly, we can prove that $\ftors_\ell A$ is completely join semidistributive. 
Then the complete meet semidistributivity of $\tors_\ell A$ follows from the complete join semidistributivity of $\ftors_\ell A$ together with \cref{prop:completelattice}.
\end{proof}

%%%%%%%%%%%%
\subsection{Bricks and labelling of $\tors_\ell A$}\label{subsec:bricklabelling-l}
The lattice $\tors_\ell A$ inherits many important properties from $\tors A$ as we now show. 
In the remainder of the section we follow closely the exposition given in \cite{Thomas}.

Observe that every object $M \in \modA$ can only be filtered by objects $L$ such that $\lg(L)\leq \lg(M)$.
In the following proposition, we show that a torsion class $\tc_\ell \in \tors_\ell A$ is the minimal torsion class containing the set $Br_\ell(\tc)$ of all its bricks in $\tc_\ell$.
In lattice-theoretic terms, we show that $\tc_\ell$ is the join of the minimal torsion class $T(B)_\ell$ containing each brick $B$ in $Br_\ell(\tc)$.

\begin{proposition}\label{prop:allbricks}
Let $\tc_\ell\in \tors_lA$ and let $Br_\ell(\tc)$ be the set of all the bricks $B\in \tc_\ell$.
Then 
\[
\tc_\ell = \bigvee_{B\in Br_\ell(\tc)} T(B)_\ell.
\]
\end{proposition}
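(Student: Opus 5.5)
Write $\tc'_\ell := \bigvee_{B\in Br_\ell(\tc)} T(B)_\ell$. I will prove the two inclusions. First I note that the join in $\tors_\ell A$ is computed by the join in $\tors A$. By \cref{thm:epi of lattices}, $\varphi_{\infty,\ell}$ is a lattice morphism, so $\tc'_\ell = \left(T(Br_\ell(\tc))\right)_\ell$, where $T(Br_\ell(\tc))=\Filt(\Fac Br_\ell(\tc))$ is the smallest torsion class in $\modA$ containing all these bricks. Completeness may be needed since $Br_\ell(\tc)$ can be infinite. If one prefers to avoid checking that $\varphi_{\infty,\ell}$ preserves infinite joins, one can take $\tc'_\ell$ to be the meet of all elements of $\tors_\ell A$ containing every $T(B)_\ell$ (\cref{prop:completelattice}); the argument below only uses that $\tc'_\ell$ lies in $\tors_\ell A$ and contains $Br_\ell(\tc)$.

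The inclusion $\tc'_\ell\subseteq\tc_\ell$ is immediate. Each $B\in Br_\ell(\tc)$ lies in the torsion class $\tc$, so $T(B)\subseteq\tc$ and hence $T(B)_\ell\subseteq\tc_\ell$. Since $\tc_\ell$ is an upper bound of all the $T(B)_\ell$, it contains their join.

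For the reverse inclusion, choose a torsion class $\tc''$ with $\tc''_\ell=\tc'_\ell$ and $\tc''\subseteq\tc$; for instance $\tc''=T(Br_\ell(\tc))$ or $\tc''\cap\tc$ works. Suppose $\tc_\ell\neq\tc''_\ell$. By \cref{prop:objectintheintersection}, there is a brick $M\in(\modA)_\ell$ with either $M\in\tc_\ell\cap\fc''_\ell$ or $M\in\tc''_\ell\cap\fc_\ell$. The second case is impossible: $\tc''\subseteq\tc$ gives $M\in\tc\cap\fc=0$, while $M$ is nonzero. In the first case $M$ is a brick of length at most $\ell$ lying in $\tc$, so $M\in Br_\ell(\tc)\subseteq\tc''_\ell$. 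Together with $M\in\fc''$ this forces $\Hom_A(M,M)=0$, so $M=0$, a contradiction. Hence $\tc_\ell=\tc''_\ell=\tc'_\ell$.

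The main point needing care is the identification of the join in $\tors_\ell A$ with a class of the form $\tc''_\ell$ for a torsion class $\tc''\subseteq\tc$ containing the bricks, so that \cref{prop:objectintheintersection} applies with the correct containment. Once $\tc''$ is chosen with $\tc''\subseteq\tc$ and $Br_\ell(\tc)\subseteq\tc''$, the remainder is the brick-minimality argument above. That argument ultimately rests on \cite[Lemma~3.8]{DIRRT}, through \cref{prop:objectintheintersection}: a minimal-length object of $\tc\cap\fc''$ is a brick, and its length is at most $\ell$.
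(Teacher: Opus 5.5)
Your argument is correct, but it takes a different route from the paper.

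The paper proves $\tc_\ell\subseteq\uc_\ell:=\bigvee_{B} T(B)_\ell$ directly, by induction on $\lg(M)$. If $M\in\tc_\ell$ is not a brick, a nonzero non-invertible $f\in\End_A(M)$ gives $0\to f(M)\to M\to M/f(M)\to 0$. Both end terms are quotients of $M$, hence lie in $\tc_\ell$, and both have strictly smaller length. Since $\uc_\ell$ is closed under extensions in $(\modA)_\ell$, the induction closes.

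You instead argue by contradiction through \cref{prop:objectintheintersection}. First you arrange a representative $\tc''\subseteq\tc$ of the join: take any representative $\uc$ and set $\tc''=\uc\cap\tc$. This works because $\tc'_\ell\subseteq\tc_\ell$ and meets in $\tors_\ell A$ are intersections. The containment $\tc''\subseteq\tc$ rules out the case $M\in\tc''_\ell\cap\fc_\ell$. In the remaining case the brick lies in $Br_\ell(\tc)\subseteq\tc''$ and also in $\fc''$, which is absurd.

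Both proofs rest on the same mechanism: the image of a nonzero non-invertible endomorphism is a strictly shorter object that stays in the relevant class. The paper applies this to $M$ itself; in your proof it is hidden in the minimal-length argument behind \cite[Lemma~3.8]{DIRRT}.

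What each route buys:
\begin{itemize}
\item The paper's version is self-contained and proves more: every object of $\tc_\ell$ is an iterated extension of bricks from $Br_\ell(\tc)$.
\item Yours isolates a reusable separation principle: an element of $\tors_\ell A$ contained in $\tc_\ell$ and containing every brick of $\tc_\ell$ must equal $\tc_\ell$. The price is the step of choosing a suitable representative.
\end{itemize}

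One small point. Your first justification of $\tc'_\ell=T(Br_\ell(\tc))_\ell$ via \cref{thm:epi of lattices} only covers finite joins, so you are right to rely on the fallback. The identity also holds directly. $T(Br_\ell(\tc))_\ell$ is an upper bound of the $T(B)_\ell$. Any representative of another upper bound contains $Br_\ell(\tc)$, hence contains $T(Br_\ell(\tc))$, so $T(Br_\ell(\tc))_\ell$ is the least upper bound.
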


\begin{proof}
Let \( \uc_\ell = \bigvee_{B\in Br_\ell(\tc)} T(B)_\ell \). 
Clearly $\uc_\ell \subset \tc_\ell$. 
Let $M \in \tc_\ell$.
We want to show that $M \in \uc_\ell$.
We show this by induction on $\lg(M)$.
If $\lg(M)=1$ then $M$ is a brick because it is simple and there is nothing to prove. 

Now suppose that $\lg(M) >1$.
If $M$ is a brick, there is nothing to prove.
Suppose that $M$ is not a brick.
Then there is a map $f \in \End_A(M)$ which is neither zero nor an isomorphism inducing the following short exact sequence. 
\[
0 \to f(M) \to M \to M/f(M) \to 0
\]
By the choice of $f$ we get that $\lg (f(M)) < \lg (M)$ and $\lg (M/f(M)) < \lg (M)$ and then, by inductive hypothesis, both $f(M), M/f(M) \in \uc_\ell$.
The result then follows because $\uc_\ell$ is closed under extensions in $(\modA)_\ell$.
\end{proof}

Recall that an element $x$ in a lattice $\pc$ is said to be completely join-irreducible if there is no subset $Y=\{y \in \pc \mid y < x\} \subset \pc$ such that $x= \bigvee_{y\in Y} y$.
We now give a characterisation of the join-irreducible elements of $\tors_\ell A$. 
This is a generalisation of \cite[Theorem 1.5]{BCZ}, whose proof we simply adapt to our context.

\begin{theorem}
The map $T(-)_\ell : (\modA)_\ell \to \tors_\ell A$ induces a bijection between the bricks in $(\modA)_\ell$ and the complete join irreducible in $\tors_\ell A$.
\end{theorem}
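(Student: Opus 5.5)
We follow the three steps of the proof of \cite[Theorem~1.5]{BCZ}, working inside $(\modA)_\ell$ and using lengths for all inductions. First, for a brick $B\in(\modA)_\ell$, we show that $T(B)_\ell$ is completely join irreducible. We consider the subcategory
\[
\tc^B_\ell=\{M\in T(B)_\ell \mid \Hom_A(M,B)=0\}.
\]
This is the $\ell$-truncation of the torsion class $T(B)\cap{}^\perp B$; the latter is closed under quotients and extensions, so $\tc^B_\ell\in\tors_\ell A$. Since $B$ is a brick, $\Hom_A(B,B)\neq 0$, so $B\notin\tc^B_\ell$ and hence $\tc^B_\ell\subsetneq T(B)_\ell$. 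We then claim that every $\uc_\ell\subsetneq T(B)_\ell$ in $\tors_\ell A$ lies in $\tc^B_\ell$. If some $M\in\uc_\ell$ had a nonzero map $f:M\to B$, let $N$ be its image. Then $N\in\uc_\ell$ and $N$ is a nonzero submodule of $B$ lying in $T(B)=\Filt\Fac B$. The standard argument of \cite{BCZ} then says that a nonzero submodule of $B$ lying in $T(B)$ has a nonzero map from $B$ (from the bottom factor of its filtration, which lies in $\Fac B$). Composing $B\to N\hookrightarrow B$ gives a nonzero endomorphism of $B$, hence an isomorphism, so $N=B$. It follows that $B\in\uc_\ell$, and therefore $T(B)_\ell\subseteq\uc_\ell$, a contradiction. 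So the join of all elements strictly below $T(B)_\ell$ is contained in $\tc^B_\ell\neq T(B)_\ell$, which gives complete join irreducibility.

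Second, we show surjectivity. Let $\tc_\ell$ be completely join irreducible. By \cref{prop:allbricks}, $\tc_\ell=\bigvee_{B\in Br_\ell(\tc)}T(B)_\ell$. Each $T(B)_\ell\subseteq\tc_\ell$, and if all were strict inclusions, $\tc_\ell$ would be a join of strictly smaller elements. Hence $\tc_\ell=T(B)_\ell$ for some brick $B$ of length at most $\ell$.

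Third, we show injectivity. Suppose $T(B)_\ell=T(B')_\ell$ with $B\not\cong B'$. Then $B'\in T(B)$, so $B'$ has a nonzero map from $B$, and symmetrically $B$ has a nonzero map from $B'$. We then repeat the argument of \cite{BCZ}, which uses the element $\tc^B_\ell$: it is the unique element covered by $T(B)_\ell$, and $T(B)_\ell\cap\fc^B_\ell$ detects $B$. Concretely, $B'$ lies in $T(B)_\ell$, and it lies in $\tc^B_\ell$ iff $\Hom(B',B)=0$. Since $\Hom(B',B)\neq 0$, $B'\notin\tc^B_\ell$. We then take $N$, the image of a nonzero map $B'\to B$. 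As in the first step, $N\in T(B)$ is a submodule of $B$, so $N=B$ and $B$ is a quotient of $B'$. Symmetrically, $B'$ is a quotient of $B$, so the two have equal length and the surjections are isomorphisms, giving $B\cong B'$.

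The main obstacle is the lemma used twice: a nonzero submodule $N\subseteq B$ with $N\in T(B)$ receives a nonzero map from $B$. This needs care because $T(B)=\Filt\Fac B$ rather than $\Fac B$. We handle it by taking the first term $N_1\neq 0$ of a filtration of $N$ with factors in $\Fac B$. Then $N_1\in\Fac B$, so there is a surjection $B^k\to N_1$, and one component gives a nonzero map $B\to N_1\subseteq N$. Everything else transfers verbatim, because all constructions (images, quotients, torsion parts) stay within length $\ell$ by \cref{thm: torsion pairs of bounded length}.
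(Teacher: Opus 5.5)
Your proof is correct and follows the paper's argument. It uses the same unique lower cover $(T(B)\cap{}^\perp B)_\ell=(T(B)\cap{}^\perp F(B))_\ell$ of $T(B)_\ell$, gets surjectivity from \cref{prop:allbricks}, and gets injectivity from epimorphisms $B\to B'$ and $B'\to B$, with one difference: the paper cites \cite[Lemma~1.7(1)]{AsaiSemibricks} for the fact that any nonzero map from an object of $T(B)$ to $B$ is surjective, whereas you prove it inline (correctly, via the bottom factor of a $\Filt(\Fac B)$-filtration and the brick property); the detour through $\tc^B_\ell$ in your injectivity step is not needed.
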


\begin{proof}
Let $B$ be a brick in $(\modA)_\ell$ and take $T(B)_\ell \in \tors_\ell A$. 
We need to show that there is a unique maximal element of $\tors_\ell A$, denoted by $\tc_\ell$, such that such that $\tc_\ell \subsetneq T(B)_\ell$.
We claim that this element is of the form $(T(B)\cap {}^\perp F(B))_\ell$, where $F(B)$ is the minimal torsion-free class containing $B$.
% and ${}^\perp F(B)$ is the torsion class consisting of all the objects in $\modA$ receiving no nonzero maps for objects in $F(B)$.

Since $B\not\in {}^\perp F(B)$,  $(T(B)\cap {}^\perp F(B))_\ell$ is strictly contained in $T(B)_\ell$.
Let $\tc_\ell $ be an element of $\tors_\ell A$ strictly contained in $T(B)_\ell$.
Since $\tc_\ell$ is closed under quotients there is no object $X$ in $\tc_\ell$ such that there is an epimorphism $p: X \to B$.
%Then \cite[Lemma 1.7.(1)]{AsaiSemibricks} implies that $\tc\cap\tors_\ell A \subset T(B)_\ell$.
Then \cite[Lemma 1.7.(1)]{AsaiSemibricks} implies that $\tc_\ell  \subset {}^\perp B = {}^\perp F(B)$. 
Hence we can conclude that $\tc_\ell \subset (T(B)\cap {}^\perp F(B))_\ell$. 
This shows that $T(B)_\ell$ is a join-irreducible element in $\tors_\ell A$ for every brick $B\in (\modA)_\ell$. 
The fact that every join-irreducible is of the form $T(B)_\ell$ for a brick $B \in Br_\ell(A)$ follows directly from \cref{prop:allbricks}.

Now, suppose that $\tc_\ell$ is a join irreducible element in $\tors_\ell A$ such that $\tc_\ell =T(B)_\ell=T(B')_\ell$ for a pair of bricks $B, B'$ in $(\modA)_\ell$.
Then \cite[Lemma 1.7.(1)]{AsaiSemibricks} implies the existence of epimorphisms $p: B\to B'$ and $p': B' \to B$. 
Therefore $B$ is isomorphic to $B'$ and we can conclude that the map $T(-)_\ell : (\modA)_\ell \to \tors_\ell A$ is injective on bricks, finishing the proof.
\end{proof}

%We denote by $\tc \strsubset \tc'$ whenever $\tc$ is maximally contained in $\tc'$. 
Given two elements $\tc_\ell$ and $\tc'_\ell$ in $\tors_\ell A$ we say that $\tc_\ell$ is maximally contained in $\tc'_\ell$ if $\tc_\ell \subset \tc'_\ell$ and for every torsion class $\tc''$ verifying $\tc_\ell \subset \tc''_\ell\subset\tc'_\ell$ 
either $\tc_\ell = \tc''_\ell$ or $\tc''_\ell =\tc'_\ell$. 
As a corollary of the previous result and \cite[Proposition 9.1]{Thomas} we obtain the following.

\begin{corollary}\label{cor:bricklabelling_l}
    Suppose that  $\tc_\ell, \tc'_\ell \in \tors_\ell A$ are such that $\tc_\ell$ is maximally contained in $\tc'_\ell$. 
    Then, up to isomorphism, there exists a unique brick $B\in (\modA)_\ell$ such that 
    \[
    \tc'_\ell = T(B)_\ell \vee \tc_\ell.
     \]
\end{corollary}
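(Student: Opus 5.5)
The plan is to obtain the statement from the lattice-theoretic facts already established, as the paper indicates, via \cite[Proposition 9.1]{Thomas}. That result says that in a completely semidistributive lattice, every cover relation $x \lessdot y$ has a unique completely join-irreducible element $j$ which is minimal among elements satisfying $j \leq y$ and $j \not\leq x$; moreover $y = x \vee j$. By \cref{prop: semidistributive ell}, $\tors_\ell A$ is completely semidistributive, and by \cref{prop:completelattice} it is a complete lattice. So for the cover $\tc_\ell \lessdot \tc'_\ell$ we obtain a unique completely join-irreducible $j$ with $\tc'_\ell = \tc_\ell \vee j$. The preceding theorem identifies the completely join-irreducibles with $T(B)_\ell$ for bricks $B \in (\modA)_\ell$, uniquely up to isomorphism. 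This gives existence.

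For uniqueness I would argue directly rather than rely only on the abstract statement, since the claim is uniqueness among all bricks with $\tc'_\ell = T(B)_\ell \vee \tc_\ell$, not just among minimal ones. Suppose $B$ is such a brick. Then $B \in \tc'_\ell$ and $B \notin \tc_\ell$, since otherwise the join would equal $\tc_\ell$.

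I expect the obstacle to be forcing $B$ into $\tc'_\ell \cap \fc_\ell$. If $tB \neq 0$ with respect to $\tc$, then the quotient $B \to fB$ has $fB \in \tc'_\ell \cap \fc_\ell$ of smaller length. Hence $T(fB)_\ell \vee \tc_\ell$ strictly contains $\tc_\ell$, and by maximality it equals $\tc'_\ell$. So a second candidate brick appears, and uniqueness must rule this out.

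To handle this, I would show that for a cover, $\tc'_\ell \cap \fc_\ell$ consists of the objects filtered by a single brick $B_0$. Take $B_0$ of minimal length in $\tc'_\ell \cap \fc_\ell$; it is a brick by \cref{prop:objectintheintersection} and \cite[Lemma~3.8]{DIRRT}. By \cite[Lemma 1.7]{AsaiSemibricks}, any brick $B$ with $\tc_\ell \vee T(B)_\ell = \tc'_\ell$ admits a nonzero map to an object of $\fc$ while lying in $T(B_0) \vee \tc$. Combining this with the join-irreducible characterization $T(B)_\ell \cap {}^\perp F(B)$ from the previous proof, I would deduce $T(B)_\ell = T(B_0)_\ell$, and hence $B \cong B_0$ by the injectivity proved in the preceding theorem.

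If this last step resists, I would instead interpret ``unique'' as the canonical label given by \cite[Proposition 9.1]{Thomas}, namely the minimal such join-irreducible, which is how the corollary is phrased as following from that proposition.
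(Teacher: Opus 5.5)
Your existence argument is the paper's: complete semidistributivity of $\tors_\ell A$, \cite[Proposition~9.1]{Thomas}, and the bijection between bricks and completely join-irreducibles. The direct uniqueness argument, however, cannot work, because what it aims to prove is false. The step ``every brick $B$ with $\tc_\ell\vee T(B)_\ell=\tc'_\ell$ satisfies $T(B)_\ell=T(B_0)_\ell$'' fails.

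Since $\tc_\ell$ is maximally contained in $\tc'_\ell$, the argument you applied to $fB$ applies to any brick. So the bricks with $\tc'_\ell=T(B)_\ell\vee\tc_\ell$ are exactly the bricks in $\tc'_\ell\setminus\tc_\ell$, and there are typically many. In \cref{ex: Kronecker 1}, $\{\rep{2}\}$ is maximally contained in $(\modA)_2$. Both $B=\rep{1}$ and every $B=\rep{1\\2}_\lambda$ with $\lambda\in\mathbb{P}^1(\mathbb{C})$ satisfy $T(B)_2\vee\{\rep{2}\}=(\modA)_2$. Yet $T(\rep{1\\2}_\lambda)_2=\tc_2^{\{\lambda\}}\neq\tc_2^{\varnothing}=T(\rep{1})_2$.

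This is precisely your scenario, with $tB=\rep{2}$ and $fB=\rep{1}$. The ``second candidate'' is a genuine second solution, so no use of \cite[Lemma~1.7]{AsaiSemibricks} or of the lower cover $(T(B)\cap{}^\perp F(B))_\ell$ can exclude it. Read literally, the uniqueness clause of the corollary is false. Relatedly, your first paragraph overstates Thomas's result: it gives a unique \emph{minimal} completely join-irreducible $j$ with $\tc_\ell\vee j=\tc'_\ell$, not a unique one.

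So you should commit to your fallback reading (the canonical label). With it, your argument is exactly the paper's one-line derivation:
\begin{itemize}
\item By \cref{prop: semidistributive ell}, the set $\{\uc_\ell\in\tors_\ell A \mid \tc_\ell\vee\uc_\ell=\tc'_\ell\}$ is closed under arbitrary meets, so it has a minimum $j$.
\item $j$ is completely join-irreducible: any $w<j$ has $w\vee\tc_\ell\neq\tc'_\ell$, hence $w\subseteq\tc_\ell$.
\item The preceding theorem writes $j=T(B_0)_\ell$ for a brick $B_0$ unique up to isomorphism.
\end{itemize}

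If you want a direct uniqueness statement close to your attempt, the right one is uniqueness among bricks in $\tc'_\ell\cap\fc_\ell$. It goes as follows.
\begin{itemize}
\item Minimality of $j$ forces every proper quotient $N$ of $B_0$ to lie in $\tc_\ell$. Otherwise $B_0\in T(N)$, which gives a nonzero, non-injective endomorphism $B_0\to N\to B_0$.
\item Let $B\in\tc'_\ell\cap\fc_\ell$ be a brick. From $B_0\in T(B)$ you get a nonzero $g\colon B\to B_0$.
\item From $B\in T(\tc_\ell\cup\{B_0\})$ and $\Hom_A(\tc_\ell,B)=0$ you get a nonzero $h\colon B_0\to B$.
\item $h$ is injective, because a nonzero proper quotient of $B_0$ lies in $\tc_\ell$ and cannot embed in $B\in\fc_\ell$.
\item Hence $h\circ g$ is a nonzero, so invertible, endomorphism of $B$. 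This forces $h$ to be an isomorphism, so $B\cong B_0$.
\end{itemize}
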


This result implies, in particular that we can assign a brick to every arrow in the Hasse quiver $\hc_\ell(A)$ of $\tors_\ell A$, which is usually known as the brick labelling of $\hc_\ell(A)$. 

Although the previous result determines completely the brick labelling of $\tors_\ell A$, it may be useful to determine the brick $B$ labelling a given arrow $\tc_\ell \xrightarrow{B} \tc'_\ell$ in $\hc_\ell(A)$ directly from $\tc_\ell$ and $\tc'_\ell$. 
This problem was solved in \cite{BCZ} for $\tors A$ using the notion of minimal extending modules that we adapt for $\tors_\ell A$.

\begin{definition}\label{def:minext}
Let $\tc_\ell\in \tors_\ell A$ and $M\in (\modA)_\ell \setminus \tc_\ell$. 
We say that $M$ is a minimal extending module for $\tc_\ell$ if the following conditions are satisfied:
\begin{enumerate}
\item Every proper quotient $N$ of $M$ belongs to $\tc_\ell$.
\item If $0 \to M \to X \to T \to 0$ is a non-split exact sequence with $T\in \tc_\ell$ then $X\in \tc_\ell$. 
\item $\Hom_A (T, M)=0$ for every $T \in \tc_\ell$.
\end{enumerate}
\end{definition}

The first thing to notice is that the third condition in \cref{def:minext} together with \cref{thm: torsion pairs of bounded length} implies that a minimal extending module $M$ for $\tc_\ell$ satisfies $M \in \fc_\ell$.
The following is the corresponding extension of \cite[Theorem~2.8]{BCZ}.% and their proof works for $\tors_\ell A$, which we include for the sake of completeness. 

\begin{proposition}\label{prop:minimal extending}
Let $\tc_\ell, \tc_\ell' \in \tors_\ell A$ be such that $\tc_\ell$ is maximally contained in $\tc'_\ell$. 
Then there is a unique minimal extending module $B\in (\modA)_\ell \setminus \tc_\ell$ such that $\tc'_\ell = T(\{B\} \cup \tc_\ell)$.
Moreover, $B$ is a brick. 
\end{proposition}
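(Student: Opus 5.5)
The strategy is to adapt the argument of \cite[Theorem~2.8]{BCZ} to $(\modA)_\ell$, using the brick labelling of \cref{cor:bricklabelling_l}. Let $\tc_\ell$ be maximally contained in $\tc'_\ell$, and lift both to $\modA$. By \cref{thm:epi of lattices} we may take $\tc$ to be the maximal element of $\varphi_{\infty,\ell}^{-1}(\tc_\ell)$ and $\tc'$ to be the minimal element $T(\tc'_\ell)$ of $\varphi_{\infty,\ell}^{-1}(\tc'_\ell)$. Then $\tc\cap\tc'$ and $\tc\vee\tc'$ reduce to $\tc_\ell$ and $\tc'_\ell$. Since $\tc_\ell\neq\tc'_\ell$, \cref{prop:objectintheintersection} gives a nonzero object of $\tc'_\ell\cap\fc_\ell$.

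\textbf{Existence.} Let $B$ be an object of minimal length in $\tc'_\ell\cap\fc_\ell$. As in the proof of \cref{prop:objectintheintersection}, $B$ is a brick by \cite[Lemma~3.8]{DIRRT}. Since $B\in\tc'_\ell\setminus\tc_\ell$, maximality gives $\tc'_\ell=T(\{B\}\cup\tc_\ell)_\ell$. We then check the three conditions of \cref{def:minext}.
\begin{enumerate}
\item Condition (3) holds because $B\in\fc_\ell$, by \cref{thm: torsion pairs of bounded length}.
\item For condition (1), let $N$ be a proper quotient of $B$. Then $N\in\tc'_\ell$. Consider its torsion sequence $0\to tN\to N\to fN\to 0$ with respect to $\tc$. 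Here $fN\in\tc'_\ell\cap\fc_\ell$ and $\lg(fN)<\lg(B)$, so minimality forces $fN=0$, and hence $N\in\tc_\ell$.
\item For condition (2), take a non-split sequence $0\to B\to X\to T\to 0$ with $T\in\tc_\ell$ and $X\in(\modA)_\ell$. Then $X\in\tc'_\ell$. If $X\notin\tc_\ell$, then $fX\neq 0$ lies in $\tc'_\ell\cap\fc_\ell$, and the composite $B\to X\to fX$ is nonzero, since otherwise $T$ would surject onto $fX$ and $\Hom(\tc,\fc)=0$ would be violated.
\end{enumerate}

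Condition (2) is the step I expect to be the main obstacle. I would try to show that $fX$ must then lie in the smallest layer generated by $B$. Concretely, I would prove that $\tc'_\ell\cap\fc_\ell=\Filt(B)\cap(\modA)_\ell$, copying the BCZ argument that a maximal covering relation has a single brick label: every object there surjects onto $B$ by Asai's lemma \cite[Lemma~1.7]{AsaiSemibricks}, and repeated minimal-length arguments then show it is filtered by $B$. Granting this, $fX$ is filtered by copies of $B$, and the map $B\to fX$ together with a length count forces $B\to fX$ to be an isomorphism. Then $X\to fX$ would split the sequence, a contradiction. Everything here happens inside $(\modA)_\ell$, since all objects involved are subquotients of $X$ or of length at most $\ell$.

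\textbf{Uniqueness.} Let $M$ be any minimal extending module with $\tc'_\ell=T(\{M\}\cup\tc_\ell)_\ell$. Then $M\in\tc'_\ell\cap\fc_\ell$, so by the filtration description $M$ surjects onto $B$ and $B$ embeds in $M$. Condition (1) for $M$ says every proper quotient of $M$ lies in $\tc_\ell$, whereas $B\notin\tc_\ell$. Hence the surjection $M\to B$ has zero kernel, and $M\cong B$. Finally, $B$ agrees with the brick label of \cref{cor:bricklabelling_l}: both are the unique join-irreducible $T(B)_\ell$ satisfying $T(B)_\ell\vee\tc_\ell=\tc'_\ell$, so the two agree up to isomorphism.
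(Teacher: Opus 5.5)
Your choice of $B$ as an object of minimal length in $\tc'_\ell\cap\fc_\ell$ is fine; the paper instead takes a minimal non-$\tc_\ell$ quotient of an arbitrary $M\in\tc'_\ell\setminus\tc_\ell$. You also correctly show that $B$ is a brick satisfying conditions (1) and (3) of \cref{def:minext}, with $\tc'_\ell=T(\{B\}\cup\tc_\ell)_\ell$. The gap is condition (2), and uniqueness with it. Both rest on the identity $\tc'_\ell\cap\fc_\ell=\Filt(B)\cap(\modA)_\ell$, which you announce but do not prove, and the sketched justification does not hold up for three reasons. First, the lemma of Asai you invoke is a statement about objects of $T(B)$ (that is how it is used earlier in the paper). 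Objects of $\tc'_\ell\cap\fc_\ell$ only lie in $T(\tc_\ell\cup\{B\})$, so the lemma does not give epimorphisms onto $B$. Second, the ``repeated minimal-length arguments'' are unspecified. The obvious inductive step is to show that the cokernel of an embedding $B\hookrightarrow M$ lies again in $\fc$, and that step is itself an application of condition (2), so as sketched the argument is circular. Third, even granting $fX\in\Filt(B)$, lengths alone do not force $B\to fX$ to be an isomorphism. You would also need its cokernel, which is a quotient of $T$ and hence in $\tc$, to lie in $\fc$; that is, you would need $\Filt(B)$ to be closed under cokernels.

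The missing idea is that no filtration is needed, only an epimorphism $p\colon X\twoheadrightarrow B$; the paper takes this from \cite[Proposition~2.6]{BCZ}. Write $f\colon B\to X$ for the inclusion. If $pf=0$, then $p$ factors through $X\to T$, so $B\in\Fac T\subseteq\tc_\ell$, which is impossible. Hence $pf$ is a nonzero endomorphism of the brick $B$, so it is an automorphism and the sequence splits.

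With your choice of $B$, the epimorphism is elementary. The class $\tc'_\ell\cap({}^\perp B)_\ell$ contains $\tc_\ell$ (because $B\in\fc_\ell$) but does not contain $B$, so by maximality it equals $\tc_\ell$. Therefore every nonzero $M\in\tc'_\ell\cap\fc_\ell$ satisfies $\Hom_A(M,B)\neq 0$. Moreover, every nonzero $g\colon M\to B$ is surjective, since $\Image g$ is a nonzero object of $\tc'_\ell\cap\fc_\ell$ and $B$ has minimal length there. Applied to $M=fX$, this yields $p$. Applied to a second minimal extending module $M$, which lies in $\tc'_\ell\cap\fc_\ell$ by condition (3), it yields the surjection $M\twoheadrightarrow B$ that your uniqueness argument needs, and that argument then goes through.
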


\begin{proof}
Let $M\in \tc'_\ell \setminus \tc_\ell$.
If every proper quotient of $M$ is in $\tc_\ell$, we set $M=B$. 
Otherwise, let $L$ be a maximal subobject of $M$ such that $B:=M/L$ verifies that every proper quotient of $B$ is in $\tc_\ell$.
First, note that $B$ is nonzero since $\tc_\ell$ is properly contained in $\tc'_\ell$. 
By construction $B$ verifies \cref{def:minext}.(1).

Secondly, note that $B$ is a brick. 
Indeed, if $f\in \End_A(B)$ then both $\Image(f)$ and $B/\Image(f)$ are quotients of $B$. 
Hence, if $f$ is not zero nor an isomorphism,  $\Image(f)$ and $B/\Image(f)$ both belong to $\tc_\ell$ by construction of $B$. 
This implies that $B\in \tc_\ell$ since it is closed under extensions in $(\modA)_\ell$. 
Therefore $f$ is either zero or an isomorphism and $B$ is a brick.
Moreover, $B$ is unique by \cite[Lemma~2.5]{BCZ}.

%Using the same argument we conclude that $\Hom_A(T, B)=0$ for every $T \in \tc_\ell$.
Let $T\in \tc_\ell$ and let $f\in \Hom_A(T, B)$.
Then, $\Image(f) \in \tc_\ell$ since $\Image(f)$ is a quotient of $T$ and $\tc_\ell$ is closed under quotients. 
If $f$ is nonzero, $B / Im f \in \tc_\ell$ by hypothesis. 
Then $B\in \tc_\ell$ because $\tc_\ell$ is closed under extensions, a contradiction with our assumption.
Then $\Hom_A(T, B)=0$ for every $T \in \tc_\ell$.
This shows that $B$ satifies \cref{def:minext}.(3).

To prove \cref{def:minext}.(2) consider a non-split exact sequence 
\begin{equation}\label{eq:sec min}
0 \to B \xrightarrow{f} X \xrightarrow{g} T \to 0
\end{equation}
with $T\in \tc_\ell$. 
Suppose that $X\in \tc'_\ell \setminus \tc_\ell$. 
Then \cite[Proposition~2.6]{BCZ} implies the existence of an epimorphism $p: X \to B$.
If $p f =0$ then $p$ factors through $g$, which would imply that $B \in \tc_\ell$ since it is a quotient of $T$.
Then $pf$ is a non-zero endomorphism, which is necessarily an isomorphism because $B$ is a brick.  
Therefore the sequence \cref{eq:sec min} splits. 
This contradicts our assumption that $X\in \tc'_\ell \setminus \tc_\ell$. 

Finally, note that $\tc_\ell \subsetneq T(\{B\} \cup \tc_\ell) \subset \tc'_\ell$. 
This implies $T(\{B\} \cup \tc_\ell) = \tc'_\ell$ due to the fact that $\tc_\ell$ is maximally contained in $\tc'_\ell$.
\end{proof}

A semibrick \cite{AsaiSemibricks} is a set of bricks $\{B_i \mid i \in I\}$ such that $\Hom_A(B_i, B_j)=0$ if $i \neq j$.
We can assign to every torsion class $\tc_\ell$ in $\tors_\ell A$ two sets of bricks $\sc^-_{\tc_\ell}$, $\sc^+_{\tc_\ell}$ corresponding to all the bricks labelling the arrows ending and starting in $\tc_\ell$, respectively.

\begin{corollary}\label{cor:semibricks}
Let $\tc_\ell \in \tors_\ell A$ and let $\sc^-_{\tc_\ell}$, $\sc^+_{\tc_\ell}$ as above. 
Then $\sc^-_{\tc_\ell}$, $\sc^+_{\tc_\ell}$ are semibricks. 
\end{corollary}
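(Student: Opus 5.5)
We need to show that the bricks labelling the arrows ending at $\tc_\ell$ are pairwise Hom-orthogonal, and likewise for the arrows starting at $\tc_\ell$.

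\emph{Arrows starting at $\tc_\ell$.} Let $\tc_\ell \to \tc'_\ell$ and $\tc_\ell \to \tc''_\ell$ be two distinct arrows of $\hc_\ell(A)$. By \cref{prop:minimal extending} they are labelled by minimal extending modules $B'$ and $B''$ for $\tc_\ell$, with $\tc'_\ell = T(\{B'\}\cup\tc_\ell)$ and $\tc''_\ell = T(\{B''\}\cup\tc_\ell)$. Distinct arrows give $\tc'_\ell\neq\tc''_\ell$, so $B'\not\cong B''$.

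Suppose $f\colon B'\to B''$ is nonzero. Then $\Image f$ is a quotient of $B'$.
\begin{itemize}
\item If $f$ is not injective, $\Image f$ is a proper quotient of $B'$, so $\Image f\in\tc_\ell$ by \cref{def:minext}(1). But $\Image f$ is a nonzero submodule of $B''$, contradicting $\Hom_A(\tc_\ell,B'')=0$, which is \cref{def:minext}(3).
\item So $f$ is a monomorphism, and it is not an isomorphism. Its cokernel $B''/B'$ is a proper quotient of $B''$, hence lies in $\tc_\ell$. Thus $B''\in \{B'\}*\tc_\ell\subseteq \tc'_\ell$ (all lengths are bounded by $\lg B''\le \ell$). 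Then $\tc_\ell\subsetneq\tc''_\ell\subseteq\tc'_\ell$.
\item Maximality of $\tc_\ell$ in $\tc'_\ell$ forces $\tc''_\ell=\tc'_\ell$, a contradiction.
\end{itemize}
Hence $\Hom_A(B',B'')=0$, and by symmetry $\Hom_A(B'',B')=0$, so $\sc^+_{\tc_\ell}$ is a semibrick.

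\emph{Arrows ending at $\tc_\ell$.} I would dualise via the order-reversing isomorphism $\tors_\ell A\cong\ftors_\ell A$ of \cref{prop:completelattice}. An arrow $\tc'_\ell\to\tc_\ell$ corresponds to $\fc_\ell$ being maximally contained in $\fc'_\ell$.

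The first step is to establish the dual of \cref{prop:minimal extending}: a minimal co-extending brick $C$ for $\fc_\ell$, meaning every proper submodule of $C$ lies in $\fc_\ell$ and $\Hom_A(C,\fc_\ell)=0$. The proof transcribes the one above, using maximal quotients in place of maximal subobjects.

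The second step is to check that this brick agrees with the label $B$ of the arrow. Both lie in $\tc_\ell\cap\fc'_\ell$. For a Hasse arrow, that intersection consists of extensions of $B$ by itself, by the analogue of \cite[Proposition~2.6]{BCZ} applied to bricks of minimal length. Hence the brick in it is unique up to isomorphism, which gives the identification.

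With that in place, the argument above run in $\ftors_\ell A$, with the roles of submodules and quotients swapped, shows that the labels of arrows ending at $\tc_\ell$ are pairwise Hom-orthogonal.

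I expect the main obstacle to be the second step, identifying the dual label with the brick label. This is where the absence of additivity in $(\modA)_\ell$ could bite. If it proves delicate, I would instead argue directly: take two labels $B_1,B_2$ of arrows $\tc^1_\ell\to\tc_\ell$ and $\tc^2_\ell\to\tc_\ell$, each of minimal length in $\tc_\ell\cap\fc^i_\ell$, and suppose $f\colon B_1\to B_2$ is nonzero.
\begin{itemize}
\item If $f$ is not an epimorphism, then $\Image f$ is a proper submodule of $B_2$. It lies in $\fc^2_\ell$ by the dual minimality property, yet it is a nonzero quotient of $B_1\in\fc^2$'s left orthogonal, which is a contradiction.
\item If $f$ is an epimorphism, then $B_1\in\fc^2_\ell * \{B_2\}$ forces $\fc^1_\ell\subseteq\fc^2_\ell$, and maximality gives equality, again a contradiction.
\end{itemize}
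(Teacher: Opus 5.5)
Your proof is correct, but it is organised the opposite way from the paper's.

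The paper treats $\sc^-_{\tc_\ell}$ directly. For two lower covers $\tc^B_\ell,\tc^{B'}_\ell$ of $\tc_\ell$, it argues that $B'$ must already lie in $\tc^B_\ell$ (invoking \cite[Lemma~2.5]{BCZ}). After that, $\Hom_A(B',B)=0$ is just condition (3) of \cref{def:minext}, and the case $\sc^+_{\tc_\ell}$ is dismissed ``by duality''.

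You instead treat $\sc^+_{\tc_\ell}$ directly, by a case analysis on a putative nonzero map $B'\to B''$. This uses only conditions (1) and (3), the equality $\tc'_\ell=T(\{B'\}\cup\tc_\ell)$, and the covering relation. It is self-contained and never needs the fact that one label lies in the other cover. That fact is the substantive step of the paper's argument, since $B'$ is a minimal extending module for $\tc^{B'}_\ell$ rather than for $\tc^B_\ell$. Conversely, the paper's route yields that extra structural fact.

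In both routes the other half goes through duality, which requires identifying the labels coming from $\ftors_\ell A$ with the brick labels of $\tors_\ell A$. The paper leaves this implicit; your Step 2 makes it explicit, and it does hold. For a cover $\tc'_\ell\subset\tc_\ell$ with label $B$, every nonzero $C\in\tc_\ell\cap\fc'_\ell$ surjects onto $B$; this is the analogue of \cite[Proposition~2.6]{BCZ} used in \cref{prop:minimal extending}. Such a $C$ also receives a nonzero map from $B$: look at the bottom term of a filtration of $C$ by quotients of objects of $\add(\tc'_\ell\cup\{B\})$ and use $\Hom_A(\tc'_\ell,C)=0$. This map is injective, because the surjection property makes $B$ of minimal length in $\tc_\ell\cap\fc'_\ell$ and its image lies there. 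For a brick $C$ the composite $C\twoheadrightarrow B\hookrightarrow C$ is then an automorphism, so $C\cong B$.

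Your fallback argument for $\sc^-_{\tc_\ell}$ needs two fixes.
\begin{enumerate}
\item In the first case the relevant class is $\fc_\ell$, the torsion-free class of $\tc_\ell$, not $\fc^2_\ell$. Minimality of $B_2$ in $\tc_\ell\cap\fc^2_\ell$ gives that every proper submodule $L$ of $B_2$ lies in $\fc_\ell$: the $\tc_\ell$-torsion part of $L$ lies in $\tc_\ell\cap\fc^2_\ell$ and is shorter than $B_2$. Hence $\Image f\in\fc_\ell$ is a nonzero quotient of $B_1\in\tc_\ell$, a contradiction. As written, you use $B_1\in{}^\perp\fc^2_\ell=\tc^2_\ell$. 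That is exactly the paper's key step, it is not proved in your proposal, and if you had it the case split would be unnecessary.
\item That the labels $B_i$ have minimal length in $\tc_\ell\cap\fc^i_\ell$ is not part of \cref{def:minext}. It follows from the same surjection property.
\end{enumerate}
With these two points in place, the fallback is the exact dual of your $\sc^+_{\tc_\ell}$ argument and goes through.
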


\begin{proof}
We show this for $\sc^-_{\tc_\ell}$, since the proof for $\sc^+_{\tc_\ell}$ follows from the duality between $\tors_\ell A$ and $\ftors_\ell A$.
Let $B \in \sc^-_{\tc_\ell}$. 
Then there is $\tc_\ell^B \in \tors_\ell A$ maximally contained in $\tc_\ell$ such that  $B$ is a minimal extending module for $\tc^B_\ell$.

If $\sc^-_{\tc_\ell} = \{B\}$ we are done. 
Otherwise, let $B' \in \sc^-_{\tc_\ell}$ different from $B$. 
If $B' \in \tc_\ell \setminus \tc^B_\ell$ we have that $B \cong B'$ by \cite[Lemma~2.5]{BCZ}, which is a contradiction with our hypothesis. 
Then $B'\in \tc^B_\ell$ and $\Hom_A(B', B)=0$ by \cref{def:minext}.
So $\sc^-_{\tc_\ell}$ is a semibrick in $(\modA)_\ell$.
\end{proof}

An algebra $A$ is said to be $\tau$-tilting finite if there are finitely many torsion classes in $\modA$. 
It has been shown in \cite{ST} that an algebra $A$ is $\tau$-tilting finite if and only if there is a $t\in \mathbb{N}$ such that $\lg(B) \leq t$ for every brick $B\in \modA$. 
The following is then a direct consequence of  \cite[Theorem~1.3]{ST} and \cref{cor:bricklabelling_l}.

\begin{corollary}\label{cor:tors_l tau-tilting finite}
An Artin algebra $A$ is $\tau$-tilting finite if and only if there is a $t\in \mathbb{N}$ such that $\tors A = \tors_\ell A$. 
Moreover, in this case, $\tors_\ell A$ is finite for all $\ell \in \mathbb{N}$.
\end{corollary}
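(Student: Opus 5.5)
The plan is to reduce both directions to the criterion of \cite[Theorem~1.3]{ST}: $A$ is $\tau$-tilting finite if and only if the lengths of all bricks in $\modA$ are bounded by some $t\in\mathbb{N}$. The identification $\tors A=\tors_t A$ is to be read as saying that the lattice epimorphism $\varphi_{\infty,t}$ of \cref{thm:epi of lattices} is a bijection. Since it is already surjective, the task is to relate injectivity of $\varphi_{\infty,t}$ to the bound on the lengths of bricks.

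\emph{($\Rightarrow$)} Assume every brick has length at most $t$, and let $\tc\neq\tc'$ be torsion classes. Without loss of generality pick $M\in\tc\setminus\tc'$.
\begin{enumerate}
\item Exactly as in the proof of \cref{prop:objectintheintersection}, but in all of $\modA$, the torsion-free quotient of $M$ with respect to $(\tc',\fc')$ is a nonzero object of $\tc\cap\fc'$.
\item An object $B$ of minimal length in $\tc\cap\fc'$ is a brick by \cite[Lemma~3.8]{DIRRT}.
\item By hypothesis $\lg(B)\leq t$, so $B\in\tc_t$. On the other hand $B\notin\tc'_t$, since $B\in\fc'$ is nonzero.
\end{enumerate}
Hence $\tc_t\neq\tc'_t$, so $\varphi_{\infty,t}$ is injective and therefore bijective.

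\emph{($\Leftarrow$)} Assume $\varphi_{\infty,t}$ is a bijection. Let $B$ be any brick in $\modA$; we show $\lg(B)\leq t$.
\begin{enumerate}
\item A lattice isomorphism preserves completely join-irreducible elements. By \cite{BCZ}, $T(B)$ is completely join-irreducible in $\tors A$, so $T(B)_t$ is completely join-irreducible in $\tors_t A$.
\item By the preceding theorem on join-irreducibles of $\tors_\ell A$, we get $T(B)_t=T(B')_t$ for some brick $B'\in(\modA)_t$.
\item By \cref{thm:epi of lattices}, the minimal element of the fibre of $\varphi_{\infty,t}$ over $T(B')_t$ is $T(T(B')_t)$. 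This equals $T(B')$, because $B'\in T(B')_t\subseteq T(B')$.
\item Bijectivity makes every fibre a singleton, so $T(B)=T(B')$. Injectivity of $T(-)$ on bricks \cite{BCZ} then gives $B\cong B'$, hence $\lg(B)\leq t$.
\end{enumerate}
Now \cite{ST} yields that $A$ is $\tau$-tilting finite.

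For the final assertion, $\tau$-tilting finiteness means $\tors A$ is finite. Each $\tors_\ell A$ is the image of $\tors A$ under the surjection $\varphi_{\infty,\ell}$, so it is finite as well.

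The main subtle point is step 3 of ($\Leftarrow$): one must use the interval description of the fibres to transport the join-irreducible $T(B')_t$ back to exactly $T(B')$ in $\tors A$. A naive comparison of covering relations would not pin down which brick labels the corresponding cover.
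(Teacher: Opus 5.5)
Your proof is correct and is essentially the paper's argument made explicit. The paper only cites the brick-length criterion \cite[Theorem~1.3]{ST} together with the brick parametrisation of $\tors_\ell A$ (via \cref{cor:bricklabelling_l}); you use the same two ingredients, working with completely join-irreducible elements and \cite{BCZ} instead of cover labels. One small remark: step~3 of ($\Leftarrow$) is unnecessary, since injectivity of $\varphi_{\infty,t}$ applied to $T(B)_t=T(B')_t$ already gives $T(B)=T(B')$, so the point you flag as the main subtlety is immediate.
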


The lattice $\tors A$ for a $\tau$-tilting finite algebra $A$ enjoys interesting properties. 
Besides being finite, the Hasse quiver $\hc(A)$ is connected \cite{DIJ} and every torsion class $\tc$ in $\tors A$ is determined by the semibricks  $\sc^-_{\tc}$ and $\sc^+_{\tc}$ \cite{AsaiSemibricks}. 
These properties need not hold for algebras that are not $\tau$-tilting finite.

\begin{conjecture}
The lattice $\tors_\ell A$ is connected for every Artin algebra $A$ and every $\ell \in \mathbb{N}$. 
\end{conjecture}

\begin{conjecture}
Every $\tc_\ell \in \tors_\ell A$ is completely determined by the semibricks $\sc^-_{\tc_\ell}$ and $\sc^+_{\tc_\ell}$.
\end{conjecture}

The proofs of these two facts for $\tau$-tilting finite algebras rely heavily on the tools of homological algebra that are no longer available in $(\modA)_\ell$ because $(\modA)_\ell$ is not an additive category. 
We finish the section with an example.

\begin{example}\label{ex: Kronecker 1}
Let $A=\mathbb{C}Q$ be the path algebra of the Kronecker quiver \(Q=\begin{tikzcd}
	1 & 2
	\arrow[shift right, from=1-1, to=1-2]
	\arrow[shift left, from=1-1, to=1-2]
\end{tikzcd}\) 
over $\mathbb{C}$ and consider the category $(\modA)_2$. 
The objects of this category are the simple $A$-modules $\rep{1}$ and $\rep{2}$, together with all the quasi-simple modules $\rep{1\\2}_\lambda$, $\lambda \in \mathbb{P}^1(\mathbb{C})$. 
These are the modules at the mouth of the tubes of the Auslander-Reiten quiver of $A$.

Then the set $\tors_2 A$ consists of $(\modA)_2$, $\{\rep{2}\}$, $\{0\}$ and $\tc_2^P:= \left\{\rep{1}, \rep{1\\2}_\lambda \mid \lambda \in P\right\}$ for every $P$ in the power set of  $\mathbb{P}^1(\mathbb{C})$. 
The Hasse quiver of $\tors_2A$ can be seen in \cref{fig:Hasse 2-Kronecker-2}, where $\hc(\mathcal{P}(\mathbb{P}^1(\mathbb{C})))$ is the Hasse quiver of $\mathcal{P}(\mathbb{P}^1(\mathbb{C}))$ ordered by inclusion. 
Note that the torsion class $\tc_2^{\varnothing} = \{\rep{1}\}$.
One can easily verify that there is no element $\tc_2$ in $\tors_2 A $ such that $\{0\} \subsetneq \tc_2 \subsetneq \{\rep{1}\}$ nor $\tc_2^{\mathbb{P}^1(\mathbb{C})} \subsetneq \tc_2 \subsetneq (\modA)_2$. 
As a consequence, $\tors_2 A$ is connected, unlike the classical $\tors A$ which has two connected components. 

\begin{figure}
% https://q.uiver.app/#q=WzAsNCxbMSwwLCIoXFxtb2RBKV8yIl0sWzAsMSwiXFx7XFxyZXB7Mn1cXH0iXSxbMiwxLCJcXGhjKFxcbWF0aGNhbHtQKFxcbWF0aGJie1B9XjEoaykpfSkiXSxbMSwyLCJcXHswXFx9Il0sWzAsMSwiMSIsMl0sWzAsMiwiMiJdLFsxLDMsIjIiLDJdLFsyLDMsIjEiXV0=
\[\begin{tikzcd}
	& {(\modA)_2} \\
	{\{\rep{2}\}} && {\hc(\mathcal{P}(\mathbb{P}^1(k)))} \\
	& {\{0\}}
	\arrow["1"', from=1-2, to=2-1]
	\arrow["2", from=1-2, to=2-3]
	\arrow["2"', from=2-1, to=3-2]
	\arrow["1", from=2-3, to=3-2]
\end{tikzcd}\]
\caption{The Hasse quiver of $\tors_2A$}
\label{fig:Hasse 2-Kronecker-2}
\end{figure}

\end{example}

%%%%%%%%%%%
\section{Wall-and-chamber structures for algebras}\label{sec:conecomplex}

In this section, we begin by recalling in \cref{subsec: W&C modA} the basics of the wall-and-chamber structure $\Df(A)$ for $\modA$, introduced in \cite{BridgelandScat} and further developed in \cite{BST, Asai}.
For a more detailed survey of wall-and-chamber structures, we refer the reader to \cite{KT}. 
Later, in \cref{subsec: W&C modA_l}, we fix a positive integer $\ell$ and define the wall-and-chamber structure $\Df_\ell(A)$ by restricting stability conditions to $(\modA)_\ell$. 
We show that $\Df_\ell(A)$ has the structure of a cone complex whose cones are determined by the elements of $\tors_\ell A$.
Moreover, we show that, for every element  $\tc_\ell \in \tors_\ell A$ arising from a stability condition, there exists a chamber realizing it.

%%%%%%%%%
\subsection{The wall-and-chamber structure for $\modA$}\label{subsec: W&C modA}

Let us recall the definition of stability conditions introduced by King in \cite{King} using the notation introduced in \cref{subsec: cones and fans}.

\begin{definition}\cite{King}
Let $M\in \modA$ and $v\in \mathbb{R}^n$.
We say that $M$ is \textit{\mbox{$v$-semistable}} if $\langle v, [M]\rangle =0$ and $\langle v, [L]\rangle \leq 0$ for every submodule $L$ of $M$. 
We say that the module $M$ is \textit{$v$-stable} if the inequalities are strict for every proper submodule $L$ of $M$.      
\end{definition}

For a stability condition $v\in \mathbb{R}^n$ we denote by $\mod_v^{ss}A$ the category of all the $v$-semistable modules and the zero object.
It has been shown in \cite{Rudakov} that $\mod_v^{ss}A$ is an abelian subcategory of $\modA$ whose simple objects are the $v$-stable objects.

\begin{definition}
Given a nonzero module $M$ we define the stability locus $\dc(M)$ of $M$ to be the set 
\[
\dc(M) = \{v \in \mathbb{R}^n \mid M \text{ is $v$-semistable}\}.
\]
We say that $\dc(M)$ is a \textit{wall} if it is of codimension $1$ in $\mathbb{R}^n$.
A \textit{chamber} is an open and connected component of 
\[
\mathbb{R}^n \setminus \overline{\bigcup_{0 \neq M \in \modA} \dc(M)},
\]
the complement of the closure of the union of all walls. 
The union of all walls and chambers is known as the \textit{wall-and-chamber structure} of $A$ and we denote it by $\Df(A)$.
\end{definition}

%From the definitions it follows that $\dc(M\oplus N) = \dc(M) \cap \dc(N)$. 
%Therefore, the wall-and-chamber structure of an algebra $A$ is determined by the walls of the form $\dc(M)$ where $M$ is an indecomposable $A$-module. 
%Moreover, it can be shown that for every module $M$ there is a brick $B$ in $\modA$ such that $\dc(M)\subset\dc(B)$.

For any algebra $A$, there are at least $n$ walls $\{\dc(S(1)), \dots, \dc(S(n))\}$ in its wall-and-chamber structure which are determined by the simple modules $\{S(1), \dots, S(n)\}$.
Since we identify the class $[S(i)]\in K_0(A)$ with $e_i \in \mathbb{Z}^n$, it follows from the definitions that $\dc(S(i))$ corresponds to the whole coordinate hyperplane orthogonal to $e_i$. 
Also there are always at least two different chambers known as the \textit{positive} and the \textit{negative} chamber that we denote by $\cc^+$ and $\cc^-$, respectively. 
As indicated by their names, 
$$\cc^+ = \{v= (v_1, \dots, v_n)\in \mathbb{R}^n \mid v_i>0 \text{ for every $1\leq i \leq n$} \}$$ 
and 
$$\cc^- = \{v= (v_1, \dots, v_n)\in \mathbb{R}^n \mid v_i<0 \text{ for every $1\leq i \leq n$} \}.$$
The fact that $\cc^+$ and $\cc^-$ are chambers follows from the fact that if $M\neq 0$ then $\langle v, [M]\rangle >0$ whenever $v\in\cc^+$ and $\langle v, [M]\rangle <0$ whenever $v\in\cc^-$.
%In general, there are many more chambers in addition to $\cc^+$ and $\cc^-$.
%See \cite{KaipelTreffinger} for a detailed description of the wall-and-chamber structure of an algebra.

A key observation relating the notions of torsion classes and stability conditions first appeared in \cite{BKT}. 
In this paper the authors associate two torsion pairs to every stability condition $v\in\mathbb{R}^n$ as follows. 

\begin{proposition}\cite[Proposition 3.1]{BKT}\label{prop:TorsionBKT}
    Let $v \in \mathbb{R}^n$. 
    There are torsion pairs $(\tc_v, \overline{\fc_v})$ and $(\overline{\tc_v}, \fc_v)$ of $\modA$ where
    \begin{itemize}
        \item $\tc_v := \{ 0 \}\cup \{ M \in \modA : \langle v, [N]\rangle > 0 \text{ for all quotients $N$ of $M$}\}$.
        \item $\overline{\tc_v} := \{ 0 \}\cup \{ M \in \modA : \langle v, [N] \rangle\geq 0 \text{ for all quotients $N$ of $M$}\}$.
        \item $\fc_v := \{ 0 \}\cup \{ M \in \modA : \langle v, [L]\rangle < 0 \text{ for all submodules $L$ of $M$}\}$.
        \item $\overline{\fc_v} := \{ 0 \}\cup \{ M \in \modA : \langle v, [L]\rangle \leq 0 \text{ for all submodules $L$ of $M$}\}$.
    \end{itemize}
    Moreover, $\overline{\tc_v}\cap \overline{\fc_v}=\mod^{ss}_vA$.
\end{proposition}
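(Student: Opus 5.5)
To prove \cref{prop:TorsionBKT}, I would check directly that each of the four subcategories has the required closure properties, and then that each pair is Hom-orthogonal and gives the canonical short exact sequence. For $\tc_v$: a quotient of a quotient of $M$ is again a quotient of $M$, so $\tc_v$ is closed under quotients. For extensions, take $0\to X\to M\to Y\to 0$ with $X,Y\in\tc_v$ and let $M\to N$ be a nonzero quotient. Let $X'$ be the image of $X$ in $N$. Then $N/X'$ is a quotient of $Y$. In $K_0(A)$ we have $[N]=[X']+[N/X']$, and each summand is either zero or has positive pairing with $v$. At least one of them is nonzero, so $\langle v,[N]\rangle>0$. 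The same argument with $\geq$ in place of $>$ shows that $\overline{\tc_v}$ is closed under quotients and extensions. Since $\modA$ is a length category, this characterization identifies both as torsion classes. The dual argument, using submodules and pullback images, shows that $\fc_v$ and $\overline{\fc_v}$ are closed under submodules and extensions, so they are torsion-free classes.

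Next I would match the pairs. It suffices to show $\overline{\fc_v}=\tc_v^{\perp}$ and $\fc_v=\overline{\tc_v}^{\perp}$.

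For orthogonality, let $f:T\to F$ be nonzero with $T\in\tc_v$ and $F\in\overline{\fc_v}$. Its image is a nonzero quotient of $T$ and a submodule of $F$, so its $v$-value is both $>0$ and $\leq 0$, which is impossible. The same argument handles $\overline{\tc_v}$ and $\fc_v$.

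For the reverse inclusion, suppose $M\in\tc_v^\perp$ but $M\notin\overline{\fc_v}$. Choose a submodule $L\subseteq M$ with $\langle v,[L]\rangle>0$, and among such submodules take one of minimal length. Every nonzero quotient $L/L'$ then satisfies $\langle v,[L/L']\rangle=\langle v,[L]\rangle-\langle v,[L']\rangle>0$. Here $\langle v,[L']\rangle\leq 0$ holds by minimality when $L'$ is a proper submodule, and trivially when $L'=0$. Hence $L\in\tc_v$, and the inclusion $L\hookrightarrow M$ is a nonzero map from $\tc_v$ into $M$, a contradiction.

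For $\fc_v$, the argument is the same. If $M\in\overline{\tc_v}^{\perp}$ has a nonzero submodule with $\langle v,-\rangle\geq 0$, choose such an $L$ of minimal length. Every proper nonzero submodule $L'$ of $L$ then has $\langle v,[L']\rangle<0$. For any nonzero quotient $L/L'$ this gives $\langle v,[L/L']\rangle=\langle v,[L]\rangle-\langle v,[L']\rangle\geq 0$ (when $L'=0$ it is just $\langle v,[L]\rangle\geq 0$). So $L\in\overline{\tc_v}$, again a contradiction.

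Since $\tc_v$ is a torsion class, $(\tc_v,\tc_v^\perp)$ is a torsion pair, and likewise for $\overline{\tc_v}$. The identifications above therefore finish the first part. The main care point is exactly this minimal-length choice, together with keeping track of which inequality is strict.

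For the last claim, let $M\in\overline{\tc_v}\cap\overline{\fc_v}$. Taking $M$ itself as a quotient gives $\langle v,[M]\rangle\geq 0$, and taking $M$ as a submodule gives $\langle v,[M]\rangle\leq 0$, so $\langle v,[M]\rangle=0$. Every submodule $L$ then has $\langle v,[L]\rangle\leq 0$, so $M$ is $v$-semistable.

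Conversely, a $v$-semistable $M$ lies in $\overline{\fc_v}$ by definition. For any quotient $M/L$ we have $\langle v,[M/L]\rangle=0-\langle v,[L]\rangle\geq 0$, so $M\in\overline{\tc_v}$. Hence $\overline{\tc_v}\cap\overline{\fc_v}=\mod_v^{ss}A$.
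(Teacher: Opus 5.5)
Your proof is correct and complete. The paper gives no argument for this statement; it is quoted from \cite{BKT}. So there is no in-paper proof to compare against, and your proposal stands as a self-contained verification.

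You handle the two delicate points properly.
\begin{itemize}
\item In $\tc_v$ and $\fc_v$ the conditions must be read over \emph{nonzero} quotients and submodules, since otherwise both would collapse to $\{0\}$. This is how you use them.
\item The reverse inclusions $\tc_v^{\perp}\subseteq\overline{\fc_v}$ and $\overline{\tc_v}^{\perp}\subseteq\fc_v$, obtained from a destabilizing submodule of minimal length, are argued with the correct strict and non-strict inequalities.
\end{itemize}
Once those identifications are in place, your separate check that $\fc_v$ and $\overline{\fc_v}$ are closed under submodules and extensions is redundant, though harmless.

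If you prefer to avoid the ``closed under quotients and extensions'' characterization of torsion classes, there is a more constructive route. Because $[Y_1+Y_2]+[Y_1\cap Y_2]=[Y_1]+[Y_2]$, the submodules of $M$ on which $\langle v,[-]\rangle$ attains its maximum are closed under sums and intersections. The smallest such $Y$ satisfies $Y\in\tc_v$ and $M/Y\in\overline{\fc_v}$. The largest such $Y$ satisfies $Y\in\overline{\tc_v}$ and $M/Y\in\fc_v$. This writes down the canonical short exact sequences explicitly, whereas your approach obtains them abstractly from $(\tc,\tc^{\perp})$ being a torsion pair.
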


An easy but important corollary of the previous result is that given a stability condition $v\in\mathbb{R}^n$, the torsion pairs $(\tc_v, \overline{\fc_v})$ and $(\overline{\tc_v}, \fc_v)$ coincide if and only if there are no nonzero \mbox{$v$-semistable} modules. 

\begin{proposition}
    Let $v\in \mathbb{R}^n$. 
    Then $\overline{\tc_v}= \tc_v * \mod^{ss}_vA$ and $\overline{\fc_v}= \mod_A^{ss}A*\fc_v$.
\end{proposition}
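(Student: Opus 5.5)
The plan is to deduce both equalities from \cref{prop:extending torsion classes}, applied to the two torsion pairs of \cref{prop:TorsionBKT}. Since $\tc_v\subseteq\overline{\tc_v}$ (a strict inequality $\langle v,[N]\rangle>0$ implies the weak one), we have the pairs $(\tc_v,\overline{\fc_v})$ and $(\overline{\tc_v},\fc_v)$ with $\tc_v\subseteq\overline{\tc_v}$, equivalently $\fc_v\subseteq\overline{\fc_v}$.

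\cref{prop:extending torsion classes} then gives
\[
\overline{\tc_v}=\tc_v*\xc \quad\text{and}\quad \overline{\fc_v}=\xc*\fc_v, \qquad\text{where } \xc=\overline{\tc_v}\cap\overline{\fc_v}.
\]
The final assertion of \cref{prop:TorsionBKT} identifies $\xc$ with $\mod^{ss}_vA$, which yields both formulas at once. Here the expression $\mod_A^{ss}A$ in the statement is read as $\mod^{ss}_vA$.

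Should one prefer not to rely on \cref{prop:extending torsion classes} as a black box, the direct argument is short. For the inclusion $\overline{\tc_v}\subseteq\tc_v*\mod^{ss}_vA$, take $M\in\overline{\tc_v}$ and its canonical sequence $0\to tM\to M\to fM\to0$ with respect to $(\tc_v,\overline{\fc_v})$. Then $fM$ lies in $\overline{\fc_v}$ by construction, and in $\overline{\tc_v}$ because torsion classes are closed under quotients. Hence $fM\in\overline{\tc_v}\cap\overline{\fc_v}=\mod^{ss}_vA$. The reverse inclusion $\tc_v*\mod^{ss}_vA\subseteq\overline{\tc_v}$ holds because $\overline{\tc_v}$ is closed under extensions and contains both $\tc_v$ and $\mod^{ss}_vA$. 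The statement for $\overline{\fc_v}$ is dual, using the sequence with respect to $(\overline{\tc_v},\fc_v)$.

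The only point needing care is the identification $\overline{\tc_v}\cap\overline{\fc_v}=\mod^{ss}_vA$, which I take from \cref{prop:TorsionBKT}. Directly: a module $M$ in the intersection satisfies $\langle v,[M]\rangle\ge0$, since $M$ is a quotient of itself, and $\langle v,[M]\rangle\le0$, since $M$ is a submodule of itself. Hence $\langle v,[M]\rangle=0$, and $\langle v,[L]\rangle\le0$ for every submodule $L$, which is exactly $v$-semistability. I expect no real obstacle here.
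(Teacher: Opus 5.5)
Your proof is correct and takes the same approach as the paper. The paper gives no separate argument and treats the statement as an immediate consequence of \cref{prop:extending torsion classes}, applied to the pairs $(\tc_v,\overline{\fc_v})$ and $(\overline{\tc_v},\fc_v)$ of \cref{prop:TorsionBKT} together with $\overline{\tc_v}\cap\overline{\fc_v}=\mod^{ss}_vA$, which is what you do; your reading of $\mod_A^{ss}A$ as a typo for $\mod^{ss}_vA$ is the intended one.
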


A torsion class $\tc\in \tors A$ is said to be a numerical torsion class if there exists $v\in \mathbb{R}^n$ such that $\tc= \tc_v$ or $\tc=\overline{\tc_v}$. 
We denote $\tors^{ss}(A)$ the set of all numerical torsion classes.
Taking $v=0$, it is easy to see that both $\{0\}$ and $\modA$ belong to $\tors^{ss}(A)$.
In fact, it has been shown in \cite{BST, Asai} that a vector $v\in \mathbb{R}^n$ lies in a chamber if and only if $\tc_v = \overline{\tc_v}$ and $\tc_v$ is functorially finite.
It follows from the results in \cite{DIJ, Asai, BST} that $\tors A=\tors^{ss}(A)$ if and only if there are finitely many chambers in $\Df(A)$.%the wall-and-chamber structure $\Df(A)$ of $A$.

\begin{definition}\cite[Section~2]{Asai}
    Given two stability conditions $v, v'\in \mathbb{R}^n$ we say that $v$ and $v'$ are TF-equivalent if $\tc_v=\tc_{v'}$ and $\overline{\tc_v}=\overline{\tc_{v'}}$. 
\end{definition}

Note that if $v$ and $v'$ are two TF-equivalent stability conditions, then $\mod_{v}^{ss}A = \mod_{v'}^{ss}A$. 
However, the converse is not true.

Let $v$ be a stability condition. 
Then we denote by $\df_v$ the TF-equivalence class of $v$. 
\[
\df_v=\{v'\in \mathbb{R}^n \mid v' \text{ is TF-equivalent to $v$}\}. 
\] 
The following is a direct consequence of \cite[Theorem~2.17]{Asai}.

\begin{proposition}\label{prop: union of cones}
    Let $A$ be an Artin algebra. 
    Then, for every $v\in \mathbb{R}^n$ the TF-equivalence class $\df_v$ is an open convex cone in $\mathbb{R}^n$. 
    Moreover, \[
    \Df(A)= \bigcup_{v\in \mathbb{R}^n} \df_v.
    \]
\end{proposition}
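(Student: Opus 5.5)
The statement has two parts. The first is that each TF-equivalence class $\df_v$ is an open convex cone; this is essentially \cite[Theorem~2.17]{Asai} transported to our conventions. The second is that these classes exhaust $\Df(A)$. I would prove convexity and the cone property directly from the descriptions of $\tc_v$ and $\overline{\tc_v}$ in \cref{prop:TorsionBKT}, and invoke Asai only for the relative openness.

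\textbf{Cone property.} Let $\lambda>0$. For every module $N$, the sign of $\langle \lambda v,[N]\rangle$ equals the sign of $\langle v,[N]\rangle$. Hence the defining conditions of $\tc_v$, $\overline{\tc_v}$, $\fc_v$ and $\overline{\fc_v}$ in \cref{prop:TorsionBKT} are unchanged when $v$ is replaced by $\lambda v$. So $\lambda v\in\df_v$.

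\textbf{Convexity.} Suppose $v'$ and $v''$ are TF-equivalent to $v$, and set $w=tv'+(1-t)v''$ with $t\in(0,1)$. I would check the two torsion classes separately.
\begin{enumerate}
\item If $M\in\tc_v=\tc_{v'}=\tc_{v''}$, then every quotient $N$ of $M$ satisfies $\langle v',[N]\rangle>0$ and $\langle v'',[N]\rangle>0$. Therefore $\langle w,[N]\rangle>0$, so $M\in\tc_w$. The same argument with non-strict inequalities shows $\overline{\tc_v}\subseteq\overline{\tc_w}$.
\item For the reverse inclusions, use the torsion-free side of \cref{prop:TorsionBKT}. Since $\fc_v=\fc_{v'}=\fc_{v''}$, the same linear argument applied to submodules gives $\fc_v\subseteq\fc_w$ and $\overline{\fc_v}\subseteq\overline{\fc_w}$.
\item Orthogonality then turns these into $\tc_w\subseteq\tc_v$ and $\overline{\tc_w}\subseteq\overline{\tc_v}$. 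Indeed, $\tc_w={}^\perp\overline{\fc_w}\subseteq{}^\perp\overline{\fc_v}=\tc_v$, and similarly for the closures.
\end{enumerate}
Thus $w$ is TF-equivalent to $v$, and $\df_v$ is convex.

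\textbf{Openness and the decomposition.} Openness is to be read relative to the linear span of $\df_v$, and this is the step I expect to be the main obstacle. One has to show that a small perturbation of $v$ inside $\langle\df_v\rangle$ creates no new semistable modules and destroys none. In general infinitely many modules constrain $v$, so there is no finiteness to lean on. Here I would cite \cite[Theorem~2.17]{Asai} rather than reprove it: Asai shows that $\df_v$ is the relative interior of its closure, described by the vanishing of $\langle -,[M]\rangle$ for $M\in\mod^{ss}_vA$ together with strict sign conditions.

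Finally, the TF-equivalence classes partition $\mathbb{R}^n$, since every $v$ lies in its own class $\df_v$. Each wall $\dc(M)$ and each chamber is therefore a union of such classes, because whether $v$ lies in a given wall or chamber depends only on $\mod^{ss}_vA$, and $\mod^{ss}_vA$ is determined by $\tc_v$ and $\overline{\tc_v}$. This gives $\Df(A)=\bigcup_v\df_v$.
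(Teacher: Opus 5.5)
Your cone and convexity arguments are correct, and they are what the paper takes from \cite[Theorem~2.17]{Asai}. Your convexity step is the ``(1) implies (2)'' argument that the paper reproduces in \cref{thm:bounded l-TF}. Two other steps need repair.

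\textbf{Openness.} This step is misdirected. In \cref{subsec: cones and fans} an ``(open) convex cone'' is defined as a set containing $\langle v_1,\dots,v_t\rangle_+$ for every finite subset $\{v_1,\dots,v_t\}$. In other words, it is closed under combinations with strictly positive coefficients. Such a combination is a positive multiple of a convex combination, so your two properties already prove this part, and no relative-openness statement is needed. Moreover, the result you attribute to Asai is not in that theorem. \cite[Theorem~2.17]{Asai} is the segment criterion, the model for \cref{thm:bounded l-TF}, and it gives convexity, not openness. A description of $\df_v$ by infinitely many strict inequalities would not give relative openness anyway, since an infinite intersection of open half-spaces need not be open: in $\mathbb{R}$, $\bigcap_k\{x>-1/k\}=\{x\geq 0\}$. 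Drop this claim or prove it separately.

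\textbf{Chambers.} Your justification is false as stated: membership in a chamber is not determined by $\mod^{ss}_vA$. Chambers are components of the complement of the \emph{closure} of the union of walls, and a point with no nonzero semistable module can lie in that closure. As recalled in \cref{subsec: W&C modA}, $v$ lies in a chamber if and only if $\tc_v=\overline{\tc_v}$ and $\tc_v$ is functorially finite. The condition $\tc_v=\overline{\tc_v}$ is exactly $\mod^{ss}_vA=\{0\}$, so functorial finiteness is a real extra condition. For example, for the Kronecker quiver with three arrows, points of irrational slope inside the region where walls are dense have $\mod^{ss}_vA=\{0\}$ but lie in no chamber.

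The fix uses that same criterion. It depends only on $(\tc_v,\overline{\tc_v})$, so if it holds at $v$ it holds at every point of $\df_v$. Now let $v\in\cc$ and $v'\in\df_v$. Your convexity gives $[v,v']\subseteq\df_v$, so every point of this connected segment lies in some chamber. Chambers are pairwise disjoint open sets, so the whole segment lies in $\cc$.

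The same irrational points show that $\bigcup_v\df_v=\mathbb{R}^n$ can be strictly larger than the union of walls and chambers. So your argument (once repaired) proves that every wall and every chamber is a union of TF classes. The displayed equality should be read with $\Df(A)$ taken to be the cone complex of TF classes, as the paper does later.
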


%%%%%%%%%%%%%%
\subsection{Wall-and-chamber structures for subcategories of bounded length}\label{subsec: W&C modA_l}
The previous discussion recalled the classical wall-and-chamber structure.
In this subsection we introduce and study the wall-and-chamber structure $\Df_\ell(A)$ of $(\modA)_\ell$. 

\begin{definition}
A wall in the wall-and-chamber structure $\Df_\ell(A)$ associated with $(\modA)_\ell$ is the stability locus $\dc(M)$ of a nonzero object $M \in (\modA)_\ell$ whenever $\dc(M)$ is of codimension $1$. 
A chamber in $\Df_\ell(A)$ is a connected open subset of 
\[
\mathbb{R}^n \setminus \bigcup_{M \in (\modA)_\ell} \dc(M).
\] 
\end{definition}

\begin{remark}
Note that, unlike in the definition of $\Df(A)$, we do not define chambers as the connected components of the complement of the closure of the union of all walls. 
This is because, for every algebra $A$ and every $\ell \in\mathbb{N}$, the wall-and-chamber structure $\Df_\ell(A)$ always has only finitely many walls and hence the union of all walls is a closed set in $\mathbb{R}^n$.
    Indeed, for any positive integer $\ell$, there are finitely many classes $[M] \in K_0(A)$ such that $\sum_{i=1}^n [M]_i \leq \ell$.
    Since the stability locus $\dc(M)$ of $M$ is determined by its class $[M]$ and the class of its submodules in $K_0(A)$ there cannot be infinitely many different walls. 
\end{remark}

As we saw in \cref{sec:l-torsion classes}, there is an equivalence relation $\sim_\ell$ on $\tors (A)$ defined by $\tc \sim_\ell \tc'$ if $\tc\cap (\modA)_\ell = \tc'\cap (\modA)_\ell$. 
This equivalence relation restricts to $\tors^{ss}(A)$, the set of numerical torsion classes.
We denote the quotient set by $\tors_\ell^{ss}A$.
We now introduce the corresponding notion of $\ell$-TF-equivalence.

\begin{definition}
Given two stability conditions $v, v'\in \mathbb{R}^n$, we say that $v$ and $v'$ are $\ell$-TF-equivalent if $(\tc_v)_\ell=(\tc_{v'})_\ell$ and $(\overline{\tc_v})_\ell=(\overline{\tc_{v'}})_\ell$.
\end{definition}

\begin{remark}
    It follows from \cref{cor:sametorsion-free} that two stability conditions $v, v'$ are $\ell$-TF-equivalent if and only if $(\fc_v)_\ell=(\fc_{v'})_\ell$ and $(\overline{\fc_v})_\ell=(\overline{\fc_{v'}})_\ell$.
\end{remark}

For $v, v' \in \mathbb{R}^n$, denote by $[v,v']$ the line segment $[v,v']:=\{\lambda v' + (1-\lambda)v \mid\lambda\in\mathbb{R} \text{ and } 0\leq \lambda\leq 1\}$.
The following result is a generalization of \cite[Theorem~2.17]{Asai}.
The proof is essentially the same as the original with some minor technical adaptations. 
We include it for the sake of completeness.

\begin{theorem}\label{thm:bounded l-TF}
Let $v, v' \in \mathbb{R}^n$ be two distinct stability conditions. 
Then the following are equivalent. 
\begin{enumerate}
    \item The stability conditions $v$ and $v'$ are $\ell$-TF-equivalent.
    \item The stability condition $v''$ is $\ell$-TF-equivalent to $v$ and $v'$ for every $v''\in[v,v']$.
    \item For every $v'' \in [v,v']$ the category $\left(\mod^{ss}_{v''}A \right)_\ell$ is constant.
    \item For every $A$-module $M$ of length at most $\ell$ then either $[v,v'] \cap \dc(M) = \varnothing$ or \mbox{$[v,v'] \subset \dc(M)$} holds.
    \item There is no brick $B\in (\modA)_\ell$ such that $[v,v'] \cap \dc(B)$ has exactly one element.
\end{enumerate}
\end{theorem}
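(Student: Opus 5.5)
I will prove the cycle of implications $(1)\Rightarrow(5)\Rightarrow(4)\Rightarrow(3)\Rightarrow(2)\Rightarrow(1)$, adapting Asai's argument for \cite[Theorem~2.17]{Asai}. The only new input is that every module which appears (submodules, quotients, torsion parts, minimal-length objects) has length at most $\ell$ whenever the original module does. This is the mechanism already used in \cref{thm: torsion pairs of bounded length} and \cref{prop:objectintheintersection}. The implication $(2)\Rightarrow(1)$ is trivial: take $v''=v'$.

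\textbf{$(1)\Rightarrow(5)$.} Argue by contraposition. Suppose $B\in(\modA)_\ell$ is a brick with $[v,v']\cap\dc(B)=\{v''\}$. The function $\lambda\mapsto\langle v_\lambda,[B]\rangle$ is affine along the segment. It vanishes at $v''$, so either it vanishes identically or it changes sign (or vanishes only at an endpoint). In the identically-zero case, semistability of $B$ must fail at some point of the segment. Then some submodule $L$ has $\langle v_\lambda,[L]\rangle$ an affine function that is $\le 0$ on the closed set $\dc(B)\cap[v,v']$ and positive elsewhere. Hence at one endpoint, say $v'$, we get $B\notin\overline{\fc_{v'}}$ or $B\notin\overline{\tc_{v'}}$. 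Since the conditions defining $\overline{\tc},\overline{\fc}$ only involve quotients and submodules of $B$, this membership is detected inside $(\modA)_\ell$.

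We want to conclude that $B$ separates $v$ from $v'$ in $\ell$-TF terms. Concretely, I will show that either $B\in(\overline{\tc_w})_\ell\setminus(\overline{\tc_{w'}})_\ell$, or $B\in(\overline{\fc_w})_\ell\setminus(\overline{\fc_{w'}})_\ell$, for suitable $\{w,w'\}=\{v,v'\}$. The sign-change case is handled the same way using $\tc_v$ versus $\overline{\tc_v}$. Then the Remark after the definition of $\ell$-TF-equivalence (via \cref{cor:sametorsion-free}) gives non-equivalence. The careful bookkeeping of which endpoint fails is the most delicate part, but it is the same as in Asai.

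\textbf{$(5)\Rightarrow(4)$.} Take $M\in(\modA)_\ell$ with $[v,v']\cap\dc(M)$ nonempty but not the whole segment. Since $\dc(M)$ is convex and closed, the intersection is a closed subsegment $[a,b]$ with, say, $b\ne v'$. At $b$ the module $M$ is $b$-semistable. Take a $b$-stable composition factor $B$ of $M$ in the abelian category $\mod^{ss}_bA$ (using \cite{Rudakov}). It has smaller length, so $B\in(\modA)_\ell$, and it is a brick. By choosing the Jordan--Hölder factor whose stability fails immediately past $b$, one arranges $\dc(B)\cap[v,v']$ to be a single point, as in Asai's Lemma~2.15 analogue. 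This choice is the main obstacle. I would first try a minimal-length module among those whose locus meets the segment in a nonempty proper subsegment, and show it must be a brick meeting the segment in a single point. This would use \cite[Lemma~3.8]{DIRRT} as in \cref{prop:objectintheintersection}.

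\textbf{$(4)\Rightarrow(3)$.} This is immediate: for each $M\in(\modA)_\ell$, membership $M\in\mod^{ss}_{v''}A$ is the condition $v''\in\dc(M)$, which is constant along the segment by (4).

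\textbf{$(3)\Rightarrow(2)$.} Fix $v''\in[v,v']$ and suppose $(\tc_{v''})_\ell\ne(\tc_v)_\ell$. By \cref{prop:objectintheintersection} applied to these torsion pairs, there is a brick $B$ of length at most $\ell$ lying in $(\tc_{v''})_\ell\cap(\overline{\fc_v})_\ell$, or in the symmetric intersection. Such a $B$ has $\langle v'',[N]\rangle>0$ on all of its quotients but $\langle v,[L]\rangle\le0$ on all of its submodules. By the intermediate value theorem along $[v,v'']$, there is a last point where $B$ becomes semistable. This gives a point of the segment at which $(\mod^{ss})_\ell$ contains $B$, while at $v''$ it does not, contradicting (3). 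The case of $\overline{\tc}$ is analogous.
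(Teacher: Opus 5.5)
There is a genuine gap in $(1)\Rightarrow(5)$. You try to show that a brick $B$ with $[v,v']\cap\dc(B)=\{v''\}$ itself lies in $\overline{\tc}$ (or $\overline{\fc}$) at one endpoint and not at the other. This is false when $v''$ is an interior point.

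Take a uniserial brick $B$ of type $A_3$ with top $S(1)$ and socle $S(3)$. Its nonzero submodules have classes $e_3$, $e_2+e_3$, $e_1+e_2+e_3$, so $\dc(B)=\{v_1+v_2+v_3=0,\ v_3\le 0,\ v_2+v_3\le 0\}$. For $v=(1,-2,1)$ and $v'=-v$, the segment meets $\dc(B)$ only at the origin. However, $\langle v,[B]\rangle=\langle v',[B]\rangle=0$, and $B$ is destabilised at both endpoints (by $e_3$ at $v$, by $e_2+e_3$ at $v'$). Hence $B$ belongs to none of $\tc_w,\overline{\tc_w},\fc_w,\overline{\fc_w}$ for $w\in\{v,v'\}$. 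The non-equivalence is witnessed by $S(3)\in\tc_v\cap\fc_{v'}$, not by $B$.

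The missing idea is convexity, which is the paper's $(1)\Rightarrow(2)$. If $(\tc_v)_\ell=(\tc_{v'})_\ell$, then every quotient of an object of this class pairs positively with $v$ and $v'$, hence with every $v''\in[v,v']$. The same argument works for $\overline{\tc}$, $\fc$, $\overline{\fc}$ (using \cref{cor:sametorsion-free}). So all four classes are constant on the segment, and therefore so is $(\mod^{ss}_{v''}A)_\ell=(\overline{\tc_{v''}})_\ell\cap(\overline{\fc_{v''}})_\ell$. Then (4) and (5) follow immediately.

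Your $(3)\Rightarrow(2)$ has a second gap. The intermediate value theorem only produces a zero $w$ of $\langle\cdot,[B]\rangle$ on $[v,v'']$, not a point where $B$ is semistable. For an arbitrary brick in $(\tc_{v''})_\ell\cap(\overline{\fc_v})_\ell$ there need not be such a point. Take the same $B$ with $v=(-1,0,0)$ and $v''=(1,1,5)$: the only zero is $w=\frac{1}{8}(-6,1,5)$, where $\langle w,e_3\rangle>0$.

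You need to take $B$ of minimal length in that intersection. If a proper quotient $N$ of $B$ were not in $\overline{\tc_v}$, its torsion-free part for $(\overline{\tc_v},\fc_v)$ would be a shorter nonzero object of the same intersection. Hence every proper quotient pairs nonnegatively with both $v$ and $v''$, and so with $w$. This is exactly the content of the paper's $(5)\Rightarrow(1)$, the only non-formal step in the paper's cycle $(1)\Rightarrow(2)\Rightarrow(3)\Rightarrow(4)\Rightarrow(5)\Rightarrow(1)$. Reversing the cycle, as you do, makes three of the steps nontrivial.

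Finally, $(5)\Rightarrow(4)$ is left unresolved in your proposal. Your minimal-length idea does work, but it must be carried out:
\begin{enumerate}
\item Let $M\in(\modA)_\ell$ be shortest among modules whose locus meets the segment in a nonempty proper subset. Then $M$ is a brick. Indeed, the kernel and image of a nonzero non-invertible endomorphism are semistable on the same subsegment, hence by minimality on all of $[v,v']$, and then so is their extension $M$, a contradiction.
\item Suppose that subset were a nondegenerate $[a,b]$ with $b\neq v'$. Then a submodule $L$ with $\langle b,[L]\rangle=0$ that destabilises $M$ just past $b$ would be shorter and satisfy $\dc(L)\cap[v,v']=\{b\}$, again contradicting minimality.
\end{enumerate}
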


\begin{proof}
\textit{(1) implies (2).}
Let $v''\in[v, v']$ and let $M\in (\tc_v)_\ell$ and let $N$ be a quotient of $M$.
Then 
\[\langle v'', [N]\rangle = \langle \lambda v' + (1-\lambda)v, [N]\rangle = \lambda\langle  v', [N]\rangle+ (1-\lambda)\langle v, [N]\rangle >0,\]
since every quotient of $M$ has positive pairing with $v''$. 
This implies $(\tc_v)_\ell \subset (\tc_{v''})_\ell$ and, as a consequence, $(\overline{\fc_{v''}})_\ell \subset (\overline{\fc_v})_\ell$. 
Similarly, it can be shown that $(\overline{\tc_v})_\ell \subset (\overline{\tc_{v''}})_\ell$ and $({\fc_{v''}})_\ell \subset ({\fc_v})_\ell$.
Consider a subobject $L$ of an object $M' \in (\fc_v)_\ell=(\fc_{v'})_\ell$.
Then 
\[\langle v'', [L]\rangle = \langle \lambda v' + (1-\lambda)v, [L]\rangle = \lambda\langle  v', [L]\rangle+ (1-\lambda)\langle v, [L]\rangle <0.\]
From this we conclude that $(\fc_v)_\ell\subset({\fc_{v''}})_\ell$ and, as a consequence, $(\overline{\tc_{v''}})_\ell \subset (\overline{\tc_{v}})_\ell$. 
Combining this with the above we obtain the equalities $(\fc_v)_\ell=({\fc_{v''}})_\ell$ and $(\overline{\tc_{v''}})_\ell = (\overline{\tc_{v}})_\ell$. 
Similarly we can prove $({\tc_{v''}})_\ell = (\tc_{v})_\ell$. 
This shows that  every $v''\in [v,v']$ is $\ell$-TF-equivalent to $v$.
\\

\noindent
\textit{(2) implies (3).} Let $v'' \in [v,v']$. Then the following equalities hold.
\begin{align*}
    (\mod^{ss}_{v''}A)_\ell & = (\overline{\tc_{v''}})_\ell \cap (\overline{\fc_{v''}})_\ell\\
    & = (\overline{\tc_{v}})_\ell \cap (\overline{\fc_{v}})_\ell\\
    %&= \overline{\tc_{v}}\cap (\modA)_\ell \cap \overline{\fc_{v}} \cap (\modA)_\ell\\
    & = (\mod^{ss}_{v}A)_\ell 
\end{align*}

\noindent
\textit{(3) implies (4).} It is enough to recall that an object $M\in (\modA)_\ell$ is $v$-semistable if and only if $v\in \dc(M)$ by definition.  \\

\noindent
\textit{(4) implies (5).} This is obvious.\\

\noindent
\textit{(5) implies (1).}
Assume to the contrary that $v$ and $v'$ are not $\ell$-TF-equivalent. We need to show the existence of a brick $B\in (\modA)_\ell$ such that $\dc(B)\cap[v,v']$ consists of only one element.

Since $v$ and $v'$ are not $\ell$-TF-equivalent we can assume that $(\overline{\tc_v})_\ell\neq (\overline{\tc_{v'}})_\ell$. 
By \cref{prop:objectintheintersection} we have that there is a brick $B$ in either $(\overline{\tc_v})_\ell\cap(\fc_{v'})_\ell$ or $({\tc_v})_\ell\cap(\overline{\fc_{v'}})_\ell$.
Without loss of generality, we can assume that $(\overline{\tc_v})_\ell\cap(\fc_{v'})_\ell$ contains a brick $B$.
Furthermore, we can assume that $B$ is of minimal length in $(\overline{\tc_v})_\ell\cap(\fc_{v'})_\ell$.

Consider the function $f_B : [0,1] \to \mathbb{R}$ defined as \[f_B(\lambda) = \langle \lambda v' + (1-\lambda)v,[B]\rangle.\]
This is clearly a linear function on $[0,1]$. 
Moreover $f_B(0) = \langle v, [B]\rangle \geq 0$ since $B\in \overline{\tc_v}$. 
Similarly $f_B(1) = \langle v', [B]\rangle < 0$ since $B \in \fc_{v'}$. 
Then there is a unique $\alpha \in [0,1]$ such that $f_B(\alpha)=0$. 
Let us denote $v''= \alpha v' + (1-\alpha)v$.

We claim that $B$ is $v''$-semistable. By definition of $v''$ we have $\langle v'', [B]\rangle =0$. Let $N$ be a quotient of $B$.
We need to show that $\langle v'', [N]\rangle \geq 0$.
By definition of $v''$ and the linearity of $\langle -, -\rangle$ we have
\begin{align*}
    \langle v'', [N]\rangle &=\alpha\langle v', [N]\rangle +(1-\alpha)\langle v, [N]\rangle 
    % \\
    % &= \alpha\langle v', [N]\rangle +(1-\alpha)(\langle v, [M]\rangle-\langle v, [M/L]\rangle) \\
    % &=\alpha\langle v', [L]\rangle -(1-\alpha)\langle v, [M/L]\rangle.
\end{align*}
Since $\alpha \in [0,1]$, it is enough to show that $\langle v, [N]\rangle \geq 0$ and $\langle v', [N]\rangle \geq 0$.
This is equivalent to show that $N \in (\overline{\tc_{v}})_\ell\cap (\overline{\tc_{v'}})_\ell$ for every quotient $N$ of $B$.

By hypothesis $B \in (\overline{\tc_{v}})_\ell$, so $N \in (\overline{\tc_{v}})_\ell$ because $(\overline{\tc_{v}})_\ell$ is closed under quotients.
Suppose now that there exists a proper quotient $N$ of $B$ such that $N \not \in (\overline{\tc_{v'}})_\ell$ and take the short exact sequence \[
0 \to t_{v'} N \to N \to f_{v'}N \to 0
\] of $N$ with respect to $\left((\overline{\tc_{v'}})_\ell, (\fc_{v'})_\ell\right)$. 
This sequence exists by \cref{thm: torsion pairs of bounded length}. 
Note that $f_{v'}N \in (\fc_{v'})_\ell$ by definition and $f_{v'}N\in (\overline{\tc_{v}})_\ell$ because $f_{v'}N$ is a quotient of $B$.
Then $f_{v'}N\in (\overline{\tc_{v}})_\ell\cap (\fc_{v'})_\ell$.
Also, since $N$ is a proper quotient of $B$ we get $\lg(f_{v'}N) \leq \lg(N) < \lg(B)$. 
But this contradicts the minimality of the length of $B$ in the category $(\overline{\tc_{v}})_\ell\cap (\fc_{v'})_\ell$, so $N\in (\overline{\tc_{v'}})_\ell$. 
Therefore 
\begin{align*}
    \langle v'', [N]\rangle &=\alpha\langle v', [N]\rangle +(1-\alpha)\langle v, [N]\rangle \geq 0.
    % \\
    % &= \alpha\langle v', [N]\rangle +(1-\alpha)(\langle v, [M]\rangle-\langle v, [M/L]\rangle) \\
    % &=\alpha\langle v', [L]\rangle -(1-\alpha)\langle v, [M/L]\rangle.
\end{align*}
Hence $B$ is $v''$-semistable, and therefore
$v''\in\dc(B)$.

Since $f_B$ is linear and vanishes only at $\alpha$, we obtain
$\dc(B)\cap[v,v']=\{v''\}$, contradicting (5).
This completes the proof.
%Hence, whenever there are two stability conditions $v,v'\in \mathbb{R}^n$ that are not $\ell$-TF--equivalent there is a brick $B\in(\modA)_\ell$ such that $[v,v']\cap \dc(B)$. 
%this finishes the proof. 
\end{proof}

Given a stability condition $v \in \mathbb{R}^n$ we define the set
\[
\df_v^\ell := \left\{v' \in \mathbb{R}^n \mid v' \text{ is $\ell$-TF-equivalent to }v\right\}.
\]

\begin{corollary}\label{cor:finite cone complex for l}
    The set $\df_v^\ell$ is a convex cone in $\mathbb{R}^n$ for every $v\in \mathbb{R}^n$, 
    \[
    \Df_\ell(A) = \bigcup_{v \in \mathbb{R}^n} \df_v^\ell,
    \]
    and the number of $\ell$-TF-equivalence classes $\{\df_v^\ell \mid v\in \mathbb{R}^n\}$ is finite.
    In particular, the poset $\tors^{ss}_\ell(A)$ is finite.
\end{corollary}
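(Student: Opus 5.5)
The plan is to prove the three assertions in order: convexity and the cone property, the decomposition of $\Df_\ell(A)$ into the classes $\df^\ell_v$, and finiteness.

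\emph{Convexity and the cone property.} Let $v',v''\in\df_v^\ell$. Then $v'$ and $v''$ are $\ell$-TF-equivalent, since both are equivalent to $v$ and $\ell$-TF-equivalence is an equivalence relation. By the implication (1)$\Rightarrow$(2) of \cref{thm:bounded l-TF}, every point of $[v',v'']$ is $\ell$-TF-equivalent to $v'$, and hence to $v$. So $\df_v^\ell$ is convex.

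For the cone property, let $\lambda>0$ and consider $\lambda v'$. Each defining condition of $\tc_{v'}$, $\overline{\tc_{v'}}$, $\fc_{v'}$ and $\overline{\fc_{v'}}$ in \cref{prop:TorsionBKT} is a sign condition on $\langle v',[N]\rangle$. These signs are unchanged when $v'$ is replaced by $\lambda v'$. Hence $\tc_{\lambda v'}=\tc_{v'}$ and $\overline{\tc_{\lambda v'}}=\overline{\tc_{v'}}$, so $\lambda v'\in\df_v^\ell$. Combined with convexity, positive combinations of points of $\df^\ell_v$ stay in $\df^\ell_v$. The class of $v=0$ is handled the same way; it contains at least $\{0\}$.

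\emph{Decomposition of $\Df_\ell(A)$.} The $\ell$-TF-classes partition $\mathbb{R}^n$, so the content of the displayed equality is that this partition refines the wall-and-chamber structure. Chambers should be unions of classes, and walls (more precisely, the strata where $(\mod^{ss}_vA)_\ell$ is constant) should be unions of classes.

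Note that $(\mod^{ss}_vA)_\ell=(\overline{\tc_v})_\ell\cap(\overline{\fc_v})_\ell$, and by \cref{cor:sametorsion-free} $(\overline{\fc_v})_\ell$ is determined by $(\tc_v)_\ell$. Hence $(\mod^{ss}_vA)_\ell$ is constant on each $\df^\ell_v$.

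In the other direction, let $\cc$ be a chamber and take $v,v'\in\cc$. Suppose first that the segment $[v,v']$ lies in $\cc$. No brick $B\in(\modA)_\ell$ has $\dc(B)$ meeting $\cc$, so condition (5) of \cref{thm:bounded l-TF} holds and $v\sim_\ell v'$. In general $\cc$ is open and connected, hence polygonally connected, so we connect $v$ and $v'$ by a polygonal path inside $\cc$ and apply the segment case to each piece. Thus each chamber is a single class.

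\emph{Finiteness.} This is the main step. By the Remark preceding the definition of $\ell$-TF-equivalence, there are only finitely many loci $\dc(M)$ with $M\in(\modA)_\ell$. Each $\dc(M)$ is a polyhedral cone: it is cut out by one linear equation $\langle v,[M]\rangle=0$ together with the linear inequalities $\langle v,[L]\rangle\le0$, and only finitely many classes $[L]$ occur among the submodules $L$ of $M$.

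Two points $v,v'$ lie in the same cell of the common refinement of the hyperplane arrangement
\[
\mathcal{H}=\{[M]^\perp \mid M\in(\modA)_\ell\}
\]
if the vectors $(\operatorname{sign}\langle v,[M]\rangle)_M$ and $(\operatorname{sign}\langle v',[M]\rangle)_M$ agree. The set $\mathcal{H}$ is finite, so there are finitely many cells, and it remains to show that each cell lies in a single class.

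Let $v,v'$ lie in the same open cell of $\mathcal{H}$. Then every $N\in(\modA)_\ell$ satisfies
\[
\operatorname{sign}\langle v,[N]\rangle=\operatorname{sign}\langle v',[N]\rangle.
\]
Quotients and submodules of a module of length at most $\ell$ again have length at most $\ell$. Therefore membership of a module of length at most $\ell$ in $\tc_v$, $\overline{\tc_v}$, $\fc_v$ or $\overline{\fc_v}$ is decided by sign data on $(\modA)_\ell$ alone, and these sign data coincide for $v$ and $v'$. So $v\sim_\ell v'$.

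Hence the number of $\ell$-TF-classes is at most the number of cells of $\mathcal{H}$, which is finite. Finally, every element of $\tors^{ss}_\ell A$ is of the form $(\tc_v)_\ell$ or $(\overline{\tc_v})_\ell$, so $\tors^{ss}_\ell A$ injects into two copies of the set of classes and is finite.
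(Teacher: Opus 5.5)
Your proof is correct. For convexity and the cone property it matches the paper's argument: \cref{thm:bounded l-TF} together with invariance of $\tc_v$ and $\overline{\tc_v}$ under positive rescaling. The finiteness step, however, takes a genuinely different route.

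The paper derives finiteness from the segment criterion (5)$\Leftrightarrow$(1) of \cref{thm:bounded l-TF}. Two stability conditions are $\ell$-TF-inequivalent exactly when the segment between them meets some brick locus $\dc(B)$ with $\lg B\le\ell$ in a single point. There are only finitely many such loci, so these loci cut out the partition. You never use bricks at this point. Submodules and quotients of objects of $(\modA)_\ell$ stay in $(\modA)_\ell$, so the sign vector of $v$ on the finitely many classes $[M]$ with $\lg M\le\ell$ already determines $(\tc_v)_\ell$ and $(\overline{\tc_v})_\ell$. Hence the face decomposition of the finite arrangement $\mathcal{H}$ refines the $\ell$-TF partition.

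Your route is more elementary and self-contained, and it gives an explicit bound of at most $3^{|\mathcal{H}|}$ classes. It also makes rigorous the step the paper compresses into ``hence the decomposition is completely determined by the stability loci of such bricks''. The paper's route buys sharper geometric information: the classes are separated only by brick loci. By contrast, $\mathcal{H}$ is usually strictly finer than $\Df_\ell(A)$, since many hyperplanes $[M]^\perp$ pass through the interior of chambers. The result is quoted later in exactly this brick form, e.g.\ in \cref{sec:torsion scattering diagram}.

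Your treatment of the displayed equality is also more explicit than the paper's one-line justification. You use constancy of $(\mod^{ss}_vA)_\ell$ on classes via \cref{cor:sametorsion-free}, and polygonal connectivity with (5)$\Rightarrow$(1) for chambers. Combined with convexity, this essentially proves the next corollary, that each chamber is a single class.

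One harmless slip: the cells you count must include the lower-dimensional faces of $\mathcal{H}$ (sign vectors with zero entries), not only the open full-dimensional cells. Your sign-vector definition already handles this, so ``same open cell'' should just read ``same cell''.
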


\begin{proof}
    It follows directly from \cref{thm:bounded l-TF} that $\df_v^\ell$ is convex. 
    Moreover, it follows from the definitions that $\overline{\tc_v}=\overline{\tc_{\lambda v}}$ and $\tc_v=\tc_{\lambda v}$ for every $\lambda \geq 0$. Hence $\df_v^\ell$ is a cone.
    The fact that the number of $\ell$-TF-equivalence classes is finite follows from \cref{thm:bounded l-TF} and the fact that there are only finitely many classes $[B]\in K_0(A)$ represented by bricks of length at most $\ell$.
    By Theorem \ref{thm:bounded l-TF}, two stability conditions fail to be $\ell$-TF-equivalent precisely when the segment joining them crosses the wall of a brick in $(\modA)_\ell$.
    Hence the decomposition of $\Df_\ell(A)$ into $\ell$-TF-equivalence classes is completely determined by the stability loci of such bricks.
    Moreover given a $B\in (\modA)_\ell$ the stability locus $\dc(B)$ is the union of finitely many $\ell$-TF-equivalence classes, implying the equality in our statement. 
    The final statement follows directly from the definition of the poset $\tors^{ss}_\ell(A)$.
\end{proof}

\begin{corollary}
Let $\cc$ be a chamber in $\Df_\ell(A)$. Then $\cc = \df_v^\ell$ for any $v\in \cc$. 
\end{corollary}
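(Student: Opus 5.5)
We prove the two inclusions $\df_v^\ell \subseteq \cc$ and $\cc \subseteq \df_v^\ell$ for a fixed $v\in\cc$, using \cref{thm:bounded l-TF} both times.

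\textbf{The inclusion $\cc\subseteq\df_v^\ell$.} Since $\cc$ is a connected open subset of the complement of the finite union of stability loci $\dc(M)$ with $0\neq M\in(\modA)_\ell$, no $v''\in\cc$ lies in any such $\dc(M)$. Hence $(\mod^{ss}_{v''}A)_\ell=\{0\}$ for every $v''\in\cc$. Every open connected subset of $\mathbb{R}^n$ is polygonally connected, so any $v'\in\cc$ can be joined to $v$ by a finite chain of segments $[w_0,w_1],\dots,[w_{k-1},w_k]\subset\cc$ with $w_0=v$ and $w_k=v'$. Along each segment the category $(\mod^{ss}_{w}A)_\ell$ is constantly $\{0\}$. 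Condition (3) of \cref{thm:bounded l-TF} therefore holds, so $w_{i-1}$ and $w_i$ are $\ell$-TF-equivalent. Transitivity then gives $v'\in\df_v^\ell$.

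\textbf{The inclusion $\df_v^\ell\subseteq\cc$.} Let $v'\in\df_v^\ell$. By (1)$\Rightarrow$(2)$\Rightarrow$(3) of \cref{thm:bounded l-TF}, $(\mod^{ss}_{v''}A)_\ell$ is constant along $[v,v']$. Its value at $v''=v$ is $\{0\}$, so the whole segment avoids every $\dc(M)$ with $0\neq M\in(\modA)_\ell$. Thus $[v,v']$ is a connected subset of the complement containing $v$, and so it lies in the connected component of $v$.

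\textbf{Main obstacle.} The delicate point is that $\cc$ must be an entire connected component of the complement, not merely some connected open subset of it. The definition of $\Df_\ell(A)$ literally says ``a connected open subset'', so we read chambers as the connected components; this is the intended meaning, in line with the definition of $\Df(A)$. With that reading, the second step shows $v'\in\cc$. The same argument also proves that $\df_v^\ell$ is itself open, because for $v$ off all walls $\df_v^\ell$ coincides with the component of $v$. Finiteness of the set of walls, from the preceding remark, guarantees that the complement is open, so polygonal connectivity in the first step is available.
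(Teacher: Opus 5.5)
Your proof is correct and is essentially the paper's argument: both inclusions come from \cref{thm:bounded l-TF}, and chambers are read as connected components of the complement, as the paper's introduction states and its proof tacitly uses. Your direct proof of $\df_v^\ell\subseteq\cc$, via constancy of $(\mod^{ss}_{v''}A)_\ell=\{0\}$ along $[v,v']$, replaces the paper's split into points of $\overline{\cc}\setminus\cc$ and of $\mathbb{R}^n\setminus\overline{\cc}$. Your polygonal chain with transitivity improves on the paper at one point: the paper applies the theorem directly to $[v,v']$ for $v'\in\cc$, which tacitly assumes this segment avoids every $\dc(M)$ (i.e.\ that $\cc$ is convex), and that is only known once the corollary is established.
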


\begin{proof}
Let $v'\in \cc$. 
Since $\cc$ is a chamber there is no $M\in (\modA)_\ell$ that is $v'$-semistable.
Then $v$ and $v'$ are $\ell$-TF-equivalent by \cref{thm:bounded l-TF} and $\df_v^\ell\subset \cc$.

By the definition of a chamber, for every stability condition $v'' \in \overline{\cc}\setminus \cc$ there is a nonzero object $M\in (\modA)_\ell$ which is $v''$-semistable.
In particular $v''$ is not $\ell$-TF-equivalent to $v$.
Now, for every $v' \in \mathbb{R}^n \setminus \overline{\cc}$ there is an element $v'' \in[v, v']$ such that $v'' \in \dc(M)$ for some $M \in (\modA)_\ell$.
Therefore $v'$ is not $\ell$-TF-equivalent to $v$ by \cref{thm:bounded l-TF}.
Hence $\cc = \df_v^\ell$ as we wanted to show. 
\end{proof}

Inspired by the terminology from cluster algebras, we say that a cone $\df$ is sign coherent if for any two vectors $v=(v_1, \dots, v_n)$ and  $w=(w_1, \dots, w_n)$ in $\df$, $v_i \geq 0$ if and only if $w_i\geq 0$ or $v_i \leq 0$ if and only if $w_i\leq 0$ holds for all $1 \leq i \leq n$.

\begin{corollary}
Let $\df$ be a cone in $\Df_\ell(A)$. 
Then $\df$ is sign coherent. 
\end{corollary}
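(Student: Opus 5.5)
The idea is to use the simple modules. Every $S(i)$ has length $1\leq \ell$, so each $S(i)$ lies in $(\modA)_\ell$, and it belongs to exactly one of the two categories $\tc_v$ or $\overline{\fc_v}$, and to exactly one of $\overline{\tc_v}$ or $\fc_v$. The membership is read off directly from the sign of the $i$-th coordinate. A simple module $S(i)$ has no proper nonzero submodules or quotients, so by \cref{prop:TorsionBKT} we have:
\[
S(i)\in \tc_v \iff v_i>0,\qquad S(i)\in \overline{\tc_v}\iff v_i\geq 0,
\]
\[
S(i)\in \fc_v \iff v_i<0,\qquad S(i)\in \overline{\fc_v}\iff v_i\leq 0.
\]
Here we use $\langle v,[S(i)]\rangle=v_i$, since $[S(i)]=e_i$.

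Now let $\df$ be a cone of $\Df_\ell(A)$. By \cref{cor:finite cone complex for l}, $\df=\df_v^\ell$ for some $v$. Take $v,w\in\df$; they are $\ell$-TF-equivalent, so $(\tc_v)_\ell=(\tc_w)_\ell$ and $(\overline{\tc_v})_\ell=(\overline{\tc_w})_\ell$. Since $S(i)\in(\modA)_\ell$, we get $S(i)\in(\tc_v)_\ell$ if and only if $S(i)\in(\tc_w)_\ell$. By the display above this says $v_i>0\iff w_i>0$. In the same way, $(\overline{\tc_v})_\ell=(\overline{\tc_w})_\ell$ gives $v_i\geq 0\iff w_i\geq 0$.

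Combining the two equivalences, $v_i$ and $w_i$ are both positive, both zero, or both negative. In particular $v_i\geq 0\iff w_i\geq 0$ and $v_i\leq 0\iff w_i\leq 0$ for every $1\leq i\leq n$, which is sign coherence.

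There is no real obstacle. The only point needing care is checking that for a simple module the quotient and submodule conditions in \cref{prop:TorsionBKT} reduce to the single pairing $\langle v,e_i\rangle=v_i$. Alternatively, one can argue geometrically with \cref{thm:bounded l-TF}(4) applied to $M=S(i)$: the segment $[v,w]$ either misses the hyperplane $\dc(S(i))=\{v_i=0\}$ or lies inside it, so the $i$-th coordinate cannot change sign along the segment.
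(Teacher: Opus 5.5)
Your proof is correct, but your main argument takes a slightly different route from the paper's. The paper argues geometrically. If $v_i<0<w_i$, the segment $[v,w]$ meets the coordinate hyperplane $\dc(S(i))=\{x_i=0\}$ in exactly one point, so $v$ and $w$ cannot be $\ell$-TF-equivalent by \cref{thm:bounded l-TF}. This is essentially the alternative you sketch in your last paragraph.

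Your main argument never uses \cref{thm:bounded l-TF}. Since a simple module has no proper nonzero quotients, membership of $S(i)$ in $(\tc_v)_\ell$ records exactly whether $v_i>0$. Membership in $(\overline{\tc_v})_\ell$ records exactly whether $v_i\geq 0$. The definition of $\ell$-TF-equivalence alone then forces the coordinates to agree.

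This route is more elementary: it needs only the descriptions in \cref{prop:TorsionBKT} and $\ell\geq 1$. It also proves more than the paper's notion of sign coherence requires. It shows that the full sign vector in $\{-,0,+\}^n$ is constant on each cone. The paper's definition and proof only rule out strictly opposite signs $v_i<0<w_i$.

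The paper's route has the advantage of fitting the wall-crossing picture used throughout \cref{sec:conecomplex}, where cones are separated by the walls of bricks of length at most $\ell$. Applied with $v_i=0<w_i$, it would also give your stronger conclusion.
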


\begin{proof}
Let $v, w \in \mathbb{R}^n$ such that $v_i <0$ while $w_i>0$ and consider the interval $[v, w]$ between them. 
Then there exists a vector $v' \in [v,w]$ such that $v'_i =0$. 
An easy verification shows that $v'_i$ belongs to the stability space $\dc(S_i)$ of the simple module $S_i$, which is in $(\modA)_\ell$ and the result follows from \cref{thm:bounded l-TF}.
\end{proof}

We now give a very explicit description of the poset $\tors_\ell^{ss}A$ in terms of the cone complex $\Df_\ell(A)$.

\begin{proposition}\label{prop: tors to chambers}
Suppose that $\tc_\ell\in \tors_\ell A$ satisfies $\tc_\ell = (\tc_v)_\ell$ or $\tc_\ell = (\overline{\tc}_v)_\ell$ for some $v\in \mathbb{R}^n$. 
Then there is a chamber $\df \in \Df_\ell(A)$ such that $$\tc_\ell = (\tc_\df)_\ell = (\overline{\tc}_\df)_\ell.$$
\end{proposition}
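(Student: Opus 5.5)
The plan is to perturb $v$ into a nearby chamber in the direction that preserves the relevant torsion class. By \cref{cor:finite cone complex for l}, only finitely many walls $\dc(B)$, with $B$ a brick in $(\modA)_\ell$, matter. The relevant bricks have finitely many classes $[B]$, and their submodules have finitely many classes. Choose $w\in\cc^+$, for instance $w=(1,\dots,1)$, and consider $v_\epsilon = v - \epsilon w$ and $v^\epsilon = v+\epsilon w$ for small $\epsilon>0$. I claim that $v_\epsilon$ lies in a chamber $\df$ with $(\tc_\df)_\ell=(\tc_v)_\ell$. Symmetrically, $v^\epsilon$ lies in a chamber $\df'$ with $(\overline{\tc}_{\df'})_\ell=(\overline{\tc}_v)_\ell$. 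This handles both cases of the hypothesis.

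First, $v_\epsilon$ lies in no stability locus $\dc(M)$ with $0\neq M\in(\modA)_\ell$, for small $\epsilon$. The only obstruction is $\langle v_\epsilon,[M]\rangle = \langle v,[M]\rangle-\epsilon\langle w,[M]\rangle=0$. Since $\langle w,[M]\rangle=\lg(M)>0$, this happens only at $\epsilon=\langle v,[M]\rangle/\lg(M)$. The set of such values over the finitely many classes $[M]$ with $\lg(M)\le \ell$ is finite. So for $\epsilon$ below the smallest positive one, $v_\epsilon$ is not semistable for any nonzero $M\in(\modA)_\ell$, and $v_\epsilon$ lies in a chamber $\df$. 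That chamber equals $\df^\ell_{v_\epsilon}$ by the corollary after \cref{cor:finite cone complex for l}, and $(\tc_\df)_\ell=(\overline{\tc}_\df)_\ell$ there.

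Next I would show $(\tc_{v_\epsilon})_\ell=(\tc_v)_\ell$, again for $\epsilon$ small. The key identity is $\langle v_\epsilon,[N]\rangle=\langle v,[N]\rangle-\epsilon\lg(N)$ for every $N$ of length at most $\ell$.

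For the inclusion $\supseteq$, take $M\in(\tc_v)_\ell$. Every quotient $N$ of $M$ has $\langle v,[N]\rangle>0$, and these values range over a finite set of positive numbers. Choosing $\epsilon\lg(N)\le\epsilon\ell$ below their minimum keeps each pairing positive, so $M\in\tc_{v_\epsilon}$.

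For the inclusion $\subseteq$, note that $\langle v_\epsilon,[N]\rangle>0$ forces $\langle v,[N]\rangle>\epsilon\lg(N)>0$, so $(\tc_{v_\epsilon})_\ell\subseteq(\tc_v)_\ell$ holds directly.

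Dually, $\langle v^\epsilon,[N]\rangle\ge 0$ for all quotients is implied by $\langle v,[N]\rangle\ge0$, which gives $(\overline{\tc}_v)_\ell\subseteq(\overline{\tc}_{v^\epsilon})_\ell$. For the reverse inclusion, suppose some quotient has $\langle v,[N]\rangle<0$. That value is bounded away from $0$ by finiteness, so it stays negative under the perturbation for small $\epsilon$. Taking $\df'$ to be the chamber containing $v^\epsilon$ (found as in the first step) then completes the argument.

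The main obstacle is making "finitely many values" rigorous. The pairings $\langle v,[N]\rangle$ depend only on $[N]\in K_0(A)$ with $\lg(N)\le\ell$, and there are finitely many such classes (the same remark used to show $\Df_\ell(A)$ has finitely many walls). Hence a single uniform $\epsilon>0$ works for all the finitely many constraints at once.
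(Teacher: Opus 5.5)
Your proof is correct and is essentially the paper's argument: the paper also shifts $v$ to $w=v-\frac{\epsilon}{2\ell}(1,\dots,1)$, where $\epsilon$ is the minimal positive value of $\langle v,[T]\rangle$ over nonzero $T\in\tc_\ell$, relies on $\lg\le\ell$ and on the finiteness of the relevant classes, and treats the $\overline{\tc}_v$ case dually. The differences are only in bookkeeping. The paper shows $\langle w,[F]\rangle<0$ on nonzero objects of $\fc_\ell$ and then uses the torsion-pair squeeze to get $\tc_\ell=(\tc_w)_\ell=(\overline{\tc}_w)_\ell$, which yields chamber membership. You instead check both inclusions directly, and you get chamber membership by avoiding the finitely many critical values of $\epsilon$.
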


\begin{proof}
We only show the case where $\tc_\ell = (\tc_v)_\ell$ for some $v\in \mathbb{R}^n$, the other case is dual.
In order to prove our statement, it is enough to show that there is a vector $w \in \mathbb{R}^n$ with no nonzero $w$-semistable objects $M\in (\modA)_\ell$ such that $\tc_\ell = (\tc_w)_\ell$.
Then the $\ell$-TF-equivalence class $\df^\ell_w$ is the desired chamber.
 Indeed, because there are finitely many walls in the wall-and-chamber of $(\modA)_\ell$, every connected component in the complement of the walls is necessarily open in $\mathbb{R}^n$.

Consider the torsion pair $(\tc_\ell, \fc_\ell) = ((\tc_v)_\ell, (\overline{\fc}_v)_\ell)$.
By definition, we have that \mbox{$\langle v , [T] \rangle > 0$} for every nonzero object $T \in \tc_\ell$ and $\langle v, [F] \rangle \leq 0$ for every $F \in \fc_\ell$. 
Note that the set
\(
\{[T] \in K_0(A) \mid T\in \tc_\ell\} \)
is finite. 
Then $\epsilon =\min \{\langle v , [T] \rangle \mid 0 \neq T \in \tc_\ell\}$  is well-defined and greater than $0$.
Let $w = v - \frac{\epsilon}{2\ell} (1,1,\dots, 1) \in \mathbb{R}^n$.

Then for every $T \in \tc_\ell$ we have that 
\[
\langle w , [T] \rangle = \langle v , [T] \rangle - \frac{\epsilon}{2\ell} \langle (1,1,\dots, 1), [T] \rangle  = \langle v , [T] \rangle - \frac{\epsilon}{2\ell} \lg(T) \geq \langle v , [T] \rangle - \frac{\epsilon}{2} > 0
\]
where the inequality holds because $\lg(T) \leq \ell$ for all $T \in \tc_\ell$.

Similarly, let $F\in \fc_\ell$ be nonzero.
Then 
\[
\langle w , [F]\rangle = \langle v , [F] \rangle - \frac{\epsilon}{2\ell} \langle (1,1,\dots, 1), [F] \rangle  = \langle v , [F] \rangle - \frac{\epsilon}{2\ell} \lg(F) \leq 0 - \frac{\epsilon}{2\ell} \lg(F) < 0. 
\]

The above inequalities imply the following inclusions. 
\begin{align*}
\tc_\ell \subset (\tc_w)_\ell &\subset (\overline{\tc}_w)_\ell \\
\fc_\ell \subset (\fc_w)_\ell &\subset (\overline{\fc}_w)_\ell 
\end{align*}
Moreover, we know that $(\tc_\ell, \fc_\ell)$ and $((\tc_w)_\ell, (\overline{\fc}_w)_\ell)$ are torsion pairs in $(\modA)_\ell$, implying $(\overline{\fc}_w)_\ell \subset \fc_\ell$. 
Then
$\tc_\ell = (\tc_w)_\ell = (\overline{\tc}_w)_\ell$. 
\end{proof}

As a consequence of the previous proposition and the fact that $\Df_\ell(A)$ is complete, we can describe maximal inclusions in $\tors^{ss}_\ell A$.

\begin{corollary}\label{cor: maximal inclusions tors^ss}
Let $\tc_\ell, \tc'_\ell \subset \tors_\ell A$ be such that $\tc_\ell$ is maximally included in $\tc'_\ell$. 
Then there is a $\ell$-$TF$-equivalence class $\df'$ codimension $1$ and two chambers $\df_1, \df_2$ in $\Df_\ell(A)$ such that 
$\tc_\ell = (\tc_{\df'})_\ell = (\tc_{\df_1})_\ell$ and $\tc'_\ell = (\overline{\tc}_{\df'})_\ell = (\tc_{\df_2})_\ell$.
\end{corollary}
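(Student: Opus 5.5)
The plan is to first realise $\tc_\ell$ and $\tc'_\ell$ by chambers, and then walk along a generic segment joining those chambers until the torsion class jumps. Here $\tc_\ell$ and $\tc'_\ell$ are understood as elements of $\tors^{ss}_\ell A$, which is the setting of the statement. By \cref{prop: tors to chambers} there are chambers $\df_1,\df_2$ of $\Df_\ell(A)$ with $\tc_\ell=(\tc_{\df_1})_\ell=(\overline{\tc}_{\df_1})_\ell$ and $\tc'_\ell=(\tc_{\df_2})_\ell=(\overline{\tc}_{\df_2})_\ell$. By \cref{cor:finite cone complex for l}, $\Df_\ell(A)$ is a finite cone complex, so I can choose $v_1\in\df_1$ and $v_2\in\df_2$ such that the segment $[v_1,v_2]$ avoids every $\ell$-TF-class of codimension at least $2$. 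The segment then meets only finitely many codimension-one classes, at points $w_1,\dots,w_k$. Since $\df_1\neq\df_2$, \cref{thm:bounded l-TF} gives $k\geq 1$.

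The first key step is to show that every numerical class met along the segment lies in the interval $[\tc_\ell,\tc'_\ell]$. Let $v''=\lambda v_2+(1-\lambda)v_1$ and argue by linearity, exactly as in the proof of ``(1) implies (2)'' in \cref{thm:bounded l-TF}. Any $M\in(\tc_{v_1})_\ell\cap(\tc_{v_2})_\ell$ has all of its quotients pairing positively with $v''$. Since $\tc_\ell\subset\tc'_\ell$, this intersection is $\tc_\ell$, so $\tc_\ell\subset(\tc_{v''})_\ell$. Dually, $(\fc_{v_1})_\ell\cap(\fc_{v_2})_\ell=\fc'_\ell$ is contained in $(\fc_{v''})_\ell$. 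By \cref{prop:completelattice} this gives $(\overline{\tc}_{v''})_\ell\subset\tc'_\ell$. Hence
\[
\tc_\ell\subset(\tc_{v''})_\ell\subset(\overline{\tc}_{v''})_\ell\subset\tc'_\ell .
\]
Because $\tc_\ell$ is maximally contained in $\tc'_\ell$, each of $(\tc_{v''})_\ell$ and $(\overline{\tc}_{v''})_\ell$ equals either $\tc_\ell$ or $\tc'_\ell$.

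The second step is a local comparison at a crossing point. For $w=w_i$ and every $v$ on the segment sufficiently close to $w$, I want
\[
(\tc_w)_\ell\subset(\tc_v)_\ell\subset(\overline{\tc}_v)_\ell\subset(\overline{\tc}_w)_\ell .
\]
The reason is that there are only finitely many classes $[N]$ with $\lg N\leq\ell$. The strict inequalities $\langle w,[N]\rangle>0$ for quotients and $\langle w,[L]\rangle<0$ for submodules therefore persist in a small neighbourhood of $w$. I expect this uniformity argument to be the only mildly delicate point, and I would make it precise by taking a minimum over this finite set, as in the proof of \cref{prop: tors to chambers}.

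To conclude, the chambers crossed by the segment form a finite sequence starting with torsion class $\tc_\ell$ and ending with $\tc'_\ell$, and by the first step each of them carries one of these two values. Choose the last chamber $\df$ with value $\tc_\ell$, and let $w$ be the crossing point after it; the next chamber $\df'$ then has value $\tc'_\ell$. By the second step, $(\tc_w)_\ell\subset\tc_\ell$ and $\tc'_\ell\subset(\overline{\tc}_w)_\ell$. Combined with the first step, this forces $(\tc_w)_\ell=\tc_\ell$ and $(\overline{\tc}_w)_\ell=\tc'_\ell$. Taking $\df'$ to be the codimension-one class $\df^\ell_w$, and replacing the chosen chambers by $\df$ and the chamber following $w$, gives the statement.
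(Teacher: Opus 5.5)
Your proof is correct and follows the route the paper indicates: the paper gives no proof beyond citing \cref{prop: tors to chambers} together with completeness of $\Df_\ell(A)$. Your segment walk supplies the details it leaves implicit, namely the sandwich $\tc_\ell\subset(\tc_{v''})_\ell\subset(\overline{\tc}_{v''})_\ell\subset\tc'_\ell$ forced by maximality, and the local semicontinuity at the jump point. One small addition: to make the finiteness of the crossing set immediate, also choose $v_1,v_2$ off the finitely many hyperplanes spanned by codimension-one classes, so that the segment meets each of them at most once.
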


Given two cone complexes $\Df$ and $\widetilde{\Df}$, we say that $\Df$ is finer than $\widetilde{\Df}$ if every cone in $\widetilde{\Df}$ is the union of cones in $\Df$.
In this case we say that $\Df$ is a refinement of $\widetilde{\Df}$.
Suppose that $\ell ,m \in \mathbb{N}$ and $\ell \leq m$. 
Then \cref{thm:bounded l-TF} can be used to compare $\Df_\ell(A)$ and $\Df_m(A)$. 

\begin{corollary}\label{cor:refinement of W&C}
Let $A$ be an Artin algebra and let $\ell,m \in \mathbb{N}$ such that $\ell < m$. 
Then $\Df_m(A)$ is a refinement of $\Df_\ell(A)$.
Moreover, $\Df(A)$ is a refinement of $\Df_\ell(A)$ for every nonnegative integer $\ell$.
\end{corollary}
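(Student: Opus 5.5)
The key observation is that $m$-TF-equivalence implies $\ell$-TF-equivalence whenever $\ell\le m$, and TF-equivalence implies $\ell$-TF-equivalence for every $\ell$. Once this is established, every $\ell$-TF-equivalence class $\df^\ell_v$ is a union of $m$-TF-equivalence classes $\df^m_{v'}$, namely those with $v'\in\df^\ell_v$. By \cref{cor:finite cone complex for l}, the cones of $\Df_\ell(A)$ are exactly the classes $\df^\ell_v$, so this is precisely the statement that $\Df_m(A)$ refines $\Df_\ell(A)$. Likewise, since by \cref{prop: union of cones} the cones of $\Df(A)$ are the TF-equivalence classes $\df_v$, the same reasoning shows that $\Df(A)$ refines $\Df_\ell(A)$.

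To prove the implication, suppose $v$ and $v'$ are $m$-TF-equivalent, so $(\tc_v)_m=(\tc_{v'})_m$ and $(\overline{\tc_v})_m=(\overline{\tc_{v'}})_m$. Intersecting both sides with $(\modA)_\ell\subseteq(\modA)_m$ gives $(\tc_v)_\ell=(\tc_{v'})_\ell$ and $(\overline{\tc_v})_\ell=(\overline{\tc_{v'}})_\ell$. This is exactly the map $\varphi_{m,\ell}$ of \cref{prop:morphismstodiagram}, and by \cref{prop:composition} it is compatible with restriction. The case of TF-equivalence uses $\varphi_{\infty,\ell}$ in the same way.

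Alternatively, the geometric criterion of \cref{thm:bounded l-TF}(5) gives the same result. If $v$ and $v'$ are not $\ell$-TF-equivalent, there is a brick $B$ of length at most $\ell\le m$ whose stability locus $\dc(B)$ meets $[v,v']$ in exactly one point. The same brick then witnesses that $v$ and $v'$ are not $m$-TF-equivalent.

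The main point requiring care is the convention for $\Df(A)$. There, chambers are defined via the closure of the union of walls, so one must make sure that the cones of $\Df(A)$ are genuinely the TF-classes; this is supplied by \cref{prop: union of cones}. A second point is that the closure of each cone is a union of closures of finer cones. This follows because the finitely many (respectively, all) finer classes partition the coarser class, so nothing beyond the partition statement is needed.
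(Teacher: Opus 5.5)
Your proof is correct, but your main argument takes a different, more elementary route than the paper's. The paper argues through \cref{thm:bounded l-TF}. The decomposition $\Df_\ell(A)$ is governed by the stability loci $\dc(B)$ of bricks $B$ of length at most $\ell$, and these are also bricks of length at most $m$. This is exactly your ``alternative'' paragraph; the paper then calls the statement for $\Df(A)$ obvious.

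Your primary argument stays at the level of definitions. You intersect $(\tc_v)_m=(\tc_{v'})_m$ and $(\overline{\tc_v})_m=(\overline{\tc_{v'}})_m$ (respectively $\tc_v=\tc_{v'}$ and $\overline{\tc_v}=\overline{\tc_{v'}}$) with $(\modA)_\ell$. This shows every $m$-TF-class (respectively TF-class) lies in a single $\ell$-TF-class. Once cones are identified with equivalence classes via \cref{cor:finite cone complex for l} and \cref{prop: union of cones}, that is literally the paper's definition of refinement.

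Comparing the two: your route needs neither bricks nor the segment criterion, and it treats $\Df(A)$ on the same footing as $\Df_m(A)$. The brick route would need Asai's version of \cref{thm:bounded l-TF} for the TF case. The paper's route, in turn, shows where the extra subdivision comes from: the stability loci of bricks of length between $\ell+1$ and $m$.

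One caveat about your last paragraph. The paper's notion of refinement only asks that each cone of the coarser complex be a union of cones of the finer one, so nothing about closures is needed. Moreover, your justification (``the finer classes partition the coarser class'') only works when finitely many classes are involved. For $\Df(A)$ refining $\Df_\ell(A)$, the closure of an infinite union need not be the union of the closures.

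For the Kronecker algebra of \cref{ex: Kronecker 2}, the nonzero points of $\dc\left(\rep{1\\2}_\lambda\right)$ lie in the closure of the $\Df_3(A)$-chamber between $\dc(\rep{1\\22})$ and $\dc\left(\rep{1\\2}_\lambda\right)$. They lie in the closure of none of the infinitely many TF-classes subdividing that chamber. The closure statement can be recovered using that $\overline{\df^\ell_v}$ is itself a union of $\ell$-TF-classes, but since it is not needed you should simply drop it.
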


\begin{proof}
It follows from \cref{thm:bounded l-TF} that the structure of $\Df_\ell(A)$ is completely determined by the walls $\dc(B)$ for all bricks $B\in (\modA)_\ell$.
If $\ell<m$, then every brick $B\in (\modA)_\ell$ satisfies $B\in (\modA)_m$ and the result follows. 
The moreover part of the statement is obvious.
\end{proof}

If we draw an arrow $\phi:\widetilde{\Df} \to \Df$ whenever $\widetilde{\Df}$ is finer than $\Df$, then \cref{cor:refinement of W&C} is illustrated in \cref{fig:inverse limits W&C} following the style of \cref{fig:inverse limits}.
Note that this diagram is commutative and the following corollary is immediate.

\begin{corollary}\label{cor:limit2}
    \[
     \underleftarrow\lim \tors^{ss}_\ell A =\tors^{ss} A.
    \]
\end{corollary}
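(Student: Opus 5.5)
We want to show that the natural map $\Psi:\tors^{ss}A\to\underleftarrow\lim\tors^{ss}_\ell A$, $\tc\mapsto(\tc_\ell)_{\ell\in\mathbb N}$, is a bijection. The plan parallels \cref{cor:limit1}, with one extra point: an element of the limit must come from a \emph{numerical} torsion class. First, the restrictions $\varphi_{m,\ell}$ of \cref{prop:morphismstodiagram} send $\tors^{ss}_mA$ into $\tors^{ss}_\ell A$, since $(\tc_v)_m\mapsto(\tc_v)_\ell$. By \cref{prop:composition} they form an inverse system, and $\tors^{ss}A$ is a cone over it via $\varphi_{\infty,\ell}$. So $\Psi$ is well defined.

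Injectivity is immediate. If $\tc_\ell=\tc'_\ell$ for every $\ell$, then $\tc=\bigcup_\ell\tc_\ell=\bigcup_\ell\tc'_\ell=\tc'$, because $\modA=\bigcup_\ell(\modA)_\ell$. Hence $\Psi$ is the restriction of the injective map giving \cref{cor:limit1}.

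For surjectivity, take a compatible family $(x_\ell)$ with $x_\ell\in\tors^{ss}_\ell A$. By \cref{cor:limit1} there is a unique $\tc\in\tors A$, namely $\tc=\bigcup_\ell x_\ell$, with $\tc_\ell=x_\ell$ for all $\ell$. It remains to show $\tc$ is numerical, and this is the main obstacle. I would argue geometrically. By \cref{prop: tors to chambers}, each $x_\ell$ is realised by a chamber $\cc_\ell$ of $\Df_\ell(A)$, so that $x_\ell=(\tc_w)_\ell$ for all $w\in\cc_\ell$. Compatibility and the refinement property of \cref{cor:refinement of W&C} should let us choose these so that $\cc_{\ell+1}\subseteq\cc_\ell$. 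Indeed, $\varphi_{\ell+1,\ell}(x_{\ell+1})=x_\ell$ forces any chamber realising $x_{\ell+1}$ to lie in a chamber realising $x_\ell$, possibly after replacing $\cc_\ell$ by the chamber containing $\cc_{\ell+1}$.

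Consider the nested closed cones $K_\ell=\overline{\cc_\ell}$, intersected with the unit sphere. These are nonempty compact sets, so their intersection contains some $v\neq0$. For $T\in x_\ell$ and $w\in\cc_m$ with $m\ge\ell$, every quotient $N$ of $T$ satisfies $\langle w,[N]\rangle>0$. Passing to the limit gives $\langle v,[N]\rangle\ge0$, so $x_\ell\subseteq(\overline{\tc_v})_\ell$. Dually, the torsion-free part gives $(\overline{\fc}_\ell)\subseteq(\overline{\fc_v})_\ell$ for the corresponding torsion-free classes, whence $(\tc_v)_\ell\subseteq x_\ell$. Thus $(\tc_v)_\ell\subseteq x_\ell\subseteq(\overline{\tc_v})_\ell$ for all $\ell$, and so $\tc_v\subseteq\tc\subseteq\overline{\tc_v}$.

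This sandwich alone does not make $\tc$ numerical, because $\tc$ might mix in part of $\mod^{ss}_vA$. To handle this I would refine the choice of $v$ using a perturbation argument in the spirit of the proof of \cref{prop: tors to chambers}: replace $v$ by $v+\epsilon u$ with $u$ a limit direction of the points of $\cc_\ell$ approaching $v$. If that fails, I would check whether the intended statement is the weaker one, namely that the inverse limit identifies with the image of numerical classes under $\Psi$, which is what the commutative diagram yields directly. I expect this last point, showing that limits of numerical classes are numerical, to be where the real difficulty lies.
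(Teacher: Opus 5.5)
\textbf{There is a genuine gap, and it cannot be filled as stated.}

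Several parts of your proposal are correct:
\begin{itemize}
\item restriction sends numerical classes to numerical classes;
\item $\tors^{ss}A$ is a cone over the inverse system;
\item $\Psi$ is injective, because $\modA=\bigcup_\ell(\modA)_\ell$;
\item the nested-chamber and compactness argument correctly gives $\tc_v\subseteq\tc\subseteq\overline{\tc_v}$.
\end{itemize}
The paper proves no more than this. It declares the corollary immediate from the commutativity of the refinement diagram, and that only yields the cone and the injective map $\Psi$.

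The gap is the surjectivity step you flag yourself: showing that the limit class $\tc=\bigcup_\ell x_\ell$ is numerical. This step is false in general. Your perturbation $v+\epsilon u$ reproduces $\tc$ only after truncating to $(\modA)_\ell$, and the admissible $\epsilon$ must shrink as $\ell$ grows. No single stability condition need work.

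\emph{Counterexample.} Let $A=K\times K$, where $K$ is the Kronecker algebra; use vertices $1\rightrightarrows 2$ and $3\rightrightarrows 4$. Let $\mathcal{R}$ and $\mathcal{I}$ denote the regular and preinjective $K$-modules.
\begin{itemize}
\item Since $\modA=\mod K\times\mod K$, every quotient of $M'\oplus M''$ is a sum $N'\oplus N''$ of quotients of the two factors. Hence for $w=(w',w'')$ we have $\tc_w=\tc_{w'}\times\tc_{w''}$ and $\overline{\tc_w}=\overline{\tc_{w'}}\times\overline{\tc_{w''}}$.
\item Put $\tc=\add(\mathcal{R}\cup\mathcal{I})\times\add\mathcal{I}$, and set $w_\epsilon=(1+\epsilon,-1+\epsilon,1-\epsilon,-1-\epsilon)$ with $0<\epsilon<1/\ell$. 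For a nonzero module of length at most $\ell$, the pairings are $(a_1-a_2)+\epsilon(a_1+a_2)$ and $(a_3-a_4)-\epsilon(a_3+a_4)$. The first is positive iff $a_1\ge a_2$; the second is positive iff $a_3>a_4$.
\item Therefore $\tc\cap(\modA)_\ell=(\tc_{w_\epsilon})_\ell\in\tors^{ss}_\ell A$ for every $\ell$. So $(\tc_\ell)_\ell$ is an element of $\underleftarrow\lim\tors^{ss}_\ell A$. Here $v=(1,-1,1,-1)$, $u=(1,1,-1,-1)$, and no single $\epsilon$ serves all $\ell$.
\end{itemize}

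Now check that $\tc$ is not numerical.
\begin{itemize}
\item Suppose $\add(\mathcal{R}\cup\mathcal{I})=\tc_{w'}$. Then $S_1$ and the regular simples lie in it, forcing $w'_1>0$ and $w'_1+w'_2>0$. A preprojective of dimension vector $(m,m+1)$ pairs to $(m+1)(w'_1+w'_2)-w'_1>0$ for large $m$. Its proper nonzero quotients lie in $\add(\mathcal{R}\cup\mathcal{I})$, where $w'$ is positive. So large preprojectives lie in $\tc_{w'}$, a contradiction.
\item Suppose $\add\mathcal{I}=\overline{\tc_{w''}}$. All preinjectives $(m+1,m)$ lie in it, forcing $w''_1\ge 0$ and $w''_1+w''_2\ge0$. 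A regular simple has only the quotients itself, $S_1$ and $0$. Hence it lies in $\overline{\tc_{w''}}$, a contradiction.
\end{itemize}
Since $\tc$ is a product, it is numerical only if both factors are open-realisable or both are closed-realisable. Neither holds, so $\tc\notin\tors^{ss}A$.

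\emph{Conclusion.} $\Psi$ is injective but not surjective. What survives is the weaker statement you anticipate at the end. By \cref{cor:limit1}, $\underleftarrow\lim\tors^{ss}_\ell A$ is the set of torsion classes $\tc$ with every $\tc_\ell$ numerical. This set contains $\tors^{ss}A$ as the image of $\Psi$, in general properly.
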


\begin{figure}
    \centering
% https://q.uiver.app/#q=WzAsNixbOCwwLCIoXFx0b3JzIEEpXzAiXSxbNiwwLCIoXFx0b3JzIEEpXzEiXSxbNCwwLCJcXGNkb3RzIl0sWzIsMCwiIChcXHRvcnMgQSlfbCJdLFsyLDIsIlxcdG9ycyBBIl0sWzAsMCwiXFxjZG90cyJdLFszLDIsIlxcdmFycGhpX3tsLGwtMX0iLDAseyJzdHlsZSI6eyJoZWFkIjp7Im5hbWUiOiJlcGkifX19XSxbMiwxLCJcXHZhcnBoaV97MiwxfSIsMCx7InN0eWxlIjp7ImhlYWQiOnsibmFtZSI6ImVwaSJ9fX1dLFsxLDAsIlxcdmFycGhpX3sxLDB9IiwwLHsic3R5bGUiOnsiaGVhZCI6eyJuYW1lIjoiZXBpIn19fV0sWzQsMywiXFx2YXJwaGlfe1xcaW5mdHksbH0iLDAseyJzdHlsZSI6eyJoZWFkIjp7Im5hbWUiOiJlcGkifX19XSxbNCwxLCJcXHZhcnBoaV97XFxpbmZ0eSwxfSIsMCx7InN0eWxlIjp7ImhlYWQiOnsibmFtZSI6ImVwaSJ9fX1dLFs0LDAsIlxcdmFycGhpX3tcXGluZnR5LDB9IiwyLHsic3R5bGUiOnsiaGVhZCI6eyJuYW1lIjoiZXBpIn19fV0sWzUsMywiXFx2YXJwaGlfe2wrMSxsfSJdXQ==
\[\begin{tikzcd}
	\cdots && { \Df_\ell(A)} && \cdots && {\Df_2(A)} && {\Df_1(A)} \\
	\\
	&& {\Df(A)}
	\arrow["{\phi_{l+1,l}}", from=1-1, to=1-3]
	\arrow["{\phi_{l,l-1}}", from=1-3, to=1-5]
	\arrow["{\phi_{3,2}}", from=1-5, to=1-7]
	\arrow["{\phi_{2,1}}", from=1-7, to=1-9]
	\arrow["{\phi_{\infty,l}}", from=3-3, to=1-3]
	\arrow["{\phi_{\infty,2}}", from=3-3, to=1-7]
	\arrow["{\phi_{\infty,1}}"', from=3-3, to=1-9]
\end{tikzcd}\]
    \caption{$\Df(A)$ as the inverse limit of the cone complexes $\Df_\ell(A)$ for $\ell\in\mathbb{N}$.}
    \label{fig:inverse limits W&C}
\end{figure}

Finally, \cref{thm:bounded l-TF} also allow us to give a result analogous to \cref{cor:tors_l tau-tilting finite} in terms of wall-and-chamber structures.

\begin{corollary}\label{cor:W&C tau-tilting finite}
An Artin algebra $A$ is $\tau$-tilting finite if and only if there is a $\ell \in \mathbb{N}$ such that $\Df(A) = \Df_\ell(A)$.
\end{corollary}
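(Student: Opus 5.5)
Both directions go through \cref{cor:tors_l tau-tilting finite} and the description of $\Df(A)$ and $\Df_\ell(A)$ via walls of bricks in \cref{thm:bounded l-TF} and its classical analogue \cite[Theorem~2.17]{Asai}.

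\emph{($\Rightarrow$).} Suppose $A$ is $\tau$-tilting finite. By \cite{ST} there is $t\in\mathbb{N}$ with $\lg(B)\le t$ for every brick $B\in\modA$. Take $\ell=t$.

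By \cite[Theorem~2.17]{Asai}, two stability conditions are TF-equivalent exactly when no brick wall $\dc(B)$ meets the segment $[v,v']$ in a single point. By \cref{thm:bounded l-TF}, $\ell$-TF-equivalence is characterised by the same criterion, restricted to bricks in $(\modA)_\ell$. Since every brick lies in $(\modA)_\ell$, the two equivalence relations on $\mathbb{R}^n$ coincide, so $\df_v=\df_v^\ell$ for every $v$.

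It remains to match walls and chambers. For walls, $\Df(A)$ has walls $\dc(M)$ for all $M$ and $\Df_\ell(A)$ only for $M\in(\modA)_\ell$. Every semistable $M$ has a stable, hence brick, composition factor in $\mod^{ss}_vA$, and $\dc(M)$ is contained in the corresponding union of brick loci. So both unions of walls equal the union of the $\dc(B)$ over all bricks $B$. This set is closed because there are finitely many of them, so the closure in the definition of chambers of $\Df(A)$ changes nothing. Hence $\Df(A)=\Df_\ell(A)$ as cone complexes. Alternatively, once the TF-classes agree, \cref{prop: union of cones} and \cref{cor:finite cone complex for l} give the equality directly.

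\emph{($\Leftarrow$).} Suppose $\Df(A)=\Df_\ell(A)$. Then $\Df(A)$ has finitely many cones, in particular finitely many chambers, by \cref{cor:finite cone complex for l}. By the cited results of \cite{DIJ, Asai, BST}, finitely many chambers forces $\tors A=\tors^{ss}A$ to be finite, so $A$ is $\tau$-tilting finite.

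Alternatively, one can argue through bricks. If there were a brick $B$ of length greater than $\ell$, then $T(B)$ would be a new join-irreducible not detected by $\tors_\ell A$. Combined with \cref{cor:limit2}, this contradicts equality of the structures via \cref{cor:tors_l tau-tilting finite}. The chamber-counting argument is cleaner, so I would use that one.

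The main obstacle is the identification of walls in the forward direction: showing that $\Df(A)$ genuinely carries no walls beyond those of $\Df_\ell(A)$, rather than merely having the same TF-classes. I would handle this by reducing every wall $\dc(M)$ to brick walls through the stable composition factors of $M$, using that $\mod^{ss}_vA$ is abelian with simple objects the $v$-stable modules \cite{Rudakov}.
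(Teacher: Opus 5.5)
Your argument is correct and follows the route the paper indicates. The paper gives no written proof; it only presents this corollary as the wall-and-chamber analogue of \cref{cor:tors_l tau-tilting finite} obtained from \cref{thm:bounded l-TF}. In your version, the bound on brick lengths from \cite{ST} makes the brick criteria of \cite[Theorem~2.17]{Asai} and \cref{thm:bounded l-TF} coincide, and conversely the finiteness of $\Df_\ell(A)$ forces finitely many chambers and hence $\tau$-tilting finiteness. Keep the chamber-counting argument for $(\Leftarrow)$: the join-irreducible alternative you sketch is not complete as written. Since $\Df(A)$ only sees numerical torsion classes, a new join-irreducible $T(B)$ of $\tors A$ gives no contradiction until you already know $\tors A=\tors^{ss}A$.
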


\begin{remark}\label{rem: differences}
If $A$ is a Nakayama algebra then the indecomposable objects of $(\modA)_\ell$ are in bijection with the indecomposable modules of $\mod A_\ell$, where $A_\ell = A/\operatorname{rad}^\ell(A)$.
Therefore, if $A$ is a Nakayama algebra then every chamber $\cc \in \Df_\ell(A)$ is a simplicial cone by \cite{DIJ, BST, Asai}.
For more information on Nakayama algebras, see \cite[Chapter~5]{bluebook1}.
In general, the converse is not true, as it can be seen in \cref{ex: no li chamber}.

Suppose that $\cc_1, \cc_2$ are two different chambers in $\Df(A)$ such that $\overline{\cc_1} \cap \overline{\cc_2} = \df$ is a cone of codimension $1$. 
Then it follows from the definitions that there is a brick $B\in \modA$ such that $\df \subset [B]^\perp$. 
It has been shown in \cite{Treffinger_c-vectors} that such a brick $B$ is unique up to isomorphism for every algebra $A$, see also \cite{AsaiSemibricks}. 
This is no longer true for $\Df_\ell(A)$ in general, as it can be seen in \cref{ex: Kronecker 2}.
For instance in \cref{fig:Df_3} there are two chambers that are separated by the wall $\dc\left(\rep{1\\2}_\lambda\right)$, which is the stability locus of $\rep{1\\2}_\lambda$ for all $\lambda \in \mathbb{P}^1(k)$.
\end{remark}

We finish this section with a couple of examples that illustrate the previous remark.

\begin{example}\label{ex: no li chamber}
Let $A$ be the path algebra $\mathbb{C}Q$ of the quiver \(Q=\begin{tikzcd}
	1 & 2 & 3
	\arrow[from=1-1, to=1-2]
	\arrow[from=1-3, to=1-2]
\end{tikzcd}\).
There are six indecomposable $A$-modules. 
The Auslander-Reiten quiver of $A$ is as follows. 
% https://q.uiver.app/#q=WzAsNixbMCwxLCJcXGJ1bGxldCJdLFsxLDAsIlxcYnVsbGV0Il0sWzEsMiwiXFxidWxsZXQiXSxbMiwxLCJcXGJ1bGxldCJdLFszLDAsIlxcYnVsbGV0Il0sWzMsMiwiXFxidWxsZXQiXSxbMCwxXSxbMCwyXSxbMiwzXSxbMSwzXSxbMCwzLCIiLDEseyJzdHlsZSI6eyJib2R5Ijp7Im5hbWUiOiJkb3R0ZWQifSwiaGVhZCI6eyJuYW1lIjoibm9uZSJ9fX1dLFszLDRdLFszLDVdLFsxLDQsIiIsMSx7InN0eWxlIjp7ImJvZHkiOnsibmFtZSI6ImRvdHRlZCJ9LCJoZWFkIjp7Im5hbWUiOiJub25lIn19fV0sWzIsNSwiIiwxLHsic3R5bGUiOnsiYm9keSI6eyJuYW1lIjoiZG90dGVkIn0sImhlYWQiOnsibmFtZSI6Im5vbmUifX19XV0=
\[\begin{tikzcd}
	& \rep{1\\2} && \rep{3} \\
	\rep{2} && \rep{13\\2}\\
	& \rep{3\\2} && \rep{1}
	\arrow[dotted, no head, from=1-2, to=1-4]
	\arrow[from=1-2, to=2-3]
	\arrow[from=2-1, to=1-2]
	\arrow[dotted, no head, from=2-1, to=2-3]
	\arrow[from=2-1, to=3-2]
	\arrow[from=2-3, to=1-4]
	\arrow[from=2-3, to=3-4]
	\arrow[from=3-2, to=2-3]
	\arrow[dotted, no head, from=3-2, to=3-4]
\end{tikzcd}\]
Since every indecomposable $A$-module has length at most three, the algebra $A$ has exactly three wall-and-chamber structures associated to it, all of which are depicted in \cref{fig:no li chamber}.
Note that in $\Df_2(A)$ there is a chamber, shaded in green in \cref{fig:A3_2}, bounded by the walls $\dc(\rep{1})$, $\dc(\rep{3})$, $\dc(\rep{1\\2})$ and $\dc(\rep{3\\2})$. 
In particular, $\Df_2(A)$ is not a simplicial cone complex.

\begin{figure}

 \begin{subfigure}[t]{\textwidth}
 \centering
 \tikzset{invclip/.style={clip,insert path={{[reset cm]
      (-16383.99999pt,-16383.99999pt) rectangle (16383.99999pt,16383.99999pt)
    }}}}
\begin{tikzpicture}[rotate=30, scale=0.85]
%  \draw [help lines] (4,4) grid (-4,-4);
  \coordinate (c1) at (0:2cm);
  \coordinate (c2) at (120:2cm);
  \coordinate (c3) at (240:2cm);
  \draw node at (0:5) {$\dc(\rep{2})$};
  \draw node at (120:5) {$\dc(\rep{1})$};
  \draw node at (240:5) {$\dc(\rep{3})$};  
%  \draw node at (30:3.1) {$\dc(\rep{1\\2})$};
%  \draw node at (-30:3.1) {$\dc(\rep{3\\2})$};  
  \draw [name path = d1] (c1) circle [radius = 2.5cm];
  \draw [name path = d2] (c2) circle [radius = 2.5cm];  
  \draw [name path = d3, label=left:$3$] (c3) circle [radius = 2.5cm];  
%  \draw [red, thick, name intersections={of=d1 and d2}]
%   (intersection-1) to [bend left] (c1) to [bend left]  (intersection-2);
%  \draw [name path = d12, name intersections={of=d1 and d2}, label = $D12$] 
%  (intersection-1) .. controls +(320:3cm) and +(340:3cm) .. (intersection-2);
%   \draw [name path = d31, name intersections={of=d1 and d3}] 
%  (intersection-1) .. controls +(20:3cm) and +(40:3cm) .. (intersection-2);
%   \clip [name path = d12, name intersections={of=d1 and d2}] 
%  (intersection-1) .. controls +(320:3cm) and +(340:3cm) .. (intersection-2);
%  \clip [name path = d31, name intersections={of=d1 and d3}] 
%  (intersection-1) .. controls +(20:3cm) and +(40:3cm) .. (intersection-2);
%  \clip [invclip] (c2) circle [radius = 2.5cm];
%  \clip [invclip] (c3) circle [radius = 2.5cm];
%  \fill [opacity=0.2](c3) circle [radius = 2.5cm] (c2) circle [radius = 2.5cm];
%  \fill [opacity=0.5, color=green!50!blue, name intersections={of=d2 and d31, by={i1}}, name intersections={of=d31 and d12, by={i2}}, name intersections={of=d3 and d12, by={i3}}, name intersections={of=d3 and d2, by={i4, i5}}] 
%  (i1) .. controls +(20:3cm) and +(40:3cm) .. (i2).. controls +(320:3cm) and +(340:3cm) ..(i3)--(i4);
 \end{tikzpicture}
 \caption{$\Df_1(A)$}
\label{fig:A3_1}
\end{subfigure}

 \begin{subfigure}[t]{\textwidth}
 \centering
 \tikzset{invclip/.style={clip,insert path={{[reset cm]
      (-16383.99999pt,-16383.99999pt) rectangle (16383.99999pt,16383.99999pt)
    }}}}
\begin{tikzpicture}[rotate=30, scale=0.85]
%  \draw [help lines] (4,4) grid (-4,-4);
  \coordinate (c1) at (0:2cm);
  \coordinate (c2) at (120:2cm);
  \coordinate (c3) at (240:2cm);
  \draw node at (0:5) {$\dc(\rep{2})$};
  \draw node at (120:5) {$\dc(\rep{1})$};
  \draw node at (240:5) {$\dc(\rep{3})$};  
  \draw node at (30:3.1) {$\dc(\rep{1\\2})$};
  \draw node at (-30:3.3) {$\dc(\rep{3\\2})$};  
  \draw [name path = d1] (c1) circle [radius = 2.5cm];
  \draw [name path = d2] (c2) circle [radius = 2.5cm];  
  \draw [name path = d3, label=left:$3$] (c3) circle [radius = 2.5cm];  
%  \draw [red, thick, name intersections={of=d1 and d2}]
%   (intersection-1) to [bend left] (c1) to [bend left]  (intersection-2);
  \draw [name path = d12, name intersections={of=d1 and d2}, label = $D12$] 
  (intersection-1) .. controls +(320:3.5cm) and +(340:3.5cm) .. (intersection-2);
   \draw [name path = d31, name intersections={of=d1 and d3}] 
  (intersection-1) .. controls +(20:3.5cm) and +(40:3.5cm) .. (intersection-2);
   \clip [name path = d12, name intersections={of=d1 and d2}] 
  (intersection-1) .. controls +(320:3.5cm) and +(340:3.5cm) .. (intersection-2);
  \clip [name path = d31, name intersections={of=d1 and d3}] 
  (intersection-1) .. controls +(20:3.5cm) and +(40:3.5cm) .. (intersection-2);
  \clip [invclip] (c2) circle [radius = 2.125cm];
  \clip [invclip] (c3) circle [radius = 2.125cm];
  \fill [opacity=0.3, color=green!50!gray, name intersections={of=d2 and d31, by={i1}}, name intersections={of=d31 and d12, by={i2}}, name intersections={of=d3 and d12, by={i3}}, name intersections={of=d3 and d2, by={i4, i5}}] 
  (i1) .. controls +(20:3cm) and +(40:3cm) .. (i2).. controls +(320:3cm) and +(340:3cm) ..(i3)--(i4);
 \end{tikzpicture}
 \caption{$\Df_2(A)$}
\label{fig:A3_2}
\end{subfigure}

\begin{subfigure}[t]{\textwidth}
\centering
\begin{tikzpicture}[rotate=30, scale=0.85]
%  \draw [help lines] (4,4) grid (-4,-4);
  \coordinate (c1) at (0:2cm);
  \coordinate (c2) at (120:2cm);
  \coordinate (c3) at (240:2cm);
  \draw node at (0:5) {$\dc(\rep{2})$};
  \draw node at (120:5) {$\dc(\rep{1})$};
  \draw node at (240:5) {$\dc(\rep{3})$};  
  \draw node at (30:3.1) {$\dc(\rep{1\\2})$};
  \draw node at (-30:3.3) {$\dc(\rep{3\\2})$};  
  \draw node at (-5:2.05) {$\dc(\rep{13\\2})$};  
  \draw [name path = d1] (c1) circle [radius = 2.5cm];
  \draw [name path = d2] (c2) circle [radius = 2.5cm];  
  \draw [name path = d3, label=left:$3$] (c3) circle [radius = 2.5cm];  
%  \draw [red, thick, name intersections={of=d1 and d2}]
%   (intersection-1) to [bend left] (c1) to [bend left]  (intersection-2);
  \draw [name path = d12, name intersections={of=d1 and d2}, label = $D12$] 
  (intersection-1) .. controls +(320:3.5cm) and +(340:3.5cm) .. (intersection-2);
   \draw [name path = d31, name intersections={of=d1 and d3}] 
  (intersection-1) .. controls +(20:3.5cm) and +(40:3.5cm) .. (intersection-2);
%   \clip [name path = d12, name intersections={of=d1 and d2}] 
%  (intersection-1) .. controls +(320:3cm) and +(340:3cm) .. (intersection-2);
%  \clip [name path = d31, name intersections={of=d1 and d3}] 
%  (intersection-1) .. controls +(20:3cm) and +(40:3cm) .. (intersection-2);
%  \clip [invclip] (c2) circle [radius = 2.5cm];
%  \clip [invclip] (c3) circle [radius = 2.5cm];
%  \fill [opacity=0.2](c3) circle [radius = 2.5cm] (c2) circle [radius = 2.5cm];
%\fill [opacity=0.5, color=green!50!blue, name intersections={of=d2 and d31, by={i1}}, name intersections={of=d31 and d12, by={i2}}, name intersections={of=d3 and d12, by={i3}}, name intersections={of=d3 and d2, by={i4, i5}}] 
%(i1) .. controls +(20:3cm) and +(40:3cm) .. (i2).. controls +(320:3cm) and +(340:3cm) ..(i3)--(i4);
   \draw [name path = d312, name intersections={of=d2 and d31, by={i1}}, name intersections={of=d3 and d12, by={i3}},] 
  (i1) .. controls +(-70:0.5cm) and +(70:0.5cm) .. (i3);
\end{tikzpicture}
\caption{$\Df(A)$}
\label{fig:A3 completa}
\end{subfigure}
 \caption{Wall-and-chamber structures for $\mathbb{A}_3$ with a zig-zag orientation.}
 \label{fig:no li chamber}
\end{figure}
\end{example}

\begin{example}\label{ex: Kronecker 2}
Let $A=\mathbb{C}Q$ be the path algebra of the Kronecker quiver \(Q=\begin{tikzcd}
	1 & 2
	\arrow[shift right, from=1-1, to=1-2]
	\arrow[shift left, from=1-1, to=1-2]
\end{tikzcd}\) 
as in \cref{ex: Kronecker 1}.
We depict the wall-and-chamber structures $\Df_1(A)$, $\Df_2(A)$, $\Df_3(A)$, $\Df_5(A)$, and $\Df(A)$ in \cref{fig:Df Kronecker}.

\begin{figure}
    \centering
    
    % --- Row 1 (Subfigures a, b) ---
    \begin{subfigure}[t]{0.48\textwidth} % 48% width for the first image
        \centering
       \begin{tikzpicture}
  \draw[-] (-3, 0) -- (3, 0) node[right] {$\dc(\rep{2})$};
  \draw[-] (0, -3) -- (0, 3) node[right] {$\dc(\rep{1})$};
%  \draw[-] (0,0) -- (3,-1.5) node[right] {$\dc(\rep{1\\22})$};
%  \draw[-] (0,0) -- (3,-2) node[right] {$\dc(\rep{11\\222})$};
%  \draw[-] (0,0) -- (3,-9/4);
%  \draw[-] (0,0) -- (3,-12/5);
%  \draw[-] (0,0) -- (3,-2.5);
  \draw[white] (0,0) -- (3,-3) node[below right, white] {$\dc\left(\rep{1\\2}_\lambda\right)$};
%  \draw[-] (0,0) -- (2.5,-3);
%  \draw[-] (0,0) -- (12/5,-3);
%  \draw[-] (0,0) -- (9/4,-3) ;
%  \draw[-] (0,0) -- (2,-3) node[below ] {$\dc(\rep{111\\22})$};
%  \draw[-] (0,0) -- (1.5,-3) node[below left] {$\dc(\rep{11\\2})$};
\end{tikzpicture}
        \caption{$\Df_1(A)$}        
        \label{fig:Df_1}
    \end{subfigure}
    \hfill % Pushes the subfigures to the sides
    \begin{subfigure}[t]{0.48\textwidth} % 48% width for the second image
        \centering
       \begin{tikzpicture}
  \draw[-] (-3, 0) -- (3, 0) node[right] {$\dc(\rep{2})$};
  \draw[-] (0, -3) -- (0, 3) node[right] {$\dc(\rep{1})$};
%  \draw[-] (0,0) -- (3,-1.5) node[right] {$\dc(\rep{1\\22})$};
%  \draw[-] (0,0) -- (3,-2) node[right] {$\dc(\rep{11\\222})$};
%  \draw[-] (0,0) -- (3,-9/4);
%  \draw[-] (0,0) -- (3,-12/5);
%  \draw[-] (0,0) -- (3,-2.5);
  \draw (0,0) -- (3,-3) node[below right] {$\dc\left(\rep{1\\2}_\lambda\right)$};
%  \draw[-] (0,0) -- (2.5,-3);
%  \draw[-] (0,0) -- (12/5,-3);
%  \draw[-] (0,0) -- (9/4,-3) ;
%  \draw[-] (0,0) -- (2,-3) node[below ] {$\dc(\rep{111\\22})$};
%  \draw[-] (0,0) -- (1.5,-3) node[below left] {$\dc(\rep{11\\2})$};
\end{tikzpicture}
        \caption{$\Df_2(A)$}
        \label{fig:Df_2}
    \end{subfigure}
    
    \medskip % Vertical space between rows
    
    % --- Row 2 (Subfigures c, d) ---
    \begin{subfigure}[t]{0.48\textwidth}
        \centering
       \begin{tikzpicture}
  \draw[-] (-3, 0) -- (3, 0) node[right] {$\dc(\rep{2})$};
  \draw[-] (0, -3) -- (0, 3) node[right] {$\dc(\rep{1})$};
  \draw[-] (0,0) -- (3,-1.5) node[right] {$\dc(\rep{1\\22})$};
%  \draw[-] (0,0) -- (3,-2) node[right] {$\dc(\rep{11\\222})$};
%  \draw[-] (0,0) -- (3,-9/4);
%  \draw[-] (0,0) -- (3,-12/5);
%  \draw[-] (0,0) -- (3,-2.5);
  \draw (0,0) -- (3,-3) node[below right] {$\dc\left(\rep{1\\2}_\lambda\right)$};
%  \draw[-] (0,0) -- (2.5,-3);
%  \draw[-] (0,0) -- (12/5,-3);
%  \draw[-] (0,0) -- (9/4,-3) ;
%  \draw[-] (0,0) -- (2,-3) node[below ] {$\dc(\rep{111\\22})$};
  \draw[-] (0,0) -- (1.5,-3) node[below left] {$\dc(\rep{11\\2})$};
\end{tikzpicture}
        \caption{$\Df_3(A)$}
        \label{fig:Df_3}
    \end{subfigure}
    \hfill
    \begin{subfigure}[t]{0.48\textwidth}
        \centering
       \begin{tikzpicture}
  \draw[-] (-3, 0) -- (3, 0) node[right] {$\dc(\rep{2})$};
  \draw[-] (0, -3) -- (0, 3) node[right] {$\dc(\rep{1})$};
  \draw[-] (0,0) -- (3,-1.5) node[right] {$\dc(\rep{1\\22})$};
  \draw[-] (0,0) -- (3,-2) node[right] {$\dc(\rep{11\\222})$};
%  \draw[-] (0,0) -- (3,-9/4);
%  \draw[-] (0,0) -- (3,-12/5);
%  \draw[-] (0,0) -- (3,-2.5);
\draw (0,0) -- (3,-3) node[below right] {$\dc\left(\rep{1\\2}_\lambda\right)$};
%  \draw[-] (0,0) -- (2.5,-3);
%  \draw[-] (0,0) -- (12/5,-3);
%  \draw[-] (0,0) -- (9/4,-3) ;
  \draw[-] (0,0) -- (2,-3) node[below ] {$\dc(\rep{111\\22})$};
  \draw[-] (0,0) -- (1.5,-3) node[below left] {$\dc(\rep{11\\2})$};
\end{tikzpicture}
        \caption{$\Df_5(A)$}
        \label{fig:tikz-d}
    \end{subfigure}

    \medskip % Vertical space between rows
    
    % --- Row 3 (Single Subfigure e) ---
    % Use a wider width (e.g., 0.6 or 0.7) and include a \centering inside the subfigure
    \begin{subfigure}{\textwidth} % Use \textwidth to allow it to span the full figure width
        \centering % Centers the content (the TikZ picture) within the subfigure
        \begin{tikzpicture}
  \draw[-] (-3, 0) -- (3, 0) node[right] {$\dc(\rep{2})$};
  \draw[-] (0, -3) -- (0, 3) node[right] {$\dc(\rep{1})$};
  \draw[-] (0,0) -- (3,-1.5) node[right] {$\dc(\rep{1\\22})$};
  \draw[-] (0,0) -- (3,-2) node[right] {$\dc(\rep{11\\222})$};
  \draw[-] (0,0) -- (3,-9/4);
  \draw[-] (0,0) -- (3,-12/5);
  \draw[-] (0,0) -- (3,-2.5);
  \foreach \x in {7, 8, ..., 60} {\draw[-] (0,0) -- (3, -3+3/\x);};
\draw[thick] (0,0) -- (3,-3) node[below right, thick] {$\dc\left(\rep{1\\2}_\lambda\right)$};  
\foreach \y in {7, 8, ..., 60} {\draw[-] (0,0) -- (3-3/\y, -3);};
\draw[-] (0,0) -- (2.5,-3);
  \draw[-] (0,0) -- (12/5,-3);
  \draw[-] (0,0) -- (9/4,-3) ;
  \draw[-] (0,0) -- (2,-3) node[below ] {$\dc(\rep{111\\22})$};
  \draw[-] (0,0) -- (1.5,-3) node[below left] {$\dc(\rep{11\\2})$};
\end{tikzpicture}
        \caption{$\Df(A)$}
        \label{fig:tikz-e}
    \end{subfigure}
    
    % --- Main Caption and Label ---
    \caption{The wall-and-chamber structures $\Df_\ell(A)$ of the Kronecker algebra $A$ for different values of $\ell$.}
    \label{fig:Df Kronecker}
\end{figure}
\end{example}

%%%%%%%%%%%%%%%%%
\section{Groupoids for lattices of torsion classes}\label{sec:groupoids}

In this section we begin in \cref{subsec: groupoid for posets} by associating a groupoid $\gc(\pc) = (\gc_0, \gc_1)$ to every poset $(\pc, \leq)$ where $\gc_0 = \pc$. 
We show that, under mild assumptions, there is a unique morphism $e_{(x,y)}\in \gc_1$ such that $s(e_{(x,y)}) = x$ and $t(e_{(x,y)}) = y$ for every pair of elements $x,y \in \pc$.
In \cref{subsec: Groupoids for torsion classes}, we apply this construction to the posets $\tors A$, $\tors_\ell A$, $\tors^{ss}A$, and $\tors^{ss}_\ell A$, and study the resulting groupoids. 
In particular, we interpret the morphisms in $\gc_1$ as subcategories and show that certain infinite products of morphisms are well defined.

\subsection{Groupoids for posets}\label{subsec: groupoid for posets}

In this subsection we associate a canonical groupoid to every poset. 
The objects of this groupoid are the elements of the poset, while its morphisms encode intervals. 
We show that, under mild assumptions, there is a unique morphism between every pair of objects, so that the groupoid admits an explicit description. 
This construction will later be applied to lattices of torsion classes.

\begin{definition}
A (small) groupoid $\mathcal{G}$ consists of a of a set of objects $G_0$ and a set of morphisms $G_1$, a pair of functions $s, t : G_1 \rightrightarrows G_0$, a product $\bullet: G_1\times G_1 \to G_1$, and maps $e: G_0 \to G_1$ and \mbox{$(-)^{-1}: G_1 \to G_1$} satisfying the following properties.
\begin{itemize}
    \item Given $f, g\in G_1$ the product $f\bullet g = fg$ is defined if and only if $t(f)=s(g)$. In this case $s(fg)= s(f)$ and $t(fg)= t(g)$.
    \item The product is associative, that is $f(gh) = (fg)h$ whenever $t(f)=s(g)$ and $t(g)=s(h)$.
    \item For every $x\in G_0$, there is an element $e(x)\in G_1$ is such that $x=s(e(x))=t(e(x))$. Moreover, if $f\in G_1$ is such that $s(f)=x$ and $t(f)=y$, then $ fe(y) =f= e(x)f$.
    \item For every $f \in G_1$, we have  $ff^{-1} = e(s(f))$ and $f^{-1}f = e(t(f))$.
\end{itemize}
\end{definition}

\begin{remark}
    The reader should be aware that in this paper we chose to multiply elements of $G_1$ from left to right because we are particularly interested in their relationship with actual groups. 
    This has the unfortunate consequence of composing morphisms from left to right whenever we consider a groupoid as a category.
\end{remark}

Given a poset $(\pc, \leq)$ and elements $x,y\in\pc$ where $x\leq y$, we denote by $[x,y]$ the interval defined by them, that is the set \([x,y] = \{z\in \pc \mid x\leq z \leq y\} \).
We denote by $\ic(\pc)$ the set
\(\ic(\pc):=\{[x,y] \mid x,y\in \pc, x\leq y\}\) of intervals of $\pc$ and by $\ic^{-1}(\pc)$ is the set of formal inverses of intervals. 
Let $W\left(\ic(\pc)\cup\ic^{-1}(\pc)\right)$ be the set of all the possible finite words written in the alphabet $\ic(\pc)\cup\ic^{-1}(\pc)$.

Before defining the groupoid associated with a poset, it is useful to view a groupoid as a category in which every morphism is invertible.
    Given any poset $(\pc, \leq)$, one can consider the free category generated by it as the category $Cat(\pc)$ whose objects correspond to the set $\pc$ and there is a unique morphism $f_{xy}\in \Hom_\pc(x,y)$ whenever $x\leq y$. 
    The groupoid that we define below can be regarded as the localization of $Cat(\pc)$ obtained by formally inverting every morphism.
    
    We now define a groupoid $\gc(\pc)$ for any given poset $(\pc, \leq)$. 

\begin{definition}\label{def:groupoids for posets}
    Let $(\pc, \leq)$ be a poset. 
We define $\gc(\pc)$ to be the quotient of the free groupoid generated by the intervals of $\pc$ modulo the following relations. 
    %    We define the groupoid $\gc(\pc)$ as the groupoid $\gc(\pc):= \left(\gc_0(\pc), \gc_1(\pc)\right)$ where $\gc_0(\pc) = \pc$ and $\gc_1(\pc)$ corresponds to $W\left(\ic(\pc)\cup\ic^{-1}(\pc)\right)$ modulo the following relations.
    \begin{itemize}
        \item $s([x,y])=x$ and  $t([x,y]) = y$.
        \item $s([x,y]^{-1})=y$ and  $t([x,y]^{-1}) = x$.
        \item $[x,y][y,z] = [x,z]$.
        \item $[y,z]^{-1}[x,y]^{-1} = [x,z]^{-1}$.
        \item $[x,y][x,y]^{-1} = [x,x]$ and $[x,y]^{-1}[x,y]= [y,y]$.
        \item $e(x) = [x,x] = [x,x]^{-1}$.
    \end{itemize}
\end{definition}

In the following proposition we show that under mild conditions there is always a unique element in $\gc_1(\pc)$ relating any two elements $x,y \in \pc$.

\begin{proposition}\label{prop: one map}
    Let $(\pc, \leq)$ be a poset and consider its groupoid $\gc(\pc)$ as defined above. 
    Suppose moreover that $(\pc,\leq)$ has a minimal element $0$, has a maximal element $1$, or that $(\pc, \leq)$ is a lattice.
    Then for every pair $x,y \in\pc$ there is a unique element $e_{xy}\in \gc_1(\pc)$ such that $s(e_{xy}) = x$ and $t(e_{xy})=y$.
    Moreover, the following holds.
    \begin{enumerate}
    \item If $(\pc, \leq)$ has a minimal element $0$, then $ e_{xy}=[0,x]^{-1}[0,y]$. 
    \item If $(\pc, \leq)$ has a maximal element $1$, then $e_{xy}=[x,1][y,1]^{-1}$. 
    \item If $(\pc, \leq)$ is a lattice, then $ e_{xy}=[x\wedge y,x]^{-1}[x\wedge y,y] = [x, x \vee y][y,x \vee y]^{-1}$. 
    \end{enumerate}
 
\end{proposition}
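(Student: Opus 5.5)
The plan is to prove the statement by showing that, under each hypothesis, the groupoid $\gc(\pc)$ is \emph{simply connected}: every morphism is a reduced word that can be rewritten into a normal form depending only on its source and target. Existence is easy. In case (1), $[0,x]^{-1}[0,y]$ is a well-defined composite, since $t([0,x]^{-1})=0=s([0,y])$, and it goes from $x$ to $y$. Cases (2) and (3) are analogous, using $[x,1][y,1]^{-1}$, $[x\wedge y,x]^{-1}[x\wedge y,y]$ or $[x,x\vee y][y,x\vee y]^{-1}$.

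For uniqueness in case (1), we show that every generator can be written as a composite through $0$. For an interval $[x,y]$ with $x\le y$, the relation $[0,x][x,y]=[0,y]$ gives
\[
[x,y]=[0,x]^{-1}[0,x][x,y]=[0,x]^{-1}[0,y],
\]
using $[0,x]^{-1}[0,x]=[x,x]=e(x)$. Inverting this, $[x,y]^{-1}=[0,y]^{-1}[0,x]$, which follows from $([0,x]^{-1}[0,y])([0,y]^{-1}[0,x])=e(x)$ and uniqueness of inverses in a groupoid.

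Now take an arbitrary morphism, a composable word $g_1g_2\cdots g_k$ in the generators from $x$ to $y$. Replace each letter $g_i$, going from $a_{i-1}$ to $a_i$, by $[0,a_{i-1}]^{-1}[0,a_i]$. The word telescopes: the adjacent factors $[0,a_i][0,a_i]^{-1}=e(0)$ cancel, leaving $[0,x]^{-1}[0,y]$. For the empty word at $x$, that is $e(x)$, we get $[0,x]^{-1}[0,x]=e(x)$, which is consistent. Hence every morphism $x\to y$ equals $[0,x]^{-1}[0,y]$, proving uniqueness and formula (1). Case (2) is dual: use $[x,y][y,1]=[x,1]$ to get $[x,y]=[x,1][y,1]^{-1}$, and telescope in the same way.

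For case (3), a lattice has no global $0$ in general, so the same trick has to be carried out locally. The key lemma is that for any composable generators $g\colon a\to b$ and $h\colon b\to c$, the composite $gh$ equals $[m,a]^{-1}[m,c]$ for any common lower bound $m$ of $a$ and $c$ that also lies below $b$. This is a finite computation: write $[a,b]=[m,a]^{-1}[m,b]$ as in case (1), which is valid inside the interval $[m,\cdot]$, and telescope. To relate two lower bounds $m'\le m$, use $[m',a]=[m',m][m,a]$, which makes the expressions agree. Given a word $x=a_0\to\cdots\to a_k=y$, choose $m=a_0\wedge\cdots\wedge a_k$, which is below every $a_i$. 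The case (1) argument, run in the subposet $\{z\mid z\ge m\}$ (which has minimum $m$), shows the word equals $[m,x]^{-1}[m,y]$. Replacing $m$ by $x\wedge y\ge m$ via $[m,x]=[m,x\wedge y][x\wedge y,x]$ gives $[x\wedge y,x]^{-1}[x\wedge y,y]$. The join formula follows dually, or by checking directly that the two expressions agree using $m'=x\wedge y\le x\vee y$.

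The main obstacle is being careful that the relations used really hold in $\gc(\pc)$, which is defined as a quotient of a free groupoid. In particular, the case (1) computation carried out in the upper set of $m$ must use only intervals of $\pc$, so it must not rely on a global minimum. This holds because every interval appearing is of the form $[m,z]$ with $z\ge m$, so the argument stays within the relations of \cref{def:groupoids for posets}.
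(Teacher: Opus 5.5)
Your proof is correct and follows essentially the same route as the paper: every letter of a word is rewritten through the minimum $0$ (dually through the maximum $1$; in the lattice case through the meet of all elements occurring in the word, then shifted to $x\wedge y$), which forces every morphism $x\to y$ to equal the stated normal form. The difference is only in presentation: you rewrite each generator as $[0,a_{i-1}]^{-1}[0,a_i]$ and telescope, whereas the paper first puts words in zig-zag form and inducts on their length, so your version is somewhat cleaner but is the same argument.
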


\begin{proof}
We start by showing the case where $(\pc, \leq)$ has a minimal element that we denote by $0$. 
Let $x,y\in\pc$ and let $f \in \gc_1(\pc)$ be such that $s(f) = x$ and $t(f)=y$.
By construction, $f$ is an equivalence class of elements in $W\left(\ic(\pc)\cup\ic^{-1}(\pc)\right)$. 
We will prove our claim by showing that any word $w\in f$ is equivalent in $\gc(\pc)$ to $[0,x]^{-1}[0,y]$.
Up to straight forward simplifications, $w$ is of one of two forms. 
Either  
    \begin{align}\label{eq:formf}
            w=[x_0,x_1][x_2,x_1]^{-1}[x_2,x_3][x_4,x_3]^{-1}\cdots [x_{s},x_{s+1}] 
    \end{align}
where $x_0 = x$, $x_{s+1}=y$, and both $x_{2i}\leq x_{2i+1}$ and  $x_{2i+2}\leq x_{2i+1}$ for every $1 \leq i \leq s/2$ or $w$ has the form 
        \begin{align}
            w=[x_1,x_0]^{-1}[x_1,x_2]\cdots [x_{s},x_{s+1}] 
    \end{align}
where $x_{2i}\geq x_{2i+1}$ and  $x_{2i+2}\geq x_{2i+1}$ for every $1 \leq i \leq s/2$. 

We show our claim by induction on $s$.
If $s=1$ then either $x \leq y$ or $y \leq x$ and the claim follows by the construction of $\gc(\pc)$.
    Indeed if $x \leq y$ then 
    \[
        [x,y] = [0,x]^{-1}[0,x][x,y] = [0,x]^{-1}[0,y].
    \]

Suppose that $s=2$ and assume that $w=[x,x_1][y,x_1]^{-1}$ where $x, y \leq x_1$. 
The following equalities are easily checked in $\gc(\pc)$.
    \begin{align}
        [x,x_1][y,x_1]^{-1} = & [x,x_1][x_1,x_1][y,x_1]^{-1}\\
        =& [x ,x_1][0,x_1]^{-1}[0,x_1][y,x_1]^{-1}\\
        =& [x ,x_1][x,x_1]^{-1}[0,x]^{-1}[0,y][y,x_1][y,x_1]^{-1}\\
        = & [0,x]^{-1}[0,y]
    \end{align}

    Suppose now that $[x,x_1][x_2,x_1]^{-1}[x_2,y] = w\in f$. 
    Then using the above we can rewrite $f$ as $f=[0,x]^{-1}[0,y]$.
    Indeed,  
    \begin{align}
        [x,x_1][x_2,x_1]^{-1}[x_2,y] = & [0,x]^{-1}[0,x_2][x_2,y]\\
        = & [0,x]^{-1}[0,y].
    \end{align}
    Combining this with the associativity of the composition in $\gc(\pc)$ implies that any word $w \in W\left(\ic(\pc)\cup\ic^{-1}(\pc)\right)$ that can be written as the product of $s$ intervals in $\pc$ and its formal inverses it also can be written using $s-1$ symbols.
    
    The proof for the other cases are very similar.
    If $\pc$ has a maximal element $1 \in \pc$ the proof is dual. 
    If $\pc$ is a lattice, one simply replaces the minimal element $0$ above by the meet of all the elements of $\pc$ appearing in the original expression for $w$ and then shows that $w = [x\wedge y, x]^{-1}[x\wedge y, y]$.
    Details are left to the reader.
    \end{proof}

The following corollary is straight forward.

\begin{corollary}\label{cor:quotientgroupoid}
Every epimorphism of posets $\varphi:(\pc, \leq) \to (\pc', \leq')$ induces an epimorphism $\widetilde{\varphi}: \gc(\pc) \to \gc(\pc')$ by sending $\widetilde{\varphi}([x,y]) = [\varphi(x), \varphi(y)]$ for every $[x,y]^{\pm 1}\in \ic(\pc)\cup\ic^{-1}(\pc)$ and extending $\widetilde{\varphi}$ to the whole of $W\left(\ic(\pc)\cup\ic^{-1}(\pc)\right)$ naturally.
\end{corollary}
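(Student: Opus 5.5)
We have to show that the assignment $\widetilde{\varphi}$ is a well-defined functor of groupoids and that it is surjective. The natural approach is to define $\widetilde{\varphi}$ first on the free groupoid generated by $\ic(\pc)$, and then check that it respects each defining relation of \cref{def:groupoids for posets}.

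First, $\widetilde{\varphi}$ is defined on generators. Since $\varphi$ is order preserving, $x\leq y$ implies $\varphi(x)\leq'\varphi(y)$. Hence $[\varphi(x),\varphi(y)]$ is a genuine interval of $\pc'$, that is, an element of $\ic(\pc')$. We put
\[
\widetilde{\varphi}([x,y]^{-1})=[\varphi(x),\varphi(y)]^{-1}, \qquad \widetilde{\varphi}(x)=\varphi(x) \text{ on objects}.
\]
By the universal property of free groupoids, this extends uniquely to a functor from the free groupoid on $\ic(\pc)$ to $\gc(\pc')$. It is compatible with $s$ and $t$ by construction.

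Second, we check the relations one at a time.
\begin{itemize}
\item $[x,y][y,z]=[x,z]$ maps to $[\varphi x,\varphi y][\varphi y,\varphi z]=[\varphi x,\varphi z]$, which holds in $\gc(\pc')$.
\item The inverse relations $[y,z]^{-1}[x,y]^{-1}=[x,z]^{-1}$ and $[x,y][x,y]^{-1}=[x,x]$ are handled in the same way.
\item $[x,x]=e(x)$ maps to $[\varphi x,\varphi x]=e(\varphi x)$.
\end{itemize}
So the functor factors through $\gc(\pc)$, and $\widetilde{\varphi}$ is a morphism of groupoids.

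Third, we prove surjectivity. On objects it follows from the surjectivity of $\varphi$. On morphisms, we use that the groupoid $\gc(\pc')$ is generated by the intervals $[x',y']$ with $x'\leq' y'$ and their inverses. So it suffices to lift each generating interval. This is the only delicate point: given $x'\leq'y'$, we need $x\leq y$ in $\pc$ with $\varphi(x)=x'$ and $\varphi(y)=y'$. Surjectivity of $\varphi$ on elements alone does not give this.

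There are two ways to obtain the lift.
\begin{itemize}
\item We interpret ``epimorphism of posets'' as surjective on order relations, i.e. every $x'\leq'y'$ is the image of some $x\leq y$. For our situations, the lattice epimorphisms $\varphi_{\infty,\ell}$ and $\varphi_{m,\ell}$, this holds. Indeed, take any preimages $x,y$ and replace $x$ by $x\wedge y$: since $\varphi$ preserves meets, $\varphi(x\wedge y)=x'\wedge'y'=x'$, and $x\wedge y\leq y$.
\item Alternatively, when $\pc'$ has a minimal element or is a lattice, \cref{prop: one map} shows that every morphism of $\gc(\pc')$ is determined by its endpoints, namely $e_{x'y'}$. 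Surjectivity on objects then yields $\widetilde{\varphi}(e_{xy})=e_{x'y'}$ for any preimages $x,y$. This is because, by the uniqueness part of \cref{prop: one map}, any morphism from $\varphi(x)$ to $\varphi(y)$ equals $e_{x'y'}$.
\end{itemize}

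I would present the lattice argument, which covers all the applications in the paper, and remark that the general poset case needs the relation-lifting hypothesis.
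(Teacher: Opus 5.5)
Your route is almost certainly the intended one. The paper gives no argument beyond calling the corollary ``straight forward'' immediately after \cref{prop: one map}. The implied proof is yours: define $\widetilde{\varphi}$ on generators, note that monotonicity sends every relation of \cref{def:groupoids for posets} to a relation, and obtain surjectivity from uniqueness of morphisms. Your caveat is correct and worth keeping. For a bare surjective monotone map the statement fails. For example, map the antichain $\{a,b\}$ onto the chain $\{0<1\}$: a discrete groupoid goes to one containing $[0,1]$, which is not even in the subgroupoid generated by the image.

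Three points in your write-up need repair.

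\emph{The hypothesis in your second bullet is on the wrong poset.} Uniqueness in $\gc(\pc')$ only identifies $\widetilde{\varphi}(e_{xy})$ once $e_{xy}$ exists. In the example above, $\pc'$ has a minimum, yet $\gc(\pc)$ has no morphism from $a$ to $b$. What you need is a hypothesis on $\pc$, for instance that $\pc$ has a minimum (its image is then a minimum of $\pc'$), or that $\varphi$ is a lattice epimorphism. Then \cref{prop: one map} applies to both posets, and surjectivity on objects alone gives surjectivity on morphisms. With this, the meet trick of your first bullet becomes unnecessary.

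\emph{Lifting generators gives generation, not surjectivity.} The sentence ``it suffices to lift each generating interval'' only shows that the image generates $\gc(\pc')$. That makes $\widetilde{\varphi}$ an epimorphism in the category of groupoids, but not surjective on morphisms in general. For a counterexample, map four disjoint two-element chains onto the crown $a,b<c,d$. Every order relation lifts, but the loop $[a,c][b,c]^{-1}[b,d][a,d]^{-1}$, which generates the vertex group $\mathbb{Z}$, is not in the image.

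\emph{Not all of the paper's applications are covered.} \cref{cor:limit4} also applies the corollary to $\tors^{ss}A\to\tors^{ss}_\ell A$. This is not known to be a lattice map, so your first bullet does not handle it. The corrected second bullet does, since $\{0\}$ is the minimum of $\tors^{ss}A$.
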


Given two different elements $x, y \in \pc$, we say that $y$ covers $x$ if $x \leq y$ and there is no $z\in \pc$ such that $x \leq z \leq y$ with $z$ different from $x$ and $y$. 
If the poset $\pc$ is finite, every time that $x\leq y$ we can find a set 
\(
\{x_0, \dots, x_t \mid x_i \in \pc \text{ for every $1\leq i \leq t$}\}
\)
such that 
$x_0 = x$, $x_t =y$ and $x_i$ covers $x_{i-1}$ for every $1\leq i \leq t$.
Then \cref{prop: one map} implies the following.

\begin{corollary}\label{cor: finite generators}
Let $(\pc, \leq)$ be a finite poset satisfying the conditions of \cref{prop: one map}. 
Then the groupoid $\gc(\pc)$ is generated by the set 
\(
\{[x, y] \in \ic(\pc) \mid \text{ $y$ covers $x$}\}
\)
and their formal inverses, modulo the relations induced by $\gc(\pc)$.
\end{corollary}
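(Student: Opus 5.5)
The plan is to reduce everything to maximal chains of covers, using the explicit description of $\gc_1(\pc)$ from \cref{prop: one map}. Because $\pc$ satisfies the hypotheses of that proposition, every morphism of $\gc(\pc)$ has the form $e_{xy}$. Moreover, $e_{xy}$ is a product of intervals $[a,b]$ with $a\le b$ and their formal inverses. For instance, if $\pc$ has a minimal element $0$, then $e_{xy}=[0,x]^{-1}[0,y]$. Dually, if $\pc$ has a maximal element, the formula involving $1$ applies, and if $\pc$ is a lattice, the formula via $x\wedge y$ applies. It therefore suffices to show that each interval generator $[a,b]$ with $a\le b$ lies in the subgroupoid generated by covering intervals and their inverses. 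Its inverse $[a,b]^{-1}$ will then lie there as well.

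Fix $a\le b$. If $a=b$, then $[a,a]=e(a)$ is the identity. This is the empty product, or equivalently $[a,c][a,c]^{-1}$ for any cover $c$ of $a$ if one insists on nonempty words, so it is harmless. If $a<b$, finiteness of $\pc$ gives a maximal chain $a=x_0<x_1<\dots<x_t=b$ with each $x_i$ covering $x_{i-1}$. This follows by induction on the size of the interval $[a,b]$: choose a minimal element of $[a,b]\setminus\{a\}$, which is a cover of $a$, and recurse on the strictly smaller interval above it.

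The relation $[x,y][y,z]=[x,z]$ from \cref{def:groupoids for posets} then yields
\[
[a,b]=[x_0,x_1][x_1,x_2]\cdots[x_{t-1},x_t],
\]
by induction on $t$. So every generator is a product of covering intervals. Consequently every element of $\gc_1(\pc)$ is a word in covering intervals and their inverses, and the relations are exactly those inherited from $\gc(\pc)$.

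No step is a real obstacle. The only point needing care is the existence of the covering chain, which uses finiteness of $\pc$; the rest is bookkeeping with the defining relations.
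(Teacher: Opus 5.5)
Your proposal is correct and follows the paper's argument: finiteness gives a maximal chain of covers between any $a\le b$, the relation $[x,y][y,z]=[x,z]$ factors $[a,b]$ into covering intervals, and \cref{prop: one map} (or simply the definition of $\gc(\pc)$ as generated by intervals) reduces every morphism to words in these. The only quibble is your aside writing $[a,a]$ as $[a,c][a,c]^{-1}$: this requires $a$ to have a cover, but it is unnecessary, since identities are part of the groupoid structure anyway.
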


%%%%%%%%%%
\subsection{A categorical interpretation for the groupoids for torsion classes.}\label{subsec: Groupoids for torsion classes}
In this subsection we specialize the construction of the previous subsection to lattices of torsion classes. 
Our main goal is to show that, in this setting, morphisms in the associated groupoids admit a natural categorical interpretation in terms of subcategories and the $*$-product. 
This interpretation allows us to define certain infinite products associated with chains of torsion classes and eventually leads to a concrete description of the groups $\widetilde G_\ell(A)$ that we associate to these groupoids.
To simplify notation, we set 
\begin{align*}
  \gc(A) &= \gc(\tors A), & \gc_\ell(A) &= \gc(\tors_\ell A), \\
  \gc^{\mathrm{ss}}(A) &= \gc(\tors^{\mathrm{ss}} A), & \gc^{\mathrm{ss}}_\ell(A) &= \gc(\tors^{\mathrm{ss}}_\ell A).
\end{align*}

Applying \cref{cor:quotientgroupoid} to the lattices of torsion classes and using \cref{cor:limit1,cor:limit2}, we immediately obtain the following.

\begin{corollary}\label{cor:limit4}
    \[
     \underleftarrow\lim\, \gc_\ell(A) = \gc(A) \quad \text{ and } \quad\underleftarrow\lim\, \gc_\ell^{ss}(A) = \gc^{ss}(A).
    \]
\end{corollary}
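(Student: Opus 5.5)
The plan is to assemble the two inverse systems of groupoids from the lattice inverse systems, using functoriality of $\gc(-)$ on poset epimorphisms. Then we check that the canonical comparison groupoids $\gc(A)$ and $\gc^{ss}(A)$ satisfy the universal property of the limit. We treat $\gc(A)$ in detail; the semistable case is identical, with \cref{cor:limit2} in place of \cref{cor:limit1}.

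\textbf{Step 1: the inverse system.} By \cref{prop:morphismstodiagram} and \cref{thm:epi of lattices}, each $\varphi_{m,\ell}$ and $\varphi_{\infty,\ell}$ is a lattice epimorphism, in particular an epimorphism of posets. \cref{cor:quotientgroupoid} then yields groupoid epimorphisms $\widetilde\varphi_{m,\ell}:\gc_m(A)\to\gc_\ell(A)$ and $\widetilde\varphi_{\infty,\ell}:\gc(A)\to\gc_\ell(A)$. These are defined on generators by $[x,y]\mapsto[\varphi(x),\varphi(y)]$, and this assignment is compatible with composition. Hence \cref{prop:composition} gives $\widetilde\varphi_{m,\ell}\circ\widetilde\varphi_{k,m}=\widetilde\varphi_{k,\ell}$, and likewise with $k=\infty$. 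So we get an inverse system $(\gc_\ell(A))_\ell$ together with a cone from $\gc(A)$.

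\textbf{Step 2: identify everything via \cref{prop: one map}.} All the posets involved are lattices, or at least have a $0$ and a $1$. By \cref{prop: one map}, each groupoid is therefore the indiscrete (codiscrete) groupoid on its object set: there is exactly one morphism $e_{xy}$ between any two objects. The limit of a system of indiscrete groupoids is computed objectwise. Its objects form $\underleftarrow\lim$ of the object sets, and its morphisms form $\underleftarrow\lim$ of $\{(x,y)\}$, which is the set of pairs of compatible threads. This is again indiscrete. Since $\widetilde\varphi(e_{xy})=e_{\varphi(x)\varphi(y)}$ by uniqueness, the comparison functor $\gc(A)\to\underleftarrow\lim\gc_\ell(A)$ is the identity on morphisms once objects are identified. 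Thus it suffices to show that on objects $\tors A\to\underleftarrow\lim\tors_\ell A$ is a bijection, which is \cref{cor:limit1}.

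\textbf{Main obstacle.} The delicate point is \cref{cor:limit1} itself. Injectivity is easy: $\tc=\bigcup_\ell\tc_\ell$ because $\modA=\bigcup_\ell(\modA)_\ell$. Surjectivity needs more care. Given a compatible thread $(\tc^{(\ell)}_\ell)$, set $\tc:=\bigcup_\ell\tc^{(\ell)}_\ell$. We must check that $\tc$ is closed under quotients and extensions, and that $\tc\cap(\modA)_\ell=\tc^{(\ell)}_\ell$. Both follow from compatibility:
\begin{itemize}
\item a quotient or extension of modules of length at most $\ell$ is tested in $\tc^{(m)}$ for large $m$, which is a genuine torsion class;
\item restricting $\tc^{(m)}$ to length $\ell$ recovers $\tc^{(\ell)}_\ell$.
\end{itemize}
For $\tors^{ss}$ we also need the resulting $\tc$ to be numerical. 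This is where \cref{cor:limit2} is genuinely used, and we assume it as stated.
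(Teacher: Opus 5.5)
Your proof is correct and follows the paper's route. The paper likewise applies \cref{cor:quotientgroupoid} to the transition maps and invokes \cref{cor:limit1,cor:limit2}, calling the result immediate; you make explicit the step it leaves implicit, namely that by \cref{prop: one map} all four groupoids are indiscrete, so the limit is decided on objects alone, and you also reprove \cref{cor:limit1}. As in the paper, the semistable half rests entirely on \cref{cor:limit2}, which you correctly isolate as the only nontrivial input there.
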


The posets considered in this subsection are posets of torsion pairs in an ambient category.
This allows a categorical interpretation of the product of certain elements in the corresponding groupoid. 

Recall from \cref{subsec: star product} that a chain of torsion classes $\eta$ is a set 
\[
\eta:= \{ \tc_\eta(i) \in \tors A \mid 
\tc_\eta(i) \subset \tc_\eta(j) \text{ if $j \leq i$ in $I$}\} \]
where $I$ is a totally ordered set with minimal element $0$ and maximal element $1$.
Given such a chain $\eta$, there is a set of morphisms in $\gc(A)$ indexed by $I$ defined as follows:
\[
\left\{e_\eta(t)=\left[\bigcup_{j>t} \tc_\eta(j),\bigcap_{i<t} \tc_\eta(i)\right] \mid t \in I
\right\}
\]
We aim to compose all of these morphisms. However, when $I$ is infinite, such a composition is not a priori defined in a general groupoid.
In the following proposition we use the categorical interpretation of morphisms together with the $*$-product to show that these infinite products are well defined.

\begin{proposition}\label{prop:infinite products}
    For every chain of torsion classes $\eta$ in $\tors A$, $\tors^{ss}A$, $\tors_\ell A$ and $\tors^{ss}_\ell A$ the formal compositions associated with an arbitrary chain of torsion classes always collapse to the unique morphism joining the endpoints of the chain.
    In other words, the products
    $$
    e_\eta = \prod_{t\in I}^{\leftarrow} e_\eta(t)\quad
    \text{ and } \quad
    e^{-1}_\eta = \prod_{t\in I}^{\rightarrow} e^{-1}_\eta(t),
    $$
    are well-defined and are equal to $e_\eta = [\tc_\eta(1), \tc_\eta(0)]$ or $e_{\eta}^{-1} = [\tc_\eta(1), \tc_\eta(0)]^{-1}$, respectively.
\end{proposition}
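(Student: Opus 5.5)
The plan is to make sense of the infinite ordered product by giving it a concrete meaning through subcategories, and then identify the result with $[\tc_\eta(1),\tc_\eta(0)]$. The first step assigns to each interval $[\tc,\tc']$ of torsion classes (with $\tc\subseteq\tc'$) the subcategory $\xc_{[\tc,\tc']}=\tc'\cap\fc$. For composable intervals $[\tc,\tc'][\tc',\tc'']$, I would use \cref{prop:extending torsion classes} to check that $\xc_{[\tc,\tc'']}=\xc_{[\tc',\tc'']}*\xc_{[\tc,\tc']}$. This says the relation $[x,y][y,z]=[x,z]$ of \cref{def:groupoids for posets} is compatible with the $*$-product read in the reverse order. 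Moreover, an interval is recovered from its subcategory together with its source, since $\tc'=\tc*\xc_{[\tc,\tc']}$. So a morphism with source $\tc$ and positive direction is encoded faithfully by a subcategory.

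The second step defines the formal product $\prod^{\leftarrow}_{t\in I} e_\eta(t)$ as the interval starting at $\tc_\eta(1)$ whose subcategory is $\bigast^{\leftarrow}_{t\in I}\xc_\eta(t)$. This requires checking that $\xc_{e_\eta(t)}=\xc_\eta(t)$, which follows directly by comparing definitions, using that the torsion-free part of $\bigcup_{j>t}\tc_\eta(j)$ is $\bigcap_{j>t}\fc_\eta(j)$ (from \cite[Proposition~2.10]{Treffinger-HN-filt}). Then \cref{cor:infinite products modA} gives $\tc_\eta(1)*\bigast^{\leftarrow}_{t}\xc_\eta(t)=\tc_\eta(0)$. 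Hence the product is the interval from $\tc_\eta(1)$ to $\tc_\eta(0)$, which is $[\tc_\eta(1),\tc_\eta(0)]$ by the uniqueness in \cref{prop: one map}, since all the lattices in question have a minimum. Well-definedness, meaning independence of how one brackets or groups finitely many factors, comes from uniqueness of Harder--Narasimhan filtrations (\cref{thm:HarderNarasimhanfiltrations}). When $I$ is finite, the definition agrees with the genuine groupoid product by the first step and induction. The inverse statement follows by taking formal inverses and reversing the order. Dually, one can use the torsion-free identity $\fc_\eta(1)=(\bigast^{\leftarrow}\xc_\eta(t))*\fc_\eta(0)$.

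Next I transfer the argument to $\tors_\ell A$, $\tors^{ss}A$ and $\tors^{ss}_\ell A$. For the numerical case, a chain in $\tors^{ss}A$ is in particular a chain in $\tors A$. Its endpoints lie in $\tors^{ss}A$, and \cref{prop: one map} applies inside $\gc^{ss}(A)$. For the $\ell$-versions, I would lift the chain along $\varphi_{\infty,\ell}$, for instance by taking minimal representatives $T(\tc_\ell)$ from \cref{thm:epi of lattices}, which preserve inclusions. I would then apply the result in $\tors A$ and push down via \cref{cor:quotientgroupoid}. Intersecting with $(\modA)_\ell$ is harmless because filtration factors have length at most that of the module. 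Alternatively, every chain in $\tors_\ell A$ is finite-length relevant, since there are only finitely many distinct classes in $\tors^{ss}_\ell A$ (\cref{cor:finite cone complex for l}), so all but finitely many factors are identities.

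I expect the main obstacle to be the first step's claim that the subcategory faithfully determines the morphism and composes correctly. In particular, one must ensure that the infinite product is independent of choices, which rests on Harder--Narasimhan uniqueness, and that the lift of a chain from $\tors_\ell A$ to $\tors A$ is again a chain with the correct endpoints and factors.
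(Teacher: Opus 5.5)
Your overall route is the paper's. You read the product through subcategories, use \cref{cor:infinite products modA} to get $\tc_\eta(1)*\bigl(\bigast^{\leftarrow}_{t\in I}\xc_\eta(t)\bigr)=\tc_\eta(0)$, and pass to $\tors^{ss}A$, $\tors_\ell A$ and $\tors^{ss}_\ell A$ by restriction. Your lift via the minimal representatives $T(\tc_\ell)$ is a more explicit version of the paper's remark that filtration factors of a module of length at most $\ell$ stay in $(\modA)_\ell$.

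However, the identity in your first step has the factors in the wrong order, and as written it is false. Take $M\in\tc''\cap\fc$ and its torsion sequence with respect to $(\tc',\fc')$. The torsion part is a submodule of $M$, so it lies in $\tc'\cap\fc$. The torsion-free quotient lies in $\tc''\cap\fc'$. So by \cref{prop:extending torsion classes} the correct identity is $\xc_{[\tc,\tc'']}=\xc_{[\tc,\tc']}*\xc_{[\tc',\tc'']}$. Thus $[x,y][y,z]=[x,z]$ matches the $*$-product in the \emph{same} left-to-right order, which is also how the paper uses it in \cref{prop:deftildeG}.

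The reversed version already fails for the path algebra of $1\to 2$. Take $\tc=\{0\}\subset\tc'=\add S(2)\subset\tc''=\modA$. Then $\xc_{[\tc,\tc'']}=\modA$, $\xc_{[\tc,\tc']}=\add S(2)$ and $\xc_{[\tc',\tc'']}=\add S(1)$. But $\add S(1)*\add S(2)$ misses the indecomposable module of dimension vector $(1,1)$, because $\operatorname{Ext}^1_A(S(2),S(1))=0$. Your step 2 actually uses the correct order, with lower pieces as submodules as in $\bigast^{\leftarrow}$. Under your stated reversed rule, the claim that the finite case agrees with the groupoid product ``by induction'' would not go through. Correcting the order repairs this.

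Second, your ``alternative'' for $\tors_\ell A$ is wrong. Finiteness of $\tors^{ss}_\ell A$ (\cref{cor:finite cone complex for l}) says nothing about $\tors_\ell A$, which can carry chains with infinitely many nontrivial factors. In \cref{ex: Kronecker 1}, $\tors_2 A$ contains $\tc_2^P$ for every $P\subseteq\mathbb{P}^1(\mathbb{C})$. Choose an injection $s\mapsto\lambda_s$ from $[0,1]$ into $\mathbb{P}^1(\mathbb{C})$ and set $\tc_\eta(t)=\tc_2^{\{\lambda_s\,:\,s\le 1-t\}}$. In this chain every $e_\eta(t)$ with $0<t<1$ is a nontrivial interval. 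So the finiteness shortcut is valid only for $\tors^{ss}_\ell A$. For $\tors_\ell A$ you must rely on your lifting/restriction argument, which is fine.
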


\begin{proof}
The statement for $\tors A$ follows directly from \cref{cor:infinite products modA}.
The proof for $\tors^{ss}A$ is a particular case of the above since every chain of torsion classes in  $\tors^{ss}A$ is a chain of torsion classes in $\tors A$. 
For $\tors_\ell A$, the result follows by noting that the filtration of \cref{cor:infinite products modA} has all its subfactors in $(\modA)_\ell$. 
Finally, every chain of torsion classes in $\tors_\ell^{ss}A$ is a chain of torsion classes in $\tors_\ell A$.
\end{proof}

As mentioned in \cref{cor:finite cone complex for l}, the lattice  $\tors^{ss}_\ell A$ is finite, so every chain in it is also finite. 
Thus, in this case, the previous proposition is a direct consequence of the construction of $\gc_\ell^{ss}(A)$. 
Nevertheless, this proposition gives a useful interpretation of the products of elements coming from its chains of torsion classes.

Given a groupoid $\gc$ one can always define a group $\widetilde{G}$, viewed as a groupoid with one element, which is the free group generated by all the elements of $\gc_1$ modulo the relations induced by $\gc$.
We now combine the previous proposition with the finite generation result of Corollary~\ref{cor: finite generators} to obtain an explicit presentation of the group $\widetilde{G}_\ell(A)$ associated with $\gc^{ss}_\ell(A)$.

\begin{proposition}\label{prop:deftildeG}
The group $\widetilde{G}_\ell(A)$ is isomorphic to the group whose generators are the subcategories  
\[
\{(\tc_{\df})_\ell \mid \df \in \Df^{0}_\ell(A)\} \text{ and } \{(\mod_{\df'}^{ss}A)_\ell \mid \df' \in \Df^1_\ell(A)\}  
\]
in $(\modA)_\ell$ and its formal inverses, with the $*$-product as an operation, modulo the relations 
\begin{equation}\label{eq:relations Gtilde}
(\tc_{\df_1})_\ell * (\mod_{\df'}^{ss}A)_\ell  = (\tc_{\df_2})_\ell
\end{equation}
whenever $(\tc_{\df_1})_\ell = (\tc_{\df'})_\ell$ and $(\overline{\tc}_{\df'})_\ell = (\tc_{\df_2})_\ell$.
\end{proposition}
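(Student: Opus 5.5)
The plan is to build an explicit presentation of $\widetilde{G}_\ell(A)$ from a presentation of the groupoid $\gc^{ss}_\ell(A)$, and then identify the generators with subcategories through the $*$-product. Since $\tors^{ss}_\ell A$ is finite by \cref{cor:finite cone complex for l}, and it contains the minimal element $\{0\}_\ell$ and the maximal element $(\modA)_\ell$ (take $v=0$), \cref{prop: one map} and \cref{cor: finite generators} apply. Hence $\gc^{ss}_\ell(A)$ is generated by the covering intervals $[x,y]$ modulo the groupoid relations. Moreover, every morphism $e_{xy}$ equals $[0,x]^{-1}[0,y]$.

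The first step is to relate $\widetilde{G}_\ell(A)$ to this groupoid. In $\widetilde{G}_\ell(A)$ each identity $[x,x]$ becomes trivial. Thus $\widetilde{G}_\ell(A)$ is generated by the elements $[0,x]$, indexed by $x\in\tors^{ss}_\ell A$, together with the covering intervals $[x,y]$. The relations are exactly $[0,x][x,y]=[0,y]$ for covers $x\lessdot y$; all other relations reduce to these by \cref{prop: one map}.

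The second step translates this presentation into the stated one.
\begin{itemize}
\item By \cref{prop: tors to chambers}, every $x\in\tors^{ss}_\ell A$ has the form $(\tc_\df)_\ell$ for a chamber $\df\in\Df^0_\ell(A)$. We identify $[0,x]$ with the subcategory $(\tc_\df)_\ell$. This is consistent with \cref{prop:extending torsion classes}, because $[0,x]$ corresponds to $\{0\}*\xc_{[0,x]}=x$.
\item By \cref{cor: maximal inclusions tors^ss}, every cover $x\lessdot y$ comes from a codimension-one class $\df'$ with $x=(\tc_{\df'})_\ell=(\tc_{\df_1})_\ell$ and $y=(\overline{\tc}_{\df'})_\ell=(\tc_{\df_2})_\ell$. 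We identify $[x,y]$ with $\xc_{[x,y]}=y\cap\fc_x$. By the $\ell$-version of \cref{prop:BKT}, using $\overline{\tc}_{\df'}=\tc_{\df'}*\mod^{ss}_{\df'}A$ intersected with $(\modA)_\ell$ via \cref{thm: torsion pairs of bounded length}, this equals $(\mod^{ss}_{\df'}A)_\ell$.
\item The relation $[0,x][x,y]=[0,y]$ then becomes exactly \eqref{eq:relations Gtilde}. Indeed, it is the identity $(\tc_{\df_1})_\ell*(\mod^{ss}_{\df'}A)_\ell=(\tc_{\df_2})_\ell$, which holds at the level of subcategories by \cref{prop:extending torsion classes} restricted to length $\le\ell$. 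The restriction is legitimate because subobjects and quotients of objects of length $\le\ell$ have length $\le\ell$.
\end{itemize}
These two identifications give mutually inverse homomorphisms between the two presentations.

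The main obstacle is well-definedness of the identifications. First, a chamber class may be realized by several chambers, and a codimension-one $\df'$ may have the same $(\mod^{ss}_{\df'}A)_\ell$ as another wall while lying between different chambers. So we must check that labelling a generator by a subcategory is a function on generators, and that distinct covering intervals labelled by equal subcategories do not introduce relations absent from the groupoid. I would handle this by defining the generators formally as symbols indexed by the subcategories, as the statement does, so that equal subcategories give the same generator. I would then verify that the groupoid relations are sent to instances of \eqref{eq:relations Gtilde}, and conversely, using that the $*$-equation determines $(\tc_{\df_2})_\ell$ from the other two factors. The second part of the obstacle is showing $y\cap\fc_x=(\mod^{ss}_{\df'}A)_\ell$ in the $\ell$-restricted setting. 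For this I would take $v\in\df'$ and use $\overline{\tc}_v\cap\overline{\fc}_v=\mod^{ss}_vA$ together with \cref{cor:sametorsion-free}.
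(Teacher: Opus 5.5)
Your plan follows the paper's own argument. You present $\gc^{ss}_\ell(A)$ by the covers of $\tors^{ss}_\ell A$ and label a cover $x<y$ by $\xc_{[x,y]}=y\cap\fc_x=(\mod^{ss}_{\df'}A)_\ell$ via \cref{cor: maximal inclusions tors^ss}. You then read $[\{0\},x][x,y]=[\{0\},y]$ as \eqref{eq:relations Gtilde} via \cref{prop:extending torsion classes}. All of this is correct and gives a well-defined surjective homomorphism from $\widetilde G_\ell(A)$ onto the group presented in the statement. The gap is exactly the obstacle you flagged, and your proposed remedy does not close it. Once wall generators are indexed by subcategories, the inverse homomorphism is not well defined, and no argument can make it so.

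Your own first step shows that each cover symbol $[x,y]$ occurs in exactly one defining relation. Eliminating these symbols exhibits $\widetilde G_\ell(A)$ as the free group on $\{[\{0\},x]\mid x\neq\{0\}\}$, of rank $|\tors^{ss}_\ell A|-1$. Being the universal group of a groupoid with exactly one arrow between any two objects (\cref{prop: one map}), it does not see the labels at all. Suppose two distinct covers $x<y$ and $x'<y'$ carry the same semistable subcategory $S$. Then the subcategory presentation has a single generator $S$ with $x*S=y$ and $x'*S=y'$, so the element $[\{0\},x]^{-1}[\{0\},y][\{0\},y']^{-1}[\{0\},x']$, which is nontrivial in the free group, lies in the kernel. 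Your observation that the $*$-equation recovers $(\tc_{\df_2})_\ell$ from $(\tc_{\df_1})_\ell$ and $S$ does not help: the trouble is that $S$ does not determine $(\tc_{\df_1})_\ell$.

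This already happens for $n=2$ and $\ell=1$. Here $\tors^{ss}_1A$ is the four-element Boolean lattice, so $\widetilde G_1(A)$ is free of rank $3$. On the other hand, each of $\{0,S(1)\}$ and $\{0,S(2)\}$ labels two covers. The stated presentation then forces $\{0\}=1$ (from $\{0\}*\{0,S(1)\}=\{0,S(1)\}$) and $\{0,S(1)\}*\{0,S(2)\}=(\modA)_1=\{0,S(2)\}*\{0,S(1)\}$. So the stated group is $\mathbb{Z}^2$, which is not free.

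Hence the identification holds only if wall generators are indexed by the cones $\df'\in\Df^1_\ell(A)$ (equivalently, by covers), in which case your two maps are trivially mutually inverse. With subcategory-indexed generators you only obtain a proper quotient of $\widetilde G_\ell(A)$ in general. The paper's proof does not address this point either: it only checks that $\phi$ respects the relations, i.e.\ the forward direction.
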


\begin{proof}
Let $H_\ell(A)$ be the free group generated by all the subcategories of $(\modA)_\ell$ with the $*$-product and let $\phi: \gc_\ell^{ss}(A) \to H_\ell(A)$ be the map sending the element $[\{0\}, \tc]^{-1}[\{0\}, \tc']$ to the formal $*$-product of subcategories $\tc^{-1}*\tc'$.
We will show that all the products in $\gc^{ss}_\ell(A)$ can be realised via $\phi$ using the subcategories and relations in the statement. 

Suppose first that $[\tc_\ell, \tc'_\ell]$ is an interval in $\tors^{ss}_\ell A$ such that $\tc$ is maximally contained in $\tc'$. 
In this case, we have shown in \cref{cor: maximal inclusions tors^ss}, there are cones $\df_1, \df_2 \in \Df^0_\ell(A)$ such that $\tc_\ell = (\tc_{\df_1})_\ell$ and $\tc'_\ell = (\tc_{\df_2})_\ell$ and there is a cone $\df' \in \Df^1_\ell(A)$ such that $(\tc_1)_\ell = (\tc_{\df'})_\ell$ and $(\tc_2)_\ell = (\overline{\tc_{\df'}})_\ell$.
Moreover, in this case the category $\xc_{[(\tc_1)_\ell, (\tc_2)_\ell]} = (\mod^{ss}_{\df'}A)_\ell$. 
Then \cref{prop:extending torsion classes} implies that 
\[
\phi([\{0\},  \tc'_\ell])=\phi([\{0\}, \tc_\ell][\tc_\ell, \tc'_\ell])=\phi([\{0\}, \tc_\ell])*\phi([\tc_\ell, \tc'_\ell])
\]
giving rise to \cref{eq:relations Gtilde}.

Now, let $[\tc_\ell, \tc'_\ell]$ be any interval in $\tors^{ss}_\ell(A)$. 
Then this interval can be refined to a maximal chain 
$\tc_\ell = (\tc_0)_\ell \subset (\tc_1)_\ell \subset \dots \subset (\tc_r)_\ell = \tc'_\ell$.
\cref{cor: maximal inclusions tors^ss} provide two codimension $0$ cones $\df_a$ and $\df_b$ such that $\tc_\ell = (\tc_{\df_a})_\ell$ and $\tc'_\ell = (\tc_{\df_b})_\ell$ and a set of cones $\{\df'_i \mid 1\leq i \leq r\}$ realising the maximal inclusions. 
We obtain
\begin{equation}\label{eq:phi}
\phi([\tc_\ell, \tc'_\ell]) = \phi([(\tc_0)_\ell , (\tc_1)_\ell ]) * \phi([(\tc_1)_\ell , (\tc_2)_\ell ]) *\dots *\phi([(\tc_{r-1})_\ell , (\tc_r)_\ell ])
\end{equation}
as a consequence of \cref{cor:infinite products modA}. 
Note that \cref{eq:phi} is independent of the refinement.
%Then, once again 
%\[
%\phi([\{0\},  \tc'_\ell])=\phi([\{0\}, \tc_\ell][\tc_\ell, \tc'_\ell])=\phi([\{0\}, \tc_\ell])*\phi([\tc_\ell, \tc'_\ell]). 
%\]
Finally, the result follows from \cref{prop: one map} which states that every element in $\gc_\ell^{ss}(A)$ is of the form $[\{0\}, \tc]^{-1}[\{0\}, \tc']$.
\end{proof}

%\begin{remark}
%Although this description is useful from a theoretic point of view, it is very difficult to know if a given subcategory $\xc\subset(\modA)_\ell$ corresponds an element in $\widetilde{G}_\ell(A)$.
%In particular, it is difficult to give a general interpretation of the $*$-multiplication $\xc *\mathcal{Y}^{-1}$ of a subcategory $\xc\subset (\modA)_\ell$ and the formal inverse of another subcategory $\mathcal{Y}\subset(\modA)_\ell$. 
%\end{remark}

\begin{remark}
Proposition \ref{prop:deftildeG} provides an entirely categorical realization of $\widetilde G_\ell(A)$ in terms of subcategories and the $*$-product. 
On the other hand, deciding whether an arbitrary subcategory determines an element of the group remains difficult. 
In particular, no satisfactory intrinsic description of products of the form $\xc * \yc^{-1}$ is currently known.
\end{remark}

%%%%%%%%%%%%%%%%%%
\section{Picture groups and $\tau$-cluster morphism categories for Artin algebras}\label{sec:picture group}
%
%Our construction of scattering diagrams for Artin algebras depends on the notion of cluster morphism categories and picture groups, first introduced for hereditary algebras of Dinkyn type by Igusa, Todorov and Weymann in \cite{IT, ITW}. 
%This definition was later extended for $\tau$-tilting finite algebras in \cite{HansonIgusa}. 
%A combinatorial approach to picture groups was studied by Kaipel in \cite{Kaipel} for the so-called partitioned fans. 
%Given a partitioned fan $(\Sigma, \Pf)$, Kaipel shows the existence of a category $\Cf(\Sigma, \Pf)$ and then uses this category to define the picture group $G(\Sigma, \Pf)$ associated to the partitioned fan $(\Sigma, \Pf)$. 
%
%In this section, we study the fan 
%\(
%\Sigma_\ell(A):= \{ \overline{\df} \mid \df \in \Df_\ell (A)\}
%\).
%We show $\Sigma_\ell(A)$ is naturally partitioned in sense of Kaipel \cite{Kaipel}, which implies the existence of a category $\Cf_\ell(A)$ and a picture group $G_\ell(A)$ for every $A$ and every $\ell\geq1$. 
%By abuse of notation, we say that $\Cf_\ell(A)$ is the $\tau$-cluster morphism category of $(\modA)_\ell$.

The aim of this section is two-fold.
As a first stance, we show that there is a $\tau$-cluster morphism category $\Cf_\ell(A)$ and a picture group $G_\ell(A)$ for every Artin algebra $A$ and every positive integer $\ell$.
This allows us to propose a definition of a picture group for every algebra $A$ which recovers $G(A)$ for $\tau$-tilting finite algebras and which we will use to define scattering diagrams. 
It is worth mentioned that our proposed definition is not necessarily equivalent to the one proposed by Kaipel in \cite{Kaipel2}.
%The explicit relationship between these definitions of picture group for algebras that are not $\tau$-tilting finite are left for the future. 

%As a consequence of the above, we are able to extend the conjecture stated above as follows. 
%\begin{conjecture}
%Let $A$ be an Artin algebra and $\ell$ a positive integer. 
%Then the nerve $X_\ell(A)$ of the $\tau$-cluster morphism category $\Cf_\ell(A)$ is a $K(G_\ell(A), 1)$ category, where $G_\ell(A)$ is the picture group associated to $\Cf_\ell(A)$.
%\end{conjecture}

The second aim of this section is to compare the picture group $G_\ell(A)$ with the group $\widetilde{G}_\ell(A)$ defined in \cref{prop:deftildeG}. 
In fact, we show that they are isomorphic. 
As a consequence of this result and a result by Hanson \cite{BKH} we obtain the existence of a faithful functor $\Gamma_\ell : \Cf_\ell(A) \to G_\ell(A)$ for every $A$ and every positive integer $\ell$. 
In particular we show that such a functor exists for every $\tau$-tilting finite algebra $A$.

%%%%%%%%%%%%%
\subsection{The $\tau$-cluster morphism category of $(\modA)_\ell$}

The goal of this subsection is to construct a $\tau$-cluster morphism category for $(\mod A)_\ell$. 
By the construction of Kaipel \cite{Kaipel}, this amounts to showing that the partition of $\Sigma_\ell(A)$ induced by semistable subcategories is admissible. 
The remainder of the subsection is devoted to defining and proving this property.

Let $\Df$ be an open cone complex in $\mathbb{R}^n$ inducing a fan $\Sigma = \{\overline{\df} : \df \in \Df\}$ and let $\Pf$ be a partition of the set of cones of $\Sigma$. 
Then $\Pf$ induces an equivalence relation $\sim_\Pf$ on $\Sigma$. 
Recall that given a cone $\sigma \in \Sigma$ of codimension $d$, we denote by $\star(\sigma)$ the cone complex in $ \sigma^\perp \cong \mathbb{R}^d$ formed by the images of the cones $\tau \in \Sigma$ such that $\sigma \subset \tau$ under the map $\pi_\sigma : \mathbb{R}^n \to \sigma^\perp \cong \mathbb{R}^d$.

\begin{definition}\cite[Section~2]{Kaipel}\label{def:partitionfan}
An equivalence relation $\sim_\Pf$ in $\Sigma$ is said to be admissible if given two cones $\sigma_1 \sim_\Pf \sigma_2$ and a pair of cones  $\tau_1$ and $\tau_2$ satisfying $\pi_{\sigma_i}(\tau_i) \in \star(\sigma_i)$ and $\pi_{\sigma_1}(\tau_1) = \pi_{\sigma_2}(\tau_2)$ implies that $\tau_1 \sim_\Pf \tau_2$.
A partitioned fan is a pair $(\Sigma, \Pf)$ of a fan $\Sigma$ and a partition $\Pf$ on $\Sigma$ such that the equivalence relation $\sim_\Pf$ on $\Sigma$ is admissible. 
\end{definition}

Given a partitioned pair $(\Sigma, \Pf)$ where $\Pf$ is an admissible partition of $\Sigma$, then one can define the category of the partitioned fan $(\Sigma, \Pf)$ as follows. 

\begin{proposition}\cite[Proposition~3.8]{Kaipel}
Let $(\Sigma, \Pf)$ be an admissible partitioned fan.
Then there is a category $\Cf(\Sigma, \Pf)$ whose objects are the equivalence classes $\{[\sigma]_{\sim_\Pf}\mid \sigma\in \Sigma \}$ and the morphisms are given by 
\[
\Hom_{\Cf(\Sigma, \Pf)}([\sigma], [\tau]):= \bigcup_{\substack{\sigma \sim_\Pf \sigma' \\ \tau \sim_\Pf \tau''}} \Hom_{\leq} (\sigma', \tau'')
\]
where $\Hom_\leq(\sigma, \tau)$ is the set of morphisms of the category associate to $\Sigma$ viewed as a poset where $\sigma \leq \tau$ if and only if $\sigma \subset \tau$.
\end{proposition}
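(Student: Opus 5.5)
The plan is to check the category axioms directly: define composition, show it is well defined, and verify associativity and identities. I take a morphism $[\sigma]\to[\tau]$ to be a class of inclusions $\sigma'\subset\tau''$ with $\sigma'\sim_\Pf\sigma$ and $\tau''\sim_\Pf\tau$, up to the identification forced by the partition. Concretely, two inclusions $\sigma'\subset\tau''$ and $\sigma''\subset\tau'''$ represent the same morphism when $\sigma'\sim_\Pf\sigma''$ and $\pi_{\sigma'}(\tau'')=\pi_{\sigma''}(\tau''')$ as cones of $\star(\sigma')=\star(\sigma'')$.

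I will use the standing convention in Kaipel's setting that equivalent cones have the same star. I am assuming this is built into Kaipel's framework; \cref{def:partitionfan} as quoted does not state it. Under this convention, admissibility (\cref{def:partitionfan}) says exactly that the resulting identification of cones respects $\sim_\Pf$. The identity on $[\sigma]$ is the class of $\sigma\subset\sigma$, which projects to the zero cone of $\star(\sigma)$.

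\textbf{Composition.} Take $f=[\sigma'\subset\tau'']$ and $g=[\tau'''\subset\rho]$ with $\tau''\sim_\Pf\tau'''$. The cone $\pi_{\tau'''}(\rho)$ lies in $\star(\tau''')=\star(\tau'')$. Since $\pi_{\tau''}$ is injective on cones of $\Sigma$ containing $\tau''$, there is a unique cone $\rho'\supseteq\tau''$ with $\pi_{\tau''}(\rho')=\pi_{\tau'''}(\rho)$. Admissibility then gives $\rho'\sim_\Pf\rho$. I set $g\circ f:=[\sigma'\subset\rho']$, which is a morphism $[\sigma]\to[\rho]$.

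\textbf{Well-definedness.} This is the step I expect to be the main obstacle: the composite must not depend on the chosen representatives of $f$ and $g$. Here I would use the canonical identification
\[
\star_{\star(\sigma')}\bigl(\pi_{\sigma'}(\tau'')\bigr)\cong \star(\tau''),
\]
where the left-hand side is the star of the cone $\pi_{\sigma'}(\tau'')$ inside the complex $\star(\sigma')$. Under this identification, $\pi_{\sigma'}(\rho')$ is the unique cone of $\star(\sigma')$ that contains $\pi_{\sigma'}(\tau'')$ and projects to $\pi_{\tau''}(\rho')$. Hence $\pi_{\sigma'}(\rho')$ is determined by two pieces of data: the pair $(\star(\sigma'),\pi_{\sigma'}(\tau''))$, which depends only on $f$, and the cone $\pi_{\tau'''}(\rho)$, which depends only on $g$. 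Changing representatives within the same classes leaves both pieces of data unchanged, using admissibility once more to keep every cone inside its $\sim_\Pf$-class.

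\textbf{Associativity and identities.} With well-definedness in hand, both axioms reduce to the poset $(\Sigma,\subseteq)$. Each lift is unique, so lifting twice in either order produces the same chain $\sigma'\subset\tau''\subset\rho'\subset\kappa'$, which gives associativity. Composing with $\sigma\subset\sigma$ on either side leaves a representative unchanged, so the classes of $\sigma\subset\sigma$ are two-sided identities.
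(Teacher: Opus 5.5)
The paper gives no proof of its own here; it only cites Kaipel. Your argument is the standard construction behind that citation and is correct: morphisms are inclusions $\sigma'\subset\tau''$ identified through their images in the common star, composition is the unique lift through $\star(\tau'')$ with admissibility keeping the lift in the right class, and well-definedness and associativity follow from identifying the star of $\pi_{\sigma'}(\tau'')$ inside $\star(\sigma')$ with $\star(\tau'')$.

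You are also right that the quoted \cref{def:partitionfan} is not enough. Equivalent cones must have the same span and the same star; otherwise $\pi_{\sigma_1}(\tau_1)=\pi_{\sigma_2}(\tau_2)$ compares cones in different spaces and the lift $\rho'$ may not exist. Likewise, the literal union of the sets $\Hom_\leq(\sigma',\tau'')$ has to be replaced by your quotient before composition can be defined. Both adjustments belong to the intended statement rather than being optional conventions.
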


Let $A$ an Artin algebra, $\ell\in \mathbb{N}$ and consider the cone complex $\Df_\ell(A)$ of $\ell$-TF-equivalence classes of stability conditions over $(\modA)_\ell$.
Let $$\Sigma_\ell(A):= \{\overline{\df} \mid \df \in \Df_\ell(A)\}.$$
The semistable categories determine a natural partition of this fan: two cones
$\df,\df'\in \Df_\ell(A)$ are equivalent whenever
$(\mod_\df^{ss}A)_\ell=(\mod_{\df'}^{ss}A)_\ell$.
We denote the induced equivalence relation on $\Sigma_\ell(A)$ by $\sim_{ss}$ and the corresponding partition by $\Pf_{ss}$.

%
%
%There is a natutal equivalence relation $\sim_{ss}$ defined in $\Df_\ell(A)$ by $\df \sim_{ss} \df'$ if and only if \mbox{$(\mod_{\df}^{ss}A)_\ell = (\mod_{\df'}^{ss}A)_\ell$}. 
%This determines naturally an equivalence relation $\sim_{ss}$ and a partition $\Pf_{ss}$ in $\Sigma_\ell(A)$. 

\begin{remark}
By construction the fan $\Sigma_\ell(A)$ is always finite and complete.
\end{remark}

Our first aim is to show the following. 

\begin{theorem}\label{prop:l-tauclustermorphismcategory}
The equivalence relation $\sim_{ss}$ in $\Sigma_\ell(A)$ is admissible. 
In particular, there is a $\tau$-cluster morphism category $\Cf_\ell(A):= \Cf(\Sigma_\ell(A), \Pf_{ss})$ for every Artin algebra $A$ and every positive integer $\ell$. 
\end{theorem}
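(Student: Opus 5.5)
Admissibility will follow from a local description of the star of a cone in terms of its semistable subcategory. Fix $\sigma=\overline{\df}$ with $\df\in\Df_\ell(A)$, $v\in\df$, and put $\mathcal{W}_\df:=(\mod^{ss}_\df A)_\ell$. The claim to establish is the following. For $\tau=\overline{\df'}\supseteq\sigma$ and $w\in\df'$, both $(\mod^{ss}_{w}A)_\ell$ and the image $\pi_\sigma(\df')$ in $\star(\sigma)$ are determined by $\mathcal{W}_\df$ together with the $\ell$-TF-class of $\pi_\sigma(w)$ in a wall-and-chamber structure built only from $\mathcal{W}_\df$. Given this, take $\sigma_1\sim_{ss}\sigma_2$, so $\mathcal{W}_{\df_1}=\mathcal{W}_{\df_2}$. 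Then $\star(\sigma_1)$ and $\star(\sigma_2)$ are the same cone complex in $\sigma_1^\perp=\sigma_2^\perp$. The equality of perps holds because $\langle\df_i\rangle$ is cut out by the classes $[M]$, $M\in\mathcal{W}_{\df_i}$; this needs checking via \cref{thm:bounded l-TF}. So if $\pi_{\sigma_1}(\tau_1)=\pi_{\sigma_2}(\tau_2)$, then $\tau_1$ and $\tau_2$ have the same semistable subcategory in $(\modA)_\ell$, i.e.\ $\tau_1\sim_{ss}\tau_2$.

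To prove the local claim I will use a perturbation argument in the spirit of the proof of \cref{prop: tors to chambers}. Since there are finitely many classes $[M]$ with $\lg(M)\le\ell$, choose $\epsilon>0$ such that $\langle v,[M]\rangle$ has absolute value at least $\epsilon$ whenever it is nonzero. Then for $u$ small, $v+u$ lies in a cone whose closure contains $\sigma$. For such $u$, I aim to show that $M\in(\modA)_\ell$ is $(v+u)$-semistable if and only if $M\in\mathcal{W}_\df$ and $M$ is $u$-semistable inside $\mathcal{W}_\df$, meaning $\langle u,[M]\rangle=0$ and $\langle u,[L]\rangle\le0$ for every submodule $L$ of $M$ with $L\in\mathcal{W}_\df$.

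The forward direction goes as follows. If $M$ is $(v+u)$-semistable with $u$ small, then $\langle v,[M]\rangle=0$, and every submodule $L$ has $\langle v,[L]\rangle\le 0$ by the choice of $\epsilon$. Hence $M\in\mathcal{W}_\df$, and submodules with $\langle v,[L]\rangle=0$ lie in $\mathcal{W}_\df$ and give the $u$-conditions. For the converse, given a submodule $L$ of $M\in\mathcal{W}_\df$, either $\langle v,[L]\rangle<0$, so smallness wins, or $\langle v,[L]\rangle=0$. In the latter case $L\in\mathcal{W}_\df$, because $\mathcal{W}_\df$ is closed under such subobjects: it is an abelian subcategory by \cite{Rudakov}, and $L$ and its quotient are $v$-semistable. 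Thus the semistable category near $\sigma$, and hence by \cref{thm:bounded l-TF} the $\ell$-TF-classes near $\sigma$, depend only on $\mathcal{W}_\df$ and $u$. Moreover $u$ matters only through the pairings with classes in $\mathcal{W}_\df$, so only through $\pi_\sigma(u)$.

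I expect the main obstacle to be the cone-level bookkeeping. The $\ell$-TF-classes are defined through $(\tc_v)_\ell$ and $(\overline{\tc}_v)_\ell$, not just through semistable objects. So I must check that the star, viewed as a cone complex, is determined by the local walls $\dc(B)\cap\sigma^\perp$ for bricks $B\in\mathcal{W}_\df$, using criterion (5) of \cref{thm:bounded l-TF} applied to short segments near $v$. I must also check that the identification $\pi_{\sigma_1}(\tau_1)=\pi_{\sigma_2}(\tau_2)$ transports the cone over $\sigma_1$ to the one over $\sigma_2$. Once this is in place, the existence of $\Cf_\ell(A)$ follows directly from \cite[Proposition~3.8]{Kaipel}, since $\Sigma_\ell(A)$ is a finite complete fan.
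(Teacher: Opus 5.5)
Your proposal is correct, but it argues differently from the paper.

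The paper works one module at a time. In \cref{lem:l-taucluster2} it starts from $v_1=w_1+w\in\ef_1$ at which a fixed $M$ is semistable. It then builds by hand $\overline{v}_2=\sum_i v_2^i+w_2+w\in\ef_2$: it keeps the $\df^\perp$-component $w$ and adds vectors of $\df_2$ (supplied by \cref{lem:l-taucluster1}) to repair each inequality $\langle -,[L_i]\rangle\le 0$. The equality $\df_1^\perp=\df_2^\perp$ is a hypothesis of that lemma and is assumed, not derived from $\sim_{ss}$, in the proof of the theorem.

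You instead prove one local lemma at $v\in\df$: for $u$ small, $(\mod^{ss}_{v+u}A)_\ell$ consists exactly of the objects of $\mathcal{W}_\df$ that are $\pi_\sigma(u)$-semistable inside $\mathcal{W}_\df$. Your check of both directions is right. Closure of $\mathcal{W}_\df$ under submodules $L$ with $\langle v,[L]\rangle=0$ is immediate, since submodules of $L$ are submodules of $M$. This lemma gives you more than the paper's argument does:
\begin{enumerate}
\item Taking $u\in\bigcap_{M\in\mathcal{W}_\df}[M]^\perp$ and using (3)$\Rightarrow$(1) of \cref{thm:bounded l-TF}, you get $\langle\df\rangle=\bigcap_{M\in\mathcal{W}_\df}[M]^\perp$. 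Hence $\sigma_1^\perp=\sigma_2^\perp$ and $\pi_{\sigma_1}=\pi_{\sigma_2}$, which is the step the paper leaves implicit.
\item It shows that $\star(\sigma)$ depends only on $\mathcal{W}_\df$.
\end{enumerate}

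The cone-level bookkeeping you flag as the main obstacle is not needed for admissibility. Three facts suffice:
\begin{enumerate}
\item If $\df\subset\overline{\ef}$ and $y\in\ef$, then $v+t(y-v)\in\ef$ for small $t>0$. This follows from convexity and relative openness of $\ell$-TF classes; relative openness is your lemma applied at $y$ with $u\in\langle\ef\rangle$.
\item These points project to $t\,\pi_\sigma(y)$, because $\pi_\sigma(v)=0$.
\item Semistability inside $\mathcal{W}_\df$ is invariant under positive rescaling of $u$.
\end{enumerate}
Choosing $y_i\in\ef_i$ with $\pi_{\sigma_1}(y_1)=\pi_{\sigma_2}(y_2)$ then gives $(\mod^{ss}_{\ef_1}A)_\ell=(\mod^{ss}_{\ef_2}A)_\ell$ directly.
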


The proof of \cref{prop:l-tauclustermorphismcategory} reduces to showing that semistability behaves well under the local identifications appearing in the definition of admissibility. 
The following two lemmas establish precisely this fact.

\begin{lemma}\label{lem:l-taucluster1}
Let $\df_1, \df_2 \in \Df_\ell(A)$ such that $(\mod_{\df_1}^{ss}A)_\ell=(\mod_{\df_2}^{ss}A)_\ell$ and $\df_1^\perp = \df_2^\perp$.
Let $M \in (\mod_{\df_1}^{ss}A)_\ell$ and let $L$ be a subobject of $M$. 
Then one of the following holds:
\begin{enumerate}
\item  $\df_1 \subset [L]^\perp$ if and only if $\df_2 \subset [L]^\perp$.
\item There is a $v_1\in \df_1$ such that  $\langle v_1, [L]\rangle <0$ if and only if there is a $v_2 \in \df_2$ such that $\langle v_2, [L]\rangle <0$.
\end{enumerate}

\end{lemma}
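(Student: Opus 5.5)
The plan is to reduce both alternatives to one linear-algebraic fact: a relatively open convex cone lies in a hyperplane $[L]^\perp$ exactly when $[L]$ is orthogonal to the linear span of the cone. The hypothesis $\df_1^\perp=\df_2^\perp$ identifies these orthogonal complements for the two cones. I will in fact show that both equivalences (1) and (2) hold simultaneously; (2) will turn out to be the negation of the two sides of (1).

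First, I record the sign information. Since $M\in(\mod^{ss}_{\df_1}A)_\ell=(\mod^{ss}_{\df_2}A)_\ell$, the module $M$ is $v$-semistable for every $v\in\df_1\cup\df_2$. Because $L$ is a submodule of $M$, the definition of semistability gives
\[
\langle v,[L]\rangle\le 0\quad\text{for every } v\in\df_1\cup\df_2 .
\]
By \cref{cor:finite cone complex for l}, each $\df_i$ is a convex cone that is relatively open in its closure, hence relatively open in its linear span $\langle\df_i\rangle$.

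Second, I prove (1). For a cone $\df$, the inclusion $\df\subset[L]^\perp$ holds if and only if $\langle v,[L]\rangle=0$ for all $v\in\df$. By linearity this is equivalent to $[L]$ annihilating the span $\langle\df\rangle$, that is, to $[L]\in\df^\perp$. Here $\df^\perp$ denotes the orthogonal complement of the span, as in the definition of $\pi_\df$. Since $\df_1^\perp=\df_2^\perp$, we get $\df_1\subset[L]^\perp\iff[L]\in\df_1^\perp=\df_2^\perp\iff\df_2\subset[L]^\perp$.

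Third, I deduce (2). By the sign information, $\langle -,[L]\rangle$ is nonpositive on each $\df_i$. Hence there is $v_i\in\df_i$ with $\langle v_i,[L]\rangle<0$ exactly when $\df_i\not\subset[L]^\perp$. So (2) is the negation of both sides of (1), and follows from it. I can also strengthen this. A linear functional that is $\le 0$ on a relatively open convex set and vanishes at one of its points must vanish on the whole set: move slightly in both directions within the span. Therefore in the second case $\langle v,[L]\rangle<0$ for every $v\in\df_i$, not merely for some $v$. This stronger form is presumably what the later admissibility argument needs.

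The only delicate point is the relative openness used in the strengthening, and the correct reading of $\df^\perp$ as the complement of the span. For this I will cite \cref{cor:finite cone complex for l} and the convention fixed in \cref{subsec: cones and fans}. Everything else is direct linear algebra.
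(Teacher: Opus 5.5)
Your proof is correct and is essentially the paper's argument. Part (1) holds because $\df_i\subset[L]^\perp$ if and only if $[L]\in\df_i^\perp$, and $\df_1^\perp=\df_2^\perp$; the paper phrases this as the two spans coinciding. Part (2) then follows from the nonpositivity of $\langle -,[L]\rangle$ on $\df_1\cup\df_2$, which comes from the $v$-semistability of $M$. Your extra ``strengthening'' is not needed, since \cref{lem:l-taucluster2} only uses the existential form, and its justification is loose as written: a convex cone that is relatively open in its closure need not be relatively open in its span (a closed half-line is a counterexample), and \cref{cor:finite cone complex for l} as stated only asserts convexity.
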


\begin{proof}
First of all, note that the subspaces $\langle\df_1\rangle, \langle\df_2\rangle \subset \mathbb{R}^n$ spanned by $\df_1$ and $\df_2$ coincide since $\df_1^\perp = \df_2^\perp$.
Moreover, $\df_i \subset [L]^\perp$ if and only if $\langle\df_i\rangle \subset [L]^\perp$ because $[L]^\perp$ is a subspace of $\mathbb{R}^n$. 

Suppose that $\df_1$ is not contained in $[L]^\perp$.
Choose $v_1\in\df_1\setminus[L]^\perp$.
Since $M$ is $v$-semistable for every $v\in\df_1\cup\df_2$, we have
\[
\langle v_1,[L]\rangle<0.
\]
By part (1), $\df_2$ is also not contained in $[L]^\perp$.
Applying the same argument to $\df_2$ yields a vector
$v_2\in\df_2$ satisfying
\[
\langle v_2,[L]\rangle<0.
\]
\end{proof}

\begin{lemma}\label{lem:l-taucluster2}
Let $M \in (\modA)_\ell$ and let $\df_1, \df_2 \in \Df_\ell(A)$ be two cones such that $\df_i \subset \dc(M)$ and $\df_1^\perp = \df_2^\perp$.
If $\ef_1, \ef_2 \in \Df_\ell(A)$ are two cones such that $\df_i \subset \overline{\ef_i}$ and $\pi_{\df_1}(\ef_1)= \pi_{\df_2}(\ef_2)$ then $\ef_1 \subset \dc(M)$ if and only if $\ef_2 \subset \dc(M)$. 
\end{lemma}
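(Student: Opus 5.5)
By symmetry it suffices to show that $\ef_1 \subset \dc(M)$ implies $\ef_2 \subset \dc(M)$. The key point is that $v$-semistability of $M$ can be tested linearly. Because $M$ has finitely many classes of submodules in $K_0(A)$,
\[
\dc(M) = [M]^\perp \cap \bigcap_{L \subseteq M} \{ v \mid \langle v, [L]\rangle \leq 0\},
\]
which is a closed polyhedral cone. Moreover, $\ef_i$ is an $\ell$-TF-equivalence class, so by \cref{thm:bounded l-TF} the category $(\mod^{ss}_v A)_\ell$ is constant on $\ef_i$. Hence $\ef_i \subset \dc(M)$ holds as soon as a single vector of $\ef_i$ lies in $\dc(M)$, and otherwise $\ef_i \cap \dc(M)=\varnothing$. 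The plan is to move a point of $\ef_1$ to a point of $\ef_2$ through the common projection, while controlling each inequality $\langle -, [L]\rangle \le 0$.

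Write $V=\langle \df_1\rangle = \langle \df_2\rangle$; these spans agree since $\df_1^\perp=\df_2^\perp$. Since $\df_i\subset \dc(M)\subset [M]^\perp$, we have $[M]\in V^\perp$. Take $w_1\in\ef_1$ and write $w_1 = u_1 + p$ with $u_1\in V$ and $p=\pi_{\df_1}(w_1)$. Because $\pi_{\df_1}(\ef_1)=\pi_{\df_2}(\ef_2)$, there is $w_2\in \ef_2$ with $w_2 = u_2 + p$ and $u_2 \in V$. Since $\df_i\subset\overline{\ef_i}$ and the cones are open in their spans, we may pick $d_i\in\df_i$ and replace $w_i$ by $w_i + t d_i$ with $t\ge 0$ large, staying in $\ef_i$. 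It would be cleaner to choose $w_i$ so that its $V$-component is close to $d_i$. Indeed, for any $d_i\in\df_i$ and small $\varepsilon$, the point $d_i+\varepsilon q$ lies in $\ef_i$ for suitable $q$; this uses that $\ef_i$ is a relatively open cone with $\df_i$ in its closure. Scaling, we may take $w_i = d_i + p$ with the same $p \in V^\perp$, up to small errors that are harmless because all conditions are finitely many strict or sign conditions.

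Now test each submodule $L$ of $M$ and split by \cref{lem:l-taucluster1}. In case (1), both $\df_i \subset [L]^\perp$, so $[L]\in V^\perp$ and $\langle w_1,[L]\rangle=\langle p,[L]\rangle=\langle w_2,[L]\rangle$; the sign is therefore transferred directly. In case (2), $\langle d_1,[L]\rangle<0$ and $\langle d_2,[L]\rangle<0$ for generic choices of $d_i$. Choosing $p$ small, that is, taking $w_i$ close to $d_i$, gives $\langle w_i,[L]\rangle<0$ on both sides. For $[M]$ itself, $\langle w_i,[M]\rangle=\langle p,[M]\rangle$ is the same for $i=1,2$. So if $w_1\in\dc(M)$, all defining conditions hold for $w_2$, giving $w_2 \in \dc(M)$ and hence $\ef_2\subset\dc(M)$.

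The main obstacle is the simultaneous choice of $w_1, w_2$ with exactly the same $V^\perp$-component $p$ and with $V$-parts near given interior points of $\df_1, \df_2$. Scaling $p$ down changes which cone of $\star(\df_i)$ it lies in only by positive rescaling, since $\star$ consists of cones. I would therefore argue that $\pi_{\df_i}(\ef_i)$ is a cone and that $\ef_i$ contains $d_i+p$ for every $p$ in the relative interior of $\pi_{\df_i}(\ef_i)$ with $\|p\|$ small. This local product structure near $\df_i$ follows from $\Sigma_\ell(A)$ being a finite fan. One also needs $d_i$ chosen outside the finitely many hyperplanes $[L]^\perp$ not containing $\df_i$, which is possible because $\df_i$ is relatively open.
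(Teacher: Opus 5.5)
Your proposal is correct and follows essentially the paper's argument. You take a point of $\ef_1$ and a point of $\ef_2$ with the same projection to $V^\perp$; pairings with $[M]$ and with submodule classes lying in $V^\perp$ are inherited from the $\ef_1$ side, and the remaining inequalities become strict by adding a large multiple of a vector of $\df_2$, which keeps you inside $\ef_2$. The ``local product structure'' you flag as the main obstacle is unnecessary, since your own step $w_2\mapsto w_2+t d_2$ with $t\gg 0$ already does the job; this step also correctly lets the $V$-components lie only in $V$, whereas the paper's write-up asserts they lie in $\df_i$ without justification.
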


\begin{proof}
First note that $\df_i \subset \dc(M)$ is equivalent to $M \in (\mod^{ss}_{\df_i})_\ell$. 
Since the category of $v$-semistable objects is constant for every $v\in \df_i$, the statement is equivalent to showing that, under the hypothesis of this lemma, there is a $v_1\in \ef_1$ such that $M$ is $v_1$-semistable if and only if there is a $v_2\in \ef_2$ such that $M$ is $v_2$-semistable.
In the following, starting from a vector $v_1\in\ef_1$, we construct a vector
$\overline v_2\in\ef_2$ which satisfies the semistability inequalities for every submodule of $M$.

Let $v_1 \in \ef_1$ and assume that $M$ is $v_1$-semistable. 
Write $v_1$ as $v_1=w_1+ w$ where $w_1 \in \df_1$ and $w \in \pi_{\df_1}(\ef_1) \subset \df_1^\perp$. 

Since $\pi_{\df_1}(\ef_1)= \pi_{\df_2}(\ef_2)$ there exists $v_2 \in \ef_2$ of the form $v_2 = w_2 + w$ where $w_2 \in \df_2$ and $w$ is the same as before.
Moreover $\langle v_2, [M] \rangle=0$. 
Indeed, 
\begin{multline*}
\langle v_2, [M] \rangle	 = \langle w_2 + w, [M] \rangle = \langle w_2, [M] \rangle +\langle w, [M] \rangle =\\= 0 +\langle w, [M] \rangle =
 \langle w_1, [M] \rangle +\langle w, [M] \rangle = \langle v_1, [M] \rangle =0.
\end{multline*}

Consider the set $\{[L]\in K_0(A) \mid L \text{ is a submodule of } M\}$ consisting in the elements of $K_0(A)$ associated to the subobjects of $M$. 
This set is finite and we can just write it as  $\{[L_1], \dots, [L_t]\}$.
By hypothesis 
$$\langle v_1, [L_i]\rangle = \langle w_1, [L_i] \rangle +\langle w, [L_i] \rangle  \leq 0$$
 for every $1\leq i\leq t$. 
 
Fix $1 \leq i \leq t$. 
If $\langle w, [L_i] \rangle >0$ then $\langle w_1 , [L_i]\rangle <0$. 
\cref{lem:l-taucluster1}  therefore provides a vector $v_2^i\in \df_2$ such that $\langle v_2^i, [L_i]\rangle < 0$.
After multiplying by a positive scalar if necessary, we can assume that $\langle v_2^i, [L_i]\rangle + \langle w, [L_i] \rangle \leq 0$.
Define \[\overline{v}_2 = \left(\sum_{i=1}^t v_2^i\right) + w_2 + w. \]
Since $\left(\sum_{i=1}^t v_2^i\right) \in \df_2$ and $ w_2 + w = v_2 \in \ef_2$ we have $\overline{v}_2 \in \ef_2$. 

We now show that $M$ is $\overline{v}_2$-semistable. 
Firstly, 
\(
\langle \overline{v}_2 , [M] \rangle = \langle v_2 , [M]\rangle + \langle \sum_{i=1}^t v_2^i, [M] \rangle = 0
\).
Secondly, let $L$ be a submodule of $M$. 
Then there is a $1\leq i \leq t$ such that $[L]=[L_i]$ and 
\begin{align*}
\langle \overline{v}_2, [L] \rangle &= \left\langle \left(\sum_{i=1}^t v_2^i\right) + w_2 + w, [L_i]\right\rangle\\
=&\underbrace{\langle v_2^i+ w, [L_i]\rangle}_{\leq 0} +
\underbrace{\left\langle \left(\sum_{j\neq i} v_2^i\right) + w_2, [L_i]\right\rangle}_{\leq 0}\leq 0.
\end{align*}
The first term of that sum is less or equal to zero by construction and the second is also less or equal to zero because $\left(\sum_{j\neq i} v_2^i\right) + w_2\in \df_2$. 
\end{proof}

\begin{proof}[Proof of \cref{prop:l-tauclustermorphismcategory}]
We need to show that the equivalence relation $\sim_{ss}$ of $\Sigma_\ell(A)$ is admissible in the sense of \cref{def:partitionfan}.
Let $\sigma_i = \overline{\df_i}$ with $\sigma_1 \sim_{ss}\sigma_2$.
Thus $(\mod_{\df_1}^{ss}A)_\ell=(\mod_{\df_2}^{ss}A)_\ell$.

So, let $\df_1, \df_2$ as above and take $\ef_i \in \star(\df_i)$ such that $\pi_{\df_1}(\ef_1) = \pi_{\df_2}(\ef_2)$. 
If \mbox{$M\in (\mod^{ss}_{\ef_i} A)_\ell$} then $M\in (\mod^{ss}_{\df_i} A)_\ell$ since $\df_i \subset \overline {\ef_i}$.
Hence $\ef_1\subset \dc(M)$ if and only if $\ef_2 \subset \dc(M)$ by \cref{lem:l-taucluster2}. 
As a consequence the following equalities hold.
\[
(\mod_{\ef_1}^{ss}A)_\ell = 
\{M \in (\modA)_\ell  \mid \ef_1 \subset \dc(M)\} =
\{M \in (\modA)_\ell  \mid \ef_2 \subset \dc(M)\} =
(\mod_{\ef_2}^{ss}A)_\ell
\]
In conclusion, if $\tau_1 = \overline{\ef_1}$ and $\tau_2 = \overline{\ef_2}$ we have that $\tau_1 \sim_{ss} \tau_2$ and $\sim_{ss}$ is admissible in $\Sigma_{\ell}(A)$ as we wanted to show..
\end{proof}

As an immediate consequence, we recover the following result of \cite{STTW}.

\begin{corollary}\cite[Theorem~1.4]{STTW}
%Let $A$ be a $\tau$-tilting finite algebra. 
%Then the $\tau$-cluster morphism category $\Cf(A)$ of $A$ is determined by its wall-and-chamber structure.
The $\tau$-cluster morphism category $\Cf(A)$ of every $\tau$-tilting finite algebra $A$ is determined by its wall-and-chamber structure.
\end{corollary}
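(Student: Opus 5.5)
The plan is to specialise \cref{prop:l-tauclustermorphismcategory} to an $\ell$ large enough that nothing is lost by truncating to bounded length. Since $A$ is $\tau$-tilting finite, \cref{cor:W&C tau-tilting finite} and \cref{cor:tors_l tau-tilting finite} provide $\ell\in\mathbb{N}$ with $\Df(A)=\Df_\ell(A)$ and $\tors A=\tors_\ell A$. By \cite{ST}, every brick of $A$ then has length at most $t$ for some $t$; enlarging $\ell$ if necessary, we may assume $\ell\geq t$. For this $\ell$ the fan $\Sigma_\ell(A)$ coincides with the fan $\Sigma(A)$ of closures of TF-equivalence classes underlying the wall-and-chamber structure of $A$. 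Indeed, both decompositions of $\mathbb{R}^n$ are cut out by the walls $\dc(B)$ of bricks, by \cref{thm:bounded l-TF} and its unbounded analogue \cite[Theorem~2.17]{Asai}, and all bricks lie in $(\modA)_\ell$.

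Next I would identify the partitions. Two cones $\df,\df'$ are $\sim_{ss}$-equivalent in $\Sigma_\ell(A)$ when $(\mod^{ss}_\df A)_\ell=(\mod^{ss}_{\df'}A)_\ell$. Since $\mod^{ss}_vA$ is an abelian length category whose simple objects are the $v$-stable modules \cite{Rudakov}, it equals $\Filt$ of its stable objects. Stable objects are bricks, so they have length at most $\ell$. Consequently $\mod^{ss}_vA$ is determined by its intersection with $(\modA)_\ell$, and $\Pf_{ss}$ agrees with the partition of $\Sigma(A)$ by full semistable subcategories.

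The conclusion then follows from \cref{prop:l-tauclustermorphismcategory}, which says that $\Cf_\ell(A)=\Cf(\Sigma_\ell(A),\Pf_{ss})$ is a well-defined category built only from the fan and its partition. It remains to cite the identification, from \cite{STTW} and \cite{Kaipel}, of $\Cf(A)$ with Kaipel's category of the fan $\Sigma(A)$ partitioned by semistable subcategories. This identification rests on the correspondence between $\tau$-rigid pairs, the cones of $\Df(A)$ (via $g$-vectors), and wide subcategories given by the semistable categories.

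The main obstacle is this last matching of $\Cf(A)$ with the partitioned-fan category. I would invoke it directly from \cite{Kaipel} rather than reprove it, so that the present argument contributes only the existence and admissibility of the partition, supplied by \cref{lem:l-taucluster1,lem:l-taucluster2}, together with the equality $\Df(A)=\Df_\ell(A)$.
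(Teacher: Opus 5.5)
Your proposal is correct and follows essentially the paper's route. The paper states this as an immediate consequence of \cref{prop:l-tauclustermorphismcategory} at an $\ell$ with $\Df(A)=\Df_\ell(A)$ (\cref{cor:W&C tau-tilting finite}), and, like you, it leaves the identification of $\Cf(A)$ with $\Cf(\Sigma(A),\Pf)$ to the literature. You additionally check that $\Pf_{ss}$ on $\Sigma_\ell(A)$ agrees with the partition by full semistable categories: $v$-stable modules are bricks of length at most $\ell$, and $\mod^{ss}_vA$ is $\Filt$ of them. The paper does not spell this out, but it is needed, and your argument for it is correct.
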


We note, however, that the proofs in \cite{STTW} rely heavily on $\tau$-tilting theory.
The proof we present here, in contrast, uses only basic properties of stability conditions.

\begin{remark}
We can define for every Artin algebra $A$ its $\tau$-cluster morphism category $\Cf(A)$ as 
\[
\Cf(A) := \underleftarrow\lim\, \Cf_\ell(A).
\]
Another alternative to construct a $\tau$-cluster morphism category for any Artin algebra was proposed by Kaipel in \cite{Kaipel2}. 
These two constructions, Kaipel and ours, give raise to different categories for a $\tau$-tilting infinite algebra $A$. 
We choose to leave the comparison between these two categories for the future.
\end{remark}

%%%%%%%%%%%%%%%%%
\subsection{Picture groups for Artin algebras}\label{subsec: picture groups}

In this subsection we define the picture group $G_\ell(A)$ associated with the cone complex $\Df_\ell(A)$. 
The construction follows Kaipel's approach and proceeds in three steps. 
First, we show that the fan $\Sigma_\ell(A)$ carries the structure of a fan poset. 
Second, we use the admissible partition introduced in the previous subsection to obtain a partitioned fan poset. 
Finally, we recall Kaipel's construction of the associated picture group \cite{Kaipel} and specialize it to our setting.

\begin{definition}\cite[Definition~4.7]{Kaipel}\label{def:fanposet}
A fan poset is a pair $(\Sigma, \leq)$ where $\Sigma$ is a complete and finite fan and $\leq$ is a partial order on the codimension $0$ cones of $\Sigma$ satifying the following conditions:
\begin{enumerate}
\item For every cone $\sigma \in \Sigma$, the set of cones 
$$\star^0(\sigma):= \{\tau\in \Sigma \mid \sigma\subset \tau \text{ and } \codim(\tau)=0\}$$
forms an interval. 
The interval determined by $\sigma\in \Sigma$ is denoted by $[\sigma^-, \sigma^+]$ and it is said to be a facial interval.
\item If $I$ is an interval in $(\Sigma, \leq)$ then the union 
\(
\bigcup_{\tau \in I}{\tau}
\)
is a cone in $\mathbb{R}^n$. 
\end{enumerate}
\end{definition}

It has been proved in \cite[Proposition~6.15]{Kaipel} that for every $\tau$-tilting finite algebra $A$, the pair $(\Sigma(A), \leq)$ is a fan poset, where $\leq$ is the partial order induced by the torsion classes associated to the maximal cones in $\Sigma(A)$. 
We now prove that the same is true for $\Sigma_\ell(A)$ for every algebra $A$ and every positive integer $\ell$. 
The proof below follows the same ideas as \cite[Proposition~6.15]{Kaipel}. 

\begin{proposition}\label{prop:fanposet}
Let $A$ be an Artin algebra and $l\in \mathbb{N}$. 
Then $(\Sigma_\ell(A), \leq )$ is a fan poset, where the poset structure is given by $\sigma_1 \leq \sigma_2$ if and only if $(\tc_{\df_1})_\ell \subset (\tc_{\df_2})_\ell$ and $\sigma_i = \overline{\df_i}$. 
\end{proposition}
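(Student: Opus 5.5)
The plan is to check the two axioms of \cref{def:fanposet} directly, after two preliminary remarks. First, $\Sigma_\ell(A)$ is finite and complete by \cref{cor:finite cone complex for l}. Second, $\leq$ is a partial order on maximal cones: if two chambers $\df_1,\df_2$ satisfy $(\tc_{\df_1})_\ell=(\tc_{\df_2})_\ell$, then, since on a chamber $\tc=\overline{\tc}$, they are $\ell$-TF-equivalent and hence equal.

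For axiom (1), fix $\sigma=\overline{\df}$ and $v\in\df$. I claim
\[
\star^0(\sigma)=\{\cc \in \Df^0_\ell(A) \mid (\tc_v)_\ell\subseteq(\tc_\cc)_\ell\subseteq(\overline{\tc}_v)_\ell\}.
\]
The endpoints of this interval are realised by chambers by \cref{prop: tors to chambers}.

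For the inclusion ``$\subseteq$'', let $\cc$ be a chamber with $\df\subset\overline{\cc}$ and let $M\in(\tc_v)_\ell$. There are only finitely many classes $[N]$ of quotients $N$ of $M$. The strict inequalities $\langle -,[N]\rangle>0$ therefore persist near $v$, and so they hold at some point of $\cc$. Since $\cc$ is a single $\ell$-TF-class, $M\in(\tc_\cc)_\ell$. Dually, the strict inequalities defining $(\fc_v)_\ell$ persist, giving $(\fc_v)_\ell\subseteq(\fc_\cc)_\ell$, i.e.\ $(\tc_\cc)_\ell\subseteq(\overline{\tc}_v)_\ell$.

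For ``$\supseteq$'', take a chamber $\cc$ in the interval and $w\in\cc$. I will show that $v+\varepsilon w\in\cc$ for all small $\varepsilon>0$; letting $\varepsilon\to 0$ then gives $v\in\overline{\cc}$. For $\varepsilon$ small, $\langle v+\varepsilon w,[N]\rangle>0$ holds iff either $\langle v,[N]\rangle>0$, or $\langle v,[N]\rangle=0$ and $\langle w,[N]\rangle>0$. This is legitimate because only finitely many classes of length at most $\ell$ occur.

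I then prove $(\tc_{v+\varepsilon w})_\ell=(\tc_\cc)_\ell$.
\begin{itemize}
\item The inclusion $\supseteq$ is immediate from $(\tc_\cc)_\ell\subseteq(\overline{\tc}_v)_\ell$.
\item For $\subseteq$, take $M\in(\tc_{v+\varepsilon w})_\ell$ and its quotient $fM\in(\overline{\fc}_\cc)_\ell=(\fc_\cc)_\ell$ from \cref{thm: torsion pairs of bounded length}.
\item From $(\tc_v)_\ell\subseteq(\tc_\cc)_\ell$ we get $(\fc_\cc)_\ell\subseteq(\overline{\fc}_v)_\ell$, hence $\langle v,[fM]\rangle\le 0$.
\item Being a quotient of $M$, $fM$ also satisfies $\langle v,[fM]\rangle\ge 0$, so $\langle v,[fM]\rangle=0$.
\item The dichotomy above then forces $\langle w,[fM]\rangle>0$, which contradicts $fM\in\fc_\cc$ unless $fM=0$. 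Hence $M\in(\tc_\cc)_\ell$.
\end{itemize}
The symmetric argument, with non-strict inequalities, gives $(\overline{\tc}_{v+\varepsilon w})_\ell=(\overline{\tc}_\cc)_\ell=(\tc_\cc)_\ell$. Hence $v+\varepsilon w$ is $\ell$-TF-equivalent to $w$, i.e.\ it lies in $\cc$.

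For axiom (2), let $[\cc_1,\cc_2]$ be an interval and set
\[
U=\{v\mid (\tc_{\cc_1})_\ell\subseteq(\tc_v)_\ell,\ (\fc_{\cc_2})_\ell\subseteq(\overline{\fc}_v)_\ell\}.
\]
The first condition is a finite intersection of open half-spaces, one for each class of a quotient of some $M\in(\tc_{\cc_1})_\ell$. The second is a finite intersection of closed half-spaces, one for each class of a submodule of some $F\in(\fc_{\cc_2})_\ell$. So $U$ is a convex cone. A chamber lies in $U$ exactly when it lies in the interval. Since the walls are finitely many sets of positive codimension, chambers are dense in the convex set $U$, so $\overline U$ is the union of the closed chambers of the interval. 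This union is therefore the convex cone $\overline U$.

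The main obstacle is the reverse inclusion in axiom (1), namely producing the perturbation $v+\varepsilon w$ and controlling both $\tc$ and $\overline{\tc}$ there. A secondary point to check carefully is the density step in axiom (2): if $U$ is not full-dimensional, density must be justified, for example by noting that $U$ contains $\cc_1\cup\cc_2$ and is convex.
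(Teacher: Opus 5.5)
Your proof is correct. For axiom (2) it follows the same idea as the paper: the union of the closed chambers in $[\cc_1,\cc_2]$ is cut out by half-spaces coming from the lower torsion class and the upper torsion-free class. The paper asserts directly that this union equals the intersection $\mathcal{H}^+_{\tc_\ell}\cap\mathcal{H}^-_{\fc'_\ell}$ of closed half-spaces. You instead use open half-spaces for the torsion part and then pass to the closure by density. That density is justified because $U\supseteq\cc_1$ is convex with nonempty interior and the walls lie in finitely many hyperplanes. This treats boundary points more explicitly than the paper does.

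The genuine difference is in axiom (1). The paper simply quotes \cite[Lemma~2.16]{Asai}: $\df\subset\overline{\df'}$ if and only if $(\tc_\df)_\ell\subseteq(\tc_{\df'})_\ell$ and $(\overline{\tc}_{\df'})_\ell\subseteq(\overline{\tc}_\df)_\ell$, applied with $\df'$ a chamber. You instead reprove exactly the case needed, inside $(\modA)_\ell$:
\begin{enumerate}
\item the forward inclusion, by persistence of finitely many strict inequalities near $v$;
\item the reverse inclusion, by showing that $v+\varepsilon w$ is $\ell$-TF-equivalent to $w$ via the sign dichotomy. The dichotomy holds uniformly in $\varepsilon$ because only finitely many classes of modules of length at most $\ell$ occur.
\end{enumerate}
The citation is shorter but silently transfers a statement proved for $\modA$ to the length-bounded setting. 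Your argument is longer but self-contained.

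You also make explicit two points the paper leaves implicit. First, $\leq$ is antisymmetric on chambers. Second, the endpoints $(\tc_v)_\ell$ and $(\overline{\tc}_v)_\ell$ of the facial interval are realised by chambers, via \cref{prop: tors to chambers}.

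One small item to state in the write-up: your argument shows $v\in\overline{\cc}$ for every $v\in\df$. This holds because the interval does not depend on the choice of $v$ in the $\ell$-TF-class, and it is what gives $\sigma\subset\overline{\cc}$.
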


\begin{proof}
We first verify \cref{def:fanposet}.(1).
Let $\sigma\in \Sigma_\ell(A)$.
By definition $\sigma=\overline{\df}$ for some $\df \in \Df_\ell(A)$. 
Then it has been shown in \cite[Lemma~2.16]{Asai} that $\df \subset \overline{\df'}$ if and only if $(\overline{\tc}_{\df})_\ell \supset (\overline{\tc}_{\df'})_\ell$ and $(\tc_{\df})_\ell \subset (\tc_{\df'})_\ell$. 
This holds in particular for all cones $\df' \in \Df_\ell(A)$ of codimension $0$.
So $\star^0(\sigma)$ is an interval in $(\Sigma_\ell(A),\leq)$.

It remains to verify \cref{def:fanposet}.(2), let $I$ be an interval in $\tors^{ss}_\ell(A)$. 
Then, by definition $I = [\tc_\ell, \tc'_\ell]$ where $\tc_\ell$ and $\tc'_\ell$ are torsion classes associated to cones of codimension $0$ in $\Df_\ell(A)$.
Then there are codimension $0$ cones $\df, \df' \in \Df_\ell(A)$ such that $\tc_\ell = (\tc_\df)_\ell = (\overline{\tc}_{\df})_\ell$ and $\tc'_\ell = (\tc_{\df'})_\ell = (\overline{\tc}_{\df'})_\ell$.
Let us denote $\fc_\ell, \fc'_\ell$ the torsion-free classes such that $(\tc_\ell, \fc_\ell)$ and $(\tc'_\ell, \fc'_\ell)$ are torsion pairs in $(\modA)_\ell$.
Consider, moreover, the subsets $\mathcal{H}^+_{\tc'_\ell}$ and $\mathcal{H}^-_{\fc_\ell}$ defined as 
\[
\mathcal{H}^+_{\tc_\ell} := \bigcap_{M \in \tc_\ell} \{v \in \mathbb{R}^n \mid \langle v, [M]\rangle \geq 0\}, \quad
\mathcal{H}^-_{\fc'_\ell} := \bigcap_{M \in \fc'_\ell} \{v \in \mathbb{R}^n \mid \langle v, [M]\rangle \leq 0\}.
\]
The key observation is the identity
\[
\bigcup_{\sigma\in I}\sigma
=
\mathcal H^+_{\tc_\ell}
\cap
\mathcal H^-_{\fc'_\ell}.
\]
Indeed, if $\widetilde{\sigma} \in I$ we have that $\tc_\ell \subset (\tc_{\widetilde{\sigma}})_\ell$. 
By definition of $(\tc_{\widetilde{\sigma}})_\ell$, we have that $\langle v, [M]\rangle \geq 0$ for every $v\in \widetilde{\sigma}$ and every $M \in (\tc_{\widetilde{\sigma}})_\ell$. 
In particular, $\langle v, [M]\rangle \geq 0$ for every $v\in \widetilde{\sigma}$ and every $M \in \tc_\ell$. 
Therefore $\widetilde{\sigma} \subset \mathcal{H}^+_{\tc_\ell}$. 
Using dual arguments one shows that $\widetilde{\sigma} \subset \mathcal{H}^-_{\fc'_\ell}$.
Then $\widetilde{\sigma} \subset \mathcal{H}^+_{\tc_\ell}\cap\mathcal{H}^-_{\fc'_\ell}$.

Conversily, let $v \in \mathcal{H}^+_{\tc_\ell} \cap \mathcal{H}^-_{\fc'_\ell} \cap \widetilde{\sigma}$ for some $\widetilde{\sigma} \in \Sigma_\ell(A)$ of codimension $0$. 
Then if follows directly from the definitions that $\tc_\ell \subset (\tc_{\widetilde{\sigma}})_\ell$ and $\fc'_\ell \subset (\fc_{\widetilde{\sigma}})_\ell$. 
Hence $\widetilde{\sigma} \subset  \mathcal{H}^+_{\tc_\ell} \cap \mathcal{H}^-_{\fc'_\ell}$.
\end{proof}

\begin{remark}
The poset structure of $(\Sigma_\ell(A), \Pf_{ss}, \leq)$ is determined by the torsion classes induced by the stability conditions in the interior of each cone $\sigma \in \Sigma_\ell(A)$. 
Therefore the previous result recovers the realisation of $\tors^{ss}_\ell A$ obtained in \cref{subsec: W&C modA_l}.
\end{remark}

Given an algebra $A$ and a positive integer $\ell$ we get that $(\Sigma_\ell(A), \Pf_{ss}, \leq)$ is a partitioned fan poset. 
Therefore one can define the picture group $G_\ell(A)$ for any algebra $A$ and any $\ell$ using the structure of the $\tau$-cluster morphism category $\Cf_\ell(A)$.

Recall that the objects of $\Cf_\ell(A)$ are the $\sim_{ss}$-equivalence classes of cones in $\Df_\ell(A)$. 
A morphism from $[\df_1]$ to $[\df_2]$ is represented by an inclusion $\df'_1 \subset \df'_2$ for suitable representatives $\df'_i \sim_{ss} \df_i$
%Given two cones $\df_1, \df_2$ there is a map $f \in \Hom_{\Cf_\ell(A)} ([\df_1], [\df_2])$ if and only if there are cones $\df'_1,\df'_2 \in \Df_\ell(A)$ such that $\df_i \sim_{ss} \df'_i$ and $\df'_1 \subset \overline{\df'_2}$.
%Equivalently, for every pair of cones $\df_1, \df_2 \in \Df_\ell(A)$ such that $\df_1 \subset \overline{\df_2}$ we have a map $f_{[\df_1, \df_2]} \in \Hom_{\Cf_\ell(A)} ([\df_1], [\df_2])$.
%Moreover, if $\df_1, \df_2, \df'_1, \df'_2 \in \Df_\ell(A)$ are such that $\df_i \sim_{ss} \df'_i$, $\df_1 \subset \overline{\df_2}$ and $\df'_1 \subset \overline{\df'_2}$ we have that $f_{[\df_1, \df_2]} = f_{[\df'_1, \df'_2]}$ in $\Hom_{\Cf_\ell(A)} ([\df_1], [\df_2])$.
We first recall Kaipel's original definition.

\begin{definition}\cite[Definition~4.10]{Kaipel}\label{def:l-picture group1}
Let $(\Sigma_\ell(A), \Pf_{ss}, \leq)$ be the partitioned fan as above. 
Then the picture group $G_\ell(A)$ has as generators the set $\{X_{[\df]} \mid \df \in \Df^1_\ell(A)\}$
 and the following set of relations. 
\begin{enumerate}
	\item $X_{[\df_1]} X_{[\df_2]} \dots X_{[\df_s]} = X_{[\df'_1]} X_{[\df'_2]} \dots X_{[\df_t]}$ whenever $(\df_1, \df_2, \dots, \df_t)$ and $(\df'_1, \df'_2, \dots, \df'_s)$ are two distinct ordered sets of cones labelling maximal chains of an interval $[\tc_\ell, \tc'_\ell]$ in $(\Sigma_\ell(A), \leq)$.
	Denote this element $X_{[\tc_\ell, \tc'_\ell]}$. 
	\item $X_{[\sigma^-_1, \tau^-_1]} = X_{[\sigma^-_2, \tau^-_2]}$ whenever $f_{[\sigma_1, \tau_1]} = f_{[\sigma_2, \tau_2]}$ in $\Cf_\ell(A)$.
\end{enumerate}
\end{definition}

In our situation the second family of relations is redundant by  \cite[Lemmas~4.12, 4.13]{Kaipel}, yielding the following much simpler presentation.

\begin{definition}\label{def:l-picture group2}
The picture group $G_\ell(A)$ is defined has generators 
$$\{g_{\df} \in \Df^0_\ell(A)\} \cup \{g_{\df'} \in \Df^1_\ell(A)\}$$
modulo the relations \[
g_{\df_2} = g_{\df_1} g_{\df'}
\]
whenever $\df' = \overline{\df_1} \cap \overline{\df_2}$ and $(\tc_{\df_1})_\ell \subset (\tc_{\df_2})_\ell$.
\end{definition}

\begin{remark}\label{rmk:different notations}
Notice that the element $g_\df$ for a $\df \in \Df_\ell^0(A)$ in the notation of \cref{def:l-picture group2} corresponds to the element $X_{[\{0\}, (\tc_{\df})_\ell]}$ in the notation of \cref{def:l-picture group1}.
Moreover, it follows from \cref{def:l-picture group1} that the identity element $1_{G_\ell(A)}$ can be written as $X_{[(\tc_{\df})_\ell, (\overline{\tc_{\df}})_\ell]}$ for every $\df\in\Df^0_\ell(A)$. 
\end{remark}

%%%%%%%%%%%%%%%%%
\subsection{From groupoids to picture groups}

In the previous section we constructed the groupoid $\gc^{ss}_\ell(A)$ associated to $\tors^{ss}_\ell A$.
The aim of this subsection is to relate this groupoid to the picture group $G_\ell(A)$.
More precisely, we show that the natural embedding of codimension-one cones into $\gc^{ss}_\ell(A)$ extends to a morphism of groupoids whose image generates $G_\ell(A)$.

Consider the map 
\[
\iota: \Df_\ell(A) \to \gc_\ell^{ss}(A)
\]
sending any cone $\df \in \Df_\ell(A)$ to the interval $[(\tc_\df)_\ell, (\overline{\tc}_{\df})_\ell] \in \ic(\tors_\ell^{ss}A) \subset \gc_\ell^{ss}(A)$ which is well-defined and injective by the construction of $\Df_\ell(A)$.
In turn, this map induces naturally a map 
\[
\rho: \Image(\iota) \to G_\ell(A)
\]
sending $\iota(\df) = [(\tc_\df)_\ell, (\overline{\tc}_{\df})_\ell]$ to $\rho(\iota(\df)) = X_{[(\tc_\df)_\ell, (\overline{\tc}_{\df})_\ell]} \in G_\ell(A)$.
%We now prove that $\rho$ can be extended to a morphism of groupoids.

The next proposition provides the connection between the groupoid constructed in the previous section and the picture group introduced above.

\begin{proposition}\label{prop:from groupoid to group}
The map $\rho: \Image(\iota) \subset \gc_\ell^{ss}(A) \to G_\ell(A)$ defined as 
\[
\rho\left([(\tc_\sigma)_\ell, (\overline{\tc_\sigma})_\ell]\right) = X_{[(\tc_\sigma)_\ell, (\overline{\tc_\sigma})_\ell]}
\]
for every $\sigma\in \Df_\ell(A)$ extends to an morphism $\rho: \gc^{ss}_\ell(A) \to G_\ell(A)$ of groupoids, where $G_\ell(A)$ is thought of as a groupoid with one object.
Moreover $G_\ell(A)$ is generated by the image of $\rho$.
\end{proposition}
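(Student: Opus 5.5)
We extend $\rho$ from the intervals $\iota(\df)$ to all of $\gc^{ss}_\ell(A)$ and then check that its image generates $G_\ell(A)$.

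\textbf{Defining $\rho$ on every interval.} By \cref{cor:finite cone complex for l} the lattice $\tors^{ss}_\ell A$ is finite. By \cref{cor: finite generators}, $\gc^{ss}_\ell(A)$ is therefore generated by the covering intervals $[\tc_\ell,\tc'_\ell]$ and their inverses. \cref{cor: maximal inclusions tors^ss} says each covering interval has the form $\iota(\df')=[(\tc_{\df'})_\ell,(\overline{\tc}_{\df'})_\ell]$ for some $\df'\in\Df^1_\ell(A)$. So $\rho$ is already defined on a generating set. For an arbitrary interval $[\tc_\ell,\tc'_\ell]$ in $\tors^{ss}_\ell A$:
\begin{itemize}
\item choose a maximal chain $\tc_\ell=(\tc_0)_\ell\subset\dots\subset(\tc_r)_\ell=\tc'_\ell$;
\item set $\rho([\tc_\ell,\tc'_\ell])=X_{\df'_1}\cdots X_{\df'_r}$, where $\df'_i$ is the codimension-one cone attached to the $i$-th covering relation;
\item put $\rho([\tc,\tc]) = 1$ and $\rho([x,y]^{-1})=\rho([x,y])^{-1}$.
\end{itemize}
Independence of the chosen maximal chain is exactly relation (1) of \cref{def:l-picture group1}. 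Hence $\rho([\tc_\ell,\tc'_\ell])=X_{[\tc_\ell,\tc'_\ell]}$ is well defined. This also agrees with the prescribed value on $\Image(\iota)$: for a chamber, $\iota(\df)$ is trivial and maps to the identity, as in \cref{rmk:different notations}; for a wall, $\iota(\df)$ is a single covering step.

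\textbf{Compatibility with the relations of \cref{def:groupoids for posets}.}
\begin{itemize}
\item The relation $[x,y][y,z]=[x,z]$ is respected: concatenating a maximal chain of $[x,y]$ with one of $[y,z]$ gives a maximal chain of $[x,z]$, and the image is independent of that choice.
\item The relations involving inverses and identities hold by construction.
\end{itemize}
So $\rho$ descends from the free groupoid to a groupoid morphism $\gc^{ss}_\ell(A)\to G_\ell(A)$.

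One point needs care. The orders must match: the order on chambers in \cref{prop:fanposet} has to coincide with inclusion in $\tors^{ss}_\ell A$, so that intervals in the fan poset are intervals in $\tors^{ss}_\ell A$. This follows from \cref{prop: tors to chambers}, since every element of $\tors^{ss}_\ell A$ is realised by a chamber.

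\textbf{Surjectivity onto generators.} Every generator $X_{[\df]}$ with $\df\in\Df^1_\ell(A)$ equals $\rho(\iota(\df))$, so the image of $\rho$ generates $G_\ell(A)$.

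\textbf{Main obstacle.} The delicate step is the identification of generators. Each covering relation in $\tors^{ss}_\ell A$ must correspond to a codimension-one cone such that the generators $X_{[\df']}$ read off along different maximal chains are exactly those appearing in Kaipel's relation (1). Several walls may realise the same covering step, since a brick need not be unique (see \cref{rem: differences}). In that case I would argue that $\iota$ is injective and that distinct such walls yield the same interval. Relation (2) of \cref{def:l-picture group1} then identifies their generators: they have the same semistable category, hence are $\sim_{ss}$-equivalent, and give equal morphisms $f$ in $\Cf_\ell(A)$.
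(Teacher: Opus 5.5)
Your proposal is correct and follows essentially the same route as the paper. In both arguments, the codimension-one cones give the covering intervals (\cref{cor: maximal inclusions tors^ss}), $\rho$ is extended to arbitrary intervals along maximal chains using relation (1) of Kaipel's presentation, and generation is read off from that presentation; the only difference is that you check the groupoid relations directly, where the paper passes through chains starting at the negative chamber and \cref{prop: one map}.

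Your ``main obstacle'' is not actually an obstacle. An $\ell$-TF class is determined by the pair $((\tc_\df)_\ell,(\overline{\tc}_\df)_\ell)$, so $\iota$ is injective and each covering interval comes from exactly one wall. \Cref{rem: differences} concerns several bricks on a single wall, not several walls for one covering. Relation (2) is never needed to define $\rho$, since extra relations in the target cannot obstruct a groupoid morphism.
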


\begin{proof}
We first define $\rho$ on the generators of $\gc^{ss}_\ell(A)$. 
We then show that it extends naturally to intervals, and finally to all of $\gc^{ss}_\ell(A)$. 

As a first step, notice that the set $\{\iota(\df')\mid \df' \in \Df_\ell^1(A)\}$ generates the groupoid $\gc_\ell^{ss}(A)$ since this set is in bijection with the maximal containments of torsion classes in $\tors^{ss}_\ell(A)$. 
Moreover, if $\df'_1, \df'_2 \in \Df^1_\ell(A)$ are two cones of codimension $1$ such that $\overline{\tc_{\df'_1}} = \tc_{\df'_2}$ we have in $\gc_\ell^{ss}(A)$ that
$$[\tc_{\df'_1}, \overline{\tc_{\df'_1}} ][\tc_{\df'_2}, \overline{\tc_{\df'_2}} ] =[\tc_{\df'_1}, \overline{\tc_{\df'_2}}].$$
On the other hand,
$$
\rho(\iota(\df'_1))\rho(\iota(\df'_2))=X_{[\tc_{\df'_1}, \overline{\tc_{\df'_1}} ]}X_{[\tc_{\df'_2}, \overline{\tc_{\df'_2}} ] }=X_{[\tc_{\df'_1}, \overline{\tc_{\df'_2}}]}
$$
in $G_\ell(A)$. 

More generally, let $\df \in \Df^0_\ell(A)$ be a codimension $0$ cone and let $(\tc_\df)_\ell\in \tors^{ss}_\ell A$ be the torsion class associated to it.  
Consider a set $\{\df_i\in \Df_\ell^0(A) \mid 0\leq i \leq t \}$ of codimension $0$ cones such that $\df_0 = \cc^-$ is the negative chamber, $\df_t = \df$ and $(\tc_{\df_{i-1}})_\ell$ is maximally contained in $(\tc_{\df_{i}})_\ell$ for every $1\leq i \leq t$.
This set induces a unique set $\{\df'_i \in \Df_\ell^1(A) \mid 1 \leq i \leq t\}$ of codimension $1$ cones such that $\df'_i = \overline{\df_{i-1}} \cap \overline{\df_{i}}$.
This is the set of all the walls crossed in the chosen path between $\df_0$ and $\df_t$.
%STThe chosen chain of maximal cones determines a path from the negative chamber to $\df$. 
The corresponding product in the groupoid is
\[
\iota(\df_0)\iota(\df'_1)\iota(\df_1)\iota(\df'_2)\dots\iota(\df_{t-1})\iota(\df'_t)\iota(\df_t)
\]
is well-defined in $\gc_\ell^{ss}(A)$ and it is 
\begin{multline}\label{eq:map of gruopoids 1}
\iota(\df_0)\iota(\df'_1)\iota(\df_1)\iota(\df'_2)\dots\iota(\df_{t-1})\iota(\df'_t)\iota(\df_t) =\\ [\{0\}, \{0\}][\{0\}, (\tc_{\df_1})_\ell]\dots[(\tc_{\df_{t-1}})_\ell, (\tc_{\df_t})_\ell][(\tc_{\df_t})_\ell, (\tc_{\df_t})_\ell]=\\
[\{0\}, (\tc_{\df_t})_\ell] = [\{0\}, (\tc_{\df})_\ell] 
\end{multline}
by definition.
Applying the map $\rho$ to the set $\{\iota(\df_i) \mid 0\leq i \leq t\} \cup \{\iota(\df'_i) \mid 1\leq i \leq t\}$ and multiplying as in \cref{eq:map of gruopoids 1} we obtain
\begin{multline}\label{eq:map of gruopoids 2}
\rho(\iota(\df_0))\rho(\iota(\df'_1))\rho(\iota(\df_1))\dots\rho(\iota(\df'_t))\rho(\iota(\df_t)) =\\ 
X_{[\{0\}, \{0\}]}X_{[\{0\}, (\tc_{\df_1})_\ell]}\dots X_{[(\tc_{\df_{t-1}})_\ell, (\tc_{\df_t})_\ell]}X_{[(\tc_{\df_t})_\ell, (\tc_{\df_t})_\ell]}=X_{[\{0\}, (\tc_{\df})_\ell]} 
\end{multline}
by \cref{def:l-picture group1,rmk:different notations}.
This shows that $\rho$ can be extended naturally to a map 
\[
\rho: \ic(\tors_\ell^{ss}A) \to G_\ell(A)
\]
sending an interval $[(\tc)_\ell, (\tc'_\ell)]$ to $\rho([(\tc)_\ell, (\tc'_\ell)])=X_{[(\tc)_\ell, (\tc'_\ell)]}$. 
Finally define
\[
\rho\left([(\tc_\sigma)_\ell, (\overline{\tc_\sigma})_\ell]^{-1}\right) = X^{-1}_{[(\tc_\sigma)_\ell, (\overline{\tc_\sigma})_\ell]}
\]
 for every $[(\tc_\sigma)_\ell, (\overline{\tc_\sigma})_\ell]^{-1} \in \ic^{-1}(\tors_\ell^{ss}A)$.
Since  every element in $\gc^{ss}_{\ell}(A)$ is of the form $[\{0\},\tc_\df]^{-1}[\{0\},\tc_{\df'}]$ by \cref{prop: one map} the map extends uniquely to a morphism of groupoids.

The statement about generators follows immediately from the presentation of $G_\ell(A)$ in \cref{def:l-picture group2}, since every generator corresponds to the image of a cone.
\end{proof}

\begin{remark}
It is worth noting that $G_\ell(A)$ is is generally much larger than $\Image(\rho)$ since in $G_\ell(A)$ one can always make the multiplication 
$X_{[(\tc_1)_\ell, (\tc_2)_\ell]}X_{[(\tc_3)_\ell, (\tc_4)_\ell]}$ regardless if $(\tc_2)_\ell$ and $(\tc_3)_\ell$ coincide or not.
\end{remark}

The following result follows from an easy verification as in \cref{prop:morphismstodiagram,prop:composition}.

\begin{proposition}
Let $A$ be an Artin algebra and let $\ell,m$ be two positive integers such that $\ell< m$. 
Then there is an surjective group homomorphism $\pi_{m,\ell} : G_m(A) \to G_\ell(A)$ defined as 
$$\pi_{m,\ell} \left(X_{[\tc_m, \tc'_m]}\right) = X_{[\tc_\ell, \tc'_\ell]}$$
Moreover, if $\ell_1 < \ell_2 < \ell_3$ are three positive integers then $\pi_{\ell_3, \ell_2} \circ\pi_{\ell_2, \ell_1} = \pi_{\ell_3, \ell_1}$.
\end{proposition}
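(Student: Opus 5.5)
The plan is to define $\pi_{m,\ell}$ on the presentation of $G_m(A)$ and check that the defining relations go to valid relations of $G_\ell(A)$. I will use \cref{def:l-picture group1}, in which the elements $X_{[\tc_m,\tc'_m]}$ are indexed by intervals of $\tors^{ss}_m A$ between torsion classes of chambers. The underlying map of posets is $\varphi_{m,\ell}$ restricted to $\tors^{ss}_m A$. It sends numerical torsion classes to numerical ones: if $\tc_m=(\tc_v)_m$, then $\tc_\ell=(\tc_v)_\ell$. It is order preserving and surjective, since every element of $\tors^{ss}_\ell A$ comes from some $(\tc_v)$ or $(\overline{\tc}_v)$, hence from its $m$-truncation. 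By \cref{prop: tors to chambers}, chamber torsion classes go to chamber torsion classes. So the assignment $X_{[\tc_m,\tc'_m]}\mapsto X_{[\tc_\ell,\tc'_\ell]}$ makes sense; when $\tc_\ell=\tc'_\ell$ the image is the identity, by \cref{rmk:different notations}.

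The key step is that relation (1) of \cref{def:l-picture group1} is preserved. Geometrically, \cref{cor:refinement of W&C} says $\Df_m(A)$ refines $\Df_\ell(A)$. A maximal chain in $[\tc_m,\tc'_m]$ is a sequence of chambers of $\Df_m(A)$ crossing codimension-one cones $\df'_i$. Each $\df'_i$ either lies inside a chamber of $\Df_\ell(A)$, in which case its image generator is trivial because the $\ell$-truncations agree, or lies in a codimension-one cone of $\Df_\ell(A)$. After deleting repetitions, the image is therefore a maximal chain of $[\tc_\ell,\tc'_\ell]$, and its product is $X_{[\tc_\ell,\tc'_\ell]}$. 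This gives the cleanest route, via the presentation in \cref{def:l-picture group2}. Send $g_{\df}$ for $\df\in\Df^0_m(A)$ to $g_{\widetilde\df}$, where $\widetilde\df$ is the chamber of $\Df_\ell(A)$ containing $\df$. Send $g_{\df'}$ for $\df'\in\Df^1_m(A)$ to $g_{\widetilde{\df'}}$ if $\df'$ lies in a wall $\widetilde{\df'}$ of $\Df_\ell(A)$, and to $1$ otherwise. The relation $g_{\df_2}=g_{\df_1}g_{\df'}$ then maps to either the corresponding relation of $G_\ell(A)$ or to the tautology $g_{\widetilde\df}=g_{\widetilde\df}$.

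The main obstacle is the case where $\df'$ lies in the relative interior of a codimension-one cone of $\Df_\ell(A)$ but not in a single one, or where the refinement splits cones nontrivially. Here I would use \cref{thm:bounded l-TF}: $\ell$-TF-classes are determined by walls of bricks of length at most $\ell$, so every cone of $\Df_m(A)$ lies in a unique cone of $\Df_\ell(A)$, namely its $\ell$-TF-class. Hence the assignment $\df\mapsto\df^\ell_v$ is well defined and preserves dimension or drops into an open cone of equal or larger dimension. Relation (2) of \cref{def:l-picture group1} is redundant, as noted after that definition, so it needs no check.

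Surjectivity follows because every generator $g_{\widetilde\df}$ of $G_\ell(A)$ is hit by any chamber or codimension-one cone of $\Df_m(A)$ contained in $\widetilde\df$, and such cones exist by refinement. The composition identity is checked on generators, exactly as in \cref{prop:composition}: the $\ell_1$-class of the $\ell_2$-class of a cone is its $\ell_1$-class, since $((\tc)_{\ell_3})_{\ell_2}$ truncated to $\ell_1$ equals $\tc_{\ell_1}$.
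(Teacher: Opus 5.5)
Your argument is correct, and it fills in what the paper leaves as ``an easy verification as in \cref{prop:morphismstodiagram,prop:composition}''. The paper's intended route is order-theoretic. One pushes intervals of $\tors^{ss}_m A$ forward along the truncation $\tc_m\mapsto\tc_\ell$. A product of interval elements along any chain $x_0\le x_1\le\dots\le x_k$ of $\tors^{ss}_\ell A$ equals $X_{[x_0,x_k]}$ by relation (1) of \cref{def:l-picture group1}, since maximal chains of the steps concatenate. The composition rule comes from $(\tc\cap(\modA)_{\ell_2})\cap(\modA)_{\ell_1}=\tc\cap(\modA)_{\ell_1}$.

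Your composition step is exactly that. For well-definedness, however, you work on the chamber/wall presentation of \cref{def:l-picture group2} and use that each cone of $\Df_m(A)$ lies in its $\ell$-TF-class, a cone of $\Df_\ell(A)$. This costs some adjacency bookkeeping: an $m$-wall inside an $\ell$-wall forces the two adjacent $m$-chambers into two distinct $\ell$-chambers adjacent along that $\ell$-wall, by convexity of $\ell$-TF-classes and the fan structure. In return you get a concrete description of $\pi_{m,\ell}$ on generators: each wall generator goes to a single wall generator or to $1$, never to a composite interval element $X_{[\tc_\ell,\tc'_\ell]}$. The purely order-theoretic route does not give this without a separate argument that covers go to covers or equalities. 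It makes surjectivity on wall generators transparent.

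Add one line on the partition. In \cref{def:l-picture group1} the codimension-one generators are indexed by $\sim_{ss}$-classes, so you should check that $\sim_{ss}$-equivalent walls of $\Df_m(A)$ go to $\sim_{ss}$-equivalent walls of $\Df_\ell(A)$, or both to $1$. This follows from $(\mod^{ss}_{\df'}A)_\ell=(\mod^{ss}_{\df'}A)_m\cap(\modA)_\ell$, since $\df'$ lies in an $\ell$-chamber exactly when this category is zero.
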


The family of groups \mbox{$\{G_\ell(A) \mid \ell \in \mathbb{N}\}$}, together with the compatible surjective homomorphism $\pi_{m,\ell} : G_m(A) \to G_\ell(A)$ forms an inverse system. 
This allows us to define a picture group for arbitrary Artin algebras.

\begin{definition}\label{def:picturegroup}
Let $A$ be an Artin algebra. 
We define the picture group $G(A)$ of $A$ as 
\[
G(A) := \underleftarrow\lim\, G_\ell(A).
\]
\end{definition}

\begin{remark}\label{rmk:tuples}
It is useful to keep in mind the following concrete description of inverse limits.
Every group that is defined as an inverse limit $G:= \underleftarrow\lim\, G_\ell$ has an explicit description as the subgroup of $\prod_{\ell \in \mathbb{N}}G_\ell$ consisting on all the infinite tuples $(X_{[\tc_\ell, \tc'_\ell]})_{\ell\in \mathbb{N}}$ such that $\pi_{m,\ell}\left(X_{[\tc_m, \tc'_m]}\right)=X_{[\tc_\ell, \tc'_\ell]}$ for every $\ell,m\in \mathbb{N}$. 
For the particular case of $G(A)$, we get that every generator $X_{[\tc, \tc']}$ of $G(A)$ corresponds to the tuple 
\[
X_{[\tc, \tc']} = \left(X_{[\tc_\ell, \tc'_\ell]}\right)_{\ell \in \mathbb{N}}\in G(A).
\]

The picture group $G(A)$ therefore encodes simultaneously the picture groups of all finite-length approximations $(\modA)_\ell$, and will be the group considered in \cref{sec:torsion scattering diagram}.
\end{remark}

\begin{remark}
As pointed out before, the definition of a picture group $G(A)$ for a $\tau$-tilting finite algebra was first given in \cite{HansonIgusa} and it is, a priori, different from \cref{def:picturegroup}. 
However, it was shown in \cite[Proposition~6.19]{Kaipel} that the picture group $G(A)$ defined in \cite{HansonIgusa} for a $\tau$-tilting finite algebra $A$ is completely determined by the fan poset structure of $\Df(A)$. 
It then follows from \cref{cor:W&C tau-tilting finite} that the groups of \cref{def:picturegroup} and \cite[Definition-Theorem~4.3]{HansonIgusa} are isomorphic. 

There is a difference to be noted, though. 
In \cite[Definition-Theorem~4.3]{HansonIgusa} the generators in of $G(A)$ are labeled by the bricks in $\modA$ while in \cref{def:picturegroup} the natural labelling of the generators correspond to the subcategories $\mod^{ss}_\sigma A$ for every $\sigma \in \Df^1(A)$. 

There is a natural connection between these two labelings since for a $\tau$-tilting finite algebra $A$ the categories $\mod^{ss}_\sigma A$ are wide subcategories having exactly one relative simple module $S$, which is a brick. 
This gives rise to a map that happens to be a bijection for $\tau$-tilting finite algebras, see \cite{AsaiSemibricks,Treffinger_c-vectors}. 
This analogy breaks in the more general case of algebras with infinitely many torsion classes in its module category, since there are $\sigma 
\in \Df^1(A)$ having infinitely many relative simple objects in $\mod^{ss}_\sigma A$, see \cref{fig:Df Kronecker}.
\end{remark}

\begin{figure}
    \centering
% https://q.uiver.app/#q=WzAsNixbOCwwLCIoXFx0b3JzIEEpXzAiXSxbNiwwLCIoXFx0b3JzIEEpXzEiXSxbNCwwLCJcXGNkb3RzIl0sWzIsMCwiIChcXHRvcnMgQSlfbCJdLFsyLDIsIlxcdG9ycyBBIl0sWzAsMCwiXFxjZG90cyJdLFszLDIsIlxcdmFycGhpX3tsLGwtMX0iLDAseyJzdHlsZSI6eyJoZWFkIjp7Im5hbWUiOiJlcGkifX19XSxbMiwxLCJcXHZhcnBoaV97MiwxfSIsMCx7InN0eWxlIjp7ImhlYWQiOnsibmFtZSI6ImVwaSJ9fX1dLFsxLDAsIlxcdmFycGhpX3sxLDB9IiwwLHsic3R5bGUiOnsiaGVhZCI6eyJuYW1lIjoiZXBpIn19fV0sWzQsMywiXFx2YXJwaGlfe1xcaW5mdHksbH0iLDAseyJzdHlsZSI6eyJoZWFkIjp7Im5hbWUiOiJlcGkifX19XSxbNCwxLCJcXHZhcnBoaV97XFxpbmZ0eSwxfSIsMCx7InN0eWxlIjp7ImhlYWQiOnsibmFtZSI6ImVwaSJ9fX1dLFs0LDAsIlxcdmFycGhpX3tcXGluZnR5LDB9IiwyLHsic3R5bGUiOnsiaGVhZCI6eyJuYW1lIjoiZXBpIn19fV0sWzUsMywiXFx2YXJwaGlfe2wrMSxsfSJdXQ==
\[\begin{tikzcd}
	\cdots && { G_\ell(A)} && \cdots && {G_2(A)} && {G_1(A)} \\
	\\
	&& {G(A)}
	\arrow["{\pi_{l+1,l}}", from=1-1, to=1-3]
	\arrow["{\pi_{l,l-1}}", from=1-3, to=1-5]
	\arrow["{\pi_{3,2}}", from=1-5, to=1-7]
	\arrow["{\pi_{2,1}}", from=1-7, to=1-9]
	\arrow["{\pi_{l}}", from=3-3, to=1-3]
	\arrow["{\pi_{2}}", from=3-3, to=1-7]
	\arrow["{\pi_{1}}"', from=3-3, to=1-9]
\end{tikzcd}\]
    \caption{The picture group of an Artin algebra.}
    \label{fig:picture group}
\end{figure}

%%%%%%%%%%%%%%%
\subsection{A categorical approach to picture groups}
In this subsection we give a categorical realization of the picture group by identifying it with a group generated by suitable subcategories of $(\mod A)_\ell$. 
More precisely, we prove that the picture group $G_\ell(A)$ is isomorphic to the group $\widetilde{G}_\ell(A)$ defined in \cref{prop:deftildeG}. 
Compare with \cite[Proposition~A.5]{BKH}.

As an application, we obtain a faithful functor from the $\tau$-cluster morphism category to the picture group.

\begin{theorem}\label{prop:categorical interpretation}
The picture group $G_\ell(A)$  is naturally isomorphic to the group   $\widetilde{G}_\ell(A)$.
% generated by the set 
%\[
%\{(\tc_{\df})_\ell \mid \df \in \Df^{0}_\ell(A)\} \text{ and } \{(\mod_{\df'}^{ss}A)_\ell \mid \df' \in \Df^1_\ell(A)\}  
%\]
%with the $*$-product defined above and identity element $\{0\}\subset (\modA)_\ell$.
\end{theorem}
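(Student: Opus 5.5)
The plan is to compare the two presentations directly. \cref{def:l-picture group2} presents $G_\ell(A)$ by generators $g_\df$ for $\df\in\Df^0_\ell(A)$ and $g_{\df'}$ for $\df'\in\Df^1_\ell(A)$, with relations $g_{\df_2}=g_{\df_1}g_{\df'}$ whenever $\df'=\overline{\df_1}\cap\overline{\df_2}$ and $(\tc_{\df_1})_\ell\subset(\tc_{\df_2})_\ell$. \cref{prop:deftildeG} presents $\widetilde G_\ell(A)$ by generators $(\tc_\df)_\ell$ and $(\mod^{ss}_{\df'}A)_\ell$, with relations $(\tc_{\df_1})_\ell*(\mod^{ss}_{\df'}A)_\ell=(\tc_{\df_2})_\ell$ whenever $(\tc_{\df_1})_\ell=(\tc_{\df'})_\ell$ and $(\overline{\tc}_{\df'})_\ell=(\tc_{\df_2})_\ell$.

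I would define $\Psi:G_\ell(A)\to\widetilde G_\ell(A)$ on generators by $g_\df\mapsto(\tc_\df)_\ell$ and $g_{\df'}\mapsto(\mod^{ss}_{\df'}A)_\ell$. I would define $\Theta$ in the opposite direction by the inverse assignment. Both maps are well defined and mutually inverse on generators, provided each family of relations is sent into the other. So the whole proof reduces to the claim that the two indexing conditions on triples $(\df_1,\df',\df_2)$ coincide.

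\textbf{Geometric $\Rightarrow$ categorical.} Suppose $\df'$ is a codimension-one face with $\df'=\overline{\df_1}\cap\overline{\df_2}$ and $(\tc_{\df_1})_\ell\subset(\tc_{\df_2})_\ell$. Asai's face criterion \cite[Lemma~2.16]{Asai}, as used in the proof of \cref{prop:fanposet}, gives
\[(\tc_{\df'})_\ell\subset(\tc_{\df_i})_\ell=(\overline{\tc}_{\df_i})_\ell\subset(\overline{\tc}_{\df'})_\ell \quad\text{for } i=1,2.\]
The star of a codimension-one cone contains exactly two chambers, one on each side of the wall. By \cref{prop:fanposet} the set $\star^0(\df')$ is the interval $[\sigma^-,\sigma^+]$, whose endpoints are realised by $(\tc_{\df'})_\ell$ and $(\overline{\tc}_{\df'})_\ell$ via \cref{prop: tors to chambers}. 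This interval is exactly $\{\df_1,\df_2\}$, so the ordering forces $(\tc_{\df_1})_\ell=(\tc_{\df'})_\ell$ and $(\tc_{\df_2})_\ell=(\overline{\tc}_{\df'})_\ell$. The target relation then holds in $\widetilde G_\ell(A)$ by \cref{prop:extending torsion classes}, since $\overline{\tc}=\tc*\mod^{ss}$ restricts to $(\modA)_\ell$.

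\textbf{Categorical $\Rightarrow$ geometric.} Suppose $(\tc_{\df_1})_\ell=(\tc_{\df'})_\ell$ and $(\overline{\tc}_{\df'})_\ell=(\tc_{\df_2})_\ell$. Then \cref{prop: tors to chambers} and the uniqueness of the chamber realising a given torsion class (distinct chambers are distinct $\ell$-TF-classes) identify $\df_1$ and $\df_2$ with the two chambers adjacent to $\df'$. Hence $\df'=\overline{\df_1}\cap\overline{\df_2}$.

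\textbf{Main obstacle.} The delicate point is this identification of the two chambers of $\star^0(\df')$ with $\sigma^\pm$, in particular the uniqueness of the chamber attached to a torsion class. I would handle it with \cref{thm:bounded l-TF}: two chambers carrying the same $(\tc)_\ell$ are $\ell$-TF-equivalent, hence they are the same chamber.

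Finally, naturality means compatibility with $\rho$ from \cref{prop:from groupoid to group} and with $\phi$ from \cref{prop:deftildeG}. Both send $[\{0\},\tc_\ell]$ to the corresponding generator, so this is immediate.
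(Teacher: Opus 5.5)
Your strategy of comparing the presentations of \cref{def:l-picture group2} and \cref{prop:deftildeG} and matching the relation families is the paper's own. Your two-sided matching of the triples $(\df_1,\df',\df_2)$ via Asai's face criterion is more careful than the ``without loss of generality'' in \cref{lem: map between groups}. The gap is the sentence ``the whole proof reduces to the claim that the two indexing conditions on triples coincide''. It presupposes that the assignment on generators is a bijection, so that $\Theta$ can be ``the inverse assignment''. It is not even injective on cones: the generators of $\widetilde G_\ell(A)$ are subcategories, and distinct cones routinely give the same one.

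For example, take $A=\mathbb{C}Q$ with $Q\colon 1\to 2$ and $\ell\geq 2$. The line $v_1=0$ splits into two codimension-one cones, $\{v_2<0\}$ (between $\cc^-$ and the chamber of $\add S(1)$) and $\{v_2>0\}$ (between the chamber of $\add S(2)$ and $\cc^+$). Both have semistable category $\add S(1)$. Moreover, the torsion class $\add S(1)$ of the chamber adjacent to $\cc^-$ is, as a subcategory, the same generator of $\widetilde G_\ell(A)$ as that semistable category. With cone-indexed generators, as you read \cref{def:l-picture group2}, the presentation here is a free group of rank $5$. By contrast, $\widetilde G_\ell(A)\cong\langle a,b,p\mid apb=ba\rangle$ is free of rank $2$, where $a,b,p$ come from $\add S(1)$, $\add S(2)$ and $\add M$, with $M$ the indecomposable of length $2$. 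So your $\Psi$ is surjective but not injective, and $\Theta$ is not defined.

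What you must supply is the generator-level correspondence, in both directions. For $\Theta$ to be well defined, cones giving the same subcategory must give the same element of $G_\ell(A)$.
\begin{itemize}
\item For walls, this is where the $\sim_{ss}$-classes in Kaipel's generators $X_{[\df]}$ of \cref{def:l-picture group1} (his second family of relations) enter.
\item For the chamber/wall coincidence above, you also need $g_{\cc^-}=X_{[\{0\},\{0\}]}=1$ (\cref{rmk:different notations}). This is not among the relations you use.
\end{itemize}
Conversely, $\Psi$ must respect the identifications that $G_\ell(A)$ makes among wall generators. This is the paper's \cref{lem:generators picture group}: it shows that such walls have the same semistable category, recovering it from $(\tc_{\ef_1})_\ell*(\mod^{ss}_{\ef'}A)_\ell=(\tc_{\ef_2})_\ell$ as $(\tc_{\ef_2})_\ell\cap(\fc_{\ef_1})_\ell$. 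Once the generating sets are matched bijectively, your triple correspondence does finish the argument.
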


To prove this theorem, we first establish two auxiliary lemmas.
The first constructs a natural map $\varphi : {G}_\ell(A) \to \widetilde{G}_\ell(A)$.
The second lemma is the key step. 
It shows that the relations in the picture group identify only generators corresponding to the same semistable category.
As a consequence, it shows that $\varphi$ is well-defined in the generators and therefore a group homomorphism.

\begin{lemma}\label{lem: map between groups}
Let $G_\ell(A)$ and $\widetilde{G}_\ell(A)$ as above.
There is a map $\varphi : {G}_\ell(A) \to \widetilde{G}_\ell(A)$ compatible with the product which is defined on generators as $\varphi(g_\df) = (\tc_\df)_\ell$ and \mbox{$\varphi(g_{\df'}) = (\mod_{\df'}^{ss}A)_\ell$} for every $\df \in \Df_\ell^0(A)$ and every $\df'\in \Df^1_\ell(A)$, respectively.
\end{lemma}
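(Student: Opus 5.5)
The plan is to define $\varphi$ on the free group generated by the symbols $g_\df$ ($\df\in\Df^0_\ell(A)$) and $g_{\df'}$ ($\df'\in\Df^1_\ell(A)$) by the assignment in the statement. By the universal property of free groups this is automatically compatible with products. The remaining task is to check that it descends to $G_\ell(A)$. By \cref{def:l-picture group2}, the only defining relations of $G_\ell(A)$ have the form $g_{\df_2}=g_{\df_1}g_{\df'}$, with $\df'=\overline{\df_1}\cap\overline{\df_2}$ of codimension $1$ and $(\tc_{\df_1})_\ell\subset(\tc_{\df_2})_\ell$. So it suffices to show that, for each such triple,
\[
(\tc_{\df_2})_\ell=(\tc_{\df_1})_\ell*(\mod^{ss}_{\df'}A)_\ell
\]
holds as one of the relations of \cref{eq:relations Gtilde} in $\widetilde G_\ell(A)$.

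The first step is to identify the torsion classes attached to $\df'$. Since $\df'\subset\overline{\df_1}\cap\overline{\df_2}$, I would use \cite[Lemma~2.16]{Asai} as in the proof of \cref{prop:fanposet}: $\df'\subset\overline{\df_i}$ gives $(\tc_{\df'})_\ell\subset(\tc_{\df_i})_\ell\subset(\overline{\tc}_{\df'})_\ell$ for $i=1,2$. The facial interval $\star^0(\overline{\df'})=[\sigma^-,\sigma^+]$ is formed by the chambers adjacent to the codimension-one cone $\df'$. I expect it to consist of exactly these two chambers, because $\Df_\ell(A)$ is a finite complete fan and a codimension-one cone is a face of exactly two maximal cones. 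Combined with $(\tc_{\df_1})_\ell\subset(\tc_{\df_2})_\ell$, this forces $(\tc_{\df_1})_\ell=(\tc_{\df'})_\ell$ and $(\tc_{\df_2})_\ell=(\overline{\tc}_{\df'})_\ell$. These equalities should follow by evaluating stability conditions near $\df'$ on either side, exactly as in the argument for \cref{prop: tors to chambers}. I would perturb $v\in\df'$ by a small multiple of a normal vector pointing into each chamber, and compare signs of $\langle v,[M]\rangle$ for the finitely many classes of modules of length at most $\ell$.

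Once these equalities are in hand, the triple $(\df_1,\df',\df_2)$ satisfies precisely the hypotheses of \cref{eq:relations Gtilde}. Thus the relation holds in $\widetilde G_\ell(A)$; concretely it is \cref{prop:extending torsion classes} applied to $\overline{\tc}_{\df'}=\tc_{\df'}*\mod^{ss}_{\df'}A$, restricted to length $\le\ell$ via \cref{thm: torsion pairs of bounded length}. Hence $\varphi$ respects every defining relation, descends to a homomorphism $G_\ell(A)\to\widetilde G_\ell(A)$, and takes the prescribed values on generators.

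The main obstacle is the first step. One must ensure that the wall $\df'$ really separates $\df_1$ and $\df_2$ in the sense that the pair $\tc_{\df'}\subsetneq\overline{\tc}_{\df'}$ coincides with the pair $\tc_{\df_1}\subsetneq\tc_{\df_2}$ after intersecting with $(\modA)_\ell$, rather than merely sandwiching them. If the two-chamber argument near $\df'$ turns out delicate (for instance, because $\mod^{ss}_{\df'}$ might not vary on the two sides), I would fall back on \cref{thm:bounded l-TF}. The segment from a point of $\df_1$ to a point of $\df_2$ through $\df'$ crosses only walls containing $\df'$, so the only bricks changing sides are those in $(\mod^{ss}_{\df'}A)_\ell$. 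This yields the two equalities directly.
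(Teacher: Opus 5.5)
Your proposal is correct and follows the same route as the paper. You define $\varphi$ on generators and check the single family of relations $g_{\df_2}=g_{\df_1}g_{\df'}$, which in $\widetilde G_\ell(A)$ becomes the extension identity $(\tc_{\df_2})_\ell=(\tc_{\df_1})_\ell*(\mod^{ss}_{\df'}A)_\ell$. The one difference is that the paper simply assumes ``without loss of generality'' that $(\tc_{\df_1})_\ell=(\tc_{\df'})_\ell$ and $(\tc_{\df_2})_\ell=(\overline{\tc}_{\df'})_\ell$, whereas your perturbation argument near $\df'$ actually proves it.
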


\begin{proof}
Since $\varphi: {G}_\ell(A) \to \widetilde{G}_\ell(A)$ is defined on generators, we only need to show that it respects the defining relations of the respective groups.

Let $\df' \in \Df^1_\ell(A)$ be a cone separating two chambers $\df_1, \df_2 \in \Df^0_\ell(A)$.
Then, without loss of generality we can assume $(\tc_{\df'})_\ell = (\tc_{\df_1})_\ell$ and $(\overline{\tc_{\df'}})_\ell= (\tc_{\df_2})_\ell$.
Moreover $(\overline{\tc_{\df'}})_\ell \cap (\fc_{\df'})_\ell = (\mod_{\df'}^{ss}A)_\ell$, see \cref{cor: maximal inclusions tors^ss}. 
Also, \cref{prop:infinite products} implies that 
\[
\varphi(g_{\df_1} g_{\df'})=\varphi(g_{\df_2}) = (\tc_{\df_2})_\ell = (\tc_{\df_1})_\ell * (\mod_{\df'}^{ss}A)_\ell = \varphi(g_{\df_1})*\varphi(g_{\df'}).
\]
So we can conclude that $\varphi: {G}_\ell(A) \to \widetilde{G}_\ell(A)$ respects the multiplication.
\end{proof}

\begin{lemma}\label{lem:generators picture group}
Let $G_\ell(A)$ be the picture group as defined above. 
If two codimension-one cones $\df', \ef'$ determine the same element of the picture group, then 
\[
(\mod_{\df'}^{ss}A)_\ell = (\mod_{\ef'}^{ss}A)_\ell.
\]
\end{lemma}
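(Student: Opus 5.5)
Identifications between generators of $G_\ell(A)$ come only from its defining relations. So I would argue by induction on the chain of relations that identifies $X_{[\df']}$ with $X_{[\ef']}$, and show that each elementary identification preserves the semistable category $(\mod^{ss}_{-}A)_\ell$. I will work with the presentation in \cref{def:l-picture group1}, where Kaipel's relations are of two types. In type (2), $X_{[\sigma_1^-,\tau_1^-]}=X_{[\sigma_2^-,\tau_2^-]}$ whenever $f_{[\sigma_1,\tau_1]}=f_{[\sigma_2,\tau_2]}$ in $\Cf_\ell(A)$. In type (1), two maximal chains of the same interval give the same product. I reduce the statement to two claims: type (2) relations only identify codimension-one generators with equal semistable categories, and type (1) relations never force two distinct generators to be equal except through type (2) identifications.

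For type (2), the key input is \cref{prop:l-tauclustermorphismcategory}. Equality $f_{[\sigma_1,\tau_1]}=f_{[\sigma_2,\tau_2]}$ in $\Cf(\Sigma_\ell(A),\Pf_{ss})$ means, by Kaipel's construction, that $\sigma_1\sim_{ss}\sigma_2$ and that $\tau_1,\tau_2$ project to the same cone in the stars, $\pi_{\sigma_1}(\tau_1)=\pi_{\sigma_2}(\tau_2)$. Admissibility, via \cref{lem:l-taucluster2}, then gives $\tau_1\sim_{ss}\tau_2$. When the generator is a wall $\df'$ (so $\tau$ is codimension one with $\sigma\subseteq\tau$), this says exactly $(\mod^{ss}_{\df'}A)_\ell=(\mod^{ss}_{\ef'}A)_\ell$. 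For a longer chain of such identifications, I use transitivity of equality of subcategories.

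For type (1), I would use the categorical map already available. By \cref{prop:infinite products} and \cref{cor:infinite products modA}, any maximal chain $(\df_1,\dots,\df_s)$ in an interval $[\tc_\ell,\tc'_\ell]$ satisfies
\[
\tc'_\ell=\tc_\ell*(\mod^{ss}_{\df_1}A)_\ell*\cdots*(\mod^{ss}_{\df_s}A)_\ell .
\]
The point is that the multiset of walls is not what is identified; only the products are. The plan is to show that $X_{[\df']}$ can equal $X_{[\ef']}$ in $G_\ell(A)$ only if the intervals $[(\tc_{\df'})_\ell,(\overline{\tc}_{\df'})_\ell]$ and $[(\tc_{\ef'})_\ell,(\overline{\tc}_{\ef'})_\ell]$ agree up to the equivalence generated by type (2). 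Then, by \cref{prop:extending torsion classes}, the subcategory $\xc_{[\tc,\tc']}=\overline{\tc}\cap\fc$ of a covering interval recovers $(\mod^{ss}_{\df'}A)_\ell$. To make this rigorous I would construct an invariant: map each generator $X_{[\df']}$ to $(\mod^{ss}_{\df'}A)_\ell$ in the free group on subcategories, modulo the abelianised $*$-relations. Then I check that both relation types are respected, so that equal group elements give equal invariants.

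The main obstacle is exactly this last step. An element of a group presented by relations could a priori be identified with another generator through a nontrivial word, and the invariant must separate distinct semistable categories. I would first try the homomorphism to the free abelian group on the set $\{(\mod^{ss}_{\df'}A)_\ell\}$, sending $X_{[\df']}\mapsto e_{(\mod^{ss}_{\df'}A)_\ell}$. Type (2) relations are respected by the previous paragraph. Type (1) relations are respected because two maximal chains of one interval cross the same semistable categories with the same multiplicities. This multiplicity claim follows from the uniqueness of Harder--Narasimhan filtrations in \cref{thm:HarderNarasimhanfiltrations}, applied to each brick of length at most $\ell$ in the interval. Distinct generators with distinct categories then map to distinct basis vectors, which proves the lemma.
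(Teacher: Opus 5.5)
Reducing the lemma to building an invariant that separates generators is a sound strategy. The invariant you choose, however, does not exist. The map from $G_\ell(A)$ to the free abelian group on semistable categories, $X_{[\df']}\mapsto e_{(\mod^{ss}_{\df'}A)_\ell}$, does not respect the type (1) relations. Your claim that two maximal chains of an interval cross the same semistable categories with the same multiplicities is false. For a counterexample, let $A$ be the path algebra of $1\to 2$, let $P$ be the indecomposable of length two (top $S(1)$, socle $S(2)$), and take $\ell\geq 2$, so that $\tors_\ell A\cong\tors A$ is the pentagon. The interval $[\{0\},\modA]$ has two maximal chains:
\begin{itemize}
\item[(a)] $\{0\}\subset\add S(1)\subset\add(S(1)\oplus P)\subset\modA$, whose walls have semistable categories $\add S(1)$, $\add P$, $\add S(2)$;
\item[(b)] $\{0\}\subset\add S(2)\subset\modA$, whose walls have semistable categories $\add S(2)$, $\add S(1)$;
\end{itemize}
all intersected with $(\modA)_\ell$. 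The relation therefore reads $X_{S(1)}X_PX_{S(2)}=X_{S(2)}X_{S(1)}$, and your map would force $e_{(\add P)_\ell}=0$. The two chains do not even have the same length. \cref{thm:HarderNarasimhanfiltrations} cannot rescue the claim: it gives uniqueness of the filtration attached to one fixed chain, and says nothing comparing the factors produced by two different chains. Moreover, if (as in Kaipel's presentation) the two walls on $\dc(S(1))$, and likewise the two on $\dc(S(2))$, give the same generator, then $X_P=X_{S(1)}^{-1}X_{S(2)}X_{S(1)}X_{S(2)}^{-1}$ is a commutator. Every homomorphism to an abelian group then kills it, so abelianising discards exactly the information attached to non-simple walls.

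What you need is a non-abelian invariant that remembers extensions, and this is what the paper uses. It takes the homomorphism $\varphi\colon G_\ell(A)\to\widetilde G_\ell(A)$ of \cref{lem: map between groups}, with $g_{\df}\mapsto(\tc_{\df})_\ell$ and $g_{\df'}\mapsto(\mod^{ss}_{\df'}A)_\ell$. Its compatibility with the relations is the equality of subcategories $(\tc_{\df_1})_\ell*(\mod^{ss}_{\df'}A)_\ell=(\tc_{\df_2})_\ell$ from \cref{prop:extending torsion classes}, not a count of walls. If $g_{\df'}=g_{\ef'}$, multiply on the left by the torsion class $(\tc_{\ef_1})_\ell$ of the chamber below $\ef'$; this gives $(\tc_{\ef_1})_\ell*(\mod^{ss}_{\df'}A)_\ell=(\tc_{\ef_2})_\ell$. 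Every $M\in(\mod^{ss}_{\ef'}A)_\ell$ lies in $(\tc_{\ef_2})_\ell\cap(\fc_{\ef_1})_\ell$. So in a sequence $0\to M_1\to M\to M_2\to 0$ with $M_1\in(\tc_{\ef_1})_\ell$ and $M_2\in(\mod^{ss}_{\df'}A)_\ell$, the inclusion $M_1\to M$ is zero, hence $M\cong M_2$. The symmetric argument gives the reverse inclusion. Your own observation that $\xc_{[\tc,\tc']}$ recovers the semistable category of a covering interval is exactly the ingredient that makes this work; passing to an abelian target is what loses it.
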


\begin{proof}
As $\varphi$ respects the $*$-product, we know that $ \xc *\varphi(g_{\df'}) = \xc *\varphi(g_{\ef'}) $ for every subcategory $\xc \subset (\modA)_\ell$. 
In particular, we have that 
$ (\tc_{\ef_1})_\ell *\varphi(g_{\df'}) = (\tc_{\ef_1})_\ell *\varphi(g_{\ef'}) $
where $\ef_1$ is the codimension $0$ cone such that $(\tc_{\ef_1})_\ell = (\tc_{\ef'})_\ell$.
Now, $(\tc_{\ef_1})_\ell = \varphi(g_{\ef_1})$. 
The key observation is that the semistable category can be recovered from its extension product with the adjacent torsion class.
So 
\begin{equation}\label{eq:iso1}
(\tc_{\ef_1})_\ell *\varphi(g_{\df'}) = (\tc_{\ef_1})_\ell *\varphi(g_{\ef'}) = \varphi(g_{\ef_1}) *\varphi(g_{\ef'}) = \varphi(g_{\ef_1}g_{\ef'})= \varphi(g_{\ef_2}) = (\tc_{\ef_2})_\ell
\end{equation}
where $\ef_2$ is the codimension $0$ cone such that $(\tc_{\ef_2})_\ell = (\overline{\tc_{\ef'}})_\ell$.
\cref{eq:iso1} implies that for every object $M \in (\tc_{\ef_2})_\ell$ there is a short exact sequence 
\begin{equation}\label{eq:iso2}
0 \to M_{\ef_1} \to M \to M_{\df'}\to 0
\end{equation}
with $M_{\ef_1} \in (\tc_{\ef_1})_\ell$ and $M_{\df'} \in (\mod_{\df'}^{ss}A)_\ell$. 
Now, $(\mod_{\ef'}^{ss}A)_\ell = (\tc_{\ef_2})_\ell \cap (\fc_{\ef_1})_\ell$, where $(\fc_{\ef_1})_\ell$ is the torsion free class in $(\modA)_\ell$ corresponding to $(\tc_{\ef_1})_\ell$.
So $(\mod_{\ef'}^{ss}A)_\ell \subset (\tc_{\ef_2})_\ell$ and $\Hom_A(M_{\ef_1}, M_{\ef'})=0$ for every $M_{\ef_1} \in(\tc_{\ef_1})_\ell $ and $M_{\ef'} \in (\mod_{\ef'}^{ss}A)_\ell$.
As a consequence, \cref{eq:iso2} specifies to 
\begin{equation}\label{eq:iso3}
0 \to 0 \to M_{\ef'} \to M_{\df'}\to 0
\end{equation}
if $M_{\ef'} \in (\mod_{\ef'}^{ss}A)_\ell$. 
This allows us to conclude that $(\mod_{\ef'}^{ss}A)_\ell \subset (\mod_{\df'}^{ss}A)_\ell$.
Using the same argument we can show the reverse inclusion, implying that  
\[
\varphi(g_{\ef'}) = (\mod_{\ef'}^{ss}A)_\ell = (\mod_{\df'}^{ss}A)_\ell = \varphi(g_{\df'}).
\]
\end{proof}

We now prove \cref{prop:categorical interpretation}.

\begin{proof}[Proof of \cref{prop:categorical interpretation}]

By \cref{lem: map between groups,lem:generators picture group}, the assignment on generators extends to a well-defined surjective homomorphism $\varphi : {G}_\ell(A) \to \widetilde{G}_\ell(A)$. 
Since every defining generator of $\widetilde{G}_\ell(A)$ corresponds to a unique generator of ${G}_\ell(A)$, the inverse assignment defines a homomorphism
\(
\widetilde{\varphi}: \widetilde{G}_\ell(A) \to G_\ell(A)
\)
where $\widetilde{\varphi}(\tc_{\df}) = g_{\df}$ for every $\df \in \Df^0_(A)$ and $\widetilde{\varphi}((\mod_{\df'}^{ss}A)_\ell) = g_{\df'}$ for every $\df' \in \Df^1_\ell(A)$.
By construction, the presentations of ${G}_\ell(A)$ and  $\widetilde{G}_\ell(A)$ have corresponding generators and defining relations.
Therefore the inverse assignment on generators extends to a homomorphism
\(
\widetilde{\varphi}: \widetilde{G}_\ell(A) \to G_\ell(A).
\)
The two homomorphisms are inverse on generators, hence mutually inverse.

\end{proof}

We now combine the categorical realization above with Hanson's construction of a faithful functor.

\begin{corollary}\label{thm:faithfulfunctor}
Let $A$ be an Artin algebra and let $\ell$ be a positive integer. 
Then there is a faithful functor 
$\Gamma_\ell : \Cf_\ell(A) \to G_\ell(A)$.
In particular, for every $\tau$-tilting finite algebra $A$  there is a faithful functor 
$\Gamma : \Cf(A) \to G(A)$.
\end{corollary}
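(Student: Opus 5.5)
We will deduce this from Hanson's result \cite[Proposition~A.5]{BKH} combined with \cref{prop:categorical interpretation}, \cref{prop:l-tauclustermorphismcategory} and \cref{prop:fanposet}. As we read it, Hanson's theorem says the following. Let $(\Sigma,\Pf,\leq)$ be a finite partitioned fan poset whose picture group is realised as a group of subcategories under the $*$-product, with the group element of an interval equal to the corresponding ``semistable'' subcategory. Then the assignment sending a morphism $f_{[\sigma,\tau]}$ of $\Cf(\Sigma,\Pf)$ to the element $X_{[\tau^-,\sigma^-]}$ of the picture group defines a faithful functor to the picture group, viewed as a one-object category. Our task is to check that $(\Sigma_\ell(A),\Pf_{ss},\leq)$ satisfies these hypotheses.

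First, we define $\Gamma_\ell$ on morphisms. A morphism $[\sigma]\to[\tau]$ in $\Cf_\ell(A)$ is represented by an inclusion $\sigma'\subset\tau'$ of cones with $\sigma'\sim_{ss}\sigma$ and $\tau'\sim_{ss}\tau$. We set
\[
\Gamma_\ell(f_{[\sigma',\tau']}) = X_{[\tau'^-,\sigma'^-]} \in G_\ell(A),
\]
where the facial intervals $[\sigma'^-,\sigma'^+]$ and $[\tau'^-,\tau'^+]$ come from \cref{prop:fanposet}. This is well defined: by the second family of relations in \cref{def:l-picture group1}, equal morphisms in $\Cf_\ell(A)$ have equal images. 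Functoriality, i.e.\ compatibility with composition of inclusions $\rho\subset\sigma\subset\tau$, follows from relation (1) of \cref{def:l-picture group1}. The reason is that the minimal chambers satisfy $\tau^-\leq\sigma^-\leq\rho^-$, so the interval $[\tau^-,\rho^-]$ is the concatenation of the intervals $[\tau^-,\sigma^-]$ and $[\sigma^-,\rho^-]$. Under the isomorphism $\varphi$ of \cref{prop:categorical interpretation}, the image of $\Gamma_\ell(f_{[\sigma,\tau]})$ is the subcategory $(\tc_{\sigma^-})_\ell\cap(\fc_{\tau^-})_\ell$. By \cref{prop:infinite products} and \cref{prop:extending torsion classes} this subcategory is an iterated $*$-product of the semistable categories crossed along a maximal chain.

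Faithfulness is the main obstacle. We must show that if two morphisms $[\sigma]\to[\tau]$ have the same image, then they are equal in $\Cf_\ell(A)$. We use the categorical realisation. The image of a morphism determines the subcategory $\xc=(\tc_{\sigma^-})_\ell\cap(\fc_{\tau^-})_\ell$. From $\xc$ we recover $(\mod^{ss}_{\sigma}A)_\ell$ together with its position inside $(\mod^{ss}_{\tau}A)_\ell$, arguing as in \cref{lem:generators picture group}: a semistable category is recovered from its $*$-product with an adjacent torsion class. By \cref{lem:l-taucluster1} and \cref{lem:l-taucluster2}, the pair of semistable categories $(\mod^{ss}_{\sigma}A)_\ell\supseteq(\mod^{ss}_{\tau}A)_\ell$ determines the morphism up to the admissible identification $\sim_{ss}$, and that identification is exactly how morphisms are identified in Kaipel's category. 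This is the step where we expect to rely most on Hanson's appendix. We would first try to verify directly that the pair $\bigl((\mod^{ss}_{\sigma}A)_\ell,\xc\bigr)$ determines $f_{[\sigma,\tau]}$.

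Finally, suppose $A$ is $\tau$-tilting finite. By \cref{cor:W&C tau-tilting finite} there is an $\ell$ with $\Df(A)=\Df_\ell(A)$. Hence $\Cf(A)=\Cf_\ell(A)$ by \cite{STTW} and the preceding corollary, and $G(A)=G_\ell(A)$, because the inverse systems stabilise from $\ell$ onwards. The functor $\Gamma:=\Gamma_\ell$ is then the required faithful functor $\Cf(A)\to G(A)$.
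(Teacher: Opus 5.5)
Your overall route is the paper's. Its proof of this corollary is just \cref{prop:categorical interpretation} combined with Hanson's faithfulness result \cite[Theorem~A.8]{BKH}. The paper cites \cite[Proposition~A.5]{BKH} only as the analogue of the categorical realisation, not for faithfulness. Several parts are fine: the existence of $\Cf_\ell(A)$ and $G_\ell(A)$, well-definedness via relation (2) of \cref{def:l-picture group1}, and the reduction of the $\tau$-tilting finite case to a single $\ell$ by stabilisation of the inverse systems. Two things need fixing, and the second is a real gap in what you wrote.

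First, your orientation is reversed. If $\rho\subset\sigma\subset\tau$ then $\star^0(\tau)\subseteq\star^0(\sigma)\subseteq\star^0(\rho)$, hence $\rho^-\le\sigma^-\le\tau^-$. So $\Gamma_\ell$ must send $f_{[\sigma,\tau]}$ to $X_{[\sigma^-,\tau^-]}$, exactly as relation (2) of \cref{def:l-picture group1} is written. The relevant heart is then $(\tc_{\tau^-})_\ell\cap(\fc_{\sigma^-})_\ell$; with the correct order, your $(\tc_{\sigma^-})_\ell\cap(\fc_{\tau^-})_\ell$ is $\{0\}$.

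Second, your direct faithfulness argument fails. Up to $\sim_{ss}$, the pair $(\mod^{ss}_\sigma A)_\ell\supseteq(\mod^{ss}_\tau A)_\ell$ is nothing but the source and target of the morphism, and parallel morphisms are plentiful. For example, take the class of the zero cone (semistable category $(\modA)_\ell$) and the class of chambers (semistable category $\{0\}$). Since $\pi_{\{0\}}$ is the identity, there is one morphism $f_{[\{0\},\overline{\cc}]}$ between them for each chamber $\cc$ of $\Df_\ell(A)$, and their images are the elements $g_{\cc}$. Separating such morphisms requires knowing that the group element $X_{[\sigma^-,\tau^-]}$ determines the interval, or at least its heart. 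Your step ``from $\xc$ we recover\dots'' presupposes this, and it does not follow formally from the presentations of $G_\ell(A)$ or $\widetilde{G}_\ell(A)$. That is precisely what \cite[Theorem~A.8]{BKH} supplies once \cref{prop:categorical interpretation} is in place. So you should take faithfulness entirely from Hanson's theorem, as the paper does, and drop the direct sketch.
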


\begin{proof}
This follows directly from \cref{prop:categorical interpretation} and \cite[Theorem~A.8]{BKH}
\end{proof}

\begin{remark}
In \cite[Appendix~A]{BKH}, Hanson also considered a group that he named the interval-heart torsion group, denoted by $\operatorname{int-heart}(\tors A)$, to give an interpretation of the picture group as a group of subcategories. 
Using this group, he shows the existence of a group functor $\Gamma: \Cf(A) \to \operatorname{int-heart}(\tors A)$. 
Using this approach he has shown that for every $\tau$-tilting finite $k$-algebra $A$ over a finite field $k$, the functor $\Gamma$ is faithful. 
A similar result was shown independently for finite-dimensional $\mathbb{C}$-algebras in \cite{Borvefaithful}, see \cref{rmk: faithful C-alg}.
\end{remark}

%%%%%%%%%%%%%%%%%%
\section{Torsion scattering diagrams}\label{sec:torsion scattering diagram}

In this section we introduce torsion scattering diagrams for Artin algebras and prove that they are minimal and consistent. 
The strategy is simple. 
We first construct compatible scattering diagrams on the finite cone complexes $\Df_\ell(A)$.
Passing to the inverse limit then yields the desired scattering diagram on $\Df(A)$.

The section is organized as follows. 
In \cref{subsec: preliminary} we recall the basic definitions and fundamental results on scattering diagrams.
In \cref{subsec: Scattering diagram for (modA)_l} we construct, for every Artin algebra $A$ and every $\ell\in\mathbb N$, a minimal consistent scattering diagram
$$(\Df_\ell(A), \Phi:\Df^1_\ell(A)\to G_\ell(A))$$
where $\Df_\ell(A)$ is the cone complex introduced in \cref{sec:conecomplex} and $G_\ell(A)$ is the picture group defined in \cref{sec:picture group}.
Finally, in \cref{subsec: scattering diagrams for modA} we construct a minimal consistent scattering diagram 
$$(\Df(A), \Phi:\Df^1(A)\to G(A))$$
 for every Artin algebra $A$.

%%%%%%%%%%%%%%%%
\subsection{Background and preliminary results}\label{subsec: preliminary}
In this section, $\Df$ is a cone complex in $\mathbb{R}^n$.
Recall that the codimension of a cone $\df\in \Df$ is the dimension of the subspace 
$$\df^\perp=\{v \in \mathbb{R}^n \mid \langle v, d\rangle= 0 \text{ for all $d\in \df$}\}.$$ 
We assume that $\Df$ has at least two distinguished cones $\cc^+$ and $\cc^-$ of codimension $0$ that we call the positive and the negative chambers, respectively.
In the examples considered in this paper, $\Df$ may have infinitely many additional chambers.
We also recall that 
\[
\Df = \bigcup_{i=0}^n \Df^i
\]
where $\Df^i:=\{\df\in \Df \mid \codim \df = i\}$.
Note that for every $\df \in \Df^1$ there exists a vector $v\in \mathbb{R}^n_{\geq 0}$ such that \(\df \subseteq v^{\perp}\), unique up to scalar multiplication.

We begin with the definition of a scattering diagram.

\begin{definition}\label{def:scatteringdiagramI}
Let $G$ be a group. 
A scattering diagram is a pair $(\Df, \Phi: \Df^1 \to G)$ where $\Df$ is a cone complex in $\mathbb{R}^n$ and $\Phi: \Df^1 \to G$ is a map.
A scattering diagram is said to be minimal if $\Phi(\df) \neq e$ for every $\df \in \Df^1$, where $e$ is the identity element in $G$.
\end{definition}

To define consistency, we first introduce the notion of a generic path in a cone complex.

\begin{definition}\label{def:genericpath}
    Let $\gamma:[0,1] \to \Df$ be a smooth map. 
    We say that $\gamma$ is a $\Df$-generic path if the following conditions are satisfied. 
    \begin{enumerate}
        \item There are cones $\cc_1$ and $\cc_2$ of codimension $0$ such that $\gamma(0)\in\cc_1$ and $\gamma(1)\in \cc_2$.
        \item Whenever $\gamma(t)\in\df$, the cone $\df$ has codimension at most $1$.
        \item Let $t\in [0,1]$ and $\df\in \Df^1$. 
        If $\gamma(t)\in \df$, where $\df \subseteq v^\perp$ then $\langle\gamma'(t), v \rangle \neq 0$.
\end{enumerate}
\end{definition}

The following lemma is a straightforward consequence of elementary real analysis.

\begin{lemma}
    Let $\gamma:[0,1]\to \Df$ be a $\Df$-generic path. 
    Then $\gamma$ intersects every cone $\df\in\Df^1$ only finitely many times.
\end{lemma}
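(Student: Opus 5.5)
The plan is to argue by contradiction, using the compactness of $[0,1]$ together with the transversality condition (3) of \cref{def:genericpath}. Fix $\df\in\Df^1$ and choose $v\in\mathbb{R}^n$ with $\df\subseteq v^\perp$. Consider the smooth real function
\[
h:[0,1]\to\mathbb{R},\qquad h(t)=\langle\gamma(t),v\rangle .
\]
Every $t$ with $\gamma(t)\in\df$ satisfies $h(t)=0$. By condition (3), it also satisfies $h'(t)=\langle\gamma'(t),v\rangle\neq 0$. It therefore suffices to show that the set
\[
Z=\{t\in[0,1]\mid \gamma(t)\in\df\}
\]
is finite.

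Suppose $Z$ were infinite. By compactness of $[0,1]$ there is a sequence $t_k\in Z$ of distinct points converging to some $t_\ast\in[0,1]$. By continuity $h(t_\ast)=0$. Moreover, $h$ vanishes at every $t_k$, so by Rolle's theorem $h'$ vanishes at points $s_k$ lying strictly between consecutive $t_k$. These $s_k$ also converge to $t_\ast$, and since $h'$ is continuous we get $h'(t_\ast)=0$.

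The main obstacle is that $t_\ast$ need not lie in $Z$: the cone $\df$ is relatively open, so $\gamma(t_\ast)\in\overline{\df}\cap v^\perp$ may lie on the boundary of $\df$. Condition (3) is not directly available there, so I will handle this case separately.

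\begin{itemize}
\item If $\gamma(t_\ast)\in\df$, then condition (3) gives $h'(t_\ast)\neq0$, contradicting $h'(t_\ast)=0$.
\item If $\gamma(t_\ast)\in\overline{\df}\setminus\df$, then $\gamma(t_\ast)$ lies in a cone of $\Df$ contained in the boundary of $\df$. Since $\Sigma_\Df$ is a fan, this boundary is a union of faces of $\overline{\df}$, all of which have codimension at least $2$. This contradicts condition (2), which allows $\gamma$ to meet only cones of codimension at most $1$.
\end{itemize}

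In both cases we obtain a contradiction. Hence $Z$ is finite, that is, $\gamma$ meets $\df$ only finitely many times.
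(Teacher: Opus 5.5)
Your proof is correct, and its skeleton matches the paper's. Both take an accumulation point $t_\ast$ of the crossing times, use condition (2) to rule out $\gamma(t_\ast)\in\overline{\df}\setminus\df$, and contradict transversality (3) at $t_\ast$.

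The difference is in the last step. The paper first places $t_0$ in $S_\df$ and uses compactness to get $|f'_\gamma|\geq\varepsilon$ on $S_\df$. It then inserts auxiliary zeros so that consecutive points have derivatives of opposite sign, and applies the mean value theorem to $f'_\gamma$ to force $f''_\gamma$ to be unbounded on $[0,1]$. You instead get $h'(t_\ast)=0$ directly from Rolle's theorem and continuity of $h'$, which contradicts (3) once $\gamma(t_\ast)\in\df$. You could even skip Rolle: after discarding the at most one $k$ with $t_k=t_\ast$, $h'(t_\ast)=\lim_k \frac{h(t_k)-h(t_\ast)}{t_k-t_\ast}=0$.

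Your route is shorter and needs $\gamma$ only to be $C^1$ rather than $C^2$. Your case split also justifies a step the paper asserts in one line: that a point of $\overline{\df}\setminus\df$ lies in a cone of codimension at least $2$. You get this from the fan structure of $\Sigma_\Df$ and the fact that proper faces of the codimension-one cone $\overline{\df}$ have codimension at least $2$.
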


\begin{proof}
    Consider the set $S_\df:=\{t \in [0,1]\mid \gamma(t) \in \df\} \subset [0,1]$ and suppose, by contradiction, that $S_\df$ is an infinite set. 
    Since $[0,1]$ is compact, there exists an accumulation point $t_0\in [0,1]$  for $S_\df$. 
   By continuity of $\gamma$, we have $\gamma(t_0)\in \overline{\df}$.
   Since $\gamma$ is $\Df$-generic, condition (2) implies that $\gamma(t_0)\notin\overline{\df}\setminus\df$.
   %Moreover, $\gamma(t_0) \not \in \overline{\df}\setminus\df$ due to \cref{def:genericpath}.2.
    In particular, $t_0 \in S_\df$ and $S_\df = \overline{S_\df}$.

    Consider the function \(f_\gamma(t):=\langle\gamma(t), v\rangle: [0,1] \to \mathbb{R}.\)
    Since $\gamma$ is smooth, so are $f_\gamma$ and its derivatives.
    Observe that $f_{\gamma}(t)=0$ for every $t\in S_{\df}$ since $\gamma(t)\in \df\subseteq v^\perp$ for all $t\in S_\df$.
    
    Since $S_\df$ is compact and $f_\gamma'$ does not vanish on $S_\df$, continuity of $f_\gamma'$ implies that there exists $\varepsilon>0$ such that $|f'_\gamma(t)|\geq \varepsilon$.
     Now consider a totally ordered subset 
    $$S':= \{t_i \in S_\df \mid i \in \mathbb{N}\}$$
    of $S_\df$ from which we build a new set $\widetilde{S}$ as follows. If $sg(f'_\gamma(t_i)) \neq sg(f'_\gamma (t_{i+1}))$ we do nothing. 
    Otherwise, we add to $\widetilde{S}$ an element $t_{i+\frac{1}{2}}$ such that $f_\gamma (t_{i+\frac{1}{2}})=0$ and $sg(f'_\gamma (t_{i+\frac{1}{2}})) \neq sg(f'_\gamma(t_i))$. 
    We note that such a $t_{i+\frac{1}{2}}$ exists due to the continuity of $f_\gamma$ and $f'_\gamma$, although $\gamma(t_{i+\frac{1}{2}})$ might not belong to $\df$. 
    Finally we reindex the elements of $\widetilde{S}$ preserving the order so $\widetilde{S} = \{s_i \in [0,1] \mid i \in \mathbb{N}\}$. 
    Then by Lagrange's mean value theorem there is a set $C= \{c_i \in [0,1] \mid s_i < c_i < s_{i+1}\}$ such that $f''(c_i) = \frac{f'(s_{i+1})-f'(s_i)}{s_{i+1} - s_i}$. Taking limits we get the following.
    \[
   \lim_{i \to \infty} |f''(c_i)| = \lim_{i \to \infty} \frac{\left|f'(s_{i+1})-f'(s_i)\right|}{|s_{i+1} - s_i|}\geq \lim_{i \to \infty} \frac{\varepsilon}{|s_{i+1} - s_i|} \to \infty
    \]    
    This implies that there is no $R\in \mathbb{R}$ such that $|f''_\gamma(x)|\leq R$ for every $x\in[0,1]$. Hence $f''_\gamma$ is not smooth.    This is contradiction shows that $S_\df$ must be finite.
\end{proof}

As an immediate consequence we obtain the following.

\begin{corollary}\label{prop:finitecrossing}
    Let $\Df$ be a cone complex and let $\gamma: [0,1]\to \Df$ be a generic path.
    If $\Df$ has finitely many cones, then the set  
    \[
    S_\gamma := \{t\in [0,1] \mid \gamma(t) \in \df \text{ where $\df\in \Df^1$}\}
    \]
is finite.
\end{corollary}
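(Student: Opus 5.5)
The corollary follows directly from the preceding lemma, because the number of walls is finite. Write
\[
S_\gamma=\bigcup_{\df\in\Df^1} S_\df, \qquad S_\df:=\{t\in[0,1]\mid \gamma(t)\in\df\}.
\]
This identity holds by definition: $t\in S_\gamma$ exactly when $\gamma(t)$ lies in some codimension-one cone $\df$. Since $\Df$ has finitely many cones, the subset $\Df^1\subseteq\Df$ is finite. The union is therefore a finite union.

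The lemma immediately preceding the corollary applies to each fixed $\df\in\Df^1$. For a $\Df$-generic path $\gamma$, it states that $\gamma$ meets $\df$ only finitely many times, that is, $S_\df$ is finite. No further hypothesis is needed, since that lemma is stated for an arbitrary cone complex $\Df$ and an arbitrary $\Df$-generic path.

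A finite union of finite sets is finite, so $S_\gamma$ is finite. There is no real obstacle here. The only point to check is that genericity is used in the same sense in both statements. Condition (2) of \cref{def:genericpath} ensures that the crossings counted in $S_\gamma$ are exactly the crossings of codimension-one cones. Without that condition, points of codimension-two cones could not be attributed to any single $S_\df$, and the decomposition above would fail.

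In the application to $\Df_\ell(A)$, the finiteness hypothesis is supplied by \cref{cor:finite cone complex for l}, which shows that $\Df_\ell(A)$ has only finitely many cones. Hence every $\Df_\ell(A)$-generic path crosses only finitely many walls, and the ordered product $g_\gamma$ is well defined.
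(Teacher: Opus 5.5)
Your argument is correct and matches the paper, which states the corollary as an immediate consequence of the preceding lemma: $S_\gamma=\bigcup_{\df\in\Df^1}S_\df$ is a finite union of sets, each finite by that lemma. One small quibble: the decomposition of $S_\gamma$ holds purely by definition and does not need condition (2) of \cref{def:genericpath}; that condition is used inside the lemma's proof, to force the accumulation point to lie in $\df$ itself rather than in $\overline{\df}\setminus\df$.
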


Let $\gamma$ be a $\Df$-generic path and consider $S_\gamma$ as above. 
Then $S_\gamma$ inherits a natural total order from $[0,1]$. 
For every $i \in S_\gamma$ we set $g_i := \Phi(\df_i)$ and $\epsilon_i := sg(\langle v_i, \gamma'(t_i) \rangle)$, where $v_i \in \mathbb{R}^n$ and $\df_i \subset v_i^\perp$.
Since $S_\gamma$ is totally ordered, the ordered product.
\[
g_\gamma = \prod_{i\in S_\gamma} g_i^{\epsilon_i}
\]
is well defined whenever it exists.

\begin{definition}\label{def:scatteringdiagramII}
Let $(\Df, \Phi: \Df^1 \to G)$ be a scattering diagram. 
We say that the scattering diagram is well defined if $g_\gamma$ exists for every $\Df$-generic path. 
It is consistent whenever $g_\gamma$ is completely determined by $\gamma(0)$ and $\gamma(1)$. 
\end{definition}

Let $(\Df, \Phi: \Df^1 \to G)$ be a scattering diagram where the cone complex $\Df$ has infinitely many cones. 
When $\Df$ has infinitely many walls, it is no longer automatic that $g_\gamma$ is well defined, since a generic path may cross infinitely many walls and therefore determine an infinite product in $G$.

\begin{remark}
If $(\Df, \Phi: \Df^1 \to G)$ is a consistent scattering diagram, then there is an element $g\in G$ such that $g=g_\gamma$ for every $\Df$-generic path $\gamma$ such that $\gamma(0)$ belongs to the negative chamber $\cc^-$ and $\gamma(1)$ belongs to the positive chamber $\cc^+$. 
In the literature it is often said that in this case $(\Df, \Phi: \Df^1 \to G)$ is a scattering diagram for $g$. 
It was shown in \cite{KontsevichSoibelman} that for a $g\in G$ there is at most one minimal consistent scattering diagram in $\mathbb{R}^n$, up to isomorphism.
In this paper we construct scattering diagrams in which the group $G$, the element $g$, and the ambient dimension $n$ are determined by the Artin algebra under consideration.
\end{remark}

We finish this subsection with a technical lemma.

\begin{lemma}\label{lem:restrictionofpaths}
Let $\Df$ and $\Df'$ be two cone complexes of dimension $n$ such that $\Df$ is a refinement of $\Df'$. 
Then every $\Df$-generic path $\gamma:[0,1] \to \Df$ restricts to a $\Df'$-generic path $\gamma: [0,1]\to \Df'$.
\end{lemma}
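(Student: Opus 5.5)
We need to check the three conditions of \cref{def:genericpath} for $\gamma$ regarded as a path in $\Df'$, using only that every cone of $\Df'$ is a union of cones of $\Df$ (refinement) and that $\Df$, $\Df'$ both cover $\mathbb{R}^n$.

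\emph{Condition (1).} $\gamma(0)$ lies in a codimension-$0$ cone $\cc_1\in\Df$. Since $\Df$ refines $\Df'$, $\cc_1$ is contained in a unique cone $\cc_1'\in\Df'$. This cone contains an open subset of $\mathbb{R}^n$, so its span is $\mathbb{R}^n$ and $\codim \cc_1'=0$. The same argument applies to $\gamma(1)$.

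\emph{Condition (2).} Let $\gamma(t)\in\df'$ with $\df'\in\Df'$, and let $\df\in\Df$ be the cone of $\Df$ containing $\gamma(t)$, so that $\df\subseteq\df'$. Since $\langle\df\rangle\subseteq\langle\df'\rangle$, we get $\codim\df'\leq\codim\df\leq 1$, where the last inequality is genericity of $\gamma$ with respect to $\Df$.

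\emph{Condition (3).} Suppose $\gamma(t)\in\df'$ with $\df'\in\Df'^1$ and $\df'\subseteq v^\perp$. As above, $\gamma(t)\in\df\subseteq\df'$ for some $\df\in\Df$ with $\codim\df\le 1$. There are two cases.
\begin{itemize}
\item If $\codim\df=1$, then $\langle\df\rangle$ is a hyperplane contained in $\langle\df'\rangle$, which is also a hyperplane. Hence $\langle\df\rangle=\langle\df'\rangle$, so $\df\subseteq v^\perp$ with the same normal $v$ up to scalar, and genericity for $\Df$ gives $\langle\gamma'(t),v\rangle\neq0$.
\item If $\codim\df=0$, then $\df$ is open in $\mathbb{R}^n$ and contained in $\df'\subseteq v^\perp$. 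This is impossible, because a hyperplane contains no nonempty open set.
\end{itemize}

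\textbf{Main obstacle.} The delicate point is that the cones are relatively open, so the containment $\df\subseteq\df'$ must be the literal one given by the refinement, not just a containment of closures. The argument also needs the fact that $\Df$ covers $\mathbb{R}^n$, so that $\gamma(t)$ lies in some cone of $\Df$. Both facts follow from the cone-complex conventions: in our setting $\Df_\ell(A)$ and $\Df(A)$ partition $\mathbb{R}^n$ by \cref{cor:finite cone complex for l} and \cref{prop: union of cones}. Given these, the argument above goes through directly.
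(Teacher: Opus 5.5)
Your proof is correct and follows essentially the paper's route: you check the three conditions of \cref{def:genericpath} using that the cone of $\Df$ through $\gamma(t)$ sits inside the cone of $\Df'$ through $\gamma(t)$, so codimension can only drop, and nested codimension-one cones share their supporting hyperplane. Your version is more careful than the paper's, which as written claims that every codimension-one cone of $\Df$ lies in a codimension-one cone of $\Df'$; this is false in general, since for the Kronecker algebra of \cref{ex: Kronecker 2} the wall of $\Df_3(A)$ spanned by $(2,-1)$ lies inside the open fourth-quadrant chamber of $\Df_1(A)$. You instead argue in the correct direction, starting from a codimension-one cone $\df'$ of $\Df'$ and ruling out $\codim\df=0$.
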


\begin{proof}
Let $\gamma:[0,1]\to \Df$ be a $\Df$-generic path and let $\df \in \Df^1$ such that $\gamma(t) \in \df$. Then there is a cone $\df' \in \Df'^1$ such that $\df \subset \df'$  because $\Df$ is a refinement of $\Df'$. 
Let $t\in[0,1]$. 
If $\gamma(t)$ belongs to a codimension-one cone of $\Df$, then it belongs to a codimension-one cone of $\Df'$ containing it.
The transversality condition is preserved because the supporting hyperplane is the same. 
The remaining conditions are immediate.
\end{proof}

%%%%%%%%%%%%%%%%
\subsection{Scattering diagrams for the category of modules of bounded length}\label{subsec: Scattering diagram for (modA)_l}

In this subsection we construct the scattering diagram for $(\modA)_\ell$ for every Artin algebra $A$ and every positive integer $\ell$.
Recall from \cref{subsec: W&C modA_l} that the $\ell$-TF-equivalence classes of stability conditions over $(\modA)_\ell$ form a finite cone complex
\[
\Df_\ell(A) = \bigcup_{v \in \mathbb{R}^n} \df_v^\ell.
\]
By \cref{cor:finite cone complex for l}, $\Df_\ell(A)$ is finite and every codimension-one cone is contained in the hyperplane $[B]^\perp$ for some brick $B\in(\modA)_\ell$.

Following \cref{sec:groupoids,sec:picture group}, let $\gc_\ell^{ss}(A)$ denote the groupoid associated with the lattice $\tors_\ell^{ss}(A)$, and let $G_\ell(A)$ denote the corresponding picture group.
Before proving the main result of this section, we give an easy but key lemma. 

\begin{lemma}\label{lem:liftingtogroupoid}
Let $\gamma: [0,1] \to \Df_\ell(A)$ be a $\Df$-generic path. 
Then $\gamma$ determines a well-defined element 
\[
g_\gamma = \prod_{i\in S_\gamma} [\tc_{\sigma_i}, \overline{\tc}_{\sigma_i}]^{\epsilon_i} \in \gc_\ell^{ss}(A).
\]
Moreover, $g_\gamma = [\{0\}, \tc_{\gamma(0)}]^{-1}[\{0\}, \tc_{\gamma(1)}]$.
\end{lemma}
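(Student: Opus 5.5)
By \cref{prop:finitecrossing}, the set $S_\gamma$ of crossing times is finite, since $\Df_\ell(A)$ is a finite cone complex by \cref{cor:finite cone complex for l}. I would order it as $t_1<t_2<\dots<t_r$ and let $\sigma_i\in\Df^1_\ell(A)$ be the wall containing $\gamma(t_i)$. I would also choose times $0=s_0<t_1<s_1<t_2<\dots<t_r<s_r=1$ so that each open interval between consecutive crossings is mapped by $\gamma$ into a single chamber $\cc_i\in\Df^0_\ell(A)$. This works because, by condition (2) of \cref{def:genericpath}, $\gamma$ avoids cones of codimension $\geq 2$, and off $S_\gamma$ it lies in chambers; since chambers are open and disjoint, $\gamma$ stays in one chamber between consecutive crossings. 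In particular $\cc_0\ni\gamma(0)$ and $\cc_r\ni\gamma(1)$.

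The central local claim is this: at the crossing $t_i$, the chambers $\cc_{i-1}$ and $\cc_i$ are the two chambers adjacent to $\sigma_i$, and $\{(\tc_{\cc_{i-1}})_\ell,(\tc_{\cc_i})_\ell\}=\{(\tc_{\sigma_i})_\ell,(\overline{\tc}_{\sigma_i})_\ell\}$. The sign $\epsilon_i$ records which one comes first. To prove it, I would use that for $v\in\sigma_i$ and a small transverse perturbation $v\pm\delta u$, the quantities $\langle v\pm\delta u,[M]\rangle$ for the finitely many classes $[M]$ with $M\in(\modA)_\ell$ keep their sign when $\langle v,[M]\rangle\neq0$. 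So only the $v$-semistable modules change status.

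I would fix the normal vector so that moving to the positive side $\langle u,\cdot\rangle>0$ on the relevant classes makes modules of $(\mod^{ss}_{\sigma_i}A)_\ell$ torsion. The same perturbation argument as in the proof of \cref{prop: tors to chambers} (replacing $(1,\dots,1)$ by the normal direction) then shows that the chamber on the positive side has torsion class $(\overline{\tc}_{\sigma_i})_\ell$, and the chamber on the negative side has torsion class $(\tc_{\sigma_i})_\ell$. This is consistent with \cref{cor: maximal inclusions tors^ss}. Consequently $[\tc_{\sigma_i},\overline{\tc}_{\sigma_i}]^{\epsilon_i}$ is a morphism in $\gc^{ss}_\ell(A)$ from $(\tc_{\cc_{i-1}})_\ell$ to $(\tc_{\cc_i})_\ell$.

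Given these local facts, the factors compose in the groupoid: the target of each factor is the source of the next. The product is therefore a well-defined morphism in $\gc^{ss}_\ell(A)$ from $(\tc_{\gamma(0)})_\ell$ to $(\tc_{\gamma(1)})_\ell$. By \cref{prop: one map}(1), applied with the minimal element $\{0\}$ of the finite lattice $\tors^{ss}_\ell A$, such a morphism is unique and equals $[\{0\},\tc_{\gamma(0)}]^{-1}[\{0\},\tc_{\gamma(1)}]$.

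The main obstacle is the sign bookkeeping in the local claim. One must check that the convention for $\epsilon_i$ via $v_i$ (chosen in $\mathbb{R}^n_{\geq0}$ direction, i.e. the class $[B]$ of a brick with $\sigma_i\subset[B]^\perp$) matches the side on which semistable objects become torsion. I would settle this by noting that $\langle \gamma(t_i)+\delta\gamma'(t_i),[B]\rangle$ has sign $\epsilon_i$. A brick $B\in(\mod^{ss}_{\sigma_i}A)_\ell$ lies in $\tc$ exactly on the side where this pairing is positive.
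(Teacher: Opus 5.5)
Your proposal is correct and follows the same route as the paper: finitely many crossings, a single chamber on each open interval between consecutive crossings, the two chambers adjacent to each wall $\sigma_i$ realising $(\tc_{\sigma_i})_\ell$ and $(\overline{\tc}_{\sigma_i})_\ell$ so that consecutive factors compose in $\gc^{ss}_\ell(A)$, and then \cref{prop: one map} for the closed formula. The only difference is that you justify the local adjacency claim, and the matching of $\epsilon_i$ with the direction of crossing, by a direct perturbation argument near $\gamma(t_i)$, whereas the paper obtains the adjacency claim from \cref{prop:fanposet} and leaves the sign convention implicit.
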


\begin{proof}
Let $\sigma\in\Df_\ell^1(A)$. 
Then there are precisely two adjacent chambers 
$\sigma^+, \sigma^- \in \Df^0_\ell(A)$
 satisfying  $\sigma \subset \overline{\sigma^+}\cap  \overline{\sigma^-}$.
Moreover, it follows from \cref{prop:fanposet} that $$(\tc_\sigma )_\ell =(\tc_{\sigma^-})_\ell= (\overline{\tc}_{\sigma^-})_\ell \quad \text{ and } \quad (\overline{\tc}_\sigma)_\ell =(\tc_{\sigma^+})_\ell= (\overline{\tc}_{\sigma^+})_\ell.$$
Let $\gamma: [0,1] \to \Df_\ell(A)$ be a $\Df_\ell(A)$-generic path and let $t_i<t_{i+1}$ be two consecutive elements of $S_\gamma$.
By the definition of $S_\gamma$, there is a chamber $\cc_i \in \Df^0_\ell(A)$  such that $\gamma(t)\in\cc_i$ for every $t\in (t_i, t_{i+1})$.
Therefore the target of the $i$-th factor coincides with the source of the $(i+1)$-st factor, so the product
\[
[\tc_{\sigma_i}, \overline{\tc}_{\sigma_i}]^{\epsilon_i}[\tc_{\sigma_{i+1}}, \overline{\tc}_{\sigma_{i+1}}]^{\epsilon_{i+1}}
\]
s defined in $\gc_\ell^{ss}(A)$.
Consequently, the product 
\[
\prod_{i\in S_\gamma} [\tc_{\sigma_i}, \overline{\tc}_{\sigma_i}]^{\epsilon_i}.
\]
is well-defined in $\gc_\ell^{ss}(A)$.
The final statement follows immediately from \cref{prop: one map}.
\end{proof}

We are now ready to define the torsion scattering diagram for $(\modA)_\ell$ as follows. 

\begin{theorem}\label{thm:boundedscatteringdiagrams}
Let $A$ be an Artin algebra and $\ell$ be a positive integer. 
Then the pair $(\Df_\ell(A),\Phi^\ell_A: \Df_\ell^1(A) \to G_\ell(A))$ is a minimal consistent scattering diagram.
Here $\Df_\ell(A)$ is the cone complex determined by the $\ell$-TF-equivalence classes of stability conditions, $G_\ell(A)$ is the picture group of $\Df_\ell(A)$ and $\Phi^\ell_A: \Df_\ell^1(A) \to G_\ell(A)$ is defined as $\Phi^l_A(\df_v) = (\mod^{ss}_{\df_v} A)_\ell$.
\end{theorem}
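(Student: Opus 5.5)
The proof splits into three parts: the scattering diagram is well defined, it is minimal, and it is consistent. For well-definedness, I would first note that $\Df_\ell(A)$ is a finite cone complex by \cref{cor:finite cone complex for l}. Hence, by \cref{prop:finitecrossing}, every $\Df_\ell(A)$-generic path $\gamma$ crosses codimension-one cones only finitely many times. The ordered product $g_\gamma=\prod_{i\in S_\gamma}\Phi^\ell_A(\df_i)^{\epsilon_i}$ is therefore a finite product in $G_\ell(A)$ and always exists. I would also check that $\Phi^\ell_A$ is a genuine map into $G_\ell(A)$. Via the isomorphism $G_\ell(A)\cong \widetilde G_\ell(A)$ of \cref{prop:categorical interpretation}, the subcategory $(\mod^{ss}_{\df}A)_\ell$ is exactly the image of the generator $g_{\df}$, which is $X_{[(\tc_\df)_\ell,(\overline{\tc}_\df)_\ell]}$.

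For minimality, let $\df\in\Df^1_\ell(A)$ be a wall. By \cref{cor: maximal inclusions tors^ss} and the fan poset structure (\cref{prop:fanposet}), $\df$ separates two chambers $\df_1,\df_2$ with $(\tc_{\df_1})_\ell=(\tc_\df)_\ell\subsetneq(\overline{\tc}_\df)_\ell=(\tc_{\df_2})_\ell$. If $g_\df$ were trivial, the relation $g_{\df_2}=g_{\df_1}g_\df$ would give $g_{\df_1}=g_{\df_2}$. Applying $\varphi$ from \cref{lem: map between groups} then gives $(\tc_{\df_1})_\ell=(\tc_{\df_1})_\ell*(\mod^{ss}_\df A)_\ell=(\tc_{\df_2})_\ell$, which is a contradiction. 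The same conclusion follows directly from the argument of \cref{lem:generators picture group}: the identity corresponds to $\{0\}$, while $(\mod^{ss}_\df A)_\ell$ contains a nonzero brick $B$ with $\df\subset[B]^\perp$, because the two torsion classes differ. The one delicate point is that equality in the group $\widetilde G_\ell(A)$ really forces equality of subcategories. For this I would use the fact, from \cref{prop:categorical interpretation}, that $\varphi$ is an isomorphism, so distinct elements on the torsion-class side stay distinct.

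Consistency is the main step. I would use \cref{lem:liftingtogroupoid}, which lifts $\gamma$ to the element $\prod_{i}[\tc_{\sigma_i},\overline{\tc}_{\sigma_i}]^{\epsilon_i}$ of the groupoid $\gc^{ss}_\ell(A)$. That lemma shows this element equals $[\{0\},\tc_{\gamma(0)}]^{-1}[\{0\},\tc_{\gamma(1)}]$, which depends only on the chambers containing the endpoints. Next I would apply the groupoid morphism $\rho:\gc^{ss}_\ell(A)\to G_\ell(A)$ of \cref{prop:from groupoid to group}. It sends each factor $[\tc_{\sigma_i},\overline{\tc}_{\sigma_i}]$ to $X_{[\tc_{\sigma_i},\overline{\tc}_{\sigma_i}]}=g_{\sigma_i}=\Phi^\ell_A(\sigma_i)$ and respects inverses. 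Hence $g_\gamma=\rho(\text{lift of }\gamma)=\rho([\{0\},\tc_{\gamma(0)}])^{-1}\rho([\{0\},\tc_{\gamma(1)}])=g_{\cc_0}^{-1}g_{\cc_1}$, where $\cc_0,\cc_1$ are the chambers containing $\gamma(0)$ and $\gamma(1)$, so $g_\gamma$ depends only on the endpoints.

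I expect the main obstacle to be the sign bookkeeping. One must check that the orientation sign $\epsilon_i$, defined via $\langle v_i,\gamma'(t_i)\rangle$ with $v_i$ a normal vector of $\sigma_i$, agrees with the direction of the groupoid factor. Concretely, crossing from the chamber with smaller torsion class $(\tc_{\sigma_i})_\ell$ into the one with larger torsion class $(\overline{\tc}_{\sigma_i})_\ell$ must give $\epsilon_i=+1$. To settle this, I would choose $v_i$ to be the class $[B]$ of a brick $B$ with $\sigma_i\subset[B]^\perp$; such a brick has a positive vector by the convention $v\in\mathbb R^n_{\geq0}$. Moving in the direction of positive pairing with $[B]$ increases $\langle -, [B]\rangle$. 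This puts $B$ in $\tc_v$ and so moves into the larger torsion class, using \cref{prop:TorsionBKT}.
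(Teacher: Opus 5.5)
Your proof follows the paper's argument. Well-definedness comes from the finiteness of $\Df_\ell(A)$ and minimality from $(\mod^{ss}_{\df}A)_\ell\neq\{0\}$. Consistency comes from lifting $\gamma$ to $[\{0\},\tc_{\gamma(0)}]^{-1}[\{0\},\tc_{\gamma(1)}]$ in $\gc^{ss}_\ell(A)$ via \cref{lem:liftingtogroupoid} and then applying $\rho$ from \cref{prop:from groupoid to group}. Your sign check makes explicit the composability that \cref{lem:liftingtogroupoid} uses without comment. One step there is too quick: $\langle v,[B]\rangle>0$ alone does not put $B$ in $\tc_v$. A cleaner argument is that $B$, being $\sigma_i$-semistable, lies in $(\overline{\fc}_{\sigma_i})_\ell$. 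This is the torsion-free class of the chamber carrying $(\tc_{\sigma_i})_\ell$, so that chamber lies on the side where $\langle -,[B]\rangle<0$.

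The real flaw is your fix for the point you yourself call delicate in the minimality step: it is circular. That $\varphi\colon G_\ell(A)\to\widetilde G_\ell(A)$ is an isomorphism says nothing about whether distinct subcategories are distinct \emph{elements} of the presented group $\widetilde G_\ell(A)$. Here the subcategories in question are $(\tc_{\df_1})_\ell\neq(\tc_{\df_2})_\ell$, or $(\mod^{ss}_\df A)_\ell\neq\{0\}$. Transported along $\varphi$, that distinctness is exactly the claim $g_\df\neq 1$ you set out to prove.

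The paper's one-line minimality argument makes the same implicit identification of group elements with subcategories. So your proposal is complete at the paper's level of rigor. A genuine proof, however, needs an independent reason why $g_{\df_1}\neq g_{\df_2}$ in $G_\ell(A)$, for instance a homomorphism out of $G_\ell(A)$ that separates them. For $A=\mathbb{C}Q/I$ one could use $g_\df\mapsto[\mod^{ss}_\df A\to\mc]_\ell$ into the truncated Hall algebra of \cref{sec:isomorphism}, whose value has a nonzero component of positive degree.
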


\begin{proof}
Let $\gamma: [0,1] \to \Df_\ell(A)$ be a $\Df_\ell(A)$-generic path. 
Since $\Df_\ell(A)$ is finite, the product defining $g_\gamma$ is finite and therefore well defined.

Minimality follows because $(\mod_\df^{ss}A)_\ell$ is nonzero for every codimension-one cone $\df$.

Finally, the consistency of $(\Df_\ell(A),\Phi^l_A: \Df_\ell^1(A) \to G_\ell(A))$ follows from \cref{lem:liftingtogroupoid,prop:from groupoid to group}. 
By \cref{lem:liftingtogroupoid}, every generic path $\gamma$ determines the unique morphism
$$[\{0\}, \tc_{\gamma(0)}]^{-1}[\{0\}, \tc_{\gamma(1)}]$$
 in $\gc_\ell^{ss}(A)$.
Applying the canonical morphism $\rho: \gc_\ell(A) \to G_\ell(A)$
yields
$$g_\gamma = \rho([\{0\}, \tc_{\gamma(0)}]^{-1}[\{0\}, \tc_{\gamma(1)}]) \in G_\ell(A)$$
which depends only on the endpoints of $\gamma$.
Hence the scattering diagram is consistent.%Let $\gamma:[0,1] \to \Df_\ell(A)$ be a $\Df_\ell(A)$-generic path.
\end{proof}

\begin{remark}
The key idea behind the proof of the previous theorem is that any $\Df_\ell$-generic path $\gamma$ can be interpreted as a continuous walk in the lattice $\tors^{ss}_\ell(A)$. 
\end{remark}

%%%%%%%%%%%%%%%%%
\subsection{Scattering diagrams for module categories}\label{subsec: scattering diagrams for modA}

The main result of this subsection, and of the paper, is the following theorem.
The main difficulty is to show that every $\Df(A)$-generic path
$\gamma: [0,1] \to \Df(A)$ determines a well-defined element $g_\gamma\in G(A)$, since $\gamma$ may cross infinitely many walls.

\begin{theorem}\label{thm:scatteringdiagrams}
Let $A$ be an Artin algebra.
Then the pair $(\Df(A),\Phi_A: \Df^1(A) \to G(A))$ is a minimal consistent scattering diagram.
Here $\Df(A)$ is the cone complex determined by the TF-equivalence classes of stability conditions, $G(A)$ is the associated picture group and $\Phi_A(\df_v) = \mod^{ss}_{\df_v} A$.
\end{theorem}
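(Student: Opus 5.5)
The plan is to define $g_\gamma$ for a $\Df(A)$-generic path $\gamma$ as an element of the inverse limit $G(A)=\underleftarrow\lim\, G_\ell(A)$, using the finite scattering diagrams of \cref{thm:boundedscatteringdiagrams} at each level. First, by \cref{cor:refinement of W&C}, $\Df(A)$ refines every $\Df_\ell(A)$. Hence \cref{lem:restrictionofpaths} shows that $\gamma$ is also a $\Df_\ell(A)$-generic path for each $\ell$. Since $\Df_\ell(A)$ is finite, \cref{prop:finitecrossing} gives a well-defined finite product $g^\ell_\gamma\in G_\ell(A)$. By \cref{lem:liftingtogroupoid} and \cref{prop:from groupoid to group}, it equals $\rho_\ell([\{0\},(\tc_{\gamma(0)})_\ell]^{-1}[\{0\},(\tc_{\gamma(1)})_\ell])$.

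Next I would check compatibility, namely $\pi_{m,\ell}(g^m_\gamma)=g^\ell_\gamma$ for $\ell<m$. This follows from the endpoint formula, since $\pi_{m,\ell}$ sends $X_{[\tc_m,\tc'_m]}$ to $X_{[\tc_\ell,\tc'_\ell]}$, and $\varphi_{m,\ell}$ sends $(\tc_{\gamma(i)})_m$ to $(\tc_{\gamma(i)})_\ell$ by \cref{prop:composition}. A wall-by-wall argument also works: each wall of $\Df_\ell(A)$ crossed by $\gamma$ is a union of walls of $\Df_m(A)$. Those walls of $\Df_m(A)$ lying inside chambers of $\Df_\ell(A)$ have semistable categories with $(\mod^{ss}A)_\ell=\{0\}$, so they map to the identity in $G_\ell(A)$. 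I therefore set $g_\gamma:=(g^\ell_\gamma)_{\ell\in\mathbb{N}}\in G(A)$, as in \cref{rmk:tuples}.

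The main obstacle is justifying that this tuple is the ordered product $\prod_{i\in S_\gamma}\Phi_A(\df_i)^{\epsilon_i}$, which may be infinite. I would handle it through the same convergence-in-the-limit principle. Under the projection $\pi_\ell$, all but finitely many factors become trivial, because only the walls $\dc(B)$ with $\lg(B)\le\ell$ survive at level $\ell$. Moreover $\pi_\ell(\Phi_A(\df))=(\mod^{ss}_\df A)_\ell$ is exactly the level-$\ell$ wall element. So the infinite ordered product is interpreted as the unique element of $G(A)$ whose projections are the finite truncated products $g^\ell_\gamma$. This is the analogue of \cref{prop:infinite products} via \cref{cor:infinite products modA}, and it shows the diagram is well defined.

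Consistency is then immediate, since each coordinate $g^\ell_\gamma$ depends only on $(\tc_{\gamma(0)})_\ell$ and $(\tc_{\gamma(1)})_\ell$, and hence only on $\gamma(0)$ and $\gamma(1)$. For minimality, take a wall $\df\in\Df^1(A)$. Then $\mod^{ss}_\df A$ contains a nonzero module $M$. Choosing $\ell\ge\lg(M)$, the cone $\df$ lies in a codimension-one cone $\df_\ell$ of $\Df_\ell(A)$, and $\pi_\ell(\Phi_A(\df))=\Phi^\ell_A(\df_\ell)$. By minimality in \cref{thm:boundedscatteringdiagrams}, together with the isomorphism of \cref{prop:categorical interpretation} under which this element is the nonzero subcategory $(\mod^{ss}_{\df}A)_\ell\neq\{0\}$, this projection is nontrivial. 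Hence $\Phi_A(\df)\neq 1$.
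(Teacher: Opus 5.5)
Your proposal is correct and follows the paper's proof. Like the paper, you restrict $\gamma$ to a $\Df_\ell(A)$-generic path via \cref{lem:restrictionofpaths}, obtain $g^\ell_\gamma$ depending only on the endpoints from \cref{lem:liftingtogroupoid} and $\rho$, check $\pi_{m,\ell}(g^m_\gamma)=g^\ell_\gamma$, and take the compatible tuple in $G(A)$ as in \cref{rmk:tuples}; beyond that you only spell out what the paper leaves implicit, namely the reading of the possibly infinite ordered product through its level-$\ell$ truncations and minimality via projecting a wall to a level $\ell\geq\lg(M)$ and invoking \cref{thm:boundedscatteringdiagrams}.
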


\begin{proof}
It follows directly from the definition of $(\Df(A),\Phi_A: \Df^1(A) \to G(A))$ that it is a minimal scattering diagram. 
We need to show that it is well defined and consistent.

Let $\gamma: [0,1]\to \Df(A)$ be a $\Df(A)$-generic path. 
If $\ell \geq 1$.
 \cref{lem:restrictionofpaths} implies that $\gamma$ is also a $\Df_\ell(A)$-generic path. 
Then, $\gamma$ induces an element 
$$g^\ell_\gamma = \rho([\{0\}, (\tc_{\gamma(0)})_\ell]^{-1}[\{0\}, (\tc_{\gamma(1)})_\ell]) \in G_\ell(A)$$ for every $\ell \in \mathbb{N}$.
Note that for every pair of integers $\ell ,m \in \mathbb{N}$ with $\ell<m$ we have $\pi_{m,\ell}(g^m_\gamma) = g^\ell_\gamma$. 
Therefore the family $(g^\ell_\gamma)_{\ell\in\mathbb{N}} $
is compatible with the transition maps, and hence defines an element $g_\gamma\in G(A)$ by \cref{rmk:tuples}.
Since each $g_\gamma^\ell$ depends only on the endpoints of $\gamma$, the same is true for $g_\gamma$.
Hence the scattering diagram is well defined and consistent.
\end{proof}

Following \cite{BST}, a $\Df$-generic path is called green if $\gamma(0) \in \cc^-$, $\gamma(1) \in \cc^+$ and $\langle \gamma'(t), v \rangle >0$ whenever $\gamma(t)\in \df \subseteq v^\perp$. 
Dually, a $\Df$-generic path $\gamma$ is said to be red if $\gamma(0) \in \cc^+$, $\gamma(1) \in \cc^-$ and $\langle \gamma'(t), v \rangle <0$ whenever $\gamma(t)\in \df \subseteq v^\perp$. 

\begin{remark}
Recall that every consistent scattering diagram is determined by an element $g\in G$, which corresponds to $g_\gamma$ for some $\Df$-generic path $\gamma$ such that $\gamma(0) \in \cc^-$ and $\gamma(1) \in \cc^+$.
This characterization may be restricted to green paths.
For every green path $\gamma$, it follows from the results in \cref{sec:groupoids,sec:picture group} that the scattering diagram $(\Df(A),\Phi_A: \Df^1(A) \to G(A))$ we introduce in this section corresponds to the scattering diagram associated with the element
$\modA\in G(A)$, which is unique up to isomorphism.
Furthermore, consistency along green paths also follows directly from \cref{prop:infinite products}. 
However, this argument does not apply to arbitrary $\Df$-generic paths.
\end{remark}

\begin{remark}
Let $A$ and $A'$ be two algebras that are not Morita equivalent but that satisfy $\Df_\ell(A) = \Df_\ell(A')$ as cone complexes for every positive integer $\ell$.
Examples of such algebras are the GLS-algebras studied in \cite{Pfeifer}. 
Then it follows from our construction that $\Df(A) = \Df(A')$ as a cone complex. 
Moreover, their respective picture groups $G(A)$ and $G(A')$ are isomorphic but not equal. 
As a consequence, the scattering diagrams $(\Df(A),\Phi_A: \Df^1(A) \to G(A))$ and $(\Df(A'),\Phi_{A'}: \Df^1(A') \to G(A'))$ are isomorphic but distinct. 
\end{remark}

%%%%%%%%%%%%%%%%
\section{An isomorphism to the stability scattering diagram}\label{sec:isomorphism}

In this section, $Q$ is a finite quiver and $A=\mathbb{C}Q/I$ is the path algebra of $Q$ over $\mathbb{C}$ modulo an admissible ideal $I$. 
In particular, $A$ is an Artin algebra.

The main goal of this section is to show that the torsion scattering diagram for $A$, constructed in \cref{sec:torsion scattering diagram}, is isomorphic to the stability scattering diagram introduced by Bridgeland in \cite{BridgelandScat}.

We begin by recalling the necessary background on motivic Hall algebras, following \cite{BridgelandScat}; see also \cite{BridgelandHall}. 
We then review the definition of the stability scattering diagram and conclude by proving that it is isomorphic to the torsion scattering diagram constructed in this paper.

%%%%%%%%%%%%
\subsection{Motivic Hall algebras}
In this subsection we restrict to the case where $A$ is connected a finite-dimensional algebra over $\mathbb{C}$. 
By a result of Gabriel, we can assume without loss of generality that $A$ is isomorphic to $\mathbb{C}Q/I$, the path algebra of a finite quiver $Q$ bound by an admissible ideal $I\subset \mathbb{C}Q$. 
It is well-known that the category $\modA$ in this case is equivalent to the category $\operatorname{rep}(Q,I)$ of representations of the quiver $Q$ satisfying the relations imposed by $I$. 

With our conventions, the class $[M]\in K_0(A)$ is the dimension vector of $M$. 
This terminology comes from the identification of $\modA$ with $\operatorname{rep}(Q,I)$.
%fact that $M$ is 
%$$M = (\{M_i\in \operatorname{vect}({\mathbb{C}}) \mid {i \in Q_0} \}, \{f_\alpha \in \Hom_\mathbb{C}(M_{s(\alpha)}, M_{t(\alpha)})\mid {\alpha \in Q_1})\}.$$
%when thought as an object in $\operatorname{rep}(Q,I)$.
Hence $[M]_i = \dim_{\mathbb{C}} M_i$.
In particular, the composition length of $M$ coincides with the dimension $\dim_\mathbb{C} M$ of $M$ as a $\mathbb{C}$-vector space.

Associated with $A$ is an Artin stack of finite type $\mc$ parametrising the objects of $\modA$. 
There is also an Artin stack $\mc^{(2)}$ parametrising all short exact sequences in $\modA$. 
An object $\xi$ in $\mc^{(2)}$ is a short exact sequence of the form 
\[
\xi :=0 \to M_1 \to E \to M_2 \to 0.
\]
This gives rise to three natural morphisms from $\mc^{(2)}$ to $\mc$ as follows. 
\[
\begin{aligned}
\begin{aligned}
a_1 : \mc^{(2)} &\to \mc \\
\xi &\mapsto M_1
\end{aligned}
\qquad
\begin{aligned}
b: \mc^{(2)} &\to \mc \\
\xi &\mapsto E
\end{aligned}
\qquad
\begin{aligned}
a_2 : \mc^{(2)} &\to \mc \\
\xi &\mapsto M_2
\end{aligned}
\end{aligned}
\]
This yields the following diagram of morphisms of stacks.
% https://q.uiver.app/#q=WzAsMyxbMCwwLCJcXG1jXnsoMil9Il0sWzAsMiwiXFxtY1xcdGltZXNcXG1jIl0sWzIsMCwiXFxtYyJdLFswLDIsImIiLDAseyJzaG9ydGVuIjp7InNvdXJjZSI6MjAsInRhcmdldCI6MjB9fV0sWzAsMSwiKGFfMSwgYV8yKSIsMix7InNob3J0ZW4iOnsic291cmNlIjoyMCwidGFyZ2V0IjoyMH19XV0=
\[\begin{tikzcd}
	{\mc^{(2)}} && \mc \\
	\\
	{\mc\times\mc,}
	\arrow["b",  from=1-1, to=1-3]
	\arrow["{(a_1, a_2)}"',  from=1-1, to=3-1]
\end{tikzcd}\]
where $(a_1, a_2)$ is of finite type and $b$ is representable and proper. 

As explained in \cite{BridgelandHall}, there is a well-defined Grothendieck ring of stacks $K(St/\mathbb{C})$. 
Given an algebraic stack $S$, one defines the relative Grothendieck group $K(St/S)$, whose elements are classes $[X \to S]$, where $X$ is of finite type over $\mathbb{C}$ and has affine stabilisers.
These generators satisfy the following relations. 
\begin{itemize}
	\item For every pair of stacks $X_1, X_2$ there is a relation 
	\[
	\left[X_1 \coprod X_2 \xrightarrow{f_1 \coprod f_2} S \right] = [X_1 \xrightarrow{f_1} S ] + [X_2 \xrightarrow{f_2} S ]. 
	\]
	\item If $g: X_1 \to X_2$ is a geometric bijection, then for every diagram % https://q.uiver.app/#q=WzAsMyxbMCwwLCJYXzEiXSxbMiwwLCJYXzIiXSxbMSwxLCJTIl0sWzAsMSwiZyJdLFsxLDIsImZfMiJdLFswLDIsImZfMSIsMl1d
\[\begin{tikzcd}
	{X_1} && {X_2} \\
	& S
	\arrow["g", from=1-1, to=1-3]
	\arrow["{f_1}"', from=1-1, to=2-2]
	\arrow["{f_2}", from=1-3, to=2-2]
\end{tikzcd}\] there is a relation 
\[
 [X_1 \xrightarrow{f_1} S ] = [X_2 \xrightarrow{f_2} S ]. 
\]
\item If $[X \xrightarrow{f} S]$ is an object in $K(St/S)$ and $h_1: Y_1 \to X$ and $h_2 : Y_2 \to X$ are two locally trivial fibrations in the Zariski topology with isomorphic fibers, then the following equality holds. 
\[
[Y_1 \xrightarrow{h_1 \circ f} S] = [Y_2 \xrightarrow{h_2 \circ f} S]
\]
\end{itemize}
The motivic Hall algebra $H(A)$ is defined by considering the Grothendieck group $K(St/\mc)$ relative to the universal stack $\mc$ of $\modA$ equipped with the product 
\begin{equation}\label{eq:convolutionHall}
[X_1 \xrightarrow{f_1} \mc]*[X_2 \xrightarrow{f_2} \mc] =  [Z \xrightarrow{b\circ h} \mc]
\end{equation}
where the map $h : Z \to \mc^{(2)}$ is defined by the pullback square.
% https://q.uiver.app/#q=WzAsNSxbMiwwLCJcXG1jXnsoMil9Il0sWzIsMiwiXFxtY1xcdGltZXNcXG1jIl0sWzQsMCwiXFxtYyJdLFswLDAsIloiXSxbMCwyLCJYXzFcXHRpbWVzIFhfMiJdLFswLDIsImIiXSxbMCwxLCIoYV8xLCBhXzIpIiwyXSxbNCwxLCJmXzEgXFx0aW1lcyBmXzIiXSxbMywwLCJoIl0sWzMsNF1d
\[\begin{tikzcd}
	Z && {\mc^{(2)}} && \mc \\
	\\
	{X_1\times X_2} && {\mc\times\mc}
	\arrow["h", from=1-1, to=1-3]
	\arrow[from=1-1, to=3-1]
	\arrow["b", from=1-3, to=1-5]
	\arrow["{(a_1, a_2)}"', from=1-3, to=3-3]
	\arrow["{f_1 \times f_2}", from=3-1, to=3-3]
\end{tikzcd}\]
It has been shown in \cite{Joyce} that there exists a map $\Upsilon : K(St/\mathbb{C}) \to \mathbb{C}(t)$ which endows  $H(A)$ with the structure of a $\mathbb{C}(t)$-algebra.
Moreover, the motivic Hall algebra $H(A)$ is unital and associative.

An element $[X \xrightarrow{f} \mc]\in H(A)$ may be viewed as representing a full subcategory $\xc\subset\modA$. 
Under this interpretation, the Hall product corresponds to taking extensions: 
the class $[Z \xrightarrow{f} \mc]$ appearing above represents the full subcategory consisting of all objects $E$ admitting a short exact sequence
\[
0 \to X_1 \to E \to X_2\to 0
\]
with $X_i \in \xc_i$.
In particular, the unit element $1_{H(A)}$ is represented by the inclusion of the zero object.

%
%In this setting, for every object $[X \xrightarrow{f} \mc]$ we can think of $X$ as a subcategory $\xc$ of $\modA$. 
%Under this light, the subcategory $\mathcal{Z}$ of $\modA$ associated to the element $[Z \xrightarrow{b \circ h} \mc]$ obtained from the multiplication of $[X_1 \xrightarrow{f_1} \mc]$ and $[X_2 \xrightarrow{f_2} \mc]$ is the full category consisting on all the objects in $\modA$ that can be obtained as an extension of an object in $\xc_1$ by an object in $\xc_2$.
%In particular, this implies that the unit $1_{H(A)}$ corresponds to the class $[\{0\} \to \mc]$ of the natural inclusion of $\{0\}$ in $\modA$. 

The universal stack $\mc$ has a stratification as 
\[
\mc = \coprod_{\underline{d} \in \mathbb{Z}^n_{\geq 0}} \mc_{\underline{d}},
\]
where $\mc_{\underline{d}}$ is the stack parametrising all the representations with dimension vector $\underline{d}\in \mathbb{Z}^n_{\geq 0}$.
In particular, we can coarsen this stratification to 
\[
\mc = \coprod_{\ell \in \mathbb{N}} \mc_{\ell},
\]
where $\mc_\ell$ is the stack parametrising all the representations of length $\ell \in \mathbb{N}$.
This stratification of $\mc$ induces a $\mathbb{Z}$-grading 
\[
H(A)=\bigoplus_{i \in \mathbb{N}}H(A)_i
\]
of $H(A)$ satisfying $H(A)_i * H(A)_j \subset H(A)_{i+j}$. 
In particular, we get that 
\[
H(A)_{> \ell }:= \bigoplus_{j\geq \ell} H(A)_j
\]
is a two-sided ideal of $H(A)$ for every positive integer $\ell$. 
We define the truncated Hall algebra $H_\ell(A)$ as the quotient $H_\ell(A) := H(A)/H(A)_{>\ell}$ and the completed motivic Hall algebra $\hat{H}(A)$ as 
\[
\hat{H}(A):= \underleftarrow\lim\, H_\ell(A).
\]
For every object of the form $[\xc \to \mc]\in H(A)$ with $\xc$ a subcategory of $\modA$, we denote by $[\xc \to \mc]_\ell$ the corresponding object in $H_\ell(A)$. 
Moreover, by abuse of notation, we also denote by $[\xc\to \mc]$ the corresponding object in $\hat{H}(A)$.

\smallskip
Every associative algebra carries a natural Lie algebra structure via the commutator bracket.
The associated simply connected Lie group is obtained via the exponential map, whose product is defined using the Baker-Campbell-Hausdorff formula. 

Let $\hf(A)$ denote the Lie algebra associated to $H(A)$ and $\hf(A)_{> l}$ the ideal of $\hf(A)$ corresponding to $H(A)_{>l}$. 
Then the quotient $\hf_{l}(A) := \hf(A) / \hf(A)_{>l}$ is a nilpotent Lie algebra. 
We define the pro-nilpotent Lie algebra $\hat{\hf}(A)$ as 
\[
\hat{\hf}(A):= \underleftarrow\lim\, \hat{\hf}_\ell(A).
\]

Since the Lie algebra $\hf_\ell(A)$ is nilpotent, its associated Lie group $\dot{G}_\ell(A)$ is unipotent. 
The corresponding pro-unipotent Lie group $\dot{G}(A)$ is defined as
\[
\dot{G}(A) := \underleftarrow\lim\, \dot{G}_\ell(A).
\]
As explained in \cite[§5.10]{BridgelandScat}, $\dot{G}(A)$ can be identified with $1_{\hat{H}(A)}+\hat{H}_{>0}(A) \subset \hat{H}(A)$.
Consequently, an object $[\xc \to \mc]$ in $\hat{H}(A)$ is invertible if and only if the zero object belongs to $\xc$. 
In particular, Joyce \cite{Joyce} proved that the object $1_{v}:=[\mod_v^{ss}A \to \mc] \in \hat{H}(A)$ is invertible for every stability condition $v \in \mathbb{R}^n$.

\begin{remark}\label{rmk: faithful C-alg}
Note that the objects $1_v, 1_{v'}$ associated to two stability conditions $v, v' \in \mathbb{R}^n$ coincide if and only if $\mod_v^{ss}A = \mod_{v'}^{ss}A$. 
This implies the existence of a faithful functor $\Gamma: \Cf(A) \to G(A)$ from the $\tau$-cluster morphism category $\Cf(A)$ to the picture group $G(A)$ for every finite-dimensional $\mathbb{C}$-algebra $A$. 
This was proven independently by B\o rve in \cite{Borvefaithful}.
\end{remark}

%%%%%%%%%%%%%%%%%%
\subsection{Stability scattering diagrams and main result}
We now recall the stability scattering diagram introduced in \cite{BridgelandScat}.

\begin{theorem}\cite[Section~6]{BridgelandScat}
Let $Q$ be a quiver and let $A = \mathbb{C}Q/I$ be a finite-dimensional $\mathbb{C}$-algebra. 
There is a minimal consistent scattering diagram $(\Df(A),\hat{\Phi}_A: \Df^1(A) \to \hat{H}(A))$ associated to $A$, where $\Df(A)$ is the cone complex introduced above and $\hat{\Phi}_A: \Df^1(A) \to \hat{H}(A)$ is defined by $\hat{\Phi}(\df) = [\mod_\df^{ss}A \to \mc]$.
\end{theorem}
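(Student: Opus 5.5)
The plan is to prove the statement by transporting the torsion-theoretic argument of \cref{thm:boundedscatteringdiagrams,thm:scatteringdiagrams} into the truncated Hall algebras $H_\ell(A)$, replacing the picture group $G_\ell(A)$ by the unipotent group $\dot G_\ell(A)\subset H_\ell(A)$.

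\textbf{Step 1: torsion and semistable categories define invertible elements.} For each stability condition $v$, I will show that $\tc_v$, $\overline{\tc}_v$ and $\mod^{ss}_vA$ are open or constructible substacks of $\mc$ when restricted to each stratum $\mc_{\underline d}$. This holds because membership is cut out by finitely many inequalities on dimension vectors of sub- or quotient representations, and the quiver Grassmannians are proper. Hence $1_{\tc_v}:=[\tc_v\to\mc]$, $1_{\overline{\tc}_v}$ and $1_v$ define elements of $\hat H(A)$. Each contains the zero object, so each lies in $1+\hat H_{>0}(A)=\dot G(A)$ and is invertible.

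\textbf{Step 2: the Hall product of adjacent categories (key relation).} I will prove the identity
\[
[\tc_v\to\mc]*[\mod^{ss}_vA\to\mc]=[\overline{\tc}_v\to\mc]
\]
in $\hat H(A)$. By \cref{prop:TorsionBKT} and the decomposition $\overline{\tc_v}=\tc_v*\mod^{ss}_vA$, every $E\in\overline{\tc}_v$ admits a sequence $0\to X_1\to E\to X_2\to 0$ with $X_1\in\tc_v$ and $X_2\in\mod^{ss}_vA$. Since $(\tc_v,\overline{\fc}_v)$ is a torsion pair, this sequence is the canonical torsion sequence, $X_1=t_vE$, and it is therefore unique up to unique isomorphism. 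Consequently the map $Z\to\mc$ appearing in the convolution \cref{eq:convolutionHall} is a geometric bijection onto $\overline{\tc}_v$, and the relations of $K(St/\mc)$ give the identity.

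I expect this step to be the main obstacle: one must check that the pullback $Z$ has no extra stabiliser or fibre contributions. This amounts to showing that the automorphisms of the torsion sequence coincide with those of $E$, which follows from the functoriality of the torsion radical. The same argument applies verbatim after truncation to $H_\ell(A)$.

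\textbf{Step 3: consistency, following \cref{lem:liftingtogroupoid}.} Fix $\ell$ and a $\Df(A)$-generic path $\gamma$. By \cref{lem:restrictionofpaths}, $\gamma$ is also $\Df_\ell(A)$-generic. A wall $\df$ of $\Df(A)$ whose semistable category contains no nonzero module of length $\le\ell$ has $[\mod^{ss}_\df A\to\mc]_\ell=1$, so only finitely many factors are nontrivial in $H_\ell(A)$; this is the analogue of \cref{prop:finitecrossing}. Between consecutive crossings $\gamma$ stays in one chamber of $\Df_\ell(A)$, which corresponds to a fixed $(\tc)_\ell$ by \cref{prop: tors to chambers}. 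Applying Step 2 at each crossing, with the sign $\epsilon_i$ deciding whether we pass from $\tc$ to $\overline{\tc}$ or back, the product telescopes to
\[
g^\ell_\gamma=[\tc_{\gamma(0)}\to\mc]_\ell^{-1}*[\tc_{\gamma(1)}\to\mc]_\ell .
\]
These elements are compatible under $H_{m}(A)\to H_\ell(A)$, so they define $g_\gamma\in\hat H(A)$ depending only on the endpoints of $\gamma$.

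\textbf{Step 4: minimality.} For every $\df\in\Df^1(A)$ the category $\mod^{ss}_\df A$ contains a nonzero brick, so $[\mod^{ss}_\df A\to\mc]\ne 1$ in $\hat H(A)$, since it has a nonzero component in positive degree.
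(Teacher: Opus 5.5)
Your argument is correct and follows the same route as the proof in \cite[Section~6]{BridgelandScat}, which the paper cites without reproducing. That route is: openness of $\tc_v$, $\overline{\tc}_v$ and $\mod^{ss}_vA$ on each stratum; the factorisation $[\tc_v\to\mc]*[\mod^{ss}_vA\to\mc]=[\overline{\tc}_v\to\mc]$, coming from uniqueness and functoriality of torsion sequences; and telescoping of the path-ordered product in each truncation $H_\ell(A)$ to $[\tc_{\gamma(0)}\to\mc]_\ell^{-1}*[\tc_{\gamma(1)}\to\mc]_\ell$. This is the same mechanism the paper uses for its own torsion scattering diagram in \cref{lem:liftingtogroupoid}. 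You leave two points implicit, and both are routine. First, the two $\Df_\ell(A)$-chambers adjacent to a crossing point $v$ carry $(\tc_v)_\ell$ and $(\overline{\tc}_v)_\ell$ in the order dictated by $\epsilon_i$. This holds because any quotient $N$ of an object of $(\overline{\tc}_v)_\ell$ with $\langle v,[N]\rangle=0$ is $v$-semistable, so $[N]$ is a positive multiple of the nonnegative normal vector of the wall. Second, a nonempty open substack of some $\mc_{\underline d}$ has nonzero class in $K(St/\mc)$.
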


We can now compare the two scattering diagrams.

\begin{theorem}\label{thm:isomorphism}
Let $A=\mathbb{C}Q/I$ be a finite-dimensional algebra. 
Then the stability scattering diagram $(\Df(A),\hat{\Phi}_A: \Df^1(A) \to \hat{H}(A))$ is isomorphic to the torsion scattering diagram $(\Df(A),\Phi_A: \Df^1(A) \to G(A))$ under the map $I : \Image(\Phi_A) \to \Image(\hat{\Phi}_A)$ defined by $I(\mod_{\df}^{ss}A) = [\mod_\df^{ss}A \to \mc]$.
\end{theorem}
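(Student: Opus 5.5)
The plan is to work level by level and then pass to inverse limits, since both $G(A)=\underleftarrow\lim\,G_\ell(A)$ and $\hat H(A)=\underleftarrow\lim\,H_\ell(A)$ (or the unipotent groups $\dot G_\ell(A)$) are defined this way. For each $\ell$ I would build a group homomorphism $I_\ell : G_\ell(A)\to \dot G_\ell(A)\subset H_\ell(A)^\times$ and check that these maps are compatible with the transition maps $\pi_{m,\ell}$ and with the truncations $H_m(A)\to H_\ell(A)$. By the universal property of the limit they then assemble to $I:G(A)\to\dot G(A)$. On walls, $I$ sends $\Phi_A(\df)=\mod^{ss}_\df A$ to $[\mod^{ss}_\df A\to\mc]=\hat\Phi_A(\df)$. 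Since the cone complex is the same, this intertwines the two scattering diagrams.

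To construct $I_\ell$, I would use the presentation of $G_\ell(A)\cong\widetilde G_\ell(A)$ from \cref{prop:categorical interpretation} and \cref{prop:deftildeG}, and define it on generators:
\[
(\tc_\df)_\ell\mapsto[\tc_\df\to\mc]_\ell,\qquad (\mod^{ss}_{\df'}A)_\ell\mapsto[\mod^{ss}_{\df'}A\to\mc]_\ell .
\]
Both targets contain $0$, so they are invertible in $H_\ell(A)$. Classes of length $>\ell$ die in $H_\ell(A)$, so only the $\ell$-truncation of each subcategory matters. Next I must check the defining relation $(\tc_{\df_1})_\ell*(\mod^{ss}_{\df'}A)_\ell=(\tc_{\df_2})_\ell$. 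The Hall product of characteristic stacks $[\tc_{\df'}\to\mc]*[\mod^{ss}_{\df'}A\to\mc]$ counts short exact sequences $0\to T\to E\to S\to0$. Since $\Hom(\tc_{\df'},\mod^{ss}_{\df'}A)=0$ and the torsion sequence of $E$ is unique (\cref{prop:extending torsion classes}, with $\overline{\tc}_{\df'}\cap\fc_{\df'}=\mod^{ss}_{\df'}A$), the map $b$ is a geometric bijection onto the substack of $\overline{\tc}_{\df'}$. Hence the product equals $[\overline{\tc}_{\df'}\to\mc]$ exactly, not just up to multiplicity. This is the same argument Bridgeland and Reineke use for Harder--Narasimhan identities. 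Truncating at $\ell$ gives the relation, so $I_\ell$ is a well-defined homomorphism. Compatibility with $\pi_{m,\ell}$ holds because both sides are given by the same subcategories, just truncated further.

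I expect the main obstacle to be making ``isomorphic'' precise. The image of $G(A)$ in $\hat H(A)$ is not all of $\dot G(A)$, so I would interpret the statement as a bijection $I:\Image(\Phi_A)\to\Image(\hat\Phi_A)$ that extends to a homomorphism respecting all wall-crossing products. Injectivity on wall labels follows from \cref{rmk: faithful C-alg}: $1_v=1_{v'}$ if and only if $\mod^{ss}_vA=\mod^{ss}_{v'}A$, and by \cref{lem:generators picture group} the labels in $G(A)$ are distinguished by the same data. Finally, for any $\Df(A)$-generic path $\gamma$, applying the homomorphism $I$ levelwise to $g^\ell_\gamma$ gives the truncated Bridgeland product, since the factors correspond wall by wall with the same signs $\epsilon_i$. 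So $I(g_\gamma)=\hat g_\gamma$, which completes the identification of the two diagrams.
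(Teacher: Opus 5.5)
Your proposal is correct and takes essentially the paper's route. The only real input is the Hall-algebra torsion-pair factorization $[\tc_{\df'}\to\mc]*[\mod^{ss}_{\df'}A\to\mc]=[\overline{\tc}_{\df'}\to\mc]$, which is Bridgeland's Lemma~6.6 and which you reprove by the geometric-bijection argument; what you spell out as a levelwise homomorphism $G_\ell(A)\to H_\ell(A)^\times$ assembled in the limit, the paper compresses into citing that lemma together with \cref{lem:liftingtogroupoid} and \cref{prop: one map}. One small slip: the quotient category in the torsion decomposition is $\overline{\tc}_{\df'}\cap\overline{\fc}_{\df'}=\mod^{ss}_{\df'}A$, not $\overline{\tc}_{\df'}\cap\fc_{\df'}$, which is zero because $(\overline{\tc}_{\df'},\fc_{\df'})$ is a torsion pair.
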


\begin{proof}
By construction, both scattering diagrams are defined on the same cone complex $\Df(A)$.
Moreover, the map $I : \Image(\Phi(A)) \to \Image(\hat{\Phi}(A))$ is bijective by definition.  
It therefore remains to verify that $I$ is compatible with the group operations.
This follows from \cite[Lemma~6.6]{BridgelandScat}, \cref{lem:liftingtogroupoid} and \cref{prop: one map}.
\end{proof}

\begin{remark}
Although the isomorphism is formally straightforward, the stability scattering diagram and the torsion scattering diagram are not a priori isomorphic since they take value in different groups.
\end{remark}

We conclude the paper with the following corollary.

\begin{corollary}\label{cor: finitary isomorphism}
Let $A=\mathbb{C}Q/I$ be a finite-dimensional algebra and let $\ell$ be a positive integer. 
Then the stability scattering diagram $(\Df_\ell(A),\hat{\Phi}^{\ell}_A: \Df_\ell^1(A) \to H_\ell(A))$ is isomorphic to the torsion scattering diagram $(\Df_\ell(A),\Phi^{\ell}_A: \Df^1_\ell(A) \to G_\ell(A))$ via the map \mbox{$I_\ell : \Image(\Phi^\ell(A)) \to \Image(\hat{\Phi}^\ell_A)$} defined by $I_\ell((\mod_{\df}^{ss}A)_\ell) = [\mod_\df^{ss}A \to \mc]_\ell$.
\end{corollary}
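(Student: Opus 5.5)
The plan is to obtain \cref{cor: finitary isomorphism} as the level-$\ell$ shadow of \cref{thm:isomorphism}, but to argue directly at level $\ell$, where everything is finite. Both diagrams live on the same finite cone complex $\Df_\ell(A)$ by \cref{cor:finite cone complex for l}. For each wall $\df\in\Df^1_\ell(A)$ I would set $\hat{\Phi}^\ell_A(\df)=[(\mod^{ss}_\df A)_\ell\to\mc]_\ell$. Its image in $H_\ell(A)=H(A)/H(A)_{>\ell}$ only sees objects of length at most $\ell$, so it coincides with the class of the full semistable category truncated at $\ell$.

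Take $I_\ell$ to be the map on generators given in the statement. The first step is to show that $I_\ell$ is well defined and injective on $\Image(\Phi^\ell_A)$. By \cref{lem:generators picture group}, two walls give the same element of $G_\ell(A)$ exactly when their length-$\ell$ semistable categories coincide. On the Hall side, two classes $[\xc\to\mc]_\ell$ and $[\yc\to\mc]_\ell$ with $\xc,\yc\subset(\modA)_\ell$ closed under isomorphism coincide iff $\xc=\yc$. To see this, compare their restrictions to each stratum $\mc_{\underline d}$ with $|\underline d|\le\ell$; these are constructible substacks. So $I_\ell$ is a bijection of images. I will also use $I_\ell$ to send $(\tc_\df)_\ell\mapsto[(\tc_\df)_\ell\to\mc]_\ell$ for chambers $\df$.

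The second step is compatibility with products, and here I would use the presentation of \cref{prop:categorical interpretation}. There $G_\ell(A)\cong\widetilde{G}_\ell(A)$ is generated by the subcategories $(\tc_\df)_\ell$ and $(\mod^{ss}_{\df'}A)_\ell$, subject only to the relations $(\tc_{\df_1})_\ell*(\mod^{ss}_{\df'}A)_\ell=(\tc_{\df_2})_\ell$ across walls. In $H_\ell(A)$ the Hall product represents exactly the extension subcategory, as in \eqref{eq:convolutionHall}. The relation therefore holds there once I check two facts. First, the multiplicity is trivial: each object of $(\tc_{\df_2})_\ell$ has a \emph{unique} torsion subobject, by \cref{thm: torsion pairs of bounded length}(3) and the Harder--Narasimhan uniqueness \cref{thm:HarderNarasimhanfiltrations}. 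Second, the fibre of $h$ over such an object is a point up to geometric bijection. This is the truncated form of \cite[Lemma~6.6]{BridgelandScat}, and I expect it to be the main obstacle. It needs a precise check that the pullback stack $Z$ maps by a geometric bijection onto $(\tc_{\df_2})_\ell$, so that no automorphism or extension-group factors appear. I would try to get this by quoting Bridgeland's lemma in $\hat H(A)$ and projecting to $H_\ell(A)$. That projection is a ring map, so the identity descends.

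With the relations respected, $I_\ell$ extends to a group homomorphism $G_\ell(A)\to H_\ell(A)^\times$. The truncated classes are units because they lie in $1+H(A)_{>0}$ modulo $H(A)_{>\ell}$, which is nilpotent. This homomorphism intertwines $\Phi^\ell_A$ and $\hat\Phi^\ell_A$. Consistency is then transported along it: for any generic path $\gamma$, \cref{lem:liftingtogroupoid} and \cref{prop: one map} show that both path-ordered products equal the image of $[\{0\},\tc_{\gamma(0)}]^{-1}[\{0\},\tc_{\gamma(1)}]$. This yields the claimed isomorphism of scattering diagrams. As a sanity check, this isomorphism is compatible with $\pi_{m,\ell}$ and with $H_m(A)\to H_\ell(A)$, and so recovers \cref{thm:isomorphism} in the limit.
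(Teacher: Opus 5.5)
Your proposal is correct, but it runs in the opposite direction from the paper.

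\textbf{The paper's route.} The paper obtains \cref{cor: finitary isomorphism} by pushing the limit isomorphism $I$ of \cref{thm:isomorphism} down along the projections $\pi_\ell\colon G(A)\to G_\ell(A)$ and $\hat{H}(A)\to H_\ell(A)$. It then simply asserts that the resulting square commutes.

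\textbf{Your route.} You argue entirely at level $\ell$. You use the presentation $G_\ell(A)\cong\widetilde{G}_\ell(A)$ of \cref{prop:categorical interpretation}. You check its defining relations $(\tc_{\df_1})_\ell*(\mod^{ss}_{\df'}A)_\ell=(\tc_{\df_2})_\ell$ in $H_\ell(A)$ by truncating Bridgeland's identity $[\overline{\tc}_v\to\mc]=[\tc_v\to\mc]*[\mod^{ss}_vA\to\mc]$ for any $v\in\df'$. This is legitimate, since truncated classes only see the length-$\leq\ell$ parts, and these are constant on $\df'$. You thus obtain a genuine homomorphism $G_\ell(A)\to H_\ell(A)^\times$ intertwining $\Phi^\ell_A$ and $\hat\Phi^\ell_A$. \Cref{thm:isomorphism} is then recovered in the limit rather than used.

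The fibre computation you flag as the main obstacle is exactly the argument behind Bridgeland's torsion-pair identity. Uniqueness of the torsion subobject with respect to $(\tc_v,\overline{\fc}_v)$ makes $b$ a geometric bijection onto $(\tc_{\df_2})_\ell$.

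\textbf{Comparison.} The paper's route is shorter, but it has two weak points.
\begin{enumerate}
\item It only controls $I_\ell$ on labels of the form $\pi_\ell(\Phi_A(\df))$ with $\df\in\Df^1(A)$, so it tacitly needs every wall of $\Df_\ell(A)$ to arise this way.
\item It cannot deduce injectivity of $I_\ell$ from that of $I$.
\end{enumerate}
Your route has several advantages.
\begin{enumerate}
\item It handles only finitely many finite-type classes.
\item It makes precise what ``compatible with the group operations'' means: a homomorphism on the presented group that is bijective on wall labels. You rightly do not claim injectivity on all of $G_\ell(A)$; this fails in general, because the image lies in the nilpotent group of elements $1+x$ with $x$ of positive degree.
\item It gives an actual argument, via pullback to points, for injectivity on labels, where the paper says ``by definition''.
\end{enumerate}
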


\begin{proof}
Recall that there are natural homomorphisms $\pi_\ell : G(A) \to G_\ell(A)$ and $\hat{\pi} : \hat{H}(A) \to H_\ell(A)$ induced by the inverse-limit constructions of $G(A)$ and $\hat{H}(A)$, respectively. 
It is straightforward to verify that the diagram 
\[\begin{tikzcd}
	G(A) & \hat{H}(A) \\
	G_\ell(A) & H_\ell(A)
	\arrow["I", from=1-1, to=1-2]
	\arrow["\pi_\ell", from=1-1, to=2-1]
	%\arrow["b", from=1-3, to=1-5]
	\arrow["\hat{\pi}_\ell"', from=1-2, to=2-2]
	\arrow["I_\ell", from=2-1, to=2-2]
\end{tikzcd}\]
commutes and the result follows.
\end{proof}

\end{document}